\newcommand{\supp}{\mathrm{supp}}
\newcommand{\x}{\times}
\newcommand{\p}{\partial}
\renewcommand{\d}{\mathrm{d}}
\newcommand{\e}{\mathrm{e}}
\newcommand{\irm}{\mathrm{i}}
\newcommand{\Lap}{\mathrm{\Delta}}
\newcommand{\eqnb}{\begin{equation}}
\newcommand{\eqne}{\end{equation}}
\newcommand{\loc}{\mathrm{loc}}
\newcommand{\re}[1]{(\ref{#1})}
\newcommand{\comment}[1]{}
\newcounter{thm}
\numberwithin{thm}{section}
\numberwithin{equation}{section}
\newcommand{\lewy}{\left\lbrace}
\newcommand{\prawy}{\right\rbrace}
\newcommand{\RR}{\mathbb{R}}
\newcommand{\QQ}{\mathbb{Q}}
\newcommand{\NN}{\mathbb{N}}
\newtheorem{theorem}[thm]{Theorem}
\newtheorem{lemma}[thm]{Lemma}
\newtheorem{fact}[thm]{Fact}
\newtheorem{remark}[thm]{Remark}
\newtheorem{corollary}[thm]{Corollary}
\newtheorem{definition}[thm]{Definition}
\renewcommand \thesection{\arabic{section}}
\begin{document}
\title{Leray's fundamental work on the Navier--Stokes equations: a modern review of \emph{``Sur le mouvement d'un liquide visqueux emplissant l'espace''}}
\author{Wojciech S. O\.za\'nski, Benjamin C. Pooley}
\maketitle
\begin{abstract}
This article offers a modern perspective which exposes the many contributions of Leray in his celebrated work on the Navier--Stokes equations  from 1934. Although the importance of his work is widely acknowledged, the precise contents of his paper are perhaps less well known. The purpose of this article is to fill this gap. We follow Leray's results in detail: we prove local existence of strong solutions starting from divergence-free initial data that is either smooth, or belongs to $H^1$, $L^2\cap L^p$ (with $p\in(3,\infty]$), as well as lower bounds on the norms $\| \nabla u (t) \|_2$, $\| u(t) \|_p$ ($p\in(3,\infty]$) as $t$ approaches a putative blow-up time. We show global existence of a weak solution and weak-strong uniqueness. We present Leray's characterisation of the set of singular times for the weak solution, from which we deduce that its upper box-counting dimension is at most $\tfrac{1}{2}$. Throughout the text we provide additional details and clarifications for the modern reader and we expand on all ideas left implicit in the original work, some of which we have not found in the literature. We use some modern mathematical tools to bypass some technical details in Leray's work, and thus expose the elegance of his approach. 
\end{abstract}
\section{Introduction}
The Navier--Stokes equations, 
\[
\p_t u+(u\cdot\nabla)u-\nu\Lap u +\nabla p =0,
\]
\[
\nabla\cdot u=0,
\]
where $u$ denotes the velocity of a fluid, $p$ denotes  the scalar pressure and $\nu>0$ denotes viscosity of the fluid, 
comprise the fundamental model for the flow of an incompressible viscous fluid. They are named in recognition of Claude-Louis \cite{Navier_1822} and George \cite{Stokes_1845} who first formulated them, and they form the basis for many models in engineering and mathematical fluid mechanics. The equations have been studied extensively and a number of excellent textbooks on the subject are available, see for instance \cite{ConstE_NS}, \cite{ConstFoias}, \cite{ L-R_Book},  Robinson, Rodrigo \& Sadowski (2016), \cite{ Sohr_NSE_Book} and \cite{ Temam_NSEnNumerics_Book}. However, the fundamental issue of the well-posedness of the equations in three dimensions remains unsolved.

In this article we focus solely on the work of Jean \cite{Leray_1934}, which to this day remains of fundamental importance in the study of the Navier--Stokes equations. Leray was the first to study the Navier--Stokes equations in the context of \emph{weak solutions}. It is remarkable that such solutions are defined using a distributional form of the equations while the distribution theory was only formally introduced later by \cite{Schwartz_1950}. 

Many of the ideas in the modern treatment of these equations and a number of other systems originate from  Leray's (1934) paper (which we shall often refer to simply as ``Leray's work'' or ``Leray's paper''). The importance of that work is witnessed by the fact that it is one of the most cited works in mathematical fluid mechanics.

Leray studied the Navier--Stokes equations on the whole space ($\RR^3$). Unlike later authors who have largely adopted Faedo-Galerkin techniques (see \cite{Hopf_1951} and \cite{Kiselev_Ladyzhenskaya} for early examples), a characteristic of earlier works, including Leray's, is the use of explicit kernels. In this instance, Leray applied the Oseen kernel (herein denoted by $\mathcal{T}$), as derived by \cite{Oseen_1911}, to obtain solutions of the \emph{inhomogeneous Stokes equations}:
\[
\p_t u-\nu\Delta u +\nabla p =F,\qquad \nabla\cdot u=0,
\]
for a given forcing $F$. At the time these were also known as the equations for \emph{infinitely slow motion}. 

Oseen had previously applied this kernel iteratively to prove local well-posedness of the Navier--Stokes equations in $C^2(\RR^3)$ (with bounded velocity, decay conditions on $\omega=\nabla\x u$, and polynomial growth estimtes on $\nabla\omega$ and $\nabla u$; see Section 3.8 of \cite{L-R_21st_century} for a more complete discussion of Oseen's contributions). Leray applied a more elegant iteration scheme (a Picard iteration) to prove existence, uniqueness, and regularity results for local-in-time strong solutions for initial data $u_0\in H^1\cap L^\infty \cap C^1$ (Leray used the term \emph{regular solutions}). We will see that in fact, $u_0\in L^2\cap L^\infty$ suffices to construct strong solutions when his arguments are rewritten using a distributional form of equations.

Leray then derives lower bounds on various norms of the strong solution $u(t)$ as $t$ approaches the maximal time of existence $T_0$ if $T_0$ is finite, which indicate the rate of blow-up of a strong solution if such a blow-up occurs. He leaves open the issue of existence of blow-ups.

Next, Leray considers a generalised notion of solution of the Navier--Stokes equations, \emph{weak solutions}. For this he studies the so-called \emph{regularised equations}, which are obtained by replacing the nonlinear term $(u\cdot \nabla )u$ by $(J_\varepsilon u \cdot \nabla )u$ (where $J_\varepsilon$ is the standard mollification operator). He shows that the regularised equations admit local well-posedness results similar to those for strong solutions of the Navier--Stokes equations, with an additional global-in-time estimate on the $L^\infty$ norm of the velocity. This extra property results in global-in-time existence and uniqueness of solutions $u_\varepsilon$ for each $\varepsilon >0$. By a careful compactness argument, he constructs a sequence of solutions $\{ u_{\varepsilon_n} \}$ converging to a global-in-time weak solution to the Navier--Stokes equations. Such solutions, which he termed \emph{turbulent solutions}, can be thought of as weak continuations of the strong solution beyond the blow-up time, a revolutionary idea at the time. He then shows that these weak solutions are strong locally-in-time except on a certain compact set of singular times with Lebesgue measure zero. To this end he uses a certain \emph{weak-strong uniqueness} property. 

We now briefly highlight a few important developments that proceeded from Leray's work.
Eberhard \cite{Hopf_1951} performed a study of the Navier--Stokes equations on the bounded domain $\Omega \subset \RR^3$, and proved global-in-time existence of weak solutions. Then \cite{ladyzhenskaya1959} proved existence and uniqueness of global-in-time strong solutions to the Navier--Stokes equations in bounded two-dimensional domains (\cite{Leray_1933} dealt with well-posedness in $\RR^2$ in his thesis but was less successful in studying the case of bounded domains \cite{Leray_1934_essai}). \cite{fujita_kato1964} used fractional powers of operators and the theory of semigroups to construct local-in-time strong solutions of the (three-dimensional) Navier--Stokes equations on bounded domains (\cite{kato1984} used similar methods in the case of unbounded domains). As for the smoothness of weak solutions, it follows from the work of \cite{serrin1962},  \cite{Prodi_1959}, and \cite{Ladyzhenskaya_1967}, that if a weak solution $u$ belongs to $ L^r ((a,b);L^s )$ with $2/r+3/s\leq1$, $s>3$ then $u$ is a strong solution on the time interval $(a,b)$; the critical case $r=\infty,\ s=3$ was proved by \cite{Escauriaza_S_S_2003}. In addition, \cite{beale_kato_majda1984} showed that if $\mathrm{curl}\, u \in L^1 ((a,b);L^\infty )$ then $u$ is a strong solution on the time interval $(a,b]$.

 \cite{scheffer1977} was the first to study the size of the singular set in both time and in space. Subsequently, Caffarelli, Kohn \& Nirenberg (1982)\nocite{CKN} proved that the one-dimensional Hausdorff measure of the singular set is zero. We refer the reader to the textbooks above for a wider description of the contributions to the theory of the Navier--Stokes equations in the last 80 years.

 It is remarkable that despite many significant contributions, it is still not known whether the (unique) local-in-time strong solutions to the Navier--Stokes equations develop blow-ups or whether the global-in-time weak solutions are unique. This remains one of the most important open problems in mathematics, at the turn of the millennium, it was announced as one of seven Millennium Problems, see \cite{Fefferman_Clay}.

A number of concepts and methods that found early use in Leray's work are now ubiquitous in the analysis of PDEs. These include: weak compactness of bounded sequences in $L^2$, the concept of weak derivatives (called \emph{quasi-derivatives} by Leray), the mollification operation and the fact that a weakly convergent sequence converges strongly if and only if the norm of the limit is the limit of the norms. He made an extensive use of the space of $L^2$ functions with weak derivatives in $L^2$ two years before the celebrated work of \cite{sobolev}. This space would later be called Sobolev space $H^1$. Furthermore, Leray was the first to introduce the compactness method of solving partial differential equations (see the proof of Theorem \ref{existence_weak}); in fact this, together with his work with Juliusz Schauder opened a new branch in mathematics, the use of topological methods in PDEs. 

The terms \emph{Leray weak solution} (or \emph{Leray-Hopf weak solution}) of the Navier--Stokes equations (see Definition \ref{def_weak_sol}), \emph{Leray regularisation} (see \eqref{regularised_NSE}) and \emph{Leray projection} (for the projection of $L^2$ onto the space of weakly divergence-free vector fields) 
 have become part of the mathematical lexicon in recognition of his seminal paper on the Navier--Stokes equations. 
We refer the interested reader to \cite{Lax_about_Leray} for a broader description of Leray's work in the field of PDEs.

This article arose from series of lectures presented by the authors for a fluid mechanics reading group organised at the University of Warwick by James Robinson and Jos\'e Rodrigo, and its purpose is to offer a modern exposition of Leray's work. We update the notation and we simplify some technical details by applying some modern methods; in particular we use the Fourier transform (see Theorem \ref{classical_sol_if_X_smooth_theorem}) and the distributional forms of the partial differential equations appearing in Leray's work. It is perhaps remarkable that these updates do not detract from the originality of Leray's arguments; rather they make them even more elegant.

We have also endeavoured to give a rigorous account of all non-trivial results that are left implicit in the original work, some of which we were not able to find in the literature. These include Leray's derivations of the blow-up rate of the norm $\| u(t) \|_p$ (with $p>3$) of a strong solution $u$ as $t$ approaches the putative blow-up time (see Corollary \ref{blowup_rates_corollary}), and a result on local existence of strong solutions for initial data $u_0 \in L^2 \cap L^p$ (with $p\in (3,\infty ]$) that is weakly divergence free (see Corollary \ref{exist_semireg_Linfty_or_Lp}). In order to make the exposition self-contained we have also added appendices on relevant facts from the theory of the heat equation, integral inqualities, the Volterra equation and other topics.

For simplicity of notation we focus only on the case $\nu =1$. The corresponding results for any $\nu >0$ can be recovered using the following rescaling argument: if $u$, $p$ is a solution of the Navier--Stokes equations with $\nu= 1$, then $\widetilde{u}(x,t)\coloneqq u(x \nu,t \nu)$, $\widetilde{p} \coloneqq p (x\nu , t\nu )$ is a solution for given $\nu>0$.

The structure of the article is as follows. In the remainder of this section we describe some notation, we recall some preliminary results, and we introduce the Oseen kernel $\mathcal{T}$, which will be the main tool for solving the Stokes equations.

In Section \ref{stokes_eq_section} we study the Stokes equations. We first show that if the forcing $F$ is sufficiently smooth (see \eqref{regularity_of_X_requirement}) the equations can be solved classically using the representation formulae
\[\begin{split}
u(t)&\coloneqq  \Phi (t)\ast u_0  +\int_0^t  \mathcal{T}(t-s) \ast F(s)\, \d s ,  \\
p(t)&\coloneqq  -(-\Delta) ^{-1} ( \mathrm{div} \, F(t) ) ,
\end{split} \]
see \eqref{repr_formula}, \eqref{repr_formula_p}, where $\Phi (t)$ denotes the heat kernel (Theorem \ref{classical_sol_if_X_smooth_theorem}). We also show some further properties of the representation formula for $u$ in the case of less regular forcing $F$ (Lemma \ref{prop_of_up_lemma}). We then focus on a special form of the forcing 
\[F=-(Y\cdot \nabla )Y ,\] 
see \eqref{form_of_X}, which is reminiscent of the nonlinear term in the Navier--Stokes equations. This special form of $F$ gives rise to the modified representation formulae,
\[ \begin{split}
u(t) &\coloneqq    \Phi (t)\ast u_0  +\int_0^t  \nabla  \mathcal{T}(t-s) \ast \left[ Y(s) Y (s) \right] \,  \d s ,\\
p(t) & \coloneqq  \p_k \p_i (-\Delta )^{-1} (Y_i (t) Y_k(t)) ,
\end{split}\]
see \eqref{repr_formula1}, \eqref{repr_formula1_p}. After deducing some properties of this modified representation formula (Lemma \ref{additional_prop_up_lemma}) we show that it gives a unique solution (in some wide sense) to the Stokes equations with the forcing $F$ of the form above,  in the sense of distributions (Theorem \ref{uniqueness_stokes} and Theorem \ref{sol_distributions_anyX_theorem}).

In Section \ref{sec_strong} we study strong solutions of the Navier--Stokes equations. After defining strong solutions on an open time interval $(0,T)$ (Definition \ref{def_strong_open}) we use the theory of the Stokes equations developed in Section \ref{stokes_eq_section} to deduce the smoothness of such solutions (Corollary \ref{smoothness_reg_sol_corollary}), as well as other interesting properties, such as the energy equality
\[
\| u(t_2) \|^2 + 2 \int_{t_1}^{t_2} \| \nabla u(s) \|^2 \, \d s = \| u(t_1) \|^2, 
\]
 (Theorem \ref{EE_open_int_thm}) and the comparison of strong solutions 
\[
\| (u-v)(t_2)\|^2 \leq  \| (u-v)(t_1) \|^2 \mathrm{e}^{\frac{1}{2} \int_{t_1}^{t_2}  \| u(s) \|^2_\infty \,\d s} , \quad t_1<t_2
\] 
 (Lemma \ref{comparison_two_sols_lem}). We then define strong solutions on a half-closed time interval $[0,T)$ (Definition \ref{def_reg_sol_halfclosed}) and show local-in-time existence and uniqueness of such solutions with weakly divergence-free initial data $u_0\in L^2 \cap L^\infty$ (Theorem \ref{local_exist_strong_thm}). Next, we discuss the issue of the maximal time of existence $T_0$ of strong solutions (Lemma \ref{bounds_on_infty_norm_lemma} and Corollary \ref{existence_time_corollary}), from which we deduce the rates of blow-up of $u(t)$ in various norms as $t$ approaches $T_0$ (if $T_0$ is finite):
 \[
 \| u(t) \|_\infty \geq \frac{C}{\sqrt{T_0-t}},\quad \|\nabla u(t) \| \geq \frac{C}{(T_0-t)^{1/4}},
 \]
 and
 \[\| u(t) \|_p \geq \frac{C^{(1-3/p)/2}(1-3/p)}{(T_0-t)^{(1-3/p)/2}}
 \]
 (Corollary \ref{blowup_rates_corollary}). This study of strong solutions is concluded with an observation that less regular initial data $u_0$ also gives rise to a unique strong solution on the time interval $(0,T)$ for some $T>0$. This motivates the definition of \emph{semi-strong solutions} (Definition \ref{def_semi_reg}); we show that if $u_0 \in L^2$ is weakly divergence free, and either 
 \[ \nabla u_0 \in L^2 \quad \text{ or } \quad u_0 \in L^p  \quad \text{(with }p>3\text{),}
 \]
then there exists a unique local-in-time semi-strong solution starting from initial data $u_0$ (Theorem \ref{semi_reg_sols_existence_thm} and Corollary \ref{exist_semireg_Linfty_or_Lp}).

In Section \ref{sec_weak} we study weak solutions of the Navier--Stokes equations. To this end, for each $\varepsilon >0$ we consider regularised equations, where a mollification operator is applied in the nonlinear term,
\[
\p_t u - \Delta u + ((J_\varepsilon u) \cdot \nabla ) u + \nabla p = 0
\]
(Definition \ref{def_of_sol_regularised_eqs}). We show that for each $\varepsilon >0 $ the regularised equations can be analysed in a similar way to strong solutions of the Navier--Stokes equations in Section \ref{sec_strong}, the difference being that the maximal time of existence of the solution $u_\varepsilon$ is infinite (Theorem \ref{thm5.1}). In order to take the limit $\varepsilon \to 0^+$ we first show that the kinetic energy of $u_\varepsilon (t)$ outside a ball can be estimated independently of $\varepsilon $,
\[
 \int_{|x|>R_2} |u_\varepsilon (t)|^2 \,\d x \leq  \int_{|x|>R_1} |u_0 |^2 \,\d x + \frac{C(u_0,t)}{R_2-R_1},\qquad R_2>R_2>0,
\]
(Lemma \ref{tail_estimate}). Thanks to this separation of energy result we can let $\varepsilon \to 0^+$ (along a carefully chosen subsequence) to obtain a global-in-time weak solution of the Navier--Stokes equations (Theorem \ref{existence_weak}). We then show the so-called weak-strong uniqueness result (Lemma \ref{weak_strong_uniqueness_lem}) and we deduce that the weak solution admits a particular structure, namely that it is (locally) a strong solution at times
\[
t\in  \bigcup_{i} (a_i , b_i)
\]
where the intervals $(a_i,b_i)\subset (0,\infty )$ are pairwise disjoint (Theorem \ref{thmEpochs}). Finally we show that the complement of their union
\[
\Sigma \coloneqq (0,\infty ) \setminus \bigcup_{i} (a_i , b_i),
\]
(the set of putative singular times) is bounded, its box-counting dimension is bounded above by $1/2$ and that the weak solution admits certain decay for sufficiently large times (Theorem \ref{thmSupInfo} and Corollary \ref{corBoxDim}).

Unless specified otherwise, each proof follows \cite{Leray_1934}, possibly with minor modifications. We also comment on Leray's methodology throughout the text, in footnotes and in the ``Notes'' at the end of each section. Equation numbers marked in italics correspond to expressions in Leray's paper.

\subsection{Preliminaries}\label{sec_prelims}
The letter $C$ denotes a numerical constant, whose value may change at each occurrence. Occasionally we write $C'$ (or $C''$) to denote a constant that has the same value wherever it appears within a given section. Also, $C_m$ (and $c_m$) denotes a numerical constant for each $m$. Throughout the article (unless specified otherwise) we consider function spaces on $\RR^3$, for example $L^p \coloneqq L^p (\RR^3)$, $H^m \coloneqq H^m (\RR^3 )$, $C_0^\infty \coloneqq C_0^\infty (\RR^3 )$. We also define $\int \coloneqq \int_{\RR^3 }$, $\| \cdot \|_{p} \coloneqq \| \cdot \|_{L^p}$ and we reserve the notation $\| \cdot \|$ for the $L^2$ norm. We say that a velocity field $v$ is \emph{weakly divergence free} if $\mathrm{div} \, v =0$ in the sense of distributions, that is
\eqnb\label{weakly_div_free_def}
\int v \cdot \nabla g =0 
\eqne
for all $g\in C_0^\infty$, and we set
\eqnb\label{def_of_HV}
\begin{split}
H &\coloneqq \{ f\in L^2 \, : \,  f \text{ is weakly divergence free} \}.\\
V &\coloneqq \{ f\in H^1 \colon \mathrm{div}\, f =0 \}. 
\end{split} 
\eqne
We understand $\QQ^+ $ as the nonnegative rational numbers and we define $\p_j \coloneqq \p /\p x_j$ and $\nabla^m \coloneqq D^m$, where we understand all derivatives in the weak sense. We use the convention of summing over repeated indices. For example, we write $v_j \p_j u_i$ to denote the vector $(v\cdot \nabla ) u$ (here $i=1,2,3$). For an interval $I$ define
\[ \begin{split}
\mathcal{H}^{1/2}(I) \coloneqq &\{f  : \RR^3 \times I  \, :\,  \exists C(t) \text{, a continuous function on }I \text{, such that } \\
&|f(x,t)-f(y,t)|\leq C(t)|x-y|^{1/2} \mbox{ for all } t\in I,\, x,y\in \RR^3 \},
\end{split}\]
that is $\mathcal{H}^{1/2}(I)$ is the space of functions such that $\| f(t) \|_{C^{0,1/2 }}\leq C(t)$ for some continuous $C(t)$ (where $\| \cdot \|_{C^{0,1/2 }}$ denotes $1/2$-H\"older seminorm). Note that $\mathcal{H}^{1/2}(I)$ is defined in the same way for vector-valued functions.

We recall the \emph{integral Minkowski inequality}
\begin{equation}\label{minkowski_integral_version}
\| f(t) \|_{p} \leq \int_0^t \| g(s) \|_{p}\, \d s .
\end{equation}
whenever $p\in [1,\infty ]$ and $f(x,t)$ is of the form $f(x,t)= \int_0^t g(x,s) \d s$.
More generally, for integrable and nonnegative $\xi $, $\eta$, if
\[
F(x,t)= \int_\RR \xi (s) \int \eta (y) f(x,y,t,s)\,  \d y\,  \d s
\]
then
\begin{equation}\label{minkowski_integral_version_more_general}
\| F( t) \|_{p} \leq \int_\RR \xi (s) \int \eta (y ) \| f(\cdot , y , t, s) \|_{p}\,  \d y\,  \d s .
\end{equation}
Now let $p,q,r>0$ satisfy $1/q=1/p+1/r-1$ and let $f\in L^p$, $g\in L^r$. Then
\begin{equation}\label{young_for_convolutions}
\| f \ast g \|_{q} \leq \| f \|_{p} \| g \|_{r}.
\end{equation}
This is \emph{Young's inequality for convolutions} (see e.g. \cite{stein_weiss_introduction}, p. 178, for the proof). Here ``$\ast$'' denotes the \emph{convolution}, that is 
\[ (f\ast g)(x) \coloneqq \int f(x-y) g(y)\, \d y. \]
If $f$ and $g$ are also functions of time $t$ we omit $x$ and simply write $u(t) = f(t) \ast g(t)$. We apply this notation in the statement of the following extension of Young's inequality to the case of space-time convolutions, whose proof we give in Appendix \ref{young_spacetime_continuity_section} (see Lemma \ref{young_spacetime_continuity_appendix}).
\begin{lemma}\label{young_spacetime_continuity}
If $p,q,r \geq 1$ satisfy 
\[\frac{1}{q}=\frac{1}{p}+\frac{1}{r}-1,\]
$A\in L^1_{\loc}([0,T);L^p)$ and $B\in C((0,T);L^r)$ with $\| B(t) \|_r$ bounded as $t \to 0^+$ then $u$ defined by
\[
u(t)\coloneqq \int_0^t  A(t-s) \ast B(s) \, \d s
\]
belongs to $C([0,T);L^q)$ and
\[
\| u (t) \|_q \leq \int_0^t \| A(t-s) \|_p \| B(s) \|_r \,\d s.
\]
\end{lemma}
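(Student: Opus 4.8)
The plan is to establish the three assertions in turn: that for each $t\in[0,T)$ the Bochner integral defining $u(t)$ converges in $L^q$, that it obeys the stated bound, and that $t\mapsto u(t)$ is continuous. The first two points reduce at once to the integral Minkowski inequality \eqref{minkowski_integral_version} and Young's inequality \eqref{young_for_convolutions}; the genuine work is the continuity, which I would obtain by splitting the increment $u(t)-u(t_0)$ into a ``near-diagonal'' piece, controlled by absolute continuity of the Lebesgue integral, and a ``translation'' piece, controlled by continuity of translation in $L^1$. The measurability of $s\mapsto A(t-s)\ast B(s)$ as an $L^q$-valued map is routine: $A$ is strongly measurable, $B$ is continuous, and convolution $L^p\times L^r\to L^q$ is bounded bilinear by \eqref{young_for_convolutions}, hence continuous.

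First I would fix $t\in[0,T)$ together with some $t_1\in(t,T)$. Since $A\in L^1_\loc([0,T);L^p)$, the substitution $\sigma=t-s$ gives $\int_0^t\|A(t-s)\|_p\,\d s=\int_0^t\|A(\sigma)\|_p\,\d\sigma<\infty$, and $M_{t_1}\coloneqq\sup_{0<s\le t_1}\|B(s)\|_r<\infty$ because $s\mapsto\|B(s)\|_r$ is continuous on $(0,T)$ and bounded near $0$. Hence $\int_0^t\|A(t-s)\ast B(s)\|_q\,\d s\le\int_0^t\|A(t-s)\|_p\|B(s)\|_r\,\d s\le M_{t_1}\int_0^t\|A(\sigma)\|_p\,\d\sigma<\infty$, so $u(t)$ is a well-defined element of $L^q$; applying \eqref{minkowski_integral_version} with $g(x,s)=(A(t-s)\ast B(s))(x)$ and then \eqref{young_for_convolutions} pointwise in $s$ yields the claimed bound. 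Letting $t\to0^+$ with $t_1$ fixed gives $\|u(t)\|_q\le M_{t_1}\int_0^t\|A(\sigma)\|_p\,\d\sigma\to0=\|u(0)\|_q$, i.e. right-continuity at $0$.

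For continuity at a point $t_0\in(0,T)$ I would fix $t_1\in(t_0,T)$ and, for $t\in(t_0,t_1]$ (the case $t<t_0$ being symmetric), write
\[
u(t)-u(t_0)=\int_0^{t_0}\bigl[A(t-s)-A(t_0-s)\bigr]\ast B(s)\,\d s+\int_{t_0}^{t}A(t-s)\ast B(s)\,\d s .
\]
By \eqref{minkowski_integral_version} and \eqref{young_for_convolutions} the $L^q$-norm of the last integral is at most $M_{t_1}\int_0^{t-t_0}\|A(\sigma)\|_p\,\d\sigma$, which tends to $0$ as $t\to t_0^+$ by absolute continuity of $\sigma\mapsto\int_0^{\sigma}\|A(\tau)\|_p\,\d\tau$. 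The substitution $\sigma=t_0-s$ bounds the $L^q$-norm of the first integral by $M_{t_1}\int_0^{t_0}\|A(\sigma+(t-t_0))-A(\sigma)\|_p\,\d\sigma$, where we regard $A$ as extended by $0$ outside $[0,t_1]$ (which does not change the integral, all arguments appearing here lying in $[0,t_1]$), so that $A\in L^1(\RR;L^p)$; this is the $L^1(\RR;L^p)$-modulus of continuity of translation evaluated at $h=t-t_0$, which tends to $0$ as $t\to t_0$. Adding the two estimates gives $\|u(t)-u(t_0)\|_q\to0$, hence $u\in C([0,T);L^q)$.

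The only non-mechanical ingredient is the continuity of translation in $L^1(\RR;L^p)$, namely $\int_\RR\|A(\sigma+h)-A(\sigma)\|_p\,\d\sigma\to0$ as $h\to0$ for $A\in L^1(\RR;L^p)$. If no reference is preferred I would prove it by approximating $A$ in $L^1(\RR;L^p)$ by finite sums of functions $\mathbf{1}_{[a,b]}(\cdot)\,f$ with $f\in L^p$ — for which the statement is immediate — and passing to the limit using the uniform bound $\|A(\cdot+h)-A(\cdot)\|_{L^1(\RR;L^p)}\le2\|A\|_{L^1(\RR;L^p)}$. The remaining steps (measurability, the changes of variables, and absolute continuity of the integral) are routine bookkeeping.
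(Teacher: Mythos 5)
Your proof is correct, but it takes a genuinely different route from the paper's. Both arguments begin identically (the bound and the right-continuity at $t=0$ follow from \eqref{minkowski_integral_version} and \eqref{young_for_convolutions}), and both must handle the increment $u(t)-u(t_0)$ by a change of variables that aligns the two integrals; the difference is \emph{which factor absorbs the time shift}. You shift the argument of $A$, reducing the main term to $M\int_0^{t_0}\| A(\sigma+h)-A(\sigma)\|_p\,\d\sigma$ and invoking continuity of translation in the Bochner space $L^1(\RR;L^p)$. The paper instead substitutes $s\mapsto s-(t_2-t_1)$ so that the shift lands on $B$, giving the decomposition
\[
u(t_2)-u(t_1)=\int_0^{t_2-t_1}A(t_2-s)\ast B(s)\,\d s+\int_{t_2-t_1}^{t_2}A(t_2-s)\ast\bigl[B(s)-B(s-(t_2-t_1))\bigr]\,\d s,
\]
and then exploits the uniform continuity of $B$ into $L^r$ on $[\eta/2,T']$; since $B$ is only continuous on the \emph{open} interval, the paper must first peel off the contributions where $t_2-s$ or $s$ is smaller than $\eta$, controlled by choosing $\eta$ with $\int_0^\eta\|A\|_p\le\varepsilon$. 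Your version buys a cleaner argument with no near-zero splitting — translation continuity of $A$ is indifferent to the behaviour of $B$ beyond boundedness — at the cost of importing (or proving, as you sketch by density of step functions) the $L^1(\RR;L^p)$ translation lemma; the paper's version is more elementary and self-contained, using only the uniform continuity of a continuous function on a compact interval, at the cost of the extra $\eta$-bookkeeping. Your sketch of the translation lemma and the symmetric case $t<t_0$ are both sound, so nothing is missing.
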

Let $J_\varepsilon$ denote the standard \emph{mollification} operator, that is
\[ J_\varepsilon v  \coloneqq   \eta_{\varepsilon } \ast v ,\]
where $\eta_\varepsilon$ is a standard mollifier, e.g. $\eta_\varepsilon (x) \coloneqq  \varepsilon^{-3} \eta (x/\varepsilon )$, where $\eta (x) \coloneqq C \exp ((|x|^2-1)^{-1})$ for $|x|< 1$ and $\eta (x)\coloneqq  0$ for $|x| \geq 1$ with the constant $C>0$ chosen such that $\int \eta =1$.
\begin{lemma}[properties of mollification]\label{prop_molli}
The mollification operator $J_\varepsilon$ (on $\RR^3$) enjoys the following properties:
\begin{enumerate}[\rm(i)]
\item $\| J_\varepsilon v \|_{p} \leq \|  v \|_{p} $ for all $p\in [1,\infty ]$, $\varepsilon >0$,
\item $\p_{k} J_\varepsilon v = J_\varepsilon (\p_{k} v)$ for every $k=1,2,3$ whenever $\nabla v \in L^1_{\mathrm{\loc}}$,
\item $\| J_\varepsilon v \|_{{\infty }} \leq C \varepsilon^{-3/2} \| v\|$,
\item if $v\in L^2$ then $J_\varepsilon v \in H^m $ for all $m$ with $\| J_\varepsilon v \|_{H^m} \leq C_m \varepsilon^{-m} \| v\|$,
\item if $v\in L^1_{\loc}$ then $J_\varepsilon v \to v$ almost everywhere as $\varepsilon \to 0$,
\item if $v\in L^p$, where $p\in [1,\infty )$, then $J_\varepsilon v \to  v$ in $L^p$ as $\varepsilon \to 0$.
\end{enumerate}
\end{lemma}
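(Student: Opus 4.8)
The plan is to deduce every item from Young's inequality for convolutions \eqref{young_for_convolutions} together with the scaling identities $\nabla^m\eta_\varepsilon(x)=\varepsilon^{-3-m}(\nabla^m\eta)(x/\varepsilon)$, which after the substitution $x\mapsto\varepsilon x$ give $\|\nabla^m\eta_\varepsilon\|_1=\varepsilon^{-m}\|\nabla^m\eta\|_1$ and $\|\eta_\varepsilon\|_2=\varepsilon^{-3/2}\|\eta\|_2$. Item (i) is then immediate: $\eta_\varepsilon\geq0$ and $\int\eta_\varepsilon=1$ give $\|\eta_\varepsilon\|_1=1$, so \eqref{young_for_convolutions} with $r=1$, $q=p$ yields $\|J_\varepsilon v\|_p=\|\eta_\varepsilon\ast v\|_p\leq\|v\|_p$. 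For (ii), since $\eta_\varepsilon\in C_0^\infty$ one may differentiate under the integral sign to obtain $\partial_k(\eta_\varepsilon\ast v)=(\partial_k\eta_\varepsilon)\ast v$, and when $\nabla v\in L^1_{\loc}$ an integration by parts --- legitimate because $\eta_\varepsilon$ has compact support --- moves the derivative onto $v$, giving $(\partial_k\eta_\varepsilon)\ast v=\eta_\varepsilon\ast(\partial_k v)$. For (iii), I would take $p=r=2$ (hence $q=\infty$) in \eqref{young_for_convolutions} and use $\|\eta_\varepsilon\|_2=\varepsilon^{-3/2}\|\eta\|_2$, so that the constant is $C=\|\eta\|_2$.

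For (iv), note that $\eta_\varepsilon\ast v\in C^\infty$ for every $v\in L^2$, and iterating (ii) (or differentiating under the integral directly) gives $\nabla^m(\eta_\varepsilon\ast v)=(\nabla^m\eta_\varepsilon)\ast v$. Applying \eqref{young_for_convolutions} with $p=1$, $r=q=2$ and the scaling identity yields $\|\nabla^j(\eta_\varepsilon\ast v)\|\leq\varepsilon^{-j}\|\nabla^j\eta\|_1\|v\|$ for $1\leq j\leq m$, while $\|\eta_\varepsilon\ast v\|\leq\|v\|$ by (i); summing the squares over $0\leq j\leq m$ and restricting to $\varepsilon\leq1$ (the only regime of interest in this article) bounds $\|J_\varepsilon v\|_{H^m}$ by $C_m\varepsilon^{-m}\|v\|$.

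Items (v) and (vi) are where a genuine, if standard, argument is required. For (v), I would write $(J_\varepsilon v)(x)-v(x)=\int\eta_\varepsilon(y)\bigl(v(x-y)-v(x)\bigr)\,\d y$, so that $|(J_\varepsilon v)(x)-v(x)|\leq C\varepsilon^{-3}\int_{|y|\leq\varepsilon}|v(x-y)-v(x)|\,\d y$, which is a fixed multiple of the average of $|v(\cdot)-v(x)|$ over the ball of radius $\varepsilon$ centred at $x$; by the Lebesgue differentiation theorem this tends to $0$ at every Lebesgue point of $v$, hence at almost every $x$. For (vi), given $\delta>0$ I would pick $\varphi\in C_0^\infty$ with $\|v-\varphi\|_p<\delta$ (density of $C_0^\infty$ in $L^p$ for $p<\infty$) and estimate, using (i),
\[
\|J_\varepsilon v-v\|_p\leq\|J_\varepsilon(v-\varphi)\|_p+\|J_\varepsilon\varphi-\varphi\|_p+\|\varphi-v\|_p\leq 2\delta+\|J_\varepsilon\varphi-\varphi\|_p;
\]
since $J_\varepsilon\varphi\to\varphi$ uniformly with all supports contained in a fixed compact set, $\|J_\varepsilon\varphi-\varphi\|_p\to0$, so $\limsup_{\varepsilon\to0}\|J_\varepsilon v-v\|_p\leq2\delta$, and $\delta$ is arbitrary. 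None of this presents a real obstacle; the only points demanding any care are the bookkeeping of the $\varepsilon$-scaling in (iii)--(iv) and the appeal to Lebesgue points in (v).
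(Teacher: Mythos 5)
Your proof is correct. The paper does not actually prove this lemma --- it declares the properties elementary and points to Appendix C of Evans, Section 3.5.2 of Majda--Bertozzi, and Appendix A.3 of Robinson--Rodrigo--Sadowski --- and your argument is exactly the standard one those references give: Young's inequality \eqref{young_for_convolutions} combined with the scalings $\|\nabla^m\eta_\varepsilon\|_1=\varepsilon^{-m}\|\nabla^m\eta\|_1$ and $\|\eta_\varepsilon\|_2=\varepsilon^{-3/2}\|\eta\|_2$ for (i), (iii), (iv), the weak-derivative identity via the test function $y\mapsto\eta_\varepsilon(x-y)$ for (ii), the Lebesgue differentiation theorem for (v), and density of $C_0^\infty$ together with (i) for (vi). The one point worth noting is that your restriction to $\varepsilon\leq 1$ in (iv) is not a blemish but a necessity: the $j=0$ contribution $\|J_\varepsilon v\|\leq\|v\|$ cannot be dominated by $C_m\varepsilon^{-m}\|v\|$ uniformly for large $\varepsilon$, so the stated bound is to be read (and is only ever used in the paper) for small $\varepsilon$.
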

The proofs of the above properties are elementary (and can be found in Appendix C in \cite{Evans_PDEBook}, Section 3.5.2 in \cite{MajdaBertozzi} or Appendix A.3 in \cite{JCR_R_S_NSE_book}).

 For $f\in L^2$ define 
\begin{equation}\label{def_of_inverse_laplace}
(-\Delta )^{-1} f (x) \coloneqq \int \frac{f(y)}{4\pi |x-y|}\, \d y .
\end{equation}
The symbol $(-\Delta )^{-1}$ relates to the fact that $g\coloneqq (-\Delta )^{-1}f$ satisfies the Poisson equation $-\Delta g =f$ in $\RR^3$ in the sense of distributions (which follows by an application of Fubini's theorem). Since we will often estimate terms similar to the right-hand side of \eqref{def_of_inverse_laplace}, we formulate the following lemma (which is Leray's {\it(1.14)}).
\begin{lemma}\label{temp_singular_int_bound} If $f\in H^1$ then for any $y\in \RR^3$
\begin{equation}\label{1.14}
\int\frac{|f(x)|^2}{|x-y|^2} \,\d x \leq 4\|\nabla f\|^2.
\end{equation}
\end{lemma}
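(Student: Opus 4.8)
The plan is to recognise \eqref{1.14} as Hardy's inequality on $\RR^3$ (the sharp constant is $4=(2/(3-2))^2$) and to prove it by the classical ``integrate against the vector field $x/|x|^2$, then Cauchy--Schwarz and absorb'' argument. First I would reduce to the case $y=0$ by translating, and then reduce to $f\in C_0^\infty (\RR^3)$ by a density argument: given $f\in H^1$ choose $f_n\in C_0^\infty$ with $f_n\to f$ in $H^1$ and, after passing to a subsequence, $f_n\to f$ almost everywhere; once \eqref{1.14} is known for each $f_n$, Fatou's lemma applied to the left-hand side together with $\|\nabla f_n\|\to\|\nabla f\|$ gives it for $f$. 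For vector-valued $f$ it suffices to run the scalar estimate on each component and sum, so I would treat $f$ as a real scalar.

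For $f\in C_0^\infty$ the key identity is the elementary computation $\nabla\cdot \bigl(x/|x|^2\bigr)=1/|x|^2$ on $\RR^3\setminus\{0\}$ (indeed $\p_i(x_i/|x|^2)=3/|x|^2-2/|x|^2$). Writing $I\coloneqq \int |f(x)|^2/|x|^2\,\d x$ and integrating by parts on $\{|x|>\varepsilon\}$ (using $\nabla |f|^2=2f\nabla f$), one gets
\[
\int_{|x|>\varepsilon}\frac{|f|^2}{|x|^2}\,\d x = -\frac{1}{\varepsilon}\int_{|x|=\varepsilon}|f|^2\,\d S - 2\int_{|x|>\varepsilon} f\,\nabla f\cdot\frac{x}{|x|^2}\,\d x,
\]
and the spherical term is bounded in absolute value by $\varepsilon^{-1}\|f\|_\infty^2\,|\p B_\varepsilon|=4\pi\varepsilon\|f\|_\infty^2\to 0$. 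Since $I<\infty$ for $f\in C_0^\infty$ (the integrand is dominated near $0$ by $\|f\|_\infty^2|x|^{-2}$, which is locally integrable in three dimensions), letting $\varepsilon\to 0$ and applying the Cauchy--Schwarz inequality yields
\[
I = -2\int f\,\nabla f\cdot\frac{x}{|x|^2}\,\d x \leq 2\int \frac{|f|}{|x|}\,|\nabla f|\,\d x \leq 2\,I^{1/2}\,\|\nabla f\|,
\]
so dividing by $I^{1/2}$ gives $I^{1/2}\leq 2\|\nabla f\|$, i.e. $I\leq 4\|\nabla f\|^2$.

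The only genuinely delicate point is the justification of the integration by parts: checking that every integral appearing is finite and that the boundary contribution on $\{|x|=\varepsilon\}$ vanishes as $\varepsilon\to 0$. Both are immediate for $f\in C_0^\infty$ from the local integrability of $|x|^{-2}$ and $|x|^{-1}$ in $\RR^3$ and the compact support of $f$. Everything else — the reduction to $y=0$, the density/Fatou passage back to general $f\in H^1$, and the final absorption step — is routine.
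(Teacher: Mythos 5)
Your proposal is correct and follows essentially the same route as the paper: reduce to $y=0$ and scalar $f\in C_0^\infty$, integrate by parts against the vector field $x/|x|^2$, apply Cauchy--Schwarz, absorb the singular integral (you divide by $I^{1/2}$ where the paper uses Young's inequality, which is the same absorption in different clothing), and recover the general $H^1$ case by density and Fatou. Your explicit excision of the ball $\{|x|\le\varepsilon\}$ and verification that the boundary term vanishes is a welcome extra justification of a step the paper leaves implicit, but it is not a different argument.
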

\begin{proof} It is enough to prove the claim when $y=0$ and when $f$ is a scalar function. If $f\in C^\infty_0$ then integration by parts, the Cauchy--Schwarz inequality and Young's inequality give
\[
\begin{split}
\int \frac{(f(x))^2}{|x|^2} \,\d x &= - \int  \frac{x}{|x|^2} \cdot \nabla (f(x))^2  \,\d x= -2 \int \frac{x}{|x|^2} \cdot  \nabla f(x)   f(x) \, \d x\\
&\leq  2\| \nabla f \|  \sqrt{ \int \frac{(f(x))^2}{|x|^2} \,\d x } \leq 2 \| \nabla f \|^2 +\frac{1}{2} \int \frac{(f(x))^2}{|x|^2} \,\d x .
\end{split}
\]
For $f\in H^1 $ the claim follows from the density of $C_0^\infty $ functions in $H^1 $ and Fatou's lemma.
\end{proof}
\begin{lemma}[The Plancherel Lemma]\label{CZ_lemma}
The operator
\[
f\mapsto \p_i \p_k (-\Delta )^{-1} f 
\]
is a bounded operator from $L^2$ to $L^2$ for every $i,k$. Consequently an application of Fubini's theorem gives that $\p_i \p_k (-\Delta )^{-1}$ is a bounded operator from $H^m$ to $H^m$ for every $m\geq 0$.
\end{lemma}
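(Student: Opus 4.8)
The name of the lemma already signals the intended route: I would pass to the Fourier transform and recognise $f\mapsto\p_i\p_k(-\Delta)^{-1}f$ as a Fourier multiplier with a bounded symbol, so that the $L^2$ estimate reduces to Plancherel's theorem. Writing $\widehat{\cdot}$ for the Fourier transform, each weak derivative $\p_j$ acts as multiplication by a constant multiple of $\xi_j$ and $(-\Delta)^{-1}$ acts as division by $|\xi|^2$, so one expects
\[
\widehat{\p_i\p_k(-\Delta)^{-1}f}(\xi) = -\frac{\xi_i\xi_k}{|\xi|^2}\,\widehat f(\xi).
\]
The symbol $m(\xi):=-\xi_i\xi_k/|\xi|^2$ satisfies $|m(\xi)|\le 1$ for all $\xi\neq0$ (indeed $|\xi_i\xi_k|\le|\xi_i|\,|\xi_k|\le|\xi|^2$), and the exceptional set $\{\xi=0\}$ is Lebesgue-null, so Plancherel's theorem would then give $\|\p_i\p_k(-\Delta)^{-1}f\|\le\|f\|$ at once.

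To make this precise I would first restrict to $f\in C_0^\infty$ and afterwards \emph{define} the operator on all of $L^2$ by continuous extension. For $f\in C_0^\infty$ the function $g:=(-\Delta)^{-1}f$ is smooth and, as already noted just after \eqref{def_of_inverse_laplace}, solves $-\Delta g=f$ distributionally; regarding $g$ as a tempered distribution and transforming this identity gives $|\xi|^2\widehat g=\widehat f$, hence $\widehat g=\widehat f/|\xi|^2$ as a locally integrable function and $\widehat{\p_i\p_k g}=-\xi_i\xi_k\widehat g=m\widehat f$, which lies in $L^2$ since $|m|\le1$ and $\widehat f\in L^2$. Plancherel then delivers the bound on $C_0^\infty$, and since $C_0^\infty$ is dense in $L^2$ the operator extends uniquely to a bounded operator on $L^2$ with the same bound; this extension is precisely the Fourier multiplier with symbol $m$, and it agrees with $\p_i\p_k(-\Delta)^{-1}f$ in the distributional sense whenever the latter is meaningful.

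For the $H^m$ statement I would use that $\p_i\p_k(-\Delta)^{-1}$ commutes with every weak derivative $\p^\alpha$ — this is the ``application of Fubini's theorem'' referred to in the statement, and equivalently the fact that two Fourier multipliers commute. Then, for $f\in H^m$ and $|\alpha|\le m$, one has $\p^\alpha\p_i\p_k(-\Delta)^{-1}f=\p_i\p_k(-\Delta)^{-1}\p^\alpha f$, whose $L^2$ norm is at most $\|\p^\alpha f\|$ by the $L^2$ case; summing over $|\alpha|\le m$ yields $\|\p_i\p_k(-\Delta)^{-1}f\|_{H^m}\le\|f\|_{H^m}$.

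The only real obstacle is the bookkeeping about which objects are genuinely square-integrable: neither the Newtonian kernel $1/(4\pi|x|)$ nor $g=(-\Delta)^{-1}f$ itself is in $L^2$ (both decay only like $|x|^{-1}$), so Plancherel cannot be applied to $g$ directly, and the Fourier-side manipulations must be carried out in the space of tempered distributions, with square-integrability entering only through the second derivatives $\p_i\p_k g$. Everything else — the pointwise bound on the symbol, the density extension, and the commutation with derivatives — is routine.
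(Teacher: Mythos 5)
Your proposal is correct and follows essentially the same route as the paper: the paper's proof is a one-line application of Plancherel's theorem using the pointwise bound $|\xi_i\xi_k/|\xi|^2|\leq 1$ on the multiplier symbol, which is exactly your argument. The additional care you take (working first on $C_0^\infty$, treating $(-\Delta)^{-1}f$ as a tempered distribution, extending by density, and commuting with $\p^\alpha$ for the $H^m$ case) fills in details the paper leaves implicit but does not change the method.
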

\begin{proof}
This follows by considering the Fourier transform and using the Plancherel property:
\[
\| \p_i \p_k (-\Delta )^{-1} f \|^2 = \int \left| \frac{\xi_i\xi_k}{|\xi |^2} \widehat{f} (\xi ) \right|^2 \d \xi \leq  \int \left|  \widehat{f} (\xi ) \right|^2 \d \xi = \| f \|^2,  
\]
where $\widehat{f}$ denotes the Fourier transform of $f$.
\end{proof}
Consider the \emph{heat equation} in $\RR^3 \times (0,T)$:
\[
v_t - \Delta v =0
\]
with initial condition $v(0)=v_0$ (understood in the sense of $L^2$ limit as $t\to 0^+$) for some $v_0 \in L^2$.
Then a classical solution $v$ of the heat equation is given by the convolution
\[
v(t) =  \Phi (t) \ast v_0,
\]
where 
\eqnb\label{def_of_heat_kernel}
\Phi (x,t):= \frac{1}{(4\pi t)^{3/2}} \mathrm{e}^{-|x|^2/4t}\eqne is the \emph{heat kernel}. In what follows, we will rely on some well-known properties of the heat equation and the heat kernel, which we discuss in Appendix \ref{heat_section_appendix}.

Finally, we will often use an integral version of an elementary fact from the theory of ordinary differential equations: if $f, \phi \colon [0,T) \to \RR^+$ are $C^1$ functions such that
\[
\begin{cases}
f' \leq g\,  f^k\,+a,\\
\phi' \geq g \, \phi^k + b
\end{cases} \text{ on } [0,T) \qquad \text{ with }\quad  f(0) < \phi (0),
\]
where $k>0$ and $g,a,b$ are continuous functions such that $g>0$ and $a\leq b$, then $f< \phi$ on $[0,T)$. In particular, we will use the following result, which corresponds to the case $k=2$.
\begin{lemma}[Integral inequalities]\label{lem_integral_ineq}
Suppose $g>0$ is a continuous function on $(0,T)$ that is locally integrable $[0,T )$, that functions $f,\phi: (0,T)\to \RR^+$ satisfy
\[
\begin{split}
f(t) &\leq  \int_0^t g(t-s) f(s)^2 \,\d s + a(t) ,  \\
\phi (t) &\geq  \int_0^t g(t-s) \phi (s)^2 \,\d s + b(t) 
\end{split}\]
for all $t\in (0,T)$, where $a$, $b$ are continuous functions satisfying $a\leq b$, $\phi$ is continuous, and that $f^2$ and $\phi^2$ are integrable near $0$.
Then $f\leq \phi $ on $ (0,T)$.
\end{lemma}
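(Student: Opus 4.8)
The plan is to reduce everything to a pointwise ODE-type comparison by a continuity/contradiction argument, using the monotonicity of the integral operator $T[h](t) \coloneqq \int_0^t g(t-s) h(s)^2\,\d s$ on nonnegative functions. First I would establish a strict version of the claim: if $a \leq b$ but we replace $b$ by $b+\delta$ for a small constant $\delta>0$ (and correspondingly weaken, say, the hypothesis on $\phi$ to $\phi(t) \geq T[\phi](t) + b(t) + \delta$ — or rather keep $\phi$ fixed and note $T[\phi]+b+\delta \geq \ldots$ fails, so instead perturb $f$ downward), one gets $f < \phi$ on $(0,T)$; the non-strict claim then follows by letting $\delta \to 0^+$. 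Concretely, I would fix $\varepsilon>0$ and consider $f_\varepsilon(t) \coloneqq f(t)$ against $\phi(t)$, aiming to show $f(t) < \phi(t) + \varepsilon$ for all $t$, and then send $\varepsilon\to 0$.

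The core step is the continuity argument. Since $f^2,\phi^2$ are integrable near $0$ and $g$ is locally integrable on $[0,T)$, the functions $T[f]$ and $T[\phi]$ are continuous on $[0,T)$ and vanish at $t=0$ (by dominated convergence, using local integrability of $g$ near $0$ and $L^1$-control of $f^2$); hence $f(t) \leq a(t) + o(1)$ and $\phi(t) \geq b(t) + o(1)$ as $t\to 0^+$, and since $a(0)\leq b(0)$ we get, for the perturbed problem, $f(t) < \phi(t)$ on some initial interval $(0,\tau)$. Let $t^\ast$ be the supremum of times $\tau$ such that $f < \phi$ on $(0,\tau)$; suppose for contradiction $t^\ast < T$. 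By continuity $f(t^\ast) \leq \phi(t^\ast)$, and in fact at $t^\ast$ we must have $f(t^\ast) = \phi(t^\ast)$ if the perturbation is set up so that strict inequality cannot fail at an interior point otherwise. Now on $[0,t^\ast]$ we have $0 \leq f \leq \phi$ pointwise, so by monotonicity of $h \mapsto h^2$ on $\RR^+$ and positivity of $g$, $T[f](t^\ast) \leq T[\phi](t^\ast)$, with the inequality being strict unless $f \equiv \phi$ a.e.\ on $(0,t^\ast)$. Combining with $f(t^\ast) \leq T[f](t^\ast) + a(t^\ast)$ and $\phi(t^\ast) \geq T[\phi](t^\ast) + b(t^\ast) + \delta$ yields $f(t^\ast) \leq T[\phi](t^\ast) + a(t^\ast) \leq \phi(t^\ast) - \delta - b(t^\ast) + a(t^\ast) < \phi(t^\ast)$, contradicting $f(t^\ast) = \phi(t^\ast)$.

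To make the dichotomy at $t^\ast$ clean, I would phrase it as: either $f < \phi$ on all of $(0,t^\ast]$ (in which case $t^\ast$ is not maximal, since continuity extends the strict inequality past $t^\ast$), or there is a first point where equality holds, and the displayed chain of inequalities rules that out. Either way we contradict $t^\ast < T$, so $t^\ast = T$ and $f < \phi$ on $(0,T)$ for every $\delta>0$; sending $\delta \to 0^+$ gives $f \leq \phi$. I would streamline this by simply running the argument with $b$ replaced by $b+\delta$ throughout (the hypothesis $\phi(t) \geq T[\phi](t) + b(t)$ of course still gives $\phi(t) \geq T[\phi](t) + (b(t)+\delta) - \delta$, so instead it is cleaner to compare $f$ with the \emph{solution} $\phi_\delta$ of the equality problem $\phi_\delta = T[\phi_\delta] + b + \delta$ — but since we are only given $\phi$ as a supersolution, the direct perturbation of the strict-inequality bookkeeping above is the honest route).

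The main obstacle I anticipate is the behaviour near $t=0$: one must be careful that $T[f]$ and $T[\phi]$ are genuinely continuous up to $0$ and vanish there, which requires the stated integrability of $f^2,\phi^2$ near $0$ together with local integrability of $g$ on $[0,T)$ (so that $g$ may be non-integrable only at the right endpoint, if at all) — this is exactly why those hypotheses are included. A secondary subtlety is that $g$ is assumed positive and continuous on the open interval $(0,T)$ but only locally integrable on $[0,T)$, so the convolution kernel could blow up at $s=t$; this does not affect the monotonicity step (positivity is all that is needed there) but it does mean the continuity of $T[f]$ should be argued via dominated convergence rather than by naive estimates. Everything else is bookkeeping with the elementary inequality $0 \le f \le \phi \Rightarrow f^2 \le \phi^2$ and the linearity/monotonicity of integration.
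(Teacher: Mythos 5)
Your overall skeleton (perturb to a strict inequality, run a continuation/barrier argument, let the perturbation parameter tend to $0$) is the same as the paper's, but the proposal has a genuine gap at its centre: the perturbation is never actually constructed, and the contradiction you display at $t^\ast$ uses a hypothesis you do not have. You write the chain
$f(t^\ast)\leq T[\phi](t^\ast)+a(t^\ast)\leq \phi(t^\ast)-\delta-b(t^\ast)+a(t^\ast)<\phi(t^\ast)$,
which requires $\phi \geq T[\phi]+b+\delta$ globally; as you yourself observe, $\phi$ is only a supersolution for $b$, so this $\delta$-gap does not exist. The ``honest route'' you then point to --- perturbing $f$ downward --- is the right idea, but it is precisely the step that carries the mathematical content and it is missing. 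If $f_\varepsilon\coloneqq f-\varepsilon$, then expanding $(f_\varepsilon+\varepsilon)^2$ produces the cross terms $2\varepsilon\int_0^t g(t-s)f(s)\,\d s+\varepsilon^2\int_0^t g(t-s)\,\d s$, and these can be absorbed into a strict deficit $a(t)-\varepsilon/4$ only on a short initial interval on which $\int_0^t g$ and $\int_0^t g(t-s)f(s)\,\d s$ are small (this is the paper's Step 2). Consequently the strictness is only \emph{local}, and one cannot run a single global $t^\ast$ argument: after establishing $f\leq\phi$ up to some $t_1$, the paper restarts the problem at $t_1$ by shifting time and folding the history integrals $\int_0^{t_1}g(t_1+t-s)f(s)^2\,\d s$ and $\int_0^{t_1}g(t_1+t-s)\phi(s)^2\,\d s$ into new forcings $A\leq B$ (Step 3). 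Your proposal has no substitute for this continuation step once the global $\delta$-gap is withdrawn.

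A secondary but real problem is your repeated appeal to continuity of $f$: the lemma deliberately makes no such assumption (the paper remarks on this explicitly), so ``by continuity $f(t^\ast)\leq\phi(t^\ast)$'' and ``continuity extends the strict inequality past $t^\ast$'' are not available, and ``the first point where equality holds'' need not exist. The paper's device is to track the \emph{continuous} majorant $t\mapsto\int_0^t g(t-s)f(s)^2\,\d s+a(t)$ rather than $f$ itself, defining the good set by the strict inequality for this majorant against $\phi$; the strictness at the supremum then comes from $f<\phi$ on the preceding interval (hence $T[f]<T[\phi]$ there, since $g>0$), which is the observation you make only parenthetically. That mechanism, combined with the local $\varepsilon$-perturbation and the restart, is what actually closes the argument; as written, your proposal does not.
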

Note that no assumption on the continuity of $f$ is needed. We prove this lemma, along with a few related results, in Appendix \ref{integral_ineq} (see Lemma \ref{lem_integral_ineq_app}).
\subsection{The Oseen kernel $\mathcal{T}$}\label{oseen_kernel_section}
The Oseen kernel is the main tool in solving the Stokes equations in $\RR^3$ (which we discuss in the next section; see also the comment following Theorem \ref{classical_sol_if_X_smooth_theorem}). It is a $3\times 3$ matrix-valued function $\mathcal{T}=\left[ \mathcal{T}_{ij}  \right]$ defined by
\begin{equation}\label{def_Tij}
\mathcal{T}_{ij} (x,t) \coloneqq  \delta_{ij} \Phi (x,t) + \p_{i} \p_{j} P (x,t),\qquad x\in \RR^3 , t>0,
\end{equation}
where $\delta_{ij}$ denotes the Kronecker delta and
\begin{equation}\label{def_of_p}
P(x,t) \coloneqq \frac{1}{4\pi^{3/2}t^{1/2} |x|} \int_0^{|x|} \mathrm{e} ^{-\xi^2/4t}\,  \d \xi.
\end{equation}
It was first introduced by \cite{Oseen_1911} (see pages 3, 19 and 41 therein)

Note that $P(\cdot , t)$ is a smooth function for each $t>0$. Indeed, for fixed $t$ the function $\tilde P: \RR \to \RR$ defined by $\tilde P (0) := 1$, 
\[\tilde P (s) := \frac{1}{4\pi^{3/2} s} \int_0^s \frac{\mathrm{e} ^{-r^2/4t}}{t^{1/2}} \, \d r\]
is even and smooth (one can verify that $\tilde P$, $\frac{\d \tilde P}{\d s}$ are continuous and that $\frac{\d^2 \tilde P}{\d s^2}=  \e^{-s^2/4t}/{(4\pi t)^{3/2}}$, a smooth function). Therefore, since $P(x,t)=\tilde P(|x|)$, $P(\cdot , t)$ is smooth as well (for each $t>0$). A direct computation shows that
\begin{equation}\label{laplace_relation}
-\Delta P = \Phi .
\end{equation}
This yields an equivalent definition of $\mathcal{T}_{ij}$:
\begin{equation}\label{eqOseen1}
\begin{array}{lll}
\mathcal{T}_{1,1}=-(\p_2^2+\p_3^2)P,\quad &\mathcal{T}_{1,2}=\p_1\p_2 P,\quad  &\mathcal{T}_{1,3}=\p_1\p_3 P,\\
\mathcal{T}_{2,1}=\p_1\p_2P,&\mathcal{T}_{2,2}=-(\p_1^2+\p_3^2) P, &\mathcal{T}_{2,3}=\p_2\p_3 P,\\
\mathcal{T}_{3,1}=\p_1\p_3P,&\mathcal{T}_{3,2}=\p_2\p_3 P, &\mathcal{T}_{3,3}=-(\p_1^2+\p_2^2) P.
\end{array}
\end{equation}
Since the derivatives $\nabla^m P$ satisfy the pointwise estimate
\[
| \nabla^m P(x,t) | \leq \frac{C_m}{(|x|^2+t)^{(m+1)/2}}, \qquad m\geq 0
\]
(see Theorem \ref{decay_p_theorem}), we obtain the following pointwise estimates on the Oseen kernel
\begin{equation}\label{eqOseenKerEst2}
|\nabla^m \mathcal{T}(x,t)|\leq  \frac{C_m}{(|x|^2+ t)^{(m+3)/2}} \qquad \text{ for } x \in \RR^3, t>0 , m\geq 0.
\end{equation}
Using these bounds we can easily deduce the integral estimates
\begin{equation}\label{integral_est_T}
\| \mathcal{T} (t) \| \leq C\, t^{-3/4}\quad \text{ and }\quad \| \nabla \mathcal{T} (t) \|_{1} \leq C\, t^{-1/2}
\end{equation}
for $t>0$, where we used the facts $\int(|x|^2+t)^{-3}\,  \d x = C/t^{3/2}$ and $\int(|x|^2+t)^{-2}\,  \d x = C/t^{1/2}$. 
 Similarly, by \eqref{eqOseenKerEst2} and an application of the Dominated Convergence Theorem we obtain
\begin{equation}\label{fcn_spaces_for_T}
\mathcal{T}\in  C((0,\infty ); L^2 ) \quad \text{ and } \quad \nabla \mathcal{T} \in C((0,\infty ); L^1 ).
\end{equation}
Finally $\mathcal{T}$ enjoys a certain integral continuity property, which we will use later to show H\"older continuity of the solution to Stokes equations (see Lemma \ref{additional_prop_up_lemma} (ii)).
\begin{lemma}[$1/2$-H\"older continuity of $\nabla \mathcal{T}$ in an $L^1$ sense]\label{holder_cts_nablaT_lemma}
For $x,y \in \RR^3$, $t>0$
\[
\int | \nabla \mathcal{T}(x-z,t)-\nabla \mathcal{T}(y-z,t)|\,  \d z \leq C |x-y |^{1/2} {t^{-3/4}} .
\]
\end{lemma}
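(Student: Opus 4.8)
The plan is to reduce the $L^1$ estimate to two regimes depending on the size of $|x-y|$ relative to $\sqrt t$, and in each regime exploit the pointwise bounds \eqref{eqOseenKerEst2} on $\nabla\mathcal{T}$ and $\nabla^2\mathcal{T}$. Write $h\coloneqq|x-y|$. For the ``far'' regime $h \geq \sqrt t$ one simply uses the triangle inequality and the bound $\|\nabla\mathcal{T}(t)\|_1 \leq Ct^{-1/2}$ from \eqref{integral_est_T}, so the integral is at most $2Ct^{-1/2} = 2C t^{-1/2}\cdot 1 \leq 2C t^{-1/2}\cdot (h/\sqrt t)^{1/2} = 2C h^{1/2} t^{-3/4}$, which is exactly the claimed bound (up to the constant). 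So the far regime is essentially immediate.

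The substance is the ``near'' regime $h < \sqrt t$, where one wants to gain a genuine $h^{1/2}$ from the difference $\nabla\mathcal{T}(x-z,t)-\nabla\mathcal{T}(y-z,t)$. Here I would use the mean value theorem along the segment from $x$ to $y$: for fixed $z$,
\[
|\nabla\mathcal{T}(x-z,t)-\nabla\mathcal{T}(y-z,t)| \leq h \sup_{\theta\in[0,1]} |\nabla^2\mathcal{T}(x_\theta - z,t)|, \qquad x_\theta \coloneqq \theta x + (1-\theta)y,
\]
and then estimate $|\nabla^2\mathcal{T}(w,t)| \leq C_2 (|w|^2+t)^{-5/2}$ from \eqref{eqOseenKerEst2} with $m=2$. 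Integrating in $z$ (after a harmless translation, and noting $\sup_\theta$ can be pulled outside once one checks the integral is uniform in the base point, e.g. by crudely bounding over a ball of radius $h$ around $x$, which only costs a constant since $h<\sqrt t$) gives a contribution of order $h \int (|z|^2+t)^{-5/2}\,\d z = h \cdot C t^{-1}$. But $h\cdot t^{-1} = h^{1/2}\cdot h^{1/2} t^{-1} \leq h^{1/2}\cdot t^{1/4}\cdot t^{-1} = h^{1/2} t^{-3/4}$, again exactly the claimed bound.

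So neither regime is hard in isolation; the main thing to be careful about is the interchange of the supremum over the segment with the $z$-integration. The cleanest way to avoid that subtlety altogether is to write the difference as the exact integral $\nabla\mathcal{T}(x-z,t)-\nabla\mathcal{T}(y-z,t) = \int_0^1 \nabla^2\mathcal{T}(x_\theta - z,t)\cdot(x-y)\,\d\theta$, take absolute values inside, and then apply Fubini to swap the $\theta$- and $z$-integrals; since $\int |\nabla^2\mathcal{T}(x_\theta-z,t)|\,\d z = \|\nabla^2\mathcal{T}(t)\|_1$ is independent of $\theta$ (translation invariance), the $\theta$-integral is trivial. One then needs the single estimate $\|\nabla^2\mathcal{T}(t)\|_1 \leq C t^{-1}$, which follows from \eqref{eqOseenKerEst2} with $m=2$ and $\int(|x|^2+t)^{-5/2}\,\d x = Ct^{-1}$, exactly in the style of \eqref{integral_est_T}. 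Combining: the near regime gives $\leq C h t^{-1} \leq C h^{1/2} t^{-3/4}$ (using $h<\sqrt t$), the far regime gives $\leq C t^{-1/2} \leq C h^{1/2} t^{-3/4}$ (using $h\geq\sqrt t$), and taking the larger constant finishes the proof. The only real ``obstacle'' is bookkeeping the two cases and confirming that $\nabla^2\mathcal{T}(\cdot,t)\in L^1$ with the right power of $t$ — both routine given the pointwise bounds already recorded.
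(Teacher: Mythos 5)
Your proof is correct, but it takes a genuinely different route from the paper's. The paper fixes $R=|x-y|$ and splits \emph{space} into the region $\Omega=B(x,2R)\cup B(y,2R)$ near the two singularities (where it bounds each kernel separately via the pointwise estimate on $\nabla\mathcal{T}$ and an explicit radial integral ending in $\tan^{-1}$) and the complement (where it applies the mean value theorem together with the geometric observation that $|z-\xi|\geq|z-x|/2$ for $\xi$ on the segment $[x,y]$, then trades $R\,r^2$ for $R^{1/2}r^{5/2}$ inside the integrand). You instead split into \emph{cases on the parameters}, $|x-y|\gtrless\sqrt t$, and in each case use a single global $L^1$ bound: $\|\nabla\mathcal{T}(t)\|_1\leq Ct^{-1/2}$ in the far case and $\|\nabla^2\mathcal{T}(t)\|_1\leq Ct^{-1}$ (via the exact Taylor representation plus Tonelli and translation invariance) in the near case. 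In effect you are interpolating, $\min\bigl(Ct^{-1/2},\,C|x-y|\,t^{-1}\bigr)\leq C|x-y|^{1/2}t^{-3/4}$, and your handling of the sup-versus-integral issue by writing the difference as $\int_0^1\nabla^2\mathcal{T}(x_\theta-z,t)\cdot(x-y)\,\d\theta$ and swapping integrals is clean and fully justified. What your version buys is brevity and robustness: no geometry near the segment, no explicit antiderivatives, and the argument immediately yields the whole family of bounds $C|x-y|^\alpha t^{-(1+\alpha)/2}$ for any $\alpha\in(0,1)$. What the paper's version buys is a single uniform spatial decomposition that does not branch on the relation between $|x-y|$ and $t$, which is the more standard singular-integral-style presentation. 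Both rest on exactly the same inputs, namely \eqref{eqOseenKerEst2} for $m=1,2$.
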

Leray mentions this inequality on page 213, and he frequently uses it in his arguments (in {\it(2.14)}, {\it(2.18)}, the inequality following {\it(3.3)}, and the first inequality on page 219) to show H\"older continuity of functions given by representation formulae involving $\nabla \mathcal{T}$. We provide a proof for the sake of completeness.
\begin{proof}
Let $R\coloneqq |x-y|$ and 
\[
\Omega\coloneqq B(x,2R) \cup B(y,2R).
\]
Since $\Omega \subset B(x,3R)$ we can use \eqref{eqOseenKerEst2} to write 
\begin{equation}\label{T_integral_cts_temp1}
\begin{split}
\MoveEqLeft\int_\Omega | \nabla \mathcal{T} (x-z,t) |\,  \d z\\
&\leq C \int_{B(x,3R)} \frac{\d z}{(|x-z|^2+t)^2} =C\int_{0}^{3R}\frac{r^2}{(r^2+t)^2}\, \d r \\
&\leq C\int_{0}^{3R}\frac{r^2+t}{(r^2+t)^2}\, \d r  = \frac{C}{t^{1/2}} \tan^{-1}\left(\frac{3R}{t^{1/2}}\right)\leq C \frac{R^{1/2} }{t^{3/4}},
\end{split}
\end{equation}
since $\tan^{-1} \alpha \leq \alpha^{1/2}$ for $\alpha>0$. Analogously 
\begin{equation}\label{T_integral_cts_temp2}
\int_\Omega | \nabla \mathcal{T} (y-z,t) |\,  \d z \leq CR^{1/2} t^{-3/4}.
\end{equation}
As for $z\in \RR^3 \setminus \Omega$ note that $|z-x|\geq 2R $. Hence for any point $\xi $ from the line segment $[x,y]$
\[
|z-x| \leq |z-\xi | + |\xi -x | \leq |z-\xi | + R \leq |z-\xi | + |z -x |/2, 
\]
and so $|z-\xi |\geq |z-x|/2$. Thus, using the Mean Value Theorem and the bound on $\nabla^2 \mathcal{T}$ (see \eqref{eqOseenKerEst2}) we obtain
\begin{align*}
\MoveEqLeft[0]
\int_{\RR^3\setminus \Omega} | \nabla \mathcal{T}(x-z,t)-\nabla \mathcal{T}(y-z,t)|\,  \d z = R\int_{\RR^3\setminus \Omega } |\nabla^2 \mathcal{T} (\xi(z) -z,t) |\,  \d z \\
&\leq C R \int_{\RR^3 \setminus \Omega } \frac{\d z}{(|\xi (z) - z |^2 +t)^{5/2}} \leq C R \int_{\RR^3 \setminus B(x,R) } \frac{\d z}{(|x - z |^2 +2t)^{5/2}} \\
&= C R \int_{R}^\infty \frac{r^{2}}{(r^2+2t)^{5/2}}\,\d r  \leq C R^{1/2} \int_{R}^\infty \frac{r^{5/2}}{(r^2+2t)^{5/2}}\,\d r& \\
 &&\mathllap{\leq C R^{1/2} t^{-3/4} \int_{R/\sqrt{t}}^\infty \frac{\rho^{5/2}}{(\rho^2+1)^{5/2}}\,\d \rho \leq C R^{1/2} t^{-3/4},\hspace{5pt}}
\end{align*}
where $\xi(z) \in [x,y]$ for each $z$. This together with \eqref{T_integral_cts_temp1} and \eqref{T_integral_cts_temp2} proves the claim. 
\end{proof}


\section{The Stokes equations}\label{stokes_eq_section}
In this section we consider the Stokes equations,
\begin{eqnarray}
\p_t  u -\Delta u +\nabla p&=& F , \label{eqLinSys1} \\
\mathrm{div}\, u&=&0\qquad \text{ in } \RR^3 \times (0,T),   \label{eqLinSys1_incomp}
\end{eqnarray}
where $T>0$ and $F(x,t)$ is a vector-valued forcing. Leray calls these equations the \emph{infinitely slow motion}. The Stokes equations model a drift-diffusion flow of a incompressible velocity field $u$. Here $p$ denotes the pressure function. One can think of the appearance of the pressure function as providing the extra freedom necessary to  impose the incompressibility constraint  \eqref{eqLinSys1_incomp} for an arbitrary $F$, see the comment after Theorem \ref{classical_sol_if_X_smooth_theorem}. As usual, we denote the initial condition for \eqref{eqLinSys1}, \eqref{eqLinSys1_incomp} by $u_0\in H$, which is understood in the sense of the $L^2$ limit, that is $\| u(t)-u_0 \| \to 0$ as $t \to 0^+$.

In his paper, Leray includes an essentially complete analysis of the Stokes initial value problem in $\RR^3$. The results for this problem are fundamental in the analysis of the Navier--Stokes equations that follows; while the arguments are at times somewhat technical, we therefore present them in full, but with some details in Appendix \ref{appendix_stokes_eq_X}. 

The Stokes equations with the general form of the forcing $F$ can be solved using the representation formulae\footnote{These formulae are stated by Leray in {\it(2.2)} and {\it(2.9)}.},
\begin{eqnarray}
u(t)&= &u_1 (t)+u_2 (t) \coloneqq \Phi (t)\ast u_0  +\int_0^t  \mathcal{T}(t-s) \ast F(s)\, \d s , \label{repr_formula} \\
p(t)&\coloneqq & -(-\Delta) ^{-1} ( \mathrm{div} \, X(t) ) ,\label{repr_formula_p}
\end{eqnarray}
see Theorem \ref{classical_sol_if_X_smooth_theorem} below (in which we focus only on the case of regular $F$). (See \eqref{def_of_inverse_laplace} for the definition of $(-\Delta) ^{-1}$, and recall that $\Phi (t) $ denotes the heat kernel \eqref{def_of_heat_kernel}.) Here the convolution of the matrix function $\mathcal{T}(t-s)$ and a vector function $F(s)$ is understood as a matrix-vector operation, that is
\[
u_{2,i} (x,t) = \int_0^t \int \mathcal{T}_{ij} (x-y, t-s) F_j (y,s) \, \d y \,  \d s.
\]
In this section we study the representation formulae \eqref{repr_formula}, \eqref{repr_formula_p} and certain modified representation formulae (that is \eqref{repr_formula1}, \eqref{repr_formula1_p}) in the case when $F$ is of the special form $F=-(Y\cdot \nabla )Y$ for some vector field $Y$. Of the two cases
\[
F\text{ of  general form} \,\,\quad  \text{ and } \quad \,\, F=-(Y\cdot \nabla )Y \text{ for some }Y
\]
Leray considers\footnote{This corresponds to Sections 11--13.} mainly the former case;  studying the  formula \eqref{repr_formula} given appropriate regularity of $F$, and only mentioning briefly the latter case\footnote{See Lemma 8 in his work.}. Here we treat the two cases separately. We treat the former case briefly, and we focus more on the latter case. An advantage of this approach is that it makes our results for each of the two cases directly applicable in the analysis of the Navier--Stokes equations. Moreover, in this slight refinement of Leray's approach, we construct the solution using the representation formulae, rather than deducing the representation as a property of the solution. As a result, we obtain a simple existence and uniqueness theorem for the Stokes equations (Theorem \ref{sol_distributions_anyX_theorem}).

\subsection{A general forcing $F$}\label{section_forcing_X}
Consider a forcing $F\in C([0,T);L^2)$ and let $u$, $p$ be given by the representation formulae \eqref{repr_formula}, \eqref{repr_formula_p} above.
\begin{lemma}\label{prop_of_up_lemma}
If $F\in C([0,T);L^2)$ then the function $u$ defined above satisfies
\begin{enumerate}[\rm(i)]
\item $u \in C((0,T);L^\infty )$ with\footnote{Leray does not state this bound (we state it as a tool for proving (iii)).},
\begin{equation}\label{linfty_bound}
\| u(t) \|_{\infty } \leq C\int_0^t \frac{\| F(s) \|}{(t-s)^{3/4}} \, \d s +  C \| u_0 \| t^{-3/4}.
\end{equation}
\item $\nabla u \in C((0,T);L^2)$ with\footnote{This is Leray's {\it(2.8)}, {\it(2.12)} and {\it(2.19)}.}
\begin{equation}\label{l2_grad_bound}
\| \nabla u(t) \| \leq C \int_0^t \frac{\| F(s) \|}{(t-s)^{1/2}}\,  \d s+  C\| u_0 \| t^{-1/2}  .
\end{equation}
More generally, if $F, \ldots , \nabla^m F\in C([0,T);L^2)$ then 
\[\nabla^{m+1} u \in C((0,T);L^2)\]
 with
\[
\| \nabla^{m+1} u (t) \| \leq C \int_0^t \frac{\| \nabla^m F(s) \|}{(t-s)^{1/2}}\,  \d s+  C_{m+1}\| u_0 \| t^{-(m+1)/2}.
\]
\item $u\in C([0,T); L^2 )$ with\footnote{This is Section 13 in \cite{Leray_1934}.}
\begin{equation}\label{l2_bound}
\| u (t) \| \leq \int_0^t \| F(s) \|\,  \d s + \| u_0 \| \qquad \text{ for all }t\in (0,T).
\end{equation}
Moreover $u$ satisfies the energy dissipation equality
\begin{equation}\label{en_dissipation}
\| u(t) \|^2 - \| u_0 \|^2 + 2 \int_0^t \| \nabla u(s) \|^2\,  \d s = 2 \int_0^t \int u \cdot F\,\, \d x \, \d s 
\end{equation}
for all $t\in (0,T)$. 
\end{enumerate}
\end{lemma}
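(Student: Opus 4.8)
The plan is to split $u = u_1 + u_2$ as in \eqref{repr_formula}, handle the heat-semigroup part $u_1 = \Phi(t)\ast u_0$ using the standard heat-kernel bounds collected in Appendix \ref{heat_section_appendix}, and treat the forcing part $u_2 = \int_0^t \mathcal{T}(t-s)\ast F(s)\,\d s$ using the integral estimates \eqref{integral_est_T}, the function-space properties \eqref{fcn_spaces_for_T}, and Young's inequality for convolutions \eqref{young_for_convolutions} together with its space-time version (Lemma \ref{young_spacetime_continuity}). For (i): from \eqref{integral_est_T} we have $\|\mathcal{T}(t)\| \le Ct^{-3/4}$, so $\|\mathcal{T}(t-s)\ast F(s)\|_\infty \le \|\mathcal{T}(t-s)\|\,\|F(s)\| \le C(t-s)^{-3/4}\|F(s)\|$ by Cauchy--Schwarz; integrating in $s$ and using the integral Minkowski inequality \eqref{minkowski_integral_version} gives the $u_2$ contribution, while $\|u_1(t)\|_\infty = \|\Phi(t)\ast u_0\|_\infty \le \|\Phi(t)\|\,\|u_0\| \le Ct^{-3/4}\|u_0\|$ gives the $u_1$ term. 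Continuity $u \in C((0,T);L^\infty)$ follows from the $C((0,\infty);L^2)$ continuity of $\mathcal{T}$ in \eqref{fcn_spaces_for_T} (plus continuity of $F$) and dominated convergence, exactly as in the proof of Lemma \ref{young_spacetime_continuity}.

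For (ii): differentiate under the convolution, writing $\nabla u_2(t) = \int_0^t \nabla\mathcal{T}(t-s)\ast F(s)\,\d s$, and apply Young's inequality in the form $\|\nabla\mathcal{T}(t-s)\ast F(s)\| \le \|\nabla\mathcal{T}(t-s)\|_1\,\|F(s)\| \le C(t-s)^{-1/2}\|F(s)\|$ using the second bound in \eqref{integral_est_T}; for $u_1$ use $\|\nabla\Phi(t)\|_1 \le Ct^{-1/2}$ (Appendix \ref{heat_section_appendix}). The higher-derivative estimate is the same argument with $m$ derivatives moved onto $F$ via integration by parts inside the convolution (legitimate since $\nabla^m F \in C([0,T);L^2)$), leaving exactly one derivative on $\mathcal{T}$ and keeping the $(t-s)^{-1/2}$ weight; the $u_1$ term picks up $\|\nabla^{m+1}\Phi(t)\|_1 \le C_{m+1}t^{-(m+1)/2}$. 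Continuity into $C((0,T);L^2)$ again comes from \eqref{fcn_spaces_for_T} and dominated convergence.

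For (iii): the $L^2$ bound \eqref{l2_bound} is the most delicate point, because the naive estimate $\|\mathcal{T}(t-s)\ast F(s)\| \le \|\mathcal{T}(t-s)\|_1\|F(s)\|$ fails — $\mathcal{T}(t) \notin L^1$. Instead one must exploit the structure of the Oseen kernel: $\mathcal{T}_{ij} = \delta_{ij}\Phi + \partial_i\partial_j P$ and $-\Delta P = \Phi$, so that $\mathcal{T}(t-s)\ast F(s) = \Phi(t-s)\ast F(s) + \nabla\otimes\nabla(-\Delta)^{-1}(\Phi(t-s)\ast F(s))$, which by the Plancherel Lemma \ref{CZ_lemma} (a Calder\'on--Zygmund bound, here exact since we are on the Fourier side) has $L^2$ norm controlled by $\|\Phi(t-s)\ast F(s)\| \le \|F(s)\|$; actually the cleanest route is to pass to the Fourier transform, where the Oseen kernel acts as multiplication by $\mathrm{e}^{-|\xi|^2(t-s)}(I - \xi\xi^T/|\xi|^2)$, an operator of norm $\le 1$, so $\|\mathcal{T}(t-s)\ast F(s)\| \le \|F(s)\|$ directly; integrating in $s$ and adding $\|u_1(t)\| = \|\Phi(t)\ast u_0\| \le \|u_0\|$ gives \eqref{l2_bound}, and continuity at $t=0$ follows since $\Phi(t)\ast u_0 \to u_0$ in $L^2$ and the integral term is $O(t)$. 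Finally, for the energy equality \eqref{en_dissipation}: formally this is just testing \eqref{eqLinSys1} against $u$, integrating over $\RR^3\times(0,t)$, using $\mathrm{div}\,u = 0$ to kill the pressure term, and integrating by parts in the Laplacian; to make it rigorous one applies this to the mollified/smooth approximations guaranteed by the regularity in parts (i)--(ii) — note $u \in C((0,T);L^\infty)$, $\nabla u \in C((0,T);L^2)$, so $\partial_t u = \Delta u - \nabla p + F \in C((0,T);H^{-1})$ and the pairing $\frac{\d}{\d s}\|u(s)\|^2 = 2\langle \partial_s u, u\rangle$ is justified on $(0,T)$ — then integrate from $\delta$ to $t$ and send $\delta \to 0^+$ using part (iii) continuity at $0$. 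The main obstacle is exactly this $L^2$ estimate in (iii): one cannot afford to lose any power of $t-s$, so the boundedness of the Oseen multiplier (equivalently the Plancherel Lemma) must be invoked rather than crude kernel estimates, and care is needed to justify differentiating $(-\Delta)^{-1}$ under the convolution / manipulating the Fourier multiplier when $F$ is merely $C([0,T);L^2)$.
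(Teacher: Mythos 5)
Your treatment of (i) and (ii) coincides with the paper's: both parts follow from Lemma \ref{young_spacetime_continuity}, the kernel bounds \eqref{integral_est_T} and the heat-kernel properties collected in Appendix \ref{heat_section_appendix}, with the higher-order case of (ii) obtained by moving $m$ derivatives onto $F$. For (iii) you take a genuinely different route. The paper proves \eqref{l2_bound} and \eqref{en_dissipation} first for smooth, compactly supported $F$, where $(u,p)$ is a classical solution (Theorem \ref{classical_sol_if_X_smooth_theorem}); the pointwise decay of $u_2$, $\Delta u_2$, $\nabla p$ and $\p_t u_2$ as $|x|\to\infty$ then justifies integrating $u\cdot(\p_t u-\Delta u+\nabla p)=u\cdot F$ over $\RR^3\times(\delta,t)$, the resulting energy identity yields $\frac{\d}{\d t}\|u\|\le\|F\|$ and hence \eqref{l2_bound}, and the general case follows by approximating $F$ in $C([0,T'];L^2)$ by smooth compactly supported forcings (Lemma \ref{approx_of_X_lemma}) and passing to the limit; see Appendix \ref{app_prop_iii}. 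You instead obtain \eqref{l2_bound} directly from the fact that the multiplier \eqref{eq_LerProj_Fourier} has operator norm at most $1$ on $L^2$, which is cleaner, produces the sharp constant $1$ at once, and dispenses with the density argument for the bound itself; and you derive \eqref{en_dissipation} by pairing the equation with $u$ in the $H^{1}$--$H^{-1}$ duality on $(\delta,t)$ and sending $\delta\to0^+$ (monotone convergence then gives the a priori non-obvious integrability of $\|\nabla u(s)\|^2$ near $s=0$, exactly as in the paper). Both routes are viable.

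The one step you should not gloss over is the premise of your energy argument: that $u$, $p$ actually satisfy $\p_t u-\Delta u+\nabla p=F$ (say, as an identity in $C((0,T);H^{-1})$) when $F$ is merely in $C([0,T);L^2)$. The paper establishes the equation only for smooth compactly supported $F$ (Theorem \ref{classical_sol_if_X_smooth_theorem}), so ``testing against $u$'' is not yet available at your level of generality. You must either run the same approximation argument as Appendix \ref{app_prop_iii} (in which case your route loses much of its advantage) or verify the equation for general $F$ directly on the Fourier side from the formula \eqref{FT_of_u2}, which does work but needs to be said explicitly, together with the observations that $\xi\cdot\widehat{u}=0$ (so the pressure term pairs to zero even though $p$ itself need not lie in $L^2$) and that $t\mapsto\|u(t)\|^2$ is differentiable on $(0,T)$ with derivative $2\langle\p_t u,u\rangle$ because $u\in C((0,T);H^1)$ and $\p_t u\in C((0,T);H^{-1})$ by parts (ii)--(iii). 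With that step supplied, your proof is complete.
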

The properties (i), (ii) of the lemma follow from Lemma \ref{young_spacetime_continuity}, the integral bounds on the Oseen kernel (see \eqref{integral_est_T}) and the corresponding property of the heat kernel (see (ii), (iii) in Appendix \ref{heat_section_appendix}). As for (iii), assuming first that $F$ is smooth, 
the functions $u$, $p$ constitute a classical solution to the Stokes equations \eqref{eqLinSys1}, \eqref{eqLinSys1_incomp} (which is proved in the following theorem). The required estimates are straightforward for classical solutions. If $F$ is not smooth, one obtains (iii) by a density argument. See Appendix \ref{app_prop_iii} for the detailed proof of (iii).

\begin{theorem}[Classical solution for smooth forcing $F$]\label{classical_sol_if_X_smooth_theorem}$\mbox{}$\\
Suppose that for some $R>0$ 
\begin{equation}\label{regularity_of_X_requirement}
F\in C^\infty (\RR^3 \times [0,T); \RR^3 )\quad \text{ and } \quad \supp \, F(t) \subset B(0,R)\text{ for } t\in [0,T) .
\end{equation}
Then the pair of functions $u$, $p$ given above is a classical solution of the Stokes equations \eqref{eqLinSys1}, \eqref{eqLinSys1_incomp} with $u(0)=u_0$. Moreover $u\in C([0,T); L^2 )$.
\end{theorem}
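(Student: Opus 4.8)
The plan is to verify directly that the functions $u_1(t) := \Phi(t)\ast u_0$ and $u_2(t) := \int_0^t \mathcal{T}(t-s)\ast F(s)\,\d s$ (together with $p$) satisfy the Stokes system pointwise, using the smoothness and compact support of $F$ to justify all differentiations under the integral sign. First I would recall that $u_1$ is the classical solution of the homogeneous heat equation with datum $u_0$ (standard heat-kernel theory, Appendix \ref{heat_section_appendix}), so $\p_t u_1 - \Delta u_1 = 0$ in $\RR^3\times(0,T)$, $u_1 \in C([0,T);L^2)$, and $\mathrm{div}\, u_1 = 0$ since $u_0 \in H$ and the heat semigroup commutes with $\mathrm{div}$. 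It then remains to treat $u_2$.

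For $u_2$, I would first establish the needed regularity: since $F$ is smooth with support in a fixed ball and $\mathcal{T}(t)$ together with all its spatial derivatives is locally integrable with the bounds \eqref{eqOseenKerEst2}, the convolution $\mathcal{T}(t-s)\ast F(s)$ is smooth in $x$, and differentiating under the $\d y$ integral is legitimate; the only delicacy is the $s$-integral near $s = t$, where $\mathcal{T}(t-s)$ has an integrable singularity of order $(t-s)^{-3/2}$ in $L^1_{\mathrm{loc}}$ (in fact $\|\mathcal{T}(t-s)\|_1 \lesssim 1$ is not quite right, but the relevant estimates \eqref{integral_est_T} give $\|\mathcal{T}(t-s)\| \lesssim (t-s)^{-3/4}$ and $\|\nabla\mathcal{T}(t-s)\|_1 \lesssim (t-s)^{-1/2}$, both integrable near $s=t$). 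To compute $\p_t u_2$, I would move one time derivative onto the upper limit and one into the kernel: writing $\mathcal{T}(t-s)\ast F(s)$ and using that $\p_t \mathcal{T} - \Delta \mathcal{T}$ acts as the identity convolution (this is the defining property of the Oseen kernel, equivalently encoded by \eqref{def_Tij}, \eqref{laplace_relation}, namely $\p_t \mathcal{T}_{ij} - \Delta \mathcal{T}_{ij} = \delta_{ij}\delta_0 + \p_i\p_j(\text{something})$ whose divergence part is absorbed into the pressure), one gets
\[
\p_t u_2(x,t) = F(x,t) + \Delta u_2(x,t) - \nabla q(x,t)
\]
for an appropriate $q$, after handling the boundary term at $s=t$ via $\lim_{s\to t^-}\mathcal{T}(t-s)\ast F(s)$; since $\mathcal{T}(\tau)\ast g \to g$ is false in general (only the $\Phi$-part converges to $g$, the $\p_i\p_j P$-part contributes a Riesz-transform-type term), one must be careful, and the cleanest route is to split $\mathcal{T} = \Phi I + \nabla^2 P$ and treat the two pieces separately: the heat part gives a genuine Duhamel formula for the heat equation with forcing $F$, and the pressure part is a time-independent-in-structure correction that is divergence-free up to a gradient.

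For the incompressibility $\mathrm{div}\, u = 0$, I would compute $\mathrm{div}_x(\mathcal{T}(t-s)\ast F(s))$: using \eqref{def_Tij}, $\p_i \mathcal{T}_{ij} = \p_j \Phi + \Delta \p_j P = \p_j\Phi - \p_j \Phi = 0$ by \eqref{laplace_relation}, so each integrand is divergence-free and hence $\mathrm{div}\, u_2 = 0$; combined with $\mathrm{div}\, u_1 = 0$ this gives \eqref{eqLinSys1_incomp}. The identification of $p$ with $-(-\Delta)^{-1}(\mathrm{div}\, F)$ (here Leray's $X = F$) then follows by taking the divergence of \eqref{eqLinSys1}: applying $\mathrm{div}$ kills $\p_t u$ and $\Delta u$ (both divergence-free) and yields $\Delta p = \mathrm{div}\, F$, which by uniqueness of decaying solutions of Poisson's equation gives \eqref{repr_formula_p}; alternatively one reads it off directly from the pressure-part of the kernel computation. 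Finally, $u \in C([0,T);L^2)$: for $u_1$ this is classical heat theory, and for $u_2$ it follows from Lemma \ref{young_spacetime_continuity} with $p = 4/3$, $r = 2$, $q = 2$ (noting $\mathcal{T} \in L^1_{\mathrm{loc}}([0,T);L^{4/3})$ since $\|\mathcal{T}(t)\|_{4/3} \lesssim t^{-3/4}\cdot t^{3/4\cdot(3/2-1)}$... more simply from \eqref{l2_bound} of Lemma \ref{prop_of_up_lemma}(iii), which is proved independently) together with the continuity at $t=0$ that comes from $\|u_2(t)\| \le \int_0^t \|F(s)\|\,\d s \to 0$.

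The main obstacle I anticipate is the rigorous justification of differentiating $u_2$ in $t$ across the singularity of $\mathcal{T}(t-s)$ at $s=t$ and correctly accounting for the resulting boundary term — in particular recognising that the non-heat part of the Oseen kernel does not vanish in the limit but instead produces precisely the pressure gradient $\nabla p$ with $p = -(-\Delta)^{-1}(\mathrm{div}\, F)$. Splitting $\mathcal{T} = \Phi\, I + \nabla^2 P$ and using $-\Delta P = \Phi$ throughout makes this bookkeeping transparent: the $\Phi$-piece is exactly Duhamel for the forced heat equation (standard, and smoothness of $F$ with compact support removes all integrability worries), while the $\nabla^2 P$-piece, after one integration by parts moving a derivative onto $F$, is seen to be a curl-free correction whose contribution to $\p_t u_2 - \Delta u_2$ is $-\nabla p$. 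Everything else is routine given the pointwise kernel bounds \eqref{eqOseenKerEst2} and the compact support hypothesis \eqref{regularity_of_X_requirement}.
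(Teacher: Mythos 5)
Your proposal is correct in outline, but it takes a genuinely different route from the paper. The paper also splits $u=u_1+u_2$ and disposes of $u_1$ via heat-kernel theory, but for $u_2$ it passes entirely to Fourier space: using $\mathcal{F}[\mathcal{T}(t)]=(I-\xi\otimes\xi/|\xi|^2)\mathrm{e}^{-4\pi^2 t|\xi|^2}$, the formula \eqref{repr_formula} becomes an explicit ODE in $t$ for each frequency $\xi$, the relation $2\pi\irm\,\xi\,\widehat p=(\xi\otimes\xi/|\xi|^2)\widehat F$ follows from $\Delta p=\mathrm{div}\,F$, and both \eqref{eqLinSys1} and $\xi\cdot\widehat{u_2}=0$ drop out by differentiating under the integral — there is no singularity at $s=t$ at all, since the Fourier multiplier is bounded by $1$. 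Your physical-space verification is essentially Leray's original route: you correctly identify that the whole difficulty is the boundary term at $s=t$, that $\mathcal{T}(\tau)\ast g\not\to g$, and that the $\nabla^2 P$ piece converges (since $P(\cdot,\tau)\to (4\pi|\cdot|)^{-1}$) to exactly the Riesz-transform term producing $\nabla p$ with $p=-(-\Delta)^{-1}\mathrm{div}\,F$; the splitting $\mathcal{T}=\Phi I+\nabla^2 P$ together with $\p_t P=-\Phi$ and one integration by parts onto $F$ makes this rigorous. What the Fourier approach buys is the complete elimination of this bookkeeping; what your approach buys is that it works without Plancherel and makes the emergence of the Leray projection visible in physical space.

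Two small repairs are needed. First, your fallback of quoting \eqref{l2_bound} from Lemma \ref{prop_of_up_lemma}(iii) for the continuity $u_2\in C([0,T);L^2)$ is circular in this paper: (iii) is itself deduced from Theorem \ref{classical_sol_if_X_smooth_theorem}. Second, your exponent choice in Lemma \ref{young_spacetime_continuity} is off: $p=4/3$, $r=2$ gives $1/q=3/4+1/2-1=1/4$, i.e.\ $q=4$, not $2$ (and $\mathcal{T}(t)\notin L^1$, so the pairing $(1,2)\to 2$ is unavailable). The correct and easy pairing exploits the compact support of $F$: take $\mathcal{T}\in L^1_{\loc}([0,T);L^2)$ with $\|\mathcal{T}(t)\|\leq Ct^{-3/4}$ from \eqref{integral_est_T} and $F(s)\in L^1$ uniformly, so $1/q=1/2+1-1=1/2$; this yields $u_2\in C([0,T);L^2)$ with $u_2(0)=0$ without any circularity.
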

In fact the functions $u$, $p$ constitute a unique solution in a much wider class, namely the class of distributional solutions $u$, $p$ such that $u\in C([0,T);L^2)$ and $p\in L^1_{\loc} (\RR^n \times [0,T))$, see Theorem \ref{uniqueness_stokes} in Section \ref{distr_sol_SE_section}.

Theorem \ref{classical_sol_if_X_smooth_theorem} follows by showing that $u_1$, $u_2$ satisfy the equations\footnote{The study of $u_1$ and $u_2$ corresponds to Leray's Sections 11 and 12, respectively.}
\begin{equation}\label{the_decomposition_u1_u2}
\left\{
\begin{array}{rl}
\p_t u_1 -\Delta u_1 &=0,\\
\mathrm{div}\, u_1 &=0,\\
 u_1 (0)&=u_0,
\end{array}  
\right.\hspace{2cm} 
\left\{
\begin{array}{rl}
\p_t u_2 -\Delta u_2 +\nabla p  &=F,\\
\mathrm{div}\, u_2 &=0,\\
 u_2 (0)&=0,
\end{array}  
\right.
\end{equation}
The part of the claim for $u_1$ follows directly from the analysis of the heat equation, see Appendix \ref{heat_section_appendix}. As for $u_2$, using the Fourier transform one can see that it is enough to prove the claim in Fourier space. It therefore suffices to use the Fourier transform of the Oseen kernel,
\begin{equation}\label{eq_LerProj_Fourier}
\mathcal{F} [\mathcal{T}(t)]=\left(I- \frac{\xi\otimes\xi}{|\xi|^2} \right) \mathrm{e}^{-4\pi^2 t|\xi|^2}    \quad t>0,
\end{equation}
obtained from \eqref{def_Tij} and \eqref{laplace_relation}, where $I$ denotes the identity matrix and $\xi \otimes \xi $ denotes the $3\times 3$ matrix with components $\xi_i \xi_j$. An interested reader is referred to Appendix \ref{app_stokes_pf_of_thm} for the detailed proof.

At this point it is interesting to note that the Stokes equations are in fact a nonhomogeneous heat equation for $u$ under the incompressibility constraint $\mathrm{div} \, u=0$. Since $\Delta p = \mathrm{div}\, F$ we see that $p$ appearing in the Stokes equations acts as a modification of the forcing $F$ to make it divergence free (that is $\mathrm{div}\, (F-\nabla p )=0$). Since any solution of a nonhomogeneous heat equation with divergence-free forcing and initial data remains divergence free for positive times, we see that the role of $p$ in the Stokes equations is to guarantee that $u(t)$ remains divergence free for $t>0$. One can also think of it as the projection of $X(t)$ onto the space of weakly divergence-free vector fields (which is often called the \emph{Leray projection}).
 
Moreover, from \eqref{repr_formula_p} we see that the Fourier transform of the modified forcing $F-\nabla p$ is
\begin{equation*}
\left( I - \frac{ \xi\otimes \xi}{|\xi |^2} \right) \widehat{F}(\xi,t).
\end{equation*}
 Thus we see from \re{eq_LerProj_Fourier} that the Oseen kernel is precisely the modification of the heat kernel that accounts for this modification of the forcing. This is particularly clear from a calculation in the Fourier space in Appendix \ref{app_stokes_pf_of_thm}.

\subsection{A forcing of the form $F=-(Y\cdot \nabla )Y$}\label{distr_sol_SE_section}
Here we assume that $F$ is of a particular form, namely
\begin{equation}\label{form_of_X}
F = -(Y \cdot \nabla ) Y
\end{equation}
(in components $F_k = Y_i \p_i Y_k$) for some weakly divergence-free $Y \in C((0,T),L^\infty )$ such that $ \| Y(t) \|_\infty $ remains bounded as $t \to 0^+$. Note that since the derivatives $\p_i Y_k$ are not well-defined we understand \eqref{form_of_X} in a formal sense and we will consider the Stokes equations \eqref{eqLinSys1}, \eqref{eqLinSys1_incomp} in the sense of distributions. More precisely, we say that $u,p$ is a {\it distributional solution} of \eqref{eqLinSys1}, \eqref{eqLinSys1_incomp} with $F$ of the form \eqref{form_of_X} if $u(t)$ is weakly divergence free for $t\in (0,T)$ and
\begin{equation}\label{eqLinSys_distributions}
\int u_0 \cdot \phi (0) \,\d x+\int_0^T  \int \left( u \cdot (\phi _t + \Delta \phi  )+ p\, \mathrm{div} \, \phi  \right)  =  \int_0^T \int Y \cdot (Y\cdot \nabla ) \phi  
\end{equation}
for $\phi \in C_0^\infty (\RR^3 \times [0,T);\RR^3)$. We will also consider a modified form of the representation formulae \eqref{repr_formula}, \eqref{repr_formula_p} that accounts for this special form of the forcing,
\begin{equation}\label{repr_formula1}
u(t) =  u_{1} (t) + u_{2} (t) \coloneqq  \Phi (t)\ast u_0  +\int_0^t  \nabla  \mathcal{T}(t-s) \ast \left[ Y(s) Y (s) \right] \,  \d s ,
\end{equation}
\begin{equation}\label{repr_formula1_p}
p(t)\coloneqq  \p_k \p_i (-\Delta )^{-1} (Y_i (t) Y_k(t)) ,
\end{equation}
where we write
\begin{equation}\label{notation_oseen_ker_conv}
\left(  \nabla  \mathcal{T}(t-s) \ast \left[ Y(s) Y (s) \right] \right)_j (x) \coloneqq \int \p_i \mathcal{T}_{jk} (x-y,t-s) Y_i (y,s) Y_k (y,s) \, \d y .
\end{equation}
Clearly, such $u$, $p$ are well-defined since no derivatives fall on $Y$ in these modified representation formulae. If $Y$ is regular (in the sense of \eqref{regularity_of_X_requirement}) then the above definition of $u$, $p$ is equivalent to \eqref{repr_formula}, \eqref{repr_formula_p}, and so Theorem \ref{classical_sol_if_X_smooth_theorem} implies that such $u$, $p$ constitute a classical solution of the Stokes equations (and hence also a distributional solution). In this section we show that $u$, $p$ constructed above constitute the unique distributional solution in a wide class if we have $Y\in C([0,T);L^2)$ in addition to the assumptions on $Y$ mentioned in \eqref{form_of_X}, see Theorem \ref{sol_distributions_anyX_theorem} below.\footnote{This corresponds to Leray's Lemma 8, in which he states that the representation formula \eqref{repr_formula1} is a property of the solution.}

For this purpose we derive several properties of such $u$, $p$. In view of Lemma \ref{prop_of_up_lemma}, we now prove refined bounds on $\| u(t) \|_\infty$ and $\| u (t) \|$, and show that $u\in \mathcal{H}^{1/2} ((0,T))$ and $\nabla u \in C((0,T);L^\infty )$.
\begin{lemma}[Properties of $u$, $p$ given by (\ref{repr_formula1}-\ref{repr_formula1_p})]\label{additional_prop_up_lemma}
Let $u_0\in L^2$ and $u,p$ given by \re{repr_formula1} and \re{repr_formula1_p} for some $Y$ with the properties described following \eqref{form_of_X}. 
\begin{enumerate}[{\rm(i)}]
\item If $u_0$ is bounded then $u\in C((0,T),L^\infty )$ with\footnote{Leray does not state this bound explicitly, but he uses it in later sections during the study of the Navier--Stokes equations (for instance in {\it(3.5)}, at the bottom of p. 222, and at the top of p. 232).} 
\[
\| u(t) \|_\infty \leq C \int_0^t \frac{\| Y(s) \|_\infty^2}{\sqrt{t-s}} \,\d s + \| u_0 \|_\infty .
\]
Moreover $u\in C([0,T),L^\infty )$ if $u_0 \in L^\infty$ is uniformly continuous.
\item $u\in \mathcal{H}^{1/2} ((0,T))$ and the corresponding H\"older constant $C_0(t)$ satisfies\footnote{Leray shows a similar property of $\nabla u$ in the case of $F$ of general form (which he obtains in {\it(2.18)} and as a consequence of {\it(2.7)} and {\it(2.8)}). We translate this result to the case of $F$ of the form $F=-(Y\cdot \nabla ) Y$.}
\[
C_0(t) \leq c_0 \int_0^t \frac{\| Y(s) \|_{\infty}^2}{(t-s)^{3/4}} \,\d s + c_0 \frac{\| u_0 \|}{t}
\]
for some $c_0>0$.

More generally, if $Y$, $\nabla Y $, ... , $\nabla^m Y\in C((0,T),L^\infty )$ with the respective $L^\infty$ norms bounded as $t \to 0^+$ then $\nabla^m u \in \mathcal{H}^{1/2} ((0,T))$ and the corresponding constant $C_m(t)$ satisfies
\[
C_m (t) \leq c_m \sum_{\alpha+\beta = m} \int_0^t \frac{\| \nabla^\alpha Y(s) \|_{\infty }\| \nabla^\beta Y(s) \|_{\infty }}{(t-s)^{3/4}} \,\d s + c_m \frac{\| u_0 \|}{t^{(m+2)/2}} .
\]
\item If additionally $Y\in C([0,T ); L^2)$ then $p\in C([0,T);L^2)$ and $u\in C([0,T); L^2)$ with\footnote{Leray does not state this property, but he uses it in showing existence of strong solutions to the Navier--Stokes equations (in the inequality he states at the bottom of page 223). We will apply it in a similar way (see Theorem \ref{local_exist_strong_thm}) and also it in the existence and uniqueness theorem for the Stokes equations (Theorem \ref{sol_distributions_anyX_theorem}).}
\begin{equation}\label{l2_continuity_X_special_form}
\| u (t) \| \leq C \int_0^t \frac{\| Y(s) \|_{\infty } \| Y(s) \|}{\sqrt{t-s}} \,\d s + \| u_0 \| .
\end{equation}
Moreover if $T'<T$ and $\{ Y^{(n)} \}$ is a sequence such that\ $ Y^{(n)} \to Y$ in $C([0,T'];L^2)$ and  $\max_{t\in [0,T']} \| Y^{(n)} (t) \|_\infty \leq  \max_{t\in [0,T']} \| Y (t)\|_\infty  $ then
\[
u^{(n)} \to u \quad \text{ and }\quad  p^{(n)} \to p \quad \text{ in } \quad C([0,T'];L^2).
\]
\item If additionally $Y\in  \mathcal{H}^{1/2} ((0,T))$ with the corresponding constant $C_0(t)$ bounded as $t \to 0^+$ then $\nabla u \in C((0,T);L^\infty )$ with\footnote{This corresponds to Leray's property of $\nabla u$ (which he obtains in {\it(2.7)} and {\it(2.20)}).}
\begin{equation}\label{nabla_of_u_Linfty_bound}
\| \nabla u(t) \|_{\infty} \leq C\int_0^t \frac{\| Y(s) \|_{\infty } C_0 (s) }{(t-s)^{3/4}} \,\d s +  C \frac{  \| u_0 \|}{t^{5/4}}  .
\end{equation}
More generally if for every multi-index $\alpha$ with $|\alpha | \leq m-1$, $D^\alpha Y \in \mathcal{H}^{1/2}((0,T))\cap  C((0,T);L^\infty )$ with the corresponding H\"older constant $C_\alpha (t)$ and $\| D^\alpha Y (t) \|_\infty$ bounded as $t \to 0^+$ then $\nabla^{m} u \in C((0,T);L^\infty )$ with
\begin{multline*}
\| \nabla^{m} u (t) \|_{\infty } \leq C_m \int_0^t \frac{\sum_{\alpha+\beta =m-1} \| \nabla^\alpha Y(s) \|_{\infty }C_\beta (s)}{(t-s)^{3/4}} \,\d s\\
 +  C_{m}t^{-\frac{m}{2}-\frac{3}{4}}\|u_0\|
\end{multline*}
\end{enumerate}
\end{lemma}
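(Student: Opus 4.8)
The plan is to treat the four items by the same mechanism: differentiate the representation formula \eqref{repr_formula1} under the integral sign, bound the resulting kernel convolutions using the pointwise Oseen estimates \eqref{eqOseenKerEst2}, the integral estimates \eqref{integral_est_T}, and the $1/2$-Hölder-in-$L^1$ bound from Lemma \ref{holder_cts_nablaT_lemma}, and finally invoke Lemma \ref{young_spacetime_continuity} (and the corresponding heat-kernel facts from Appendix \ref{heat_section_appendix}) to get the asserted continuity in time with values in the relevant space. Throughout, $u_1(t)=\Phi(t)\ast u_0$ is handled by standard heat-kernel estimates ($\|\nabla^m\Phi(t)\|_1\le C_m t^{-m/2}$, $\|\nabla^m\Phi(t)\|\le C_m t^{-m/2-3/4}$, and uniform continuity of $\Phi(t)\ast u_0$ when $u_0$ is uniformly continuous), which produce exactly the $u_0$-dependent terms $C\|u_0\|_\infty$, $C\|u_0\|t^{-(m+2)/2}$, $C\|u_0\|t^{-m/2-3/4}$, so I will focus on the $u_2$ part.

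For (i): since $\|\nabla\mathcal{T}(t)\|_1\le Ct^{-1/2}$ by \eqref{integral_est_T}, Young's inequality \eqref{young_for_convolutions} gives $\|(\nabla\mathcal{T}(t-s)\ast[Y(s)Y(s)])\|_\infty\le Ct^{-1/2}\|Y(s)\|_\infty^2$ pointwise in $s$, and integrating plus applying Lemma \ref{young_spacetime_continuity} (with $p=1$, $r=\infty$, $q=\infty$, $A=\nabla\mathcal{T}$, $B=YY$) yields both the bound and membership in $C((0,T),L^\infty)$; continuity up to $t=0$ when $u_0$ is uniformly continuous follows from the corresponding property of the heat semigroup. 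For (iii): the pressure bound is immediate from Lemma \ref{CZ_lemma} applied to $Y_iY_k\in C([0,T);L^2)$ (note $\|Y_iY_k(t)\|\le\|Y(t)\|_\infty\|Y(t)\|$, hence continuity in $L^2$ up to $0$), and the $L^2$ bound \eqref{l2_continuity_X_special_form} for $u$ again comes from $\|\nabla\mathcal{T}(t-s)\|_1\le C(t-s)^{-1/2}$, Young's inequality with $p=1,r=2,q=2$, and Lemma \ref{young_spacetime_continuity}; the convergence statement $u^{(n)}\to u$, $p^{(n)}\to p$ in $C([0,T'];L^2)$ follows by writing $Y^{(n)}Y^{(n)}-YY=(Y^{(n)}-Y)Y^{(n)}+Y(Y^{(n)}-Y)$, bounding its $L^2$ norm by $(\|Y^{(n)}(s)\|_\infty+\|Y(s)\|_\infty)\|(Y^{(n)}-Y)(s)\|$, using the uniform $L^\infty$ hypothesis to control the prefactor, and feeding this through the same convolution estimates.

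For (iv): differentiate once more, so $\nabla u_2(t)=\int_0^t\nabla^2\mathcal{T}(t-s)\ast[Y(s)Y(s)]\,\d s$, but now $\|\nabla^2\mathcal{T}(t-s)\|_1$ diverges like $(t-s)^{-1}$, which is not integrable, so the naive estimate fails — this is the main obstacle. The fix, following Leray, is to use the Hölder regularity of $Y$: write the convolution as $\int\nabla^2\mathcal{T}(x-y,t-s)\big([Y(s)Y(s)](y)-[Y(s)Y(s)](x)\big)\,\d y$ (legitimate because $\int\nabla^2\mathcal{T}(z,\tau)\,\d z=0$ for each $\tau$, as each entry of $\nabla^2\mathcal{T}$ is a pure second derivative of an $L^1$ function, or equivalently by the cancellation visible in \eqref{eqOseen1}), and then exploit $\big|[YY](y)-[YY](x)\big|\le C\|Y(s)\|_\infty C_0(s)|x-y|^{1/2}$ together with $\int|\nabla^2\mathcal{T}(z,\tau)|\,|z|^{1/2}\,\d z\le C\tau^{-3/4}$ (which comes from \eqref{eqOseenKerEst2}: $\int(|z|^2+\tau)^{-5/2}|z|^{1/2}\d z\le C\tau^{-3/4}$, convergent at both ends). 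This gives $\|\nabla u_2(t)\|_\infty\le C\int_0^t(t-s)^{-3/4}\|Y(s)\|_\infty C_0(s)\,\d s$, and since the exponent $3/4<1$ this is integrable; Lemma \ref{young_spacetime_continuity}-type reasoning (or a direct continuity argument using dominated convergence and the continuity of $s\mapsto Y(s)$ in $L^\infty$ and of $C_0$) gives $\nabla u\in C((0,T);L^\infty)$.

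For (ii): the claim that $u\in\mathcal{H}^{1/2}((0,T))$ with the stated constant is proved by estimating $|u_2(x,t)-u_2(y,t)|\le\int_0^t\big|(\nabla\mathcal{T}(t-s)\ast[YY](s))(x)-(\nabla\mathcal{T}(t-s)\ast[YY](s))(y)\big|\,\d s$ and bounding the inner difference by $\int|\nabla\mathcal{T}(x-z,t-s)-\nabla\mathcal{T}(y-z,t-s)|\,|[YY](z,s)|\,\d z\le C|x-y|^{1/2}(t-s)^{-3/4}\|Y(s)\|_\infty^2$ via Lemma \ref{holder_cts_nablaT_lemma}; integrating in $s$ (again $3/4<1$) produces the first term of the bound on $C_0(t)$, and the $\|u_0\|t^{-1}$ term comes from $|u_1(x,t)-u_1(y,t)|\le\|\nabla\Phi(t)\|\,\|u_0\|\,|x-y|\le Ct^{-5/4}\|u_0\||x-y|\le Ct^{-1}\|u_0\||x-y|^{1/2}$ on the (relevant) range $|x-y|\lesssim t^{1/2}$, with the trivial bound $|u_1(x,t)-u_1(y,t)|\le 2\|u_1(t)\|_\infty\le Ct^{-3/4}\|u_0\|$ handling $|x-y|\gtrsim t^{1/2}$. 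The higher-order statements in (ii) and (iv) follow by induction: differentiating \eqref{repr_formula1} $m$ times and distributing derivatives onto the two copies of $Y$ by the Leibniz rule produces sums over $\alpha+\beta=m$ (resp.\ $m-1$) of terms $\nabla\mathcal{T}\ast[\nabla^\alpha Y\,\nabla^\beta Y]$ (resp.\ $\nabla^2\mathcal{T}\ast[\nabla^\alpha Y\,\nabla^\beta Y]$), each estimated exactly as above, with the worst $u_0$-contribution $\|\nabla^{m+1}\Phi(t)\|\,\|u_0\|$ or $\|\nabla^m\Phi(t)\|_1$-type factor giving the stated powers of $t$.
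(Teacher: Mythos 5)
Your proposal is correct and follows the paper's strategy almost exactly: the same representation-formula decomposition $u=u_1+u_2$, the same use of Young's inequality and Lemma \ref{young_spacetime_continuity} for (i) and (iii), the Plancherel Lemma for $p$, and Lemma \ref{holder_cts_nablaT_lemma} for the $u_2$ part of (ii), with the higher-order cases handled by the same Leibniz-rule reduction to $\alpha+\beta=m$ (resp.\ $m-1$).

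The one genuinely different ingredient is your cancellation in (iv). The paper exploits the weak divergence-freeness of $Y$, namely $\int \p_{l}\p_i\mathcal{T}_{jk}(x-y,t)\,Y_i(y,s)\,\d y=0$ (sum over $i$), which lets one subtract only $Y_k(x,s)$ from the second factor while leaving $Y_i(y,s)$ intact. You instead use the vanishing mean $\int \nabla^2\mathcal{T}(z,\tau)\,\d z=0$ to subtract the full product $[YY](x)$; this is also valid and does not even need $Y$ to be divergence free for this step, though your justification should be phrased as ``$\nabla^2\mathcal{T}=\nabla(\nabla\mathcal{T})$ with $\nabla\mathcal{T}(\cdot,\tau)\in L^1$ decaying like $|z|^{-4}$, so the boundary terms vanish'' rather than as a second derivative of an $L^1$ function ($\mathcal{T}$ itself is only borderline integrable). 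Either way one lands on the same integral $\int|z|^{1/2}(|z|^2+\tau)^{-5/2}\,\d z=C\tau^{-3/4}$. Two further minor variations: for the $u_1$ contribution in (ii) you replace the paper's appeal to Morrey's inequality by an elementary two-case interpolation between the Lipschitz bound $t^{-5/4}\|u_0\|\,|x-y|$ and the sup bound $t^{-3/4}\|u_0\|$, which gives the same constant $c_0\|u_0\|/t$; and for the continuity $\nabla u_2\in C((0,T);L^\infty)$ in (iv) you only gesture at the argument --- note that Lemma \ref{young_spacetime_continuity} does not apply directly there (the kernel $\nabla^2\mathcal{T}(\tau)$ is not in $L^1$), so one really does need the time-splitting argument carried out in Appendix \ref{app_stokes_continuity}, isolating a short interval near $s=t$ where the $(t-s)^{-3/4}$ tail is uniformly small and using uniform continuity of $Y_iY_k$ elsewhere.
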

\begin{proof}
Since $Y\in C((0,T);L^\infty )$ with $\| Y(t) \|_\infty $ bounded as $t \to 0^+$ the same is true of $Y_i Y_k $ for each pair $i,k$ and so claim (i) follows from Lemma \ref{young_spacetime_continuity} and from the properties of the heat kernel (see Appendix \ref{heat_section_appendix}; note also that Lemma \ref{Linfty_conv_heat_eq} verifies the comment in (i)). In a similar way one obtains (iii), where the claim for $p$ follows directly from the Plancherel Lemma (Lemma \ref{CZ_lemma}) and the limiting property follows from \eqref{l2_continuity_X_special_form} and the Plancherel Lemma. Property (ii) is a consequence of the H\"older continuity of the heat kernel (see (v) in Appendix \ref{heat_section_appendix}) and the H\"older continuity of $\nabla \mathcal{T}$ in $L^1$ (see Lemma \ref{holder_cts_nablaT_lemma}). Indeed we have
\begin{multline*}
|u_2(x,t)-u_2(y,t)|\\\leq \int_0^t \int \left| \nabla \mathcal{T} (x-z,t-s) - \nabla \mathcal{T}(y-z,t-s) \right| \,\d z \| Y(s) \|_{\infty}^2 \,\d s \\\ 
\leq  C |x-y|^{1/2} \int_0^t \frac{\| Y(s) \|_{\infty}^2}{(t-s)^{3/4}} \,\d s .
\end{multline*}
As for property (iv) note that the bound on $\nabla u_1$ in \eqref{nabla_of_u_Linfty_bound} (that is $\| \nabla u_1 (t) \|_\infty \leq C \| u_0 \| t^{-5/4}$) and the continuity $\nabla  u_1 \in C((0,T);L^\infty  )$ follow from properties of the heat kernel (see (iv) in Appendix \ref{heat_section_appendix}). As for $u_2$, the bound on $\nabla u_2$ in \eqref{nabla_of_u_Linfty_bound} can be shown using the following trick. Recalling that $Y$ is weakly divergence free, we obtain 
\[
 \int \p_{li}  \mathcal{T}_{kj} (x-y,t) Y_i(y,s) \,\d y=0
\]
for all $j,k,l$ and $s,t \in (0,T)$, $x\in\RR^3$, where the integral exists due to \eqref{eqOseenKerEst2}. Hence, for each $j,l$ and $t\in (0,T)$
\begin{multline}\label{temp_prop_up}
\p_l u_{2,j} (x,t) = - \int_0^{t} \int \p_{li} \mathcal{T}_{jk} (x-y,t-s)  Y_i (y,s) Y_k (y,s)\,  \d y \, \d s \\
= - \int_{0}^{t} \int \p_{li} \mathcal{T}_{jk} (x-y,t-s)  Y_i (y,s)  \left[ Y_k (y,s) - Y_k(x,s) \right]\, \d y \, \d s,
\end{multline}
and so using the bound $|\nabla^2 \mathcal{T} (x,t)|\leq C(|x|^2+t)^{-5/2}$ (see \eqref{eqOseenKerEst2}) and the assumption $Y\in \mathcal{H}^{1/2} ((0,T))$ we obtain  
\[
\begin{split}
|\nabla u_2 (x,t) | &\leq C \int_0^{t} \int \frac{C_0 (s)\| Y (s) \|_{\infty }  |x-y|^{1/2}}{(|x-y|^2+(t-s))^{5/2}}  \,\d y \, \d s\\
&=  C \int_{0}^{t} \frac{\| Y(s) \|_{\infty } C_0 (s)}{(t-s)^{3/4}} \,\d s,
\end{split} \]
where we used the fact $\int |y|^{1/2}/(|y|^2 +t)^{5/2} \,\d y = C t^{-3/4}$. Thus \eqref{nabla_of_u_Linfty_bound} follows. One can also employ this trick to show the continuity $\nabla u_2 \in C((0,T);L^\infty )$, see Appendix \ref{app_stokes_continuity} for the details. 

Finally, claims (ii) and (iv) for higher derivatives $\nabla^m u$ follow in a similar way. Indeed, the claims corresponding to $u_1$ follow from the properties of the heat kernel (see (iv) in Appendix \ref{heat_section_appendix}) and, as for $u_2$, we write any $D^\gamma u_{2,j}$ with $|\gamma |=m$ as the sum of the integrals 
\[
\int_0^t \int \p_l \p_i \mathcal{T}_{jk} (x-y,t-s) Y_{\alpha ,i} (y,s)  Y_{\beta ,k} (y,s)\, \d y\, \d s
\]
where $Y_\alpha$, $Y_\beta$ denote appropriate derivatives of $Y$ of orders $\alpha$, $\beta$, respectively, where $\alpha+\beta =m-1$, $l,j=1,2,3$, and we repeat the reasoning above.
\end{proof}

\begin{corollary}\label{additional_prop_up_corollary}
The results of the above lemma extend to the case $F_i=-\p_k (Y_i Z_k)$ for some weakly divergence-free $Y,Z\in C((0,T),L^\infty )$ with the $L^\infty$ norms bounded as $t \to 0^+$. In particular, if such $Y,Z$ satisfy also $Y,Z \in C([0,T),L^2 )$ and $v$ is given by
\eqnb\label{repr_formula1_two_vectors}
v(t) \coloneqq  \Phi (t)\ast u_0  +\int_0^t \nabla \mathcal{T} (t-s) \ast \left[ Y(s) Z (s) \right]  \, \d s 
\eqne
then $v\in C((0,T),L^\infty ) \cap C([0,T),L^2 )$ with
\[
\begin{split}
\| v(t) \|_\infty &\leq C \int_0^t \frac{\| Y(s) \|_\infty \| Z(s) \|_\infty }{\sqrt{t-s}} \,\d s + \| u_0 \|_\infty ,\\
\| v (t) \| &\leq C \int_0^t \frac{\| Y(s) \|_{\infty } \| Z(s) \|}{\sqrt{t-s}} \,\d s + \| u_0 \| .
\end{split}
\]
\end{corollary}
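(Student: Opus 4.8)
The plan is to recognise that the corollary is proved by re-running the proof of Lemma~\ref{additional_prop_up_lemma} with the scalar product $Y_i Y_k$ replaced throughout by $Y_i Z_k$. That proof uses no property of the quantity $Y_i Y_k$ beyond: (a) the pointwise bound $|Y_i(x,s) Y_k(x,s)| \le \|Y(s)\|_\infty^2$; (b) the $L^2$ bound $\|Y_i(s) Y_k(s)\| \le \|Y(s)\|_\infty \|Y(s)\|$; (c) the H\"older continuity in $x$ of the two factors (for parts (ii) and (iv)); and (d) the weak divergence-freeness of the copy of $Y$ contracted against the extra Oseen-kernel derivative $\partial_i$ in the ``trick'' \eqref{temp_prop_up} used to prove part (iv). All four ingredients survive the substitution: by H\"older's inequality $|Y_i(x,s) Z_k(x,s)| \le \|Y(s)\|_\infty \|Z(s)\|_\infty$ pointwise and $\|Y_i(s) Z_k(s)\| \le \|Y(s)\|_\infty \|Z(s)\|$ in $L^2$, while both $Y$ and $Z$ are assumed weakly divergence free. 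One therefore copies each estimate of the lemma verbatim, replacing every $\|Y\|_\infty^2$ by $\|Y\|_\infty\|Z\|_\infty$, every $\|Y\|_\infty\|Y\|$ by $\|Y\|_\infty\|Z\|$, and so on, and the continuity assertions follow from the same appeals to Lemma~\ref{young_spacetime_continuity}, Lemma~\ref{holder_cts_nablaT_lemma} and the heat-kernel properties of Appendix~\ref{heat_section_appendix}.

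For the two displayed bounds in the ``in particular'' part there is no need to identify the forcing: one estimates \eqref{repr_formula1_two_vectors} directly. Writing $v = \Phi(t)\ast u_0 + \int_0^t \nabla\mathcal{T}(t-s)\ast[Y(s)Z(s)]\,\d s$ and applying Lemma~\ref{young_spacetime_continuity} with $p=1$, $r=\infty$, $q=\infty$ (so that $v\in C((0,T),L^\infty)$), the integral estimate $\|\nabla\mathcal{T}(t)\|_1 \le C t^{-1/2}$ from \eqref{integral_est_T}, the pointwise bound $|Y_i Z_k| \le \|Y(s)\|_\infty\|Z(s)\|_\infty$, and $\|\Phi(t)\ast u_0\|_\infty \le \|u_0\|_\infty$, gives the first inequality. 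The second follows the same way with $p=1$, $r=2$, $q=2$ (so that $v\in C([0,T),L^2)$), using $\|Y_i(s) Z_k(s)\| \le \|Y(s)\|_\infty\|Z(s)\|$ (componentwise H\"older, then summing over $i,k$) and $\|\Phi(t)\ast u_0\| \le \|u_0\|$.

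The only point that needs care is the analogue of part (iv) (the bound on $\|\nabla v(t)\|_\infty$). There, the identity $\int \partial_{li}\mathcal{T}_{kj}(x-y,t) W_i(y,s)\,\d y = 0$ that permits inserting the H\"older difference requires the field $W$ contracted with $\partial_i$ to be weakly divergence free, so one must track which of $Y$, $Z$ ends up in that slot after the integration by parts producing \eqref{repr_formula1_two_vectors}. Since both fields are divergence free this causes no trouble: one subtracts the value at $x$ of the \emph{other} factor and requires that factor to lie in $\mathcal{H}^{1/2}((0,T))$; the remainder of the estimate, using $|\nabla^2\mathcal{T}(x,t)| \le C(|x|^2+t)^{-5/2}$ from \eqref{eqOseenKerEst2} and $\int |y|^{1/2}(|y|^2+t)^{-5/2}\,\d y = C t^{-3/4}$, is unchanged. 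This bookkeeping (and the analogous relabelling for the higher-derivative versions) is the main, and essentially the only, obstacle; everything else is a transcription of the proof of Lemma~\ref{additional_prop_up_lemma}.
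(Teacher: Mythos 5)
Your proposal is correct and matches the paper's (implicit) argument: the paper states this corollary without proof precisely because the proof of Lemma \ref{additional_prop_up_lemma} transfers verbatim once $Y_iY_k$ is replaced by $Y_iZ_k$, using only the H\"older bounds $|Y_iZ_k|\leq \|Y\|_\infty\|Z\|_\infty$, $\|Y_iZ_k\|\leq\|Y\|_\infty\|Z\|$ and the divergence-freeness of the factor contracted against $\p_i$ in the trick \eqref{temp_prop_up}. Your bookkeeping remark about which factor supplies the cancellation and which supplies the H\"older difference in the analogue of part (iv) is exactly the right point to flag.
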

The existence and uniqueness theorem for distributional solutions to the Stokes equations is based on the following uniqueness result.
\begin{theorem}[Uniqueness of distributional solutions to the Stokes equations\footnote{This is a version of the argument from Section 14 of \cite{Leray_1934}.}]\label{uniqueness_stokes}
If $u,p$ are such that $u\in C([0,T); L^2)$ is weakly divergence-free, $p\in L^1_{\loc} (\RR^3 \times [0,T))$, and 
\begin{equation}\label{weak_homogeneous_solution}
\int_0^T \int \left( (\phi_t + \Delta \phi ) \cdot u + p \, \mathrm{div}\, \phi \right) \,\d x \, \d t =0
\end{equation}
for all $\phi \in C_0^\infty (\RR^3 \times [0,T) ; \RR^3)$, then $u\equiv 0$.
\end{theorem}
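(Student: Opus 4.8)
\emph{Proof proposal.} The plan is a duality argument. Fix $t_0\in(0,T)$; it is enough to prove that
\[
\int u(t_0)\cdot\mathrm{curl}\,\rho\,\d x=0\qquad\text{for every }\rho\in C_0^\infty(\RR^3;\RR^3),
\]
since this says $\mathrm{curl}\,u(t_0)=0$ in the sense of distributions, which together with $\mathrm{div}\,u(t_0)=0$ (valid as $u(t_0)\in H$) forces $\xi\times\widehat{u(t_0)}=0$ and $\xi\cdot\widehat{u(t_0)}=0$ a.e., hence $\widehat{u(t_0)}=0$ and $u(t_0)=0$; the continuity $u\in C([0,T);L^2)$ then extends this to all $t_0\in[0,T)$ (with $u(0)=\lim_{t\to 0^+}u(t)=0$). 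To test \eqref{weak_homogeneous_solution} against $\mathrm{curl}\,\rho$ we solve the backward heat equation: put
\[
A(t)\coloneqq\Phi(t_0-t)\ast\rho,\qquad w(t)\coloneqq\mathrm{curl}\,A(t)=\Phi(t_0-t)\ast\mathrm{curl}\,\rho,\qquad 0\le t\le t_0,
\]
so that $A,w$ are smooth, $\p_t A+\Delta A=0$, $\p_t w+\Delta w=0$, $A(t_0)=\rho$, $w(t_0)=\mathrm{curl}\,\rho$, $w(t)$ is weakly divergence free, and --- $\rho$ being compactly supported and $\Phi$ Gaussian --- $|\nabla^m A(x,t)|+|\nabla^m w(x,t)|\le C_{m,N}(1+|x|)^{-N}$ for all $m,N$, uniformly for $t\in[0,t_0]$.

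The crux of the matter is that $p$ is only $L^1_{\mathrm{loc}}$: a naive localisation $\chi_R(x)w(x,t)$ would leave the term $\int p\,\mathrm{div}(\chi_R w)=\int p\,(\nabla\chi_R\cdot w)$, which one cannot show tends to $0$ as $R\to\infty$ without a priori growth control on $\int_{|x|\le 2R}|p|$. We avoid any contact with $p$ by using a test function that is \emph{exactly} divergence free: with a spatial cut-off $\chi_R(x)=\chi(x/R)$ ($\chi\equiv 1$ on $B(0,1)$, $\mathrm{supp}\,\chi\subset B(0,2)$) and a temporal cut-off $\zeta_\varepsilon\in C^\infty([0,T))$ with $\zeta_\varepsilon\equiv 1$ on $[0,t_0-\varepsilon]$ and $\zeta_\varepsilon\equiv 0$ on $[t_0,T)$, we set
\[
\phi_{R,\varepsilon}(x,t)\coloneqq\zeta_\varepsilon(t)\,\mathrm{curl}\big(\chi_R(x)\,A(x,t)\big).
\]
Being the curl of a smooth compactly supported field, $\phi_{R,\varepsilon}$ is smooth, divergence free, and supported in $B(0,2R)\times[0,t_0]$, hence admissible in \eqref{weak_homogeneous_solution}.

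Plugging $\phi_{R,\varepsilon}$ into \eqref{weak_homogeneous_solution}, the pressure term vanishes identically. Using $\p_t A+\Delta A=0$, the commutation of $\mathrm{curl}$ with $\Delta$, and $\mathrm{curl}(\chi_R A)=\chi_R w+\nabla\chi_R\times A$, one computes
\[
\p_t\phi_{R,\varepsilon}+\Delta\phi_{R,\varepsilon}=\zeta_\varepsilon'\big(\chi_R w+\nabla\chi_R\times A\big)+\zeta_\varepsilon\,\mathrm{curl}\big(2\,\nabla\chi_R\!\cdot\!\nabla A+(\Delta\chi_R)A\big),
\]
where $\nabla\chi_R\cdot\nabla A\coloneqq\sum_j(\p_j\chi_R)\,\p_j A$. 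Every term carrying a derivative of $\chi_R$ is supported in $\{R\le|x|\le 2R\}$ and, by $|\nabla\chi_R|\le C/R$, $|\Delta\chi_R|\le C/R^2$ and the rapid decay of $A,\nabla A,\nabla^2 A$, is bounded there by $C_N R^{-1-N}$; hence its $L^2$-norm is $O_N(R^{1/2-N})$, and since $\|u(t)\|$ is bounded on $[0,t_0]$ its contribution to $\int_0^{t_0}\int(\p_t\phi_{R,\varepsilon}+\Delta\phi_{R,\varepsilon})\cdot u$ tends to $0$ as $R\to\infty$. The remaining term converges by dominated convergence (dominant $|\zeta_\varepsilon'|\,|w|\,|u|\in L^1(\RR^3\times(0,t_0))$, as $\|w(t)\|\le\|\mathrm{curl}\,\rho\|$). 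Since $\int_0^{t_0}\int(\p_t\phi_{R,\varepsilon}+\Delta\phi_{R,\varepsilon})\cdot u=0$ for every $R$, letting $R\to\infty$ yields
\[
0=\int_0^{t_0}\zeta_\varepsilon'(t)\Big(\int w(t)\cdot u(t)\,\d x\Big)\,\d t .
\]
Finally I let $\varepsilon\to 0$: $\zeta_\varepsilon'$ is supported in $[t_0-\varepsilon,t_0]$ with $\int_0^{t_0}\zeta_\varepsilon'=\zeta_\varepsilon(t_0)-\zeta_\varepsilon(0)=-1$, while $t\mapsto\int w(t)\cdot u(t)\,\d x$ is continuous up to $t_0$ (both $w,u\in C([0,t_0];L^2)$) with value $\int\mathrm{curl}\,\rho\cdot u(t_0)\,\d x$ there, so the right-hand side converges to $-\int u(t_0)\cdot\mathrm{curl}\,\rho\,\d x$. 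Hence $\int u(t_0)\cdot\mathrm{curl}\,\rho\,\d x=0$, as required.

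The only genuinely non-routine point is the construction of $\phi_{R,\varepsilon}$ as an \emph{exactly} divergence-free, compactly supported test function that almost solves the backward heat equation --- this is what lets us sidestep the merely $L^1_{\mathrm{loc}}$ pressure, and it is available precisely because it suffices to test against curls $\mathrm{curl}\,\rho$ and the corresponding backward-heat solution $w=\mathrm{curl}\,(\Phi(t_0-\cdot)\ast\rho)$ is itself a curl. The two limits $R\to\infty$ and $\varepsilon\to 0$, and the uniform decay estimates on $A$ and $w$, are routine.
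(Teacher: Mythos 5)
Your argument is correct, but it is a genuinely different route from the one in the paper. The paper's proof never constructs a dual test function: it regularises both unknowns, setting $v(x,t)=\int_0^t (J_\varepsilon u)(x,s)\,\d s$ and $q(x,t)=\int_0^t (J_\varepsilon p)(x,s)\,\d s$, shows (by testing \eqref{weak_homogeneous_solution} with gradients $\phi=\nabla\psi$) that $p$, hence $q$, is weakly harmonic, deduces that $\Delta v$ is a weak solution of the heat equation with zero data, invokes the uniqueness lemma for the weak heat equation (Lemma \ref{uniqueness_heat_eq_lemma}) to get $\Delta v\equiv 0$, and finishes with Liouville's theorem and $v\in L^2$. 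You instead eliminate the pressure at the outset by testing only against \emph{exactly} divergence-free fields $\zeta_\varepsilon\,\mathrm{curl}(\chi_R A)$ built from the backward heat flow $A(t)=\Phi(t_0-t)\ast\rho$, which yields $\mathrm{curl}\,u(t_0)=0$; combined with $\mathrm{div}\,u(t_0)=0$ and $u(t_0)\in L^2$ this forces $u(t_0)=0$. What your approach buys is that the pressure is never touched (no need to prove it is harmonic, and no mollification of $u$ or $p$), and the heat-equation uniqueness lemma is effectively in-lined as a duality argument; what the paper's approach buys is modularity (it reuses the separately proved heat uniqueness lemma, itself established by a duality of the same flavour) and it identifies the structural fact that the pressure in \eqref{weak_homogeneous_solution} must be harmonic. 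Your cut-off estimates and the two limits $R\to\infty$, $\varepsilon\to 0$ are all sound; the only point worth tidying is the smoothness of $\phi_{R,\varepsilon}$ at $t=t_0$, which holds because $\nabla^m\p_t^k A=\Phi(t_0-t)\ast(-\Delta)^k\nabla^m\rho$ stays bounded as $t\to t_0^-$ while $\zeta_\varepsilon$ vanishes to infinite order there (or, more simply, choose $\zeta_\varepsilon$ supported in $[0,t_0-\varepsilon/2]$).
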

It follows that $\int_0^T \int p \, \mathrm{div}\, \phi \,\d x \, \d t =0$, and so integration by parts and the Fundamental Lemma of Calculus of Variations give $\nabla p \equiv 0$; that is $p$ is a function of $t$ only. Since both the Stokes equations and the Navier--Stokes equations are invariant under addition to the pressure function any function of time, we will identify two solutions $u_1,p_1$ and $u_2,p_2$ of the Stokes equations (or the Navier--Stokes equations) if $u_1=u_2$ and $p_1$ differs from $p_2$ by a function of time.
\begin{proof}[Proof (sketch; see Appendix \ref{app_stokes_uniqueness} for details).] The proof of the theorem is based on considering the regularisations of $u$, $p$,
\[
v (x,t) \coloneqq \int_0^t (J_\varepsilon u)(x,s)\, \d s, \qquad {q} (x,t) \coloneqq \int_0^t (J_\varepsilon p)(x,s)\, \d s,\quad \varepsilon >0.
\]
Such a regularisation is still a solution of \eqref{weak_homogeneous_solution} and one can show that $\Delta q =0$ in a distributional sense. Thus $\Delta v$ satisfies the homogeneous heat equation in a distributional sense and the uniqueness of the solution to the heat equation gives $\Delta v=0$. An application of Liouville's theorem and the assumption $\| u(t) \|<\infty $ for all $t$ then gives $v\equiv 0$ for all $\varepsilon >0$, and consequently $u\equiv 0$.
\end{proof}
We are now ready to prove the existence and uniqueness of distributional solutions, the central result of the study of the Stokes equations.
\begin{theorem}[Distributional solution for $F$ of the form \eqref{form_of_X}]\label{sol_distributions_anyX_theorem}
Let $Y \in C([0,T),L^2)\cap C((0,T),L^\infty )$ be weakly divergence free such that $\| Y(t) \|_\infty$ remains bounded as $t \to 0^+$. Then $u,p$ given by \eqref{repr_formula1}, \eqref{repr_formula1_p} comprise a distributional solution of \eqref{eqLinSys1}, \eqref{eqLinSys1_incomp} with initial data $u_0$ and $F=-(Y\cdot\nabla)Y$. Moreover this solution is unique in the class $u\in C([0,T),L^2)$, $p\in L^1_{\loc} (\RR^3 \times [0,T))$.
\end{theorem}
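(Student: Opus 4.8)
\emph{Uniqueness} is immediate from Theorem~\ref{uniqueness_stokes}. If $(u,p)$ and $(\widetilde u,\widetilde p)$ are two distributional solutions with $u,\widetilde u\in C([0,T),L^2)$ weakly divergence free and $p,\widetilde p\in L^1_{\loc}(\RR^3\x[0,T))$, then subtracting the two instances of \eqref{eqLinSys_distributions} makes the common terms $\int u_0\cdot\phi(0)$ and $\int_0^T\int Y\cdot(Y\cdot\nabla)\phi$ cancel, leaving exactly the homogeneous identity \eqref{weak_homogeneous_solution} for $w\coloneqq u-\widetilde u$ and $q\coloneqq p-\widetilde p$. Theorem~\ref{uniqueness_stokes} gives $w\equiv0$, and, as remarked there, $q$ is then a function of $t$ alone, so the two solutions coincide in the sense in which we identify solutions of the Stokes equations.

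For \emph{existence}, the regularity assertions $u\in C([0,T),L^2)$ and $p\in C([0,T),L^2)\subset L^1_{\loc}(\RR^3\x[0,T))$ are precisely Lemma~\ref{additional_prop_up_lemma}(iii), for which the extra hypothesis $Y\in C([0,T),L^2)$ is exactly what is needed; moreover $\|u(t)-u_0\|\to0$ as $t\to0^+$ follows from the heat-kernel convergence $\Phi(t)\ast u_0\to u_0$ in $L^2$ and from $\|u_2(t)\|\le C\int_0^t\|Y(s)\|_\infty\|Y(s)\|(t-s)^{-1/2}\,\d s\to0$ (the integrand is bounded near $s=0$ by the hypotheses on $Y$), which is implicit in the estimate behind \eqref{l2_continuity_X_special_form}. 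It therefore remains to prove that $u(t)$ is weakly divergence free for $t\in(0,T)$ and that $u,p$ satisfy the distributional identity \eqref{eqLinSys_distributions} with $F=-(Y\cdot\nabla)Y$; recall also that, as initial data, $u_0\in H$.

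The plan for this is approximation. Fix $T'<T$ and choose fields $Y^{(n)}$ that are smooth, weakly divergence free, with $\|Y^{(n)}(t)\|_\infty$ controlled by $\|Y\|_\infty$ and $Y^{(n)}\to Y$ in $C([0,T'];L^2)$ — mollifying in space preserves the divergence constraint and, by Lemma~\ref{prop_molli}, the $L^\infty$ bound, and a mollification in time restores joint smoothness in $(x,t)$. For each $n$ the forcing $F^{(n)}\coloneqq-(Y^{(n)}\cdot\nabla)Y^{(n)}$ is smooth, so by the remark following \eqref{notation_oseen_ker_conv} (using $\mathrm{div}\,Y^{(n)}=0$) the pair $u^{(n)},p^{(n)}$ obtained from \eqref{repr_formula1}, \eqref{repr_formula1_p} with $Y^{(n)}$ in place of $Y$ coincides with the classical Stokes solution with forcing $F^{(n)}$ furnished by Theorem~\ref{classical_sol_if_X_smooth_theorem}; in particular $u^{(n)}(t)$ is divergence free for $t\in(0,T)$, and $u^{(n)},p^{(n)}$ satisfy \eqref{eqLinSys_distributions} with $Y^{(n)}$. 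Now let $n\to\infty$: the limiting part of Lemma~\ref{additional_prop_up_lemma}(iii) gives $u^{(n)}\to u$ and $p^{(n)}\to p$ in $C([0,T'];L^2)$, so $u(t)$ inherits weak divergence-freeness (pass to the limit in $\int u^{(n)}(t)\cdot\nabla g=0$, $g\in C_0^\infty$), the three linear terms of \eqref{eqLinSys_distributions} converge by this $L^2$-convergence, and the nonlinear term converges because, on the compact spatial support of $\phi$, the identity $Y^{(n)}_iY^{(n)}_k-Y_iY_k=(Y^{(n)}_i-Y_i)Y^{(n)}_k+Y_i(Y^{(n)}_k-Y_k)$ together with the Cauchy--Schwarz inequality bounds the $L^1$-norm by $\|Y^{(n)}(t)-Y(t)\|(\|Y^{(n)}(t)\|+\|Y(t)\|)\to0$ uniformly on $[0,T']$. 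Hence $u,p$ satisfy \eqref{eqLinSys_distributions}, which finishes the proof.

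The main obstacle is the approximation step: one needs approximants that are regular enough to invoke Theorem~\ref{classical_sol_if_X_smooth_theorem} and that simultaneously retain the two features required by the limiting part of Lemma~\ref{additional_prop_up_lemma}(iii) — convergence in $C([0,T'];L^2)$ and a uniform $L^\infty$ bound. Space--time mollification delivers smoothness, divergence-freeness, the $L^\infty$ bound and $L^2$ convergence, but not compact support in $x$; this should be harmless, since the Fourier-space proof of Theorem~\ref{classical_sol_if_X_smooth_theorem} only really uses that $F^{(n)}(t)\in H^m$ for every $m$ uniformly on compact time intervals — which the mollified fields satisfy — so its compact-support hypothesis can be relaxed; alternatively one may truncate a vector potential of $Y$, which keeps the field divergence free, and verify directly the $L^\infty$ and $L^2$ bounds for the truncation. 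Everything else is a routine density and compactness argument of the kind Leray pioneered.
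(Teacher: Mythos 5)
Your overall architecture is the same as the paper's: uniqueness via Theorem~\ref{uniqueness_stokes}, regularity and $L^2$-continuity at $t=0$ via Lemma~\ref{additional_prop_up_lemma}\,(iii), and existence by approximating the forcing by smooth compactly supported ones, invoking Theorem~\ref{classical_sol_if_X_smooth_theorem}, and passing to the limit with the limiting part of Lemma~\ref{additional_prop_up_lemma}\,(iii). The one point where you diverge --- and where your argument is not actually closed --- is the approximation step. You insist that the approximants $Y^{(n)}$ be weakly divergence free, so that the forcing can be written as $-(Y^{(n)}\cdot\nabla)Y^{(n)}$; this is what creates the dilemma you flag (mollification loses compact support, truncation loses the divergence constraint), and neither of your proposed repairs is carried out (relaxing the support hypothesis of Theorem~\ref{classical_sol_if_X_smooth_theorem} would require reworking its proof and that of Lemma~\ref{prop_of_up_lemma}, while truncating a vector potential makes the uniform $L^\infty$ bound needed for Lemma~\ref{additional_prop_up_lemma}\,(iii) delicate). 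The paper's resolution is that divergence-freeness of the approximants is never needed: it takes $Y^{(n)}$ smooth, compactly supported, with $\max_{t\in[0,T']}\|Y^{(n)}(t)\|_\infty\le\max_{t\in[0,T']}\|Y(t)\|_\infty$ and $Y^{(n)}\to Y$ in $C([0,T'];L^2)$ as furnished by Lemma~\ref{approx_of_X_lemma} (spatial truncation followed by space--time mollification, certainly not divergence free), and works with the forcing in divergence form, $F^{(n)}_k\coloneqq-\p_i\bigl(Y^{(n)}_iY^{(n)}_k\bigr)$. This $F^{(n)}$ is smooth and compactly supported, so Theorem~\ref{classical_sol_if_X_smooth_theorem} applies as stated; the representation \eqref{repr_formula} with $F^{(n)}$ becomes \eqref{repr_formula1} with $Y^{(n)}$ by a single integration by parts that moves the derivative onto $\mathcal{T}$, and the right-hand side $\int_0^T\int Y^{(n)}\cdot(Y^{(n)}\cdot\nabla)\phi$ of \eqref{eqLinSys_distributions1} likewise arises from integrating $\int F^{(n)}\cdot\phi$ by parts --- none of which uses $\mathrm{div}\,Y^{(n)}=0$ (only the limit $u(t)$ must be weakly divergence free, and that passes to the limit since each $u_n(t)$ is, by Theorem~\ref{classical_sol_if_X_smooth_theorem}). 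Once you drop the divergence-free requirement on the $Y^{(n)}$, your proof closes exactly as the paper's does; the remaining limit passages you describe, including your $L^1$ treatment of the nonlinear term (the paper instead uses $\|Y^{(n)}_iY^{(n)}_k-Y_iY_k\|_{C([0,T'];L^2)}\to0$), are fine.
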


\begin{proof}
Uniqueness follows from the theorem above. The fact that $u\in C([0,T);L^2)$ and the $L^2$ continuity at $t=0$, $\| u(t) - u_0 \| \to 0$ as $t\to 0$, is clear from Lemma \ref{additional_prop_up_lemma}, (iii).  Fix $\phi\in C_0^\infty (\RR^3 \times [0,T),\RR^3)$ and let $T'\in (0,T)$ be such that $\phi =0$ for $t\geq T'$. Let $\{ Y^{(n)} \}$ be a sequence of functions $Y^{(n)} \in C^\infty (\RR^3 \times [0,T))$ such that $\mathrm{supp}\, Y^{(n)} (t) \subset B(0,R_n)$ for some $R_n >0$,
\[
\| Y- Y^{(n)} \|_{C([0,T'],L^2)} \to 0 \quad \text{ as } n \to \infty 
\]
and $\max_{t\in[0,T']} \| Y^{(n)} (t)\|_\infty \leq \max_{t\in[0,T']} \| Y (t)\|_\infty $. Note that the above convergence means that also 
\[
\| Y_iY_k- Y^{(n)}_i Y^{(n)}_k \|_{C([0,T'],L^2)} \to 0 \quad \text{ as } n \to \infty 
\]
for all $i,k$.
The existence of such $Y^{(n)}$'s is guaranteed by Lemma \ref{approx_of_X_lemma}. Let $(u_n,p_n)$ be given by \eqref{repr_formula}, \eqref{repr_formula_p} with $F$ replaced by $F^{(n)}$, where $F_k^{(n)}\coloneqq -\p_i (Y^{(n)}_i Y^{(n)}_k)$. By Theorem \ref{classical_sol_if_X_smooth_theorem} $(u_n,p_n)$ satisfies the equations \eqref{eqLinSys1}, \eqref{eqLinSys1_incomp} with $F$ replaced by $F^{(n)}$ in the classical sense, and so also in the sense of distributions, that is $u_n$ is weakly divergence free and
\begin{equation}\label{eqLinSys_distributions1}
\int u_0 \cdot \phi \, \d x + \int_0^T  \int \left( u_n \cdot (\phi_t + \Delta \phi )+ p_n \, \mathrm{div}\, \phi \right)  =  \int_0^T \int Y^{(n)} \cdot (Y^{(n)} \cdot \nabla ) \phi .
\end{equation}
By Lemma \ref{additional_prop_up_lemma}, (iii), we have 
\[
\| u_n - u \|_{C([0,T'],L^2)} \to 0 ,\quad \| p_n -  p \|_{C([0,T'],L^2)} \to 0 \quad \text{ as } n \to \infty
\]
and so we can take the limit $n\to \infty $ to obtain that $u$ is weakly divergence free and, from \eqref{eqLinSys_distributions1},
\[
\int u_0 \cdot \phi \, \d x +\int_0^T  \int \left( u \cdot (\phi_t + \Delta \phi )+ p \, \mathrm{div}\, \phi \right)  =  \int_0^T \int Y \cdot (Y \cdot \nabla ) \phi ,
\]
that is $u,p$ is indeed the distributional solution.
\end{proof}
\begin{corollary}\label{cor_can_have_yz}
The conclusion of Theorem \ref{sol_distributions_anyX_theorem} also holds if $F$ is of the form $F_i=-\p_i (Y_i Z_k)$, where $Y,Z \in C([0,T),L^2)\cap C((0,T),L^\infty )$ are weakly divergence free with $\| Y(t) \|_\infty$, $\| Z(t) \|_\infty$ bounded as $t\to 0^+$, and the representation formula \eqref{repr_formula1} is replaced by \eqref{repr_formula1_two_vectors}.
\end{corollary}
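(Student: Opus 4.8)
The plan is to adapt the proof of Theorem \ref{sol_distributions_anyX_theorem} essentially verbatim, replacing the quadratic nonlinearity $Y_iY_k$ by the bilinear one $Y_iZ_k$ and invoking Corollary \ref{additional_prop_up_corollary} wherever the one-vector argument used Lemma \ref{additional_prop_up_lemma}. As there, the proof splits into uniqueness and existence, and only the existence part changes. Uniqueness needs no new work: the difference of two distributional solutions in the class $u\in C([0,T),L^2)$, $p\in L^1_{\loc}(\RR^3\times[0,T))$ with the same data and the same forcing satisfies the hypotheses of Theorem \ref{uniqueness_stokes} (the right-hand sides of the distributional identity cancel), hence is identically zero.

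For existence, I would set $v$ as in \eqref{repr_formula1_two_vectors}. First, Corollary \ref{additional_prop_up_corollary} gives directly that $v\in C((0,T),L^\infty)\cap C([0,T),L^2)$ with $v(t)\to u_0$ in $L^2$ as $t\to 0^+$, which supplies the claimed regularity and the attainment of the initial data. Next, fix $\phi\in C_0^\infty(\RR^3\times[0,T);\RR^3)$, choose $T'\in(0,T)$ with $\phi(t)=0$ for $t\ge T'$, and apply Lemma \ref{approx_of_X_lemma} twice to obtain sequences $Y^{(n)},Z^{(n)}\in C^\infty(\RR^3\times[0,T))$ with spatial supports in balls $B(0,R_n)$, converging to $Y$ and $Z$ respectively in $C([0,T'];L^2)$, and satisfying $\max_{[0,T']}\|Y^{(n)}(t)\|_\infty\le\max_{[0,T']}\|Y(t)\|_\infty$ and likewise for $Z^{(n)}$. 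Since each $L^2$-convergent factor is multiplied by a uniformly $L^\infty$-bounded factor, $Y^{(n)}_iZ^{(n)}_k\to Y_iZ_k$ in $C([0,T'];L^2)$ for all $i,k$. Putting $F^{(n)}_i\coloneqq-\p_k(Y^{(n)}_iZ^{(n)}_k)$, which is smooth with support in a ball, Theorem \ref{classical_sol_if_X_smooth_theorem} shows that the pair $(u_n,p_n)$ from \eqref{repr_formula}, \eqref{repr_formula_p} is a classical, hence distributional, solution; so $u_n$ is weakly divergence free and
\[
\int u_0\cdot\phi\,\d x+\int_0^T\int\bigl(u_n\cdot(\phi_t+\Delta\phi)+p_n\,\mathrm{div}\,\phi\bigr)=\int_0^T\int Y^{(n)}\cdot(Z^{(n)}\cdot\nabla)\phi.
\]
By the stability (limiting) statement in Corollary \ref{additional_prop_up_corollary}, $u_n\to v$ and $p_n\to p$ in $C([0,T'];L^2)$, so every term above converges, and in the limit one obtains both that $v$ is weakly divergence free (from $\int v(t)\cdot\nabla g=\lim_n\int u_n(t)\cdot\nabla g=0$) and the distributional identity for $(v,p)$ with $F_i=-\p_k(Y_iZ_k)$, i.e. the assertion with $u\coloneqq v$. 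Alternatively the weak divergence-freeness of $v$ can be read off \eqref{eq_LerProj_Fourier}, since $\mathcal{F}[\nabla\mathcal{T}]$ carries the factor $I-\xi\otimes\xi/|\xi|^2$ which annihilates divergences, while $\mathcal{F}[\Phi(t)\ast u_0]$ is divergence free because $u_0\in H$.

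I do not expect a genuine obstacle here — the argument is pure bookkeeping — but the one point to verify carefully is that Corollary \ref{additional_prop_up_corollary} really does supply the two-vector analogues of everything the one-vector proof extracted from Lemma \ref{additional_prop_up_lemma}(iii): namely the $C([0,T),L^2)$-bound \eqref{l2_continuity_X_special_form} in its bilinear form, and, crucially, the perturbation statement that $u^{(n)}\to u$, $p^{(n)}\to p$ in $C([0,T'];L^2)$ whenever $(Y^{(n)},Z^{(n)})\to(Y,Z)$ in $C([0,T'];L^2)$ with $L^\infty$ norms controlled. One should also record the elementary identity $-\p_k(Y_iZ_k)=-(Z\cdot\nabla)Y_i$, valid since $\mathrm{div}\,Z=0$, which is what makes the right-hand side of the distributional formulation take the form $\int_0^T\int Y\cdot(Z\cdot\nabla)\phi$ after one integration by parts.
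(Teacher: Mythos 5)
Your proposal is correct and is precisely the argument the paper intends: the corollary is stated without a separate proof because it follows by repeating the proof of Theorem \ref{sol_distributions_anyX_theorem} with $Y_iY_k$ replaced by $Y_iZ_k$, Lemma \ref{additional_prop_up_lemma}(iii) replaced by its two-vector extension in Corollary \ref{additional_prop_up_corollary} (whose opening sentence does assert that all parts of the lemma, including the $C([0,T');L^2)$ bound and the stability statement, carry over), and Lemma \ref{approx_of_X_lemma} applied to each of $Y$ and $Z$. Your bookkeeping of the weak formulation, including the identity $-\p_k(Y_iZ_k)=-(Z\cdot\nabla)Y_i$ and the resulting right-hand side $\int_0^T\int Y\cdot(Z\cdot\nabla)\phi$, matches the paper's \eqref{eqLinSys_distributions1} in its bilinear form.
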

\subsection*{Notes}
As remarked in the beginning of the section, we focused on the forcing of the form $F=-(Y\cdot \nabla )Y$ more directly than Leray. In particular Lemma \ref{additional_prop_up_lemma} is not stated by explicitly by Leray. Thanks to the use of the distributional form of the Stokes equations \eqref{eqLinSys_distributions} and the limiting property of the representation formulae \eqref{repr_formula1}, \eqref{repr_formula1_p} (that is Lemma \ref{additional_prop_up_lemma} (iii)) the main results of the section can be encapsulated in Theorem \ref{sol_distributions_anyX_theorem}.

In the next section we will follow Leray in applying the results for the Stokes equations to study the Navier--Stokes equations. In particular we will employ the other properties of the modified representation formulae \eqref{repr_formula1}, \eqref{repr_formula1_p}, that is Lemma \ref{additional_prop_up_lemma} (i),(ii),(iv). These properties were presented by Leray either implicitely during the study of the Navier--Stokes equations or by showing a related result for a general of the forcing $F$, as we pointed out in the footnotes in Lemma \ref{additional_prop_up_lemma}.
\section{Strong solutions of the Navier--Stokes equations}\label{sec_strong}
We now consider the Navier--Stokes equations
\begin{equation}\label{eqNSE}
\p_t u - \Delta u +\nabla p = -(u \cdot\nabla)u,
\end{equation}
\begin{equation}\label{eqNSE_incomp}
\mathrm{div}\,  u=0
\end{equation}
in $\RR^3\times (0,T)$. We will consider a weak form of these equations,
\begin{equation}\label{eqNSE_distr}
\int_0^T  \int (u \cdot (f_t + \Lap f) + p \, \mathrm{div}\, f) = \int_0^T \int u \cdot (u \cdot \nabla) f
\end{equation}
for $f\in C_0^\infty (\RR^3 \times (0,T))$, where $u(t)$ is weakly divergence free. We first define solutions on the open time interval $(0,T)$ (see below) and we study their properties in Section \ref{smoothness_section}. In Section \ref{sec_local_exist_uni_strong} we equip the problem \eqref{eqNSE}-\eqref{eqNSE_incomp} with initial data, and for this reason we extend the definition of strong solutions to the half-closed time interval $[0,T)$ (Definition \ref{def_reg_sol_halfclosed}). We then show existence and uniqueness of local-in-time strong solutions (Theorem \ref{local_exist_strong_thm}). In Section \ref{char_of_sing_section} we study the maximal time of existence for strong solutions and the rate of blow-up of strong solutions if the maximal time is finite. In Section \ref{sec_semi_reg} we study local existence and uniqueness of strong solutions with less regular initial data.
\begin{definition}\label{def_strong_open} A function $u$ is a \emph{strong solution of the Navier--Stokes equations on the time interval $(0,T)$} if it satisfies the weak form of the equations with some $p\in L^1_{\loc} (\RR^3 \times (0,T))$ and if
\[
u\in C((0,T);L^2) \cap C((0,T);L^\infty ).
\]
\end{definition}
This is how Leray defines a strong solution, except that he requires the continuity of all terms appearing in the Navier--Stokes equations \eqref{eqNSE} (see p. 217). Here we make use of the weak form of equations and thus we avoid specifying any conditions on derivatives of $u$. However, smoothness of strong solutions (Corollary \ref{smoothness_reg_sol_corollary}) implies that the two definitions are equivalent to each other.
Note also that if $u$ is a strong solution of the Navier--Stokes equations then \eqref{eqNSE_distr} is equivalent to requiring that
\begin{multline}\label{eqNSE_distr_equiv}
\int u(t_1)\cdot f(t_1) - \int u(t_2) \cdot f(t_2) + \int_{t_1}^{t_2}  \int (u \cdot (f_t + \Lap f) + p \, \mathrm{div}\, f) \\= \int_{t_1}^{t_2} \int u \cdot (u \cdot \nabla) f
\end{multline}
for all $f\in C_0^\infty (\RR^3 \times (0,T))$ and $t_1,t_2\in (0,T)$ with $t_1<t_2$. While the ``$\Leftarrow$'' part this equivalence is trivial, the ``$\Rightarrow$'' part is not immediate but can be obtained by a simple cut-off procedure, which we now explain. 

For $h>0$ let $F_h (x,s)\coloneqq f(x,s) \theta_h (s)$, where $\theta_h \in C^\infty (\RR )$ is a nonincreasing function such that $\theta_h (s) = 1 $ for $s\leq t_2$, $\theta_h (s) =0 $ for $s\geq t_2+h$. Using $F_h$ as a test function in \eqref{eqNSE_distr} we obtain
\begin{multline*}
\int_0^{t_2+h}  \int \left( u \cdot (f_t + \Delta f ) + p\, \mathrm{div} \, f \right)\theta_h  + \int_{t_2}^{t_2+h} \int u \cdot f \theta_h'\\ =  \int_0^{t_2+h} \int u \cdot (u\cdot \nabla ) f \theta_h  .
\end{multline*}
Since $u,f \in C((0,T);L^2)$ the function $s\mapsto \int u(s) \cdot f(s) $ is continuous. Thus, since $\theta_h$ is nonincreasing and $\int_\RR \theta_h' =-1$ we obtain
\[
\int_{t_2}^{t_2+h} \int u \cdot f\, \theta_h' \to -\int u(t_2) \cdot f(t_2)  \quad \text{ as } h\to 0^+ .
\]
Thus taking the limit $h\to 0^+$ in the last equation (via the Dominated Convergence Theorem) gives 
\[
-\int u(t_2) \cdot f (t_2) + \int_0^{t_2}  \int \left( u \cdot (f_t + \Delta f ) + p\, \mathrm{div} \, f \right)    =  \int_0^{t_2} \int u \cdot (u\cdot \nabla ) f  .
\]
Applying a similar cut-off procedure at time $t_1$ gives \eqref{eqNSE_distr_equiv}.

From \eqref{eqNSE_distr_equiv} and the theorem about the existence and uniqueness of distributional solutions to Stokes equations (Theorem \ref{sol_distributions_anyX_theorem}) we see that a strong solution of the Navier--Stokes equations admits representation formulae\footnote{These are {\it(3.2)} and {\it(3.3)} in \cite{Leray_1934}.},
\begin{equation}\label{sol_NS_form}
\begin{split}
u(t_2)&=  \Phi (t_2-t_1) \ast u (t_1)+\int_{t_1}^{t_2} \int \nabla \mathcal{T} (t_2-t_1-s) \ast \left[ u (s)  u (s) \right]  \, \d s ,\\
p(t_2)&=  \p_i \p_k (-\Delta )^{-1} (u_i (t_2) u_k (t_2) )   .
\end{split} 
\end{equation}
for all $t_1,t_2 \in (0,T)$ with $t_1<t_2$. Recall we employ the notation
\[
\left(  \nabla  \mathcal{T}(t-s) \ast \left[ Y(s) Z (s) \right] \right)_j (x) \coloneqq \int \p_i \mathcal{T}_{jk} (x-y,t-s) Y_i (y,s) Z_k (y,s) \, \d y .
\]
Note this representation formula also determines uniquely the pressure function $p$.
\subsection{Properties of strong solutions}\label{smoothness_section}
In this section we study the properties of strong solutions of the Navier--Stokes equations on the open time interval $(0,T)$. We will show that if $u$ is a strong solution and $p$ is the corresponding pressure function then $u$ and $p$ are smooth and $u$ satisfies an energy equality, along with some other useful results.
The theorem below as well as the following corollary show that $u$, $p$ are smooth.
\begin{theorem}\label{thmStrongSolnsSmooth}\footnote{Theorem \ref{thmStrongSolnsSmooth} and Corollary \ref{smoothness_reg_sol_corollary} correspond to pp. 218-219 in \cite{Leray_1934}.} If $u$ is a strong solution of the Navier--Stokes equations on $(0,T)$ then
\[\nabla^m u \in C((0,T);L^2) \cap C((0,T);L^\infty)\quad \text{ for all } m \geq 0 .\]
\end{theorem}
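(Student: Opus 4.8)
The plan is to bootstrap from the representation formula \eqref{sol_NS_form}, which a strong solution is already known to satisfy. Writing the nonlinear term as $(u\cdot\nabla)u=\partial_k(u_iu_k)$ (legitimate since $u$ is weakly divergence free), we may view $u$, on any subinterval, as the modified Stokes representation \eqref{repr_formula1_two_vectors} with $Y=Z=u$, so that Lemma \ref{additional_prop_up_lemma} and Corollary \ref{additional_prop_up_corollary} apply. Since we work on the open interval $(0,T)$, it suffices to prove, for each fixed $m$ and each $\delta\in(0,T)$, that $\nabla^m u\in C((\delta,T);L^\infty)\cap C((\delta,T);L^2)$; to this end we restart \eqref{sol_NS_form} from a finite increasing sequence of times $0<s_1<\dots<s_N<\delta$. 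At any such $s_j$, \eqref{sol_NS_form} expresses $u$ on $(s_j,T)$ via \eqref{repr_formula1_two_vectors} with initial datum $u(s_j)\in L^2\cap L^\infty$ and $Y=Z=u$, which is admissible because $u\in C([s_j,T);L^2)\cap C((s_j,T);L^\infty)$ with $\|u(t)\|_\infty$ bounded as $t\to s_j^+$ (here $s_j>0$ is used).

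The heart of the argument is the $L^\infty$ bootstrap. From $u\in C((0,T);L^\infty)$, Lemma \ref{additional_prop_up_lemma}(ii) with $m=0$ (restarting from $s_1$) gives $u\in\mathcal{H}^{1/2}((s_1,T))$; then Lemma \ref{additional_prop_up_lemma}(iv) with $m=1$ (restarting from $s_2>s_1$, where $\|u(t)\|_\infty$ and the H\"older constant of $u$ are bounded near $s_2$) gives $\nabla u\in C((s_2,T);L^\infty)$. We iterate, alternating an application of (ii) — which upgrades the current top-order derivative of $u$ to $\mathcal{H}^{1/2}$ — with an application of (iv) — which produces one further derivative of $u$ in $C(\cdot;L^\infty)$ — and at each stage restarting from a slightly later time so that all the regularity gained so far (both the $L^\infty$ bounds and the accumulated $\mathcal{H}^{1/2}$ control on lower-order derivatives needed as hypotheses of (iv)) is bounded near the new starting point. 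After finitely many (roughly $2m$) stages we obtain $\nabla^m u\in C((s_N,T);L^\infty)$, and since $\delta>s_N$ was arbitrary, $\nabla^m u\in C((0,T);L^\infty)$ for every $m$.

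For the $L^2$ regularity we induct on $m$, the case $m=0$ being the hypothesis $u\in C((0,T);L^2)$. Assume $\nabla^k u\in C((0,T);L^2)$ for all $k<m$. Expanding $\nabla^m(u_iu_k)=\sum_{a+b=m}\binom{m}{a}\nabla^a u_i\,\nabla^b u_k$ and pairing, in each term, the factor of lower order (which, taking $a\le b$ without loss of generality, has $a\le m-1$, hence lies in $L^2$ by the inductive hypothesis) with the factor of higher order (which lies in $L^\infty$ by the $L^\infty$ bootstrap) shows $\nabla^m[uu]\in C((0,T);L^2)$. Fix $t_1\in(0,T)$ and differentiate \eqref{sol_NS_form} $m$ times: the heat part yields $\nabla^m\Phi(t-t_1)\ast u(t_1)$, which lies in $C((t_1,T);L^2)$ since $\nabla^m\Phi(\tau)\in L^1$ for $\tau>0$ and $u(t_1)\in L^2$ (standard heat-kernel properties, Appendix \ref{heat_section_appendix}); the nonlinear part, after transferring the $m$ extra spatial derivatives onto the quadratic factor by integration by parts, becomes a space--time convolution of $\nabla\mathcal{T}$ with $\nabla^m[uu]$ of exactly the type treated by Lemma \ref{young_spacetime_continuity}, and since $\|\nabla\mathcal{T}(\tau)\|_1\le C\tau^{-1/2}\in L^1_{\loc}$ and $\nabla^m[uu]\in C((t_1,T);L^2)$ with norm bounded as $t\to t_1^+$, that lemma puts it in $C((t_1,T);L^2)$. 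Hence $\nabla^m u\in C((t_1,T);L^2)$, and letting $t_1\to0^+$ gives $\nabla^m u\in C((0,T);L^2)$.

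The main obstacle is the degeneracy of the estimates in Lemma \ref{additional_prop_up_lemma} near the initial time: each bound blows up like a negative power of the elapsed time, so a single application cannot be iterated directly. The remedy is precisely the restarting device above — available because on an open time interval the solution is, at any interior time, already as regular as the stage reached so far — and since only finitely many stages are needed for each fixed $m$, finitely many restarts suffice. What remains is routine: checking the precise hypotheses of Lemma \ref{additional_prop_up_lemma}(ii),(iv) at each stage, and the Leibniz and Young-type bookkeeping in the $L^2$ step.
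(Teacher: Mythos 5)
Your proposal is correct and follows essentially the same double-induction scheme as the paper's proof: first an $L^\infty$/$\mathcal{H}^{1/2}$ bootstrap alternating Lemma \ref{additional_prop_up_lemma}(ii) and (iv), then an $L^2$ induction combining a Leibniz/H\"older estimate on the nonlinearity with the Stokes representation bounds. The only cosmetic differences are that you make explicit the restarting at interior times needed because the estimates degenerate at the left endpoint (the paper leaves this implicit), and that you run the $L^2$ step through the divergence-form representation and Lemma \ref{young_spacetime_continuity} rather than Lemma \ref{prop_of_up_lemma}(ii); these amount to the same estimates.
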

The proof of the theorem (as well as the corollary that follows) is a simplification\footnote{This simplification is due to our choice to organise the properties of the representation formulae \eqref{repr_formula}, \eqref{repr_formula_p} and \eqref{repr_formula1}, \eqref{repr_formula1_p} into Lemmas \ref{prop_of_up_lemma} and \ref{additional_prop_up_lemma}, see Notes at the end of the section} of Leray's arguments (which he presents on pages 218-219).
\begin{proof}
The proof proceeds by a double use of induction. First, we show that
\[
\nabla^m u \in C((0,T);L^\infty ) \cap \mathcal{H}^{1/2} ((0,T))\quad \text{ for } m\geq 0.
\]
Here the base case follows from the definition of a strong solution and from Lemma \ref{additional_prop_up_lemma}, (ii), and the induction step follows from the same lemma, properties (iv) and (ii).

Second, we show that 
\[
\nabla^m u \in C((0,T);L^2) ,\,\, \nabla^m \left[ (u\cdot \nabla ) u \right] \in C((0,T);L^2)\quad \text{ for } m\geq 0.
\]
Here the base case follows from the definition of a strong solution and by deducing that $(u\cdot \nabla )u \in C((0,T);L^2)$ by using H\"older's inequality,
\begin{multline*}
\| (u(t)\cdot \nabla )u(t) - (u(s)\cdot \nabla )u(s) \|\\ \leq \| u(t) \| \, \| \nabla u(t) - \nabla u(s) \|_\infty + \| \nabla u(s) \|_\infty \| u(t) - u(s) \|.
\end{multline*}
The induction step follows from Lemma \ref{prop_of_up_lemma} (ii) and from a similar use of H\"older's inequality.
\end{proof}
Note that in fact for each $s\in (0,T)$ we have bounded the norms $\| \nabla^m u (s) \|$, $\| \nabla^m u(s) \|_\infty$, $m\geq 0$ using only the norms $\| u (t) \|$, $\| u(t) \|_\infty$, $t\in (0,T)$.
\begin{corollary}[Smoothness of strong solutions]\label{smoothness_reg_sol_corollary} If $u$ is a strong solution of the Navier--Stokes equations on $(0,T)$ and $p$ is the corresponding pressure then
\[
\p_t^k \nabla^m u, \p_t^k \nabla^m p \in C((0,T);L^2) \cap C((0,T);L^\infty )\quad \text{ for all } m,k\geq 0.
\]
In particular $u,p \in C^\infty (\RR^3 \times (0,T) )$ and $u,p$ constitute a classical solution of the Navier--Stokes equations on $\RR^3 \times (0,T)$.
\end{corollary}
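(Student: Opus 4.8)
The plan is to bootstrap from Theorem \ref{thmStrongSolnsSmooth}, which already gives $\nabla^m u\in C((0,T);L^2)\cap C((0,T);L^\infty)$ for all $m\ge 0$, to control of the pressure and then of all mixed space--time derivatives. First I would handle the spatial derivatives of $p$. Since each $\nabla^j u$ lies in $C((0,T);L^2)\cap C((0,T);L^\infty)$, the Leibniz rule together with the estimate $\|fg\|\le\|f\|_\infty\|g\|$ (and the same continuity argument used in the proof of Theorem \ref{thmStrongSolnsSmooth}) gives $u_iu_k\in C((0,T);H^m)$ for every $m$. Using the representation $p(t)=\p_i\p_k(-\Delta)^{-1}(u_i(t)u_k(t))$ from \eqref{sol_NS_form} and the fact that $\p_i\p_k(-\Delta)^{-1}$ is bounded on $H^m$ (Lemma \ref{CZ_lemma}), we get $\nabla^m p\in C((0,T);L^2)$ for all $m$; the accompanying $L^\infty$ bound follows by applying the Sobolev embedding $H^2(\RR^3)\hookrightarrow L^\infty(\RR^3)$ to $\nabla^m p$, which belongs to $H^2$ because $u_iu_k\in H^{m+2}$. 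Hence $\nabla^m p\in C((0,T);L^2)\cap C((0,T);L^\infty)$ for all $m$.

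Next I would recover the pointwise evolution equation. Testing the weak form \eqref{eqNSE_distr} with $f(x,t)=\psi(x)\chi(t)$ for $\psi\in C_0^\infty(\RR^3)$, $\chi\in C_0^\infty((0,T))$, and integrating by parts in $x$ (legitimate now that $u$, $p$ are smooth in space and $\mathrm{div}\,u=0$) yields
\[
\int_0^T\chi'(t)\Big(\int u(t)\cdot\psi\Big)\,\d t=-\int_0^T\chi(t)\Big(\int\big(\Delta u-\nabla p-(u\cdot\nabla)u\big)(t)\cdot\psi\Big)\,\d t .
\]
Setting $g\coloneqq\Delta u-\nabla p-(u\cdot\nabla)u$, which by the first step and Theorem \ref{thmStrongSolnsSmooth} lies in $C((0,T);L^2)\cap C((0,T);L^\infty)$, this identity says that $t\mapsto\int u(t)\cdot\psi$ has distributional derivative $t\mapsto\int g(t)\cdot\psi$ for every $\psi$; since $g$ and $u$ are continuous into $L^2$, it follows that $u\in C^1((0,T);L^2)$ with $\p_t u=g$, so in particular $\p_t u\in C((0,T);L^2)\cap C((0,T);L^\infty)$.

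The remaining derivatives are obtained by induction. Differentiating $\p_t u=\Delta u-\nabla p-(u\cdot\nabla)u$ in space and time and differentiating the pressure formula in time, $\p_t p=\p_i\p_k(-\Delta)^{-1}(\p_t u_i\,u_k+u_i\,\p_t u_k)$ (so that $\p_t^k\nabla^m p$ is treated exactly as in the first step, with some factors of $u$ replaced by time derivatives of $u$), one checks that every new mixed derivative of $u$ or $p$ is a finite sum of products of already-controlled lower-order mixed derivatives; the estimate $\|fg\|\le\|f\|_\infty\|g\|$ keeps each term in $C((0,T);L^2)\cap C((0,T);L^\infty)$, and the Sobolev argument upgrades each $\nabla^m$ of the pressure to $L^\infty$. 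This proves $\p_t^k\nabla^m u,\ \p_t^k\nabla^m p\in C((0,T);L^2)\cap C((0,T);L^\infty)$ for all $k,m$. Finally, since $\p_t^k\nabla^m u\in C((0,T);L^2)$ for all $m$ means $\p_t^k u\in C((0,T);H^m)$ for all $m$, Sobolev embedding gives $\p_t^k u\in C((0,T);C^j(\RR^3))$ for all $j,k$, and likewise for $p$; a function whose iterated time derivatives all lie in $C((0,T);C^j(\RR^3))$ for every $j$ is $C^\infty$ on $\RR^3\times(0,T)$, which yields the last claim, the classical-solution statement being immediate from $\p_t u=\Delta u-\nabla p-(u\cdot\nabla)u$.

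The main obstacle is the pressure: because the Calder\'on--Zygmund operator $\p_i\p_k(-\Delta)^{-1}$ is not bounded on $L^\infty$, one cannot read the $L^\infty$ bound on $\nabla^m p$ off Lemma \ref{CZ_lemma} directly and must route it through $H^m$ regularity and the Sobolev embedding; once this is set up the rest is a routine, if lengthy, induction. A secondary point needing a little care is the passage from the weak form to the genuine identity $\p_t u=g$, which relies on the right-hand side $t\mapsto\int g(t)\cdot\psi$ being continuous.
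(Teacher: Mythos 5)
Your proof is correct and follows the same skeleton as the paper's: first upgrade the spatial regularity of $p$ via the representation $p=\p_i\p_k(-\Delta)^{-1}(u_iu_k)$ and the Plancherel Lemma, then read $\p_t u=\Delta u-\nabla p-(u\cdot\nabla)u$ off the weak formulation, then induct on the number of time derivatives. The one place you genuinely diverge is the $L^\infty$ control of $\nabla^m p$: you route it through $u_iu_k\in C((0,T);H^{m+2})$ and the Sobolev embedding $H^2(\RR^3)\hookrightarrow L^\infty(\RR^3)$, whereas the paper estimates $|p(x,t)-p(x,s)|$ directly from the Newtonian-potential representation $(-\Delta)^{-1}(\p_iu_j\,\p_ju_i)$ using Cauchy--Schwarz and the Hardy-type inequality of Lemma \ref{temp_singular_int_bound}. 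Both are valid; your version is shorter and more modern (at the cost of invoking Sobolev embedding, which is anyway justified by Theorem \ref{thmStrongSolnsSmooth}), while the paper's explicit kernel estimate stays closer to Leray's original toolkit and yields the continuity in $t$ into $L^\infty$ in the same breath. Your more careful spelling-out of the passage from the weak form to $u\in C^1((0,T);L^2)$ with $\p_tu=g$ is a welcome expansion of a step the paper compresses into one sentence.
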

\begin{proof}
From the representation of $p$, \eqref{sol_NS_form}, and the Plancherel Lemma (Lemma \ref{CZ_lemma}), we obtain that $\nabla^m p\in C((0,T);L^2)$ for all $m$. Moreover, using the Cauchy-Schwarz inequality and Lemma \ref{temp_singular_int_bound} we obtain for any $x\in \RR^3$
\begin{align*}
|p(x,t)-p(x,s)|& \leq \int \frac{|\p_i u_j (y,t) (\p_j u_i (y,t)-\p_j u_i(y,s) )|}{4\pi |x-y|} \, \d y\\ 
&\quad+ \int \frac{|\p_j u_i (y,s) (\p_i u_j (y,t)-\p_i u_j(y,s) )|}{4\pi |x-y|} \, \d y
\MoveEqLeft[0]
\\&&\mathllap{\leq C \| \nabla u (t) - \nabla u (s)\| \left( \sqrt{\int \frac{|\nabla u (y,t)|^2}{ |x-y|^2} \, \d y } +\sqrt{\int \frac{|\nabla u (y,s)|^2}{ |x-y|^2} \, \d y }  \right) }
\\&&\mathllap{\leq C  \| \nabla u (t) - \nabla u (s)\| \left( \| \nabla^2 u(t)\| +\| \nabla^2 u(s) \|  \right).}
\end{align*}
Thus $p\in C((0,T);L^\infty)$ and performing a similar calculation for each of the spatial derivatives of $p$ shows that $\nabla^m p \in C((0,T);L^\infty )$ for all $m$. Thus the distributional form of the Navier--Stokes equations \eqref{eqNSE_distr} gives $\nabla^m u_t \in C((0,T);L^2) \cap C((0,T);L^\infty )$ for all $m$.
 
 The regularity of higher derivatives in time follows by induction: regularity of $\p_t^k u$ follows from the regularity of $ u, \p_{t} u , \ldots , \p_t^{k-1} u $ and $\p_t^{k-1} p$, and by taking $(k-1)$-th time derivative (in a weak sense) of the Navier--Stokes equations, and the regularity of $\p_t^k p$ follows by taking $k$ time derivatives of the representation formula of $p$, \eqref{sol_NS_form}. 
\end{proof}
\begin{theorem}[Energy equality for strong solutions\footnote{This is {\it(3.4)} in \cite{Leray_1934}.}]\label{EE_open_int_thm}
A strong solution $u$ of the Navier--Stokes equations on $(0,T)$ satisfies
\begin{equation}\label{EE_strong_open_interval}
\| u(t_2 ) \|^2 + 2\int_{t_1}^{t_2} \| \nabla u (s) \|^2 \,\d s = \| u(t_1) \|^2
\end{equation}
for all $t_1, t_2 \in (0,T)$.
\end{theorem}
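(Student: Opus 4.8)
The plan is to regard the strong solution $u$, restricted to a compact subinterval $[t_1,t_2]\subset(0,T)$, as a solution of the Stokes equations with forcing $F=-(u\cdot\nabla)u$ and ``initial data'' $u(t_1)$, and then invoke the energy dissipation equality \eqref{en_dissipation} from Lemma \ref{prop_of_up_lemma}(iii), the only remaining point being that the resulting nonlinear term vanishes. By Corollary \ref{smoothness_reg_sol_corollary} the pair $u$, $p$ is smooth with $\nabla^m u,\nabla^m p\in C((0,T);L^2)\cap C((0,T);L^\infty)$ for all $m$; in particular $F=-(u\cdot\nabla)u$ obeys $\|F(s)\|\le\|u(s)\|_\infty\|\nabla u(s)\|$, so $F\in C((0,T);L^2)$. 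After a time shift the representation formula \eqref{sol_NS_form} (with base point $t_1$) expresses $u$ on $[t_1,t_2]$ through the modified formula \eqref{repr_formula1} with $Y=u$ and initial data $u(t_1)$; since $u$ is smooth and decays in $L^2$, an integration by parts with vanishing boundary term identifies this with the general-forcing formula \eqref{repr_formula} for $F=-(u\cdot\nabla)u$. Hence Lemma \ref{prop_of_up_lemma}(iii), applied on $[t_1,t_2]$, yields
\[
\|u(t_2)\|^2-\|u(t_1)\|^2+2\int_{t_1}^{t_2}\|\nabla u(s)\|^2\,\d s = 2\int_{t_1}^{t_2}\int u\cdot F\,\d x\,\d s = -2\int_{t_1}^{t_2}\int u\cdot(u\cdot\nabla)u\,\d x\,\d s.
\]

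It then remains to show the nonlinear integral vanishes for each $s$. Since $u$ is smooth and divergence free one has, pointwise, $u\cdot(u\cdot\nabla)u=\tfrac12\,u\cdot\nabla|u|^2=\tfrac12\,\mathrm{div}(|u|^2u)$, and $|u|^2u\in L^1$ with $\mathrm{div}(|u|^2u)\in L^1$ (by $u\in L^2\cap L^\infty$, $\nabla u\in L^2\cap L^\infty$); multiplying by a cut-off $\chi(\cdot/R)$, integrating by parts, and letting $R\to\infty$ (the interface term being $O(R^{-1}\|\,|u|^2u\|_{L^1(R\le|x|\le 2R)})\to0$) gives $\int u\cdot(u\cdot\nabla)u\,\d x=0$. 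Substituting this above produces exactly \eqref{EE_strong_open_interval}.

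Alternatively, one can bypass the Stokes machinery and argue directly from the (now classical) equation \eqref{eqNSE}: dotting with $u$ and using $\mathrm{div}\,u=0$ gives $\tfrac12\p_t|u|^2+|\nabla u|^2=-\mathrm{div}\bigl(\tfrac12|u|^2u-\tfrac12\nabla|u|^2+pu\bigr)$; integrating over $\RR^3$, where the flux and its divergence lie in $L^1$ thanks to the smoothness and $L^2$ bounds of Corollary \ref{smoothness_reg_sol_corollary} so the same cut-off argument removes the divergence term, and then over $s\in[t_1,t_2]$, with $\tfrac{\d}{\d s}\|u(s)\|^2=2\int u\cdot\p_s u$ justified by $u\in C^1((0,T);L^2)$, gives the identity directly. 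Either way, the sole point needing care is the vanishing of boundary terms at spatial infinity — both in identifying \eqref{repr_formula1} with \eqref{repr_formula} and in the integrations by parts — and this is supplied entirely by the $L^2$ (and $L^\infty$) decay of $u$, $\nabla u$, $p$ from Corollary \ref{smoothness_reg_sol_corollary} together with a standard cut-off and dominated convergence; everything else is routine.
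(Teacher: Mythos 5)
Your proposal is correct and takes essentially the same route as the paper: identify the strong solution on $[t_1,t_2]$ as a solution of the Stokes equations with forcing $F=-(u\cdot\nabla)u\in C((0,T);L^2)$ (via the smoothness results), apply the energy dissipation equality of Lemma \ref{prop_of_up_lemma}(iii), and then observe that $\int u\cdot(u\cdot\nabla)u=0$ by incompressibility. Your cut-off argument at spatial infinity merely makes explicit the integration by parts the paper performs in \eqref{temp_cancellation_EE}, so no further comment is needed.
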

\begin{proof}
Since Theorem \ref{thmStrongSolnsSmooth} gives in particular $(u\cdot \nabla ) u \in C((0,T);L^2)$, Lemma \ref{prop_of_up_lemma}, (iii)  gives
\[
\| u(t_2) \|^2 - \| u(t_1)  \|^2 + 2 \int_{t_1}^{t_2} \| \nabla u(s) \|^2 \,\d s = 2\int_{t_1}^{t_2} \int u \cdot (u\cdot \nabla ) u\,\, \d x \,\d s .
\]
The theorem follows by noting that the right-hand side vanishes: integration by parts and the incompressibility constraint, $\p_k u_k=0$, give
\begin{equation}\label{temp_cancellation_EE}
\int u\cdot (u\cdot \nabla ) u = \int u_i \, u_k \,\p_k u_i = - \int \p_k u_i \, u_k \,  u_i ,
\end{equation}
that is $ \int u \cdot (u\cdot \nabla ) u =0$.
\end{proof}
We now show that we can control the separation of two strong solutions.
\begin{lemma}[Comparison of two strong solutions\footnote{This is Section 18 of \cite{Leray_1934}.}]\label{comparison_two_sols_lem}
Suppose that $u$, $v$ are strong solutions to the Navier--Stokes equations on $(0,T)$ and let
\[
w\coloneqq u-v .
\]
Then 
\begin{equation}\label{comparison_two_sols_inequality}
\| w(t_2)\|^2 \leq  \| w(t_1) \|^2 \mathrm{e}^{\frac{1}{2} \int_{t_1}^{t_2}  \| u(s) \|^2_\infty \,\d s}
\end{equation}
for $t_1, t_2\in (0,T)$ with $t_1<t_2$. 
\end{lemma}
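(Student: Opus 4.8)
The plan is to turn the statement into a Gronwall estimate for $t\mapsto\|w(t)\|^2$, where $w\coloneqq u-v$.

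\textbf{Setting up the equation for $w$.} Both $u$ and $v$ are strong solutions, so by Corollary~\ref{smoothness_reg_sol_corollary} they, and hence $w$, are smooth on $\RR^3\times(0,T)$ with all spatial derivatives in $C((0,T);L^2)\cap C((0,T);L^\infty)$. Subtracting the representation formulae \eqref{sol_NS_form} written for $u$ and for $v$, and using the pointwise identity $u_iu_k-v_iv_k=w_iu_k+v_iw_k$ (which holds since $w=u-v$), I would conclude that, for $t_1<t_2$ in $(0,T)$, $w$ is the Stokes solution on $[t_1,T)$ with initial datum $w(t_1)$ and forcing
\[
F\coloneqq -(u\cdot\nabla)u+(v\cdot\nabla)v=-(w\cdot\nabla)u-(v\cdot\nabla)w,
\]
the two expressions for $F$ agreeing because $w=u-v$. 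The smoothness above gives $F\in C([t_1,T);L^2)$, so that Lemma~\ref{prop_of_up_lemma} applies to $w$ on $[t_1,T)$ (after the harmless time shift $\tau\mapsto t_1+\tau$).

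\textbf{Energy identity and the nonlinear term.} Applying the energy dissipation equality \eqref{en_dissipation} of Lemma~\ref{prop_of_up_lemma}(iii) to $w$ gives, for $t_2\in(t_1,T)$,
\[
\|w(t_2)\|^2-\|w(t_1)\|^2+2\int_{t_1}^{t_2}\|\nabla w(s)\|^2\,\d s=2\int_{t_1}^{t_2}\int w\cdot F\,\,\d x\,\d s.
\]
Next I would integrate by parts on the right-hand side, which is legitimate since $u$, $v$, $w$ and all their derivatives decay at infinity. Using $\mathrm{div}\,v=0$ one finds $\int(v\cdot\nabla)w\cdot w\,\d x=\tfrac12\int v_k\,\p_k|w|^2\,\d x=0$, and using $\mathrm{div}\,w=0$ one finds $-\int(w\cdot\nabla)u\cdot w\,\d x=\int u_i\,w_k\,\p_kw_i\,\d x$; hence $\int w\cdot F\,\d x=\int u_i\,w_k\,\p_kw_i\,\d x$. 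By the Cauchy--Schwarz and Young inequalities,
\[
2\int w\cdot F\,\d x\le 2\|u\|_\infty\|w\|\|\nabla w\|\le\tfrac12\|u\|_\infty^2\|w\|^2+2\|\nabla w\|^2,
\]
so after cancelling the $2\|\nabla w\|^2$ terms,
\[
\|w(t_2)\|^2\le\|w(t_1)\|^2+\tfrac12\int_{t_1}^{t_2}\|u(s)\|_\infty^2\,\|w(s)\|^2\,\d s.
\]

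\textbf{Closing.} Since $s\mapsto\|u(s)\|_\infty^2$ is continuous on $(0,T)$, hence integrable on $[t_1,t_2]$, and $t\mapsto\|w(t)\|^2$ is continuous, the (linear) integral Gronwall inequality yields $\|w(t_2)\|^2\le\|w(t_1)\|^2\exp\big(\tfrac12\int_{t_1}^{t_2}\|u(s)\|_\infty^2\,\d s\big)$, which is \eqref{comparison_two_sols_inequality}. The only step requiring real care is the first one: one must verify that $w$, now with initial time $t_1>0$, genuinely satisfies the hypotheses of Lemma~\ref{prop_of_up_lemma}(iii), i.e.\ that it is the Stokes solution associated with the $L^2$-continuous forcing $F$ — which is exactly what subtracting the representation formulae \eqref{sol_NS_form} records. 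Everything else is a routine energy computation. (Alternatively, since $u$ and $v$ are classical solutions one can bypass Lemma~\ref{prop_of_up_lemma}(iii) altogether and obtain the energy identity for $w$ directly by multiplying $\p_tw-\Delta w+\nabla(p_u-p_v)=F$ by $w$ and integrating over $\RR^3\times(t_1,t_2)$, the pressure term dropping out because $\mathrm{div}\,w=0$.)
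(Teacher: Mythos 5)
Your proof is correct and follows essentially the same route as the paper: the same decomposition $-(u\cdot\nabla)u+(v\cdot\nabla)v=-(v\cdot\nabla)w-(w\cdot\nabla)u$, the same cancellation $\int w\cdot(v\cdot\nabla)w=0$, the same Cauchy--Schwarz/Young step absorbing $\|\nabla w\|^2$, and Gronwall. The paper in fact takes exactly the ``alternative'' you mention in your final parenthesis --- multiplying the pointwise equation for $w$ by $w$ and integrating, which sidesteps the (minor) care needed to invoke Lemma~\ref{prop_of_up_lemma}(iii) for a forcing written via $\nabla\mathcal{T}$ rather than $\mathcal{T}$ --- but this is a difference of bookkeeping, not of substance.
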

In particular, if $u$, $v$ coincide at time $t_1$ then they continue to coincide for the later times. We will extend this uniqueness property to account for the initial data in Section \ref{sec_local_exist_uni_strong} (Lemma \ref{uniqueness_and_EE_strong_sols_lem}).
\begin{proof}
Since both $u$ and $v$ satisfy the Navier--Stokes equations pointwise, subtracting them gives
\[
\p_t w  - \Delta w+\nabla q = -(u \cdot\nabla) u + (v\cdot \nabla) v = - (v \cdot \nabla ) w - (w\cdot \nabla ) u.
\]
As in \eqref{temp_cancellation_EE} we have $\int w\cdot (v\cdot \nabla ) w =0$ and hence multiplying the above equality by $w$, integrating by parts in spatial variables, using the incompressibility constraint, $\p_k w_k =0$, we obtain for $t\in (0,T)$
\[
\begin{split}
\frac{1}{2}\frac{\d}{\d t}\|w(t)\|^2+ \|\nabla w(t)\|^2&=-\int w \cdot \left( (v\cdot\nabla)w + (w\cdot\nabla)u \right) \\
&= \int w_i \, w_k \, \p_k u_i = - \int  \p_k w_i \, w_k \,  u_i \\
&\leq    \|\nabla w(t)\|\|w(t)\|\|u(t)\|_{\infty}\\
&\leq \| \nabla w(t) \|^2 + \frac{1}{4} \|w(t)\|^2 \|u(t)\|_{\infty}^2,
\end{split}
\]
where we also used the Cauchy-Schwarz and Young inequalities (and we omitted the argument ``$t$'' under the integrals). Hence
\[
\frac{\d}{\d t}\|w(t)\|^2 \leq \frac{1}{2} \|w(t)\|^2 \|u(t)\|_{\infty}^2,
\]
and the claim follows by applying Gronwall's inequality.
\end{proof}
Finally, the remark after the proof of Theorem \ref{thmStrongSolnsSmooth} suggests the following convergence property of a family of strong solutions of the Navier--Stokes equations.
\begin{lemma}[Convergence lemma\footnote{This is Lemma 9 in \cite{Leray_1934}.}]\label{lem9_from_Leray}
Suppose $\{ u_\varepsilon \}_{\varepsilon >0}$ is a family of strong solutions of the Navier-Stokes equations such that 
\[\| u_\varepsilon (t) \|_{\infty}, \| u_\varepsilon (t) \| \leq f(t)\qquad \text{ for } t\in (0,T), 
\]
where $f$ is a continuous function on $(0,T)$. Then there exists a sequence $ \varepsilon_k \to 0^+$ and a function $u$, such that $u_{\varepsilon_k} \to u ,\, \nabla u_{\varepsilon_k} \to \nabla u$  uniformly on compact sets in  $\RR^3 \times (0,T)$ as $\varepsilon_k \to 0^+$.

Moreover $u$ is a strong solution of the Navier-Stokes equations in $(0,T)$ and satisfies $\| u (t) \|_{\infty}, \| u (t) \| \leq f(t) $ for all $t\in (0,T)$. 
\end{lemma}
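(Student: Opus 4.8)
The plan is to first extract a locally uniform estimate on all the higher-order norms $\|\nabla^m u_\varepsilon(t)\|$ and $\|\nabla^m u_\varepsilon(t)\|_\infty$ from the hypothesis that $\|u_\varepsilon(t)\|$ and $\|u_\varepsilon(t)\|_\infty$ are controlled by a single continuous function $f(t)$. The crucial observation, already flagged in the remark after the proof of Theorem~\ref{thmStrongSolnsSmooth}, is that the bounds produced there on $\|\nabla^m u(s)\|$ and $\|\nabla^m u(s)\|_\infty$ depend only on $\|u(t)\|$ and $\|u(t)\|_\infty$ for $t\in(0,T)$ (via the representation formulae \eqref{sol_NS_form} and the estimates in Lemma~\ref{additional_prop_up_lemma}). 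Applying those same bounds to each $u_\varepsilon$, with $\|u_\varepsilon(t)\|$, $\|u_\varepsilon(t)\|_\infty$ replaced by $f(t)$, gives that for every compact $K\subset\RR^3\times(0,T)$ and every $m,k\ge0$ the norms $\|\p_t^k\nabla^m u_\varepsilon\|_{L^\infty(K)}$ are bounded uniformly in $\varepsilon$; one also gets uniform bounds on $\p_t^k\nabla^m p_\varepsilon$ from the argument in Corollary~\ref{smoothness_reg_sol_corollary}.

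Next I would run a standard Arzelà–Ascoli plus diagonal extraction argument. From the uniform bounds on $u_\varepsilon$ and $\nabla u_\varepsilon$ (and $\p_t u_\varepsilon$, say) on compact sets, the family $\{u_\varepsilon\}$ is equicontinuous and equibounded on compacts, so along a subsequence $\varepsilon_k\to0^+$ we have $u_{\varepsilon_k}\to u$ and $\nabla u_{\varepsilon_k}\to \nabla u$ (in fact all $\p_t^k\nabla^m u_{\varepsilon_k}$ converge) uniformly on compact subsets of $\RR^3\times(0,T)$, with the limit $u$ smooth. Exhausting $\RR^3\times(0,T)$ by countably many compacts and diagonalising gives the asserted uniform-on-compacts convergence on the whole of $\RR^3\times(0,T)$. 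The pressures $p_{\varepsilon_k}$ likewise converge (along a further subsequence, or the same one) to a smooth $p$, and by uniform convergence the limiting pair $(u,p)$ satisfies the Navier--Stokes equations \eqref{eqNSE}--\eqref{eqNSE_incomp} classically on $\RR^3\times(0,T)$.

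To conclude that $u$ is a \emph{strong solution} in the sense of Definition~\ref{def_strong_open} I still need the right function-space membership, namely $u\in C((0,T);L^2)\cap C((0,T);L^\infty)$, together with the bounds $\|u(t)\|_\infty,\|u(t)\|\le f(t)$. The $L^\infty$ bound and continuity are immediate: for fixed $t$, $u_{\varepsilon_k}(\cdot,t)\to u(\cdot,t)$ uniformly on compacts and $\|u_{\varepsilon_k}(t)\|_\infty\le f(t)$, so $\|u(t)\|_\infty\le f(t)$; continuity in $t$ into $L^\infty$ follows from the uniform-on-compacts convergence of $u_{\varepsilon_k}$ and the uniform $C^1$-in-time bounds (or one re-derives it from the representation formula for $u$, exactly as in Lemma~\ref{additional_prop_up_lemma}(i), now that $u$ itself satisfies \eqref{sol_NS_form}). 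For the $L^2$ bound I would use Fatou's lemma: $\int|u(t)|^2 = \int \lim_k |u_{\varepsilon_k}(t)|^2 \le \liminf_k \int|u_{\varepsilon_k}(t)|^2\le f(t)^2$, which also shows $u(t)\in L^2$; continuity $u\in C((0,T);L^2)$ then follows from the representation formula \eqref{sol_NS_form} and Lemma~\ref{additional_prop_up_lemma}(iii) applied with $Y=u$, once we know $u\in C((0,T);L^\infty)$. Finally, weak divergence-freeness of $u$ passes to the limit trivially from that of each $u_{\varepsilon_k}$.

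The main obstacle is the first step: making precise that the higher-derivative estimates obtained in the proof of Theorem~\ref{thmStrongSolnsSmooth} are genuinely \emph{quantitative} and depend on $u_\varepsilon$ only through $\sup$'s of $\|u_\varepsilon\|$ and $\|u_\varepsilon\|_\infty$ on subintervals bounded away from $0$ and $T$. This requires revisiting the inductive scheme there — each step invokes Lemma~\ref{additional_prop_up_lemma}(ii),(iv) (and Lemma~\ref{prop_of_up_lemma}(ii)), whose right-hand sides are explicit integrals of products of lower-order norms of $Y=u$ against the kernels $(t-s)^{-3/4}$, plus the $\|u_0\|\,t^{-\text{power}}$ terms coming from the heat part — and checking that replacing the role of ``$u_0$'' by $u_{\varepsilon_k}(t_1)$ for a fixed interior time $t_1$, together with the bound $\|u_{\varepsilon_k}(t_1)\|\le f(t_1)$, yields $\varepsilon$-uniform constants on $[t_1+\delta,\,T']\subset(0,T)$. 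Everything after that is routine compactness.
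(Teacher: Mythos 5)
Your proposal follows essentially the same route as the paper: uniform local bounds on all space–time derivatives of $u_\varepsilon$ and $p_\varepsilon$ via the quantitative remark after Theorem \ref{thmStrongSolnsSmooth} and Corollary \ref{smoothness_reg_sol_corollary}, Arzel\`a--Ascoli with a double diagonal argument, passage to the limit in the equations, and Fatou for the $L^2$ bound. The only (harmless) deviation is in establishing $u\in C((0,T);L^2)$: the paper uses the uniform bounds on $\|\p_t u_{\varepsilon_k}(t)\|$ together with the mean value theorem and Fatou, whereas your representation-formula route also works provided you note that $\|u(t)u(t)-u(s)u(s)\|_{L^2}$ is controlled by $\|u(t)-u(s)\|_{L^\infty}$ times the (already known, pointwise-in-$t$) $L^2$ bounds, so that invoking Lemma \ref{additional_prop_up_lemma}(iii) with $Y=u$ is not circular.
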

\begin{proof}
Let $p_\varepsilon (t) $ denote the pressure function corresponding to $u_\varepsilon$ (which is determined uniquely by the representation formula \eqref{sol_NS_form} with $u$ replaced by $u_\varepsilon$). Fix a bounded domain $\Omega \subset \RR^3$ and $\delta >0$. We see from Corollary \ref{smoothness_reg_sol_corollary} that for a multi-index $\alpha $ and $m$ such that $m, |\alpha |\leq 3 $ 
\[
|\p_{t^m} D^\alpha u_\varepsilon |,| \p_{t^m} D^\alpha p_\varepsilon | \leq C_{\Omega , \delta} \quad \text{ on } \Omega_\delta \times (\delta /2 , T- \delta /2)  , 
\]
where $\Omega_\delta \coloneqq \Omega + B(0,\delta )$. Thus an application of the Arzel\`a--Ascoli theorem and a simple diagonalization argument produces a sequence $\{ \varepsilon_k \}$ such that all derivatives $\p_{t^m} D^\alpha u_{\varepsilon_k} $, $\p_{t^m} D^\alpha p_{\varepsilon_k}$ with $m,|\alpha |\leq 2$ converge to the respective derivatives of $u$ and $p$ uniformly on $\Omega \times (\delta , T-\delta )$, for some functions $u$, $p$. In particular  the Navier--Stokes equations
\[
\begin{split} \p_t u_{\varepsilon_k} - \Delta u_{\varepsilon_k} +\nabla p_{\varepsilon_k} &= -(u_{\varepsilon_k} \cdot\nabla)u_{\varepsilon_k},\\
\mathrm{div}\, u_{\varepsilon_k}&=0
\end{split}
\]
converge uniformly on $\Omega \times (\delta , T-\delta )$ to Navier--Stokes equations for $u$ (in the sense that all terms converge). Now consider a sequence of bounded sets $\Omega_n\nearrow \RR^3$, a sequence $\delta_n \to 0^+$ and apply another diagonal argument to obtain a subsequence $\{ \varepsilon_k \}$ (which we relabel) such that for $m$, $\alpha$ with $m,|\alpha |\leq 2$
\[
\p_{t^m} D^\alpha u_{\varepsilon_k}  \to \p_{t^m} D^\alpha u,\quad \p_{t^m} D^\alpha p_{\varepsilon_k} \to  \p_{t^m} D^\alpha p
\]
uniformly on compact sets in $\RR^3 \times (0,T)$. In particular $u$, $p$ satisfy the Navier--Stokes equations pointwise, and thus also in the sense of distributions \eqref{eqNSE_distr}. That $\| u(t) \|_\infty \leq f(t)$ holds for all $t$ is clear, and the inequality $\| u(t) \|\leq f(t)$ follows by an application of Fatou's lemma. According to Definition \ref{def_strong_open} it remains to verify that $u\in C((0,T);L^2)\cap C((0,T);L^\infty )$. For this let $I$ be a closed interval in $(0,T)$ and note that there exists $M>0$ such that $\| \p_t u_{\varepsilon_k} (t) \|_\infty , \| \p_t u_{\varepsilon_k } (t) \| \leq M$ for $t\in I$, $k\geq 0$ (see Corollary \ref{smoothness_reg_sol_corollary}). Thus the mean value theorem gives 
\[
\| u_{\varepsilon_k } (t) - u_{\varepsilon_k } (s) \|_\infty , \, \| u_{\varepsilon_k }(t) - u_{\varepsilon_k }(s) \|  \leq M |t-s| \quad \text{ for } s,t \in I,\,k\geq 0.
\] 
Thus taking the limit in $k$ (and applying Fatou's lemma) gives the required continuity.
\end{proof}

\subsection{Local existence and uniqueness of strong solutions}\label{sec_local_exist_uni_strong}
In this section we study the Navier--Stokes initial value problem, that is we consider the equations \eqref{eqNSE}, \eqref{eqNSE_incomp} with initial data. For this reason we extend the definition of strong solutions (Definition \ref{def_strong_open}) to the half-closed time interval $[0,T)$.
\begin{definition}\label{def_reg_sol_halfclosed}
A function $u$ is a \emph{strong solution of the Navier--Stokes equations on $[0,T)$} if for some $p\in L_{\loc}^1 (\RR^3 \times [0,T))$ 
\begin{equation}\label{eqNSE_distr_incl_zero}
\int u(0) \cdot f(0)  +\int_0^T  \int (u \cdot (f_t + \Lap f) + p \, \mathrm{div}\, f) = \int_0^T \int u \cdot (u \cdot \nabla) f
\end{equation}
for all $f\in C_0^\infty (\RR^3 \times [0,T);\RR^3)$, $u(t)$ is weakly divergence free for $t\in (0,T)$, and 
\eqnb\label{temp_regularity_in_def}
 u\in C([0,T); L^2)\cap C((0,T);L^\infty )
 \eqne
with $\| u (t) \|_\infty$ bounded as $t\to 0^+$.
\end{definition}
The regularity \eqref{temp_regularity_in_def} is a part of Leray's definition of solutions on the time interval $[0,T)$, but he also requires $\nabla u \in C([0,T);L^2)$ and that $u$ and its spacial derivatives are continuous at $t=0$ (see pp. 220-221 of his paper). It is remarkable that by use of the weak formulation \eqref{eqNSE_distr_incl_zero}, these additional assumptions are not necessary for showing local well-posedness (see Theorem \ref{local_exist_strong_thm} below). Moreover, the above definition is much less restrictive than the commonly used definition of strong solutions, which usually requires
\[
u \in L^\infty_{\mathrm{loc}} ([0,T);H^1 ) \cap L^2_{\mathrm{loc}} ([0,T);H^2 ),
\]
and so consequently $u\in C([0,T);H^1)$ see, for example, Definition 6.1 and the following discussion in \cite{JCR_R_S_NSE_book}.
In particular, Definition \ref{def_reg_sol_halfclosed} makes no assumption on the regularity of $\nabla u(t)$ for times $t$ near $0$.   

Since Definition \ref{def_reg_sol_halfclosed} is an extension of the definition of the strong solution on the open time interval $(0,T)$ (Definition \ref{def_strong_open}), we see that $u$, $p$ admit the representation formulae \eqref{sol_NS_form}. Moreover, now the representation formula also holds for $t_1=0$, 
\begin{equation}\label{repr_half_closed}
u(t)= \Phi (t)\ast u (0) +\int_{0}^{t}  \nabla \mathcal{T}(t-s )\ast \left[ u(s)\, u(s) \right]  \,\d s
\end{equation}
for $t\in (0,T)$, a consequence of the definition above and Theorem \ref{sol_distributions_anyX_theorem}.

We also see that \eqref{eqNSE_distr_incl_zero} is equivalent to
\begin{equation}\label{eqNSE_distr_incl_zero_more_flex}
\begin{split}
\int u(0) \cdot f(0)  &-\int u(t) \cdot f(t)   \\
&+\int_0^t  \int (u \cdot (f_t + \Lap f) + p \, \mathrm{div}\, f) = \int_0^t \int u \cdot (u \cdot \nabla) f
\end{split}
\end{equation}
being satisfied for all $t \in (0,T)$, $f\in C_0^\infty (\RR^3 \times [0,T);\RR^3)$ (see the discussion following Definition \ref{def_strong_open}). A consequence of this fact is that a strong solution on the time interval $[0,T)$ is also a strong solution on the time interval $[\tau , T)$ for any $\tau \in (0,T)$.

Given the definition of strong solutions on the half-closed time interval $[0,T)$, we immediately obtain the energy equality \eqref{EE_strong_open_interval} with $t_1=0$ and the uniqueness of strong solutions.
\begin{lemma}[Uniqueness and the energy equality for local strong solution]\label{uniqueness_and_EE_strong_sols_lem}
A strong solution $u$ to the Navier--Stokes equations on $[0,T)$ satisfies the \emph{energy equality}
\begin{equation}\label{EE_half_closed}
\| u(t ) \|^2 + 2\int_{0}^{t} \| \nabla u (s) \|^2 \,\d s = \| u_0 \|^2
\end{equation}
for $t\in [0,T )$.

Moreover, if $v$ is another strong solution to the Navier--Stokes equations on $[0,T)$ with $v(0)=u(0)$ then $u\equiv v$. 
\end{lemma}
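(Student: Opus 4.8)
The plan is to obtain both assertions by passing to the limit $t_1\to 0^+$ in the statements already established on the open interval $(0,T)$; note that a strong solution on $[0,T)$ is in particular a strong solution on $(0,T)$ in the sense of Definition~\ref{def_strong_open}, since the weak form \eqref{eqNSE_distr} follows by restricting test functions and $u\in C((0,T);L^2)\cap C((0,T);L^\infty)$.

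For the energy equality I would start from Theorem~\ref{EE_open_int_thm}, which gives
\[
\| u(t_2) \|^2 + 2\int_{t_1}^{t_2} \| \nabla u(s) \|^2\,\d s = \| u(t_1) \|^2
\]
for all $0<t_1<t_2<T$. Fixing $t_2$ and letting $t_1\to 0^+$, the continuity $u\in C([0,T);L^2)$ gives $\|u(t_1)\|^2\to\|u(0)\|^2=\|u_0\|^2$; in particular the right-hand side stays bounded, so the quantity $\int_{t_1}^{t_2}\|\nabla u(s)\|^2\,\d s$, which increases as $t_1$ decreases, is bounded above and therefore converges, by the monotone convergence theorem, to the finite integral $\int_0^{t_2}\|\nabla u(s)\|^2\,\d s$. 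Passing to the limit yields \eqref{EE_half_closed}.

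For uniqueness, let $u,v$ be strong solutions on $[0,T)$ with $u(0)=v(0)$ and set $w\coloneqq u-v$. Applying Lemma~\ref{comparison_two_sols_lem} on $(0,T)$ gives
\[
\| w(t_2) \|^2 \leq \| w(t_1) \|^2\, \mathrm{e}^{\frac12\int_{t_1}^{t_2}\|u(s)\|_\infty^2\,\d s}, \qquad 0<t_1<t_2<T.
\]
Again I fix $t_2$ and let $t_1\to 0^+$: the prefactor $\|w(t_1)\|^2\to\|u(0)-v(0)\|^2=0$ since $w\in C([0,T);L^2)$, while the exponential factor stays bounded because, by Definition~\ref{def_reg_sol_halfclosed}, the map $s\mapsto\|u(s)\|_\infty$ is continuous on $(0,T)$ and bounded as $s\to 0^+$, so $\int_0^{t_2}\|u(s)\|_\infty^2\,\d s<\infty$. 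Hence $\|w(t_2)\|^2=0$ for every $t_2\in(0,T)$, and combined with $w(0)=0$ this gives $u\equiv v$.

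I do not expect any genuine obstacle here; the only point requiring care — and precisely the reason the boundedness conditions were built into Definition~\ref{def_reg_sol_halfclosed} — is verifying that $\int_0^{t_2}\|\nabla u(s)\|^2\,\d s$ and $\int_0^{t_2}\|u(s)\|_\infty^2\,\d s$ are finite, so that the limits $t_1\to0^+$ may legitimately be taken.
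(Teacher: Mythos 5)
Your proposal is correct and follows exactly the paper's argument: take $t_1\to 0^+$ in the open-interval energy equality \eqref{EE_strong_open_interval} and in the comparison inequality \eqref{comparison_two_sols_inequality}, using $u\in C([0,T);L^2)$, the Monotone Convergence Theorem for $\int\|\nabla u\|^2$, and the boundedness of $\|u(t)\|_\infty$ near $t=0$ from Definition~\ref{def_reg_sol_halfclosed} to control the exponential factor. The paper states this in one sentence; you have merely supplied the same details explicitly.
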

\begin{proof} The claim follows by taking the limit $t_1 \to 0^+$ in \eqref{EE_strong_open_interval} and \eqref{comparison_two_sols_inequality} and applying the Monotone Convergence Theorem.
\end{proof}
From the lemma we obtain the semigroup property: if $\tau \in (0,T_1)$ and $u$, $\widetilde{u}$ are strong solutions on the intervals $[0,T_1)$, $[\tau , T_2)$ respectively with $\widetilde{u} (\tau )= u(\tau )$, then $\widetilde{u}=u$ on the time interval $ [\tau , \min \{T_1,T_2\} )$.
We now state the central theorem regarding strong solutions to the Navier--Stokes equations.
\begin{theorem}[Local existence of strong solutions]\label{local_exist_strong_thm}
If $u_0 \in H  \cap L^\infty $ (that is $u_0 \in L^2 \cap L^\infty $ is weakly divergence free, see \eqref{def_of_HV}) then there exists a unique strong solution $u$ of the Navier--Stokes equations on $[0,T)$ with $u(0)=u_0$, where $T>C/\| u_0 \|_{\infty }^2$.
\end{theorem}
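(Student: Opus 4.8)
The uniqueness assertion is already contained in Lemma~\ref{uniqueness_and_EE_strong_sols_lem}, so I would focus on constructing a strong solution on an interval $[0,T)$ with $T$ of order $\|u_0\|_\infty^{-2}$, by a Picard iteration built on the representation formula~\eqref{repr_half_closed}. Put $u^{(0)}(t):=\Phi(t)\ast u_0$ and, for $n\geq 0$,
\[
u^{(n+1)}(t)=\Psi\bigl(u^{(n)}\bigr)(t):=\Phi(t)\ast u_0+\int_0^t \nabla\mathcal{T}(t-s)\ast\bigl[u^{(n)}(s)\,u^{(n)}(s)\bigr]\,\d s ,
\]
which is exactly~\eqref{repr_formula1} with $Y=u^{(n)}$. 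First I would check that the iteration stays in the correct class: if $v\in C([0,T);L^2)\cap C((0,T);L^\infty)$ is weakly divergence free with $\|v(t)\|_\infty$ bounded as $t\to0^+$, then by Theorem~\ref{sol_distributions_anyX_theorem} (applied with $Y=v$, which gives that $\Psi(v)$ is weakly divergence free, lies in $C([0,T);L^2)$, and satisfies $\|\Psi(v)(t)-u_0\|\to0$) together with Lemma~\ref{additional_prop_up_lemma}(i) (which gives $\Psi(v)\in C((0,T);L^\infty)$), $\Psi(v)$ enjoys the same properties. Since $u^{(0)}=\Phi(\cdot)\ast u_0$ does, induction gives it for every $u^{(n)}$.

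Next I would establish uniform bounds. Fix $M_\infty:=2\|u_0\|_\infty$, $M_2:=2\|u_0\|$, and $T:=c\,\|u_0\|_\infty^{-2}$ for a small numerical constant $c>0$. Corollary~\ref{additional_prop_up_corollary} with $Y=Z=u^{(n)}$ (so the corresponding forcing is $-(u^{(n)}\cdot\nabla)u^{(n)}$) gives
\[
\|u^{(n+1)}(t)\|_\infty\leq C\int_0^t\frac{\|u^{(n)}(s)\|_\infty^2}{\sqrt{t-s}}\,\d s+\|u_0\|_\infty,\qquad
\|u^{(n+1)}(t)\|\leq C\int_0^t\frac{\|u^{(n)}(s)\|_\infty\,\|u^{(n)}(s)\|}{\sqrt{t-s}}\,\d s+\|u_0\|,
\]
and since $\int_0^t(t-s)^{-1/2}\,\d s=2\sqrt t\leq 2\sqrt T$, an induction (base case $\|u^{(0)}(t)\|_\infty\leq\|u_0\|_\infty$, $\|u^{(0)}(t)\|\leq\|u_0\|$) yields $\sup_{(0,T)}\|u^{(n)}(t)\|_\infty\leq M_\infty$ and $\sup_{[0,T]}\|u^{(n)}(t)\|\leq M_2$ for all $n$, once $c$ is small enough. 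The key observation is that the $L^\infty$ bound closes as soon as $CM_\infty^2\sqrt T\leq\tfrac12\|u_0\|_\infty$, i.e.\ $\sqrt T\leq(8C\|u_0\|_\infty)^{-1}$, and---because a separate radius $M_2$ is used for the $L^2$ norm---the $L^2$ bound closes under the very same condition $CM_\infty M_2\sqrt T\leq\|u_0\|$. Thus $T$ may be taken to depend only on $\|u_0\|_\infty$, exactly as in the statement.

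I would then show that the iteration is Cauchy. Writing $w^{(n)}:=u^{(n+1)}-u^{(n)}$ and using $u^{(n)}u^{(n)}-u^{(n-1)}u^{(n-1)}=u^{(n)}w^{(n-1)}+w^{(n-1)}u^{(n-1)}$, each summand of $w^{(n)}$ is of the form~\eqref{repr_formula1_two_vectors} with vanishing initial data, so Corollary~\ref{additional_prop_up_corollary} gives
\[
\sup_{(0,T)}\|w^{(n)}(t)\|_\infty\leq 4CM_\infty\sqrt T\,\sup_{(0,T)}\|w^{(n-1)}(t)\|_\infty ,
\]
whose coefficient is $\leq\tfrac12$ once $c$ is small (again involving only $\|u_0\|_\infty$), so $\{u^{(n)}\}$ converges geometrically in $C((0,T);L^\infty)$. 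The companion bound is $\sup_{[0,T]}\|w^{(n)}(t)\|\leq\tfrac12\sup_{[0,T]}\|w^{(n-1)}(t)\|+CM_2\sqrt T\,\sup_{(0,T)}\|w^{(n-1)}(t)\|_\infty$, and since the last term is geometrically small the recursion forces $\sum_n\sup_{[0,T]}\|w^{(n)}(t)\|<\infty$; hence $\{u^{(n)}\}$ is Cauchy in $C([0,T);L^2)$ as well. Let $u$ be the common limit. Then $u\in C([0,T);L^2)\cap C((0,T);L^\infty)$ with $\sup_{(0,T)}\|u(t)\|_\infty\leq M_\infty$, $\sup_{[0,T]}\|u(t)\|\leq M_2$ (for the $L^\infty$ bound, pass to a subsequence converging a.e.\ and use Fatou's lemma), $u$ is weakly divergence free (a closed condition under $L^2$ convergence), and $u(0)=u_0$.

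Finally I would identify the limit. Splitting $u^{(n)}u^{(n)}-u\,u=u^{(n)}(u^{(n)}-u)+(u^{(n)}-u)u$ as before and invoking Corollary~\ref{additional_prop_up_corollary} once more shows that $\Psi(u^{(n)})\to\Psi(u)$ in $C([0,T);L^2)$, hence $u=\Psi(u)$; that is, $u$ (together with $p$ defined by~\eqref{repr_formula1_p}) is given by the representation formulae~\eqref{repr_formula1},~\eqref{repr_formula1_p} with $Y=u$. Theorem~\ref{sol_distributions_anyX_theorem} then shows that $(u,p)$ is the distributional solution of the Stokes equations with $F=-(u\cdot\nabla)u$ and initial data $u_0$; but with $Y=u$ the identity~\eqref{eqLinSys_distributions} is precisely the weak form~\eqref{eqNSE_distr_incl_zero} of the Navier--Stokes equations with initial data $u_0$. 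Together with the regularity established above, $u$ is a strong solution on $[0,T)$ with $u(0)=u_0$, and uniqueness of the strong solution is Lemma~\ref{uniqueness_and_EE_strong_sols_lem}. The hard part is not any individual estimate but the bookkeeping that keeps the existence time controlled by $\|u_0\|_\infty$ rather than $\|u_0\|$: this hinges on the $\sqrt t$-gain coming from $\|\nabla\mathcal{T}(t)\|_1\leq Ct^{-1/2}$ in~\eqref{integral_est_T}, together with the use of independent radii $M_\infty\sim\|u_0\|_\infty$ and $M_2\sim\|u_0\|$, so that every smallness requirement on $T$ reduces to a bound of the form $\sqrt T\,\|u_0\|_\infty\leq c'$ with $c'$ numerical; the two-tier recursion in the Cauchy step is the price one pays for this, since the $L^2$ difference necessarily couples to the $L^\infty$ difference.
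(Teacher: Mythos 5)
Your proposal is correct and follows essentially the same route as the paper: a Picard iteration built on the Oseen-kernel representation formula \eqref{repr_formula1}, uniform $L^\infty$ and $L^2$ bounds on a time interval of length $\sim\|u_0\|_\infty^{-2}$ via the $\sqrt{t}$-gain from $\|\nabla\mathcal{T}(t)\|_1\leq Ct^{-1/2}$, a contraction argument using Corollary~\ref{additional_prop_up_corollary} on the splitting $u^{(n)}u^{(n)}-u^{(n-1)}u^{(n-1)}=u^{(n)}w^{(n-1)}+w^{(n-1)}u^{(n-1)}$, and identification of the limit through Theorem~\ref{sol_distributions_anyX_theorem}. The only (harmless) deviations are that the paper obtains a decoupled $L^2$ contraction by always placing the difference in the $Z$-slot of Corollary~\ref{additional_prop_up_corollary}, avoiding your two-tier recursion, and that it passes to the limit directly in the weak formulation rather than first establishing the fixed-point identity $u=\Psi(u)$.
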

This theorem is proved by Leray in Section 19 in the case when $u_0\in C^1\cap H^1 \cap L^\infty$. Here, we use the distributional form of equations to relax this regularity requirement to only $u_0 \in H\cap L^\infty$, and to demonstrate how Leray's original proof can be simplified while exposing his main ideas.
\begin{proof}
Uniqueness is guaranteed by Lemma \ref{uniqueness_and_EE_strong_sols_lem}. As for existence, we consider the following iterative definition of $u^{(n)}$:
\[
u^{(0)} (t) \coloneqq   \Phi(t) \ast u_0 
 \]
and
\[
u^{(n+1)}(t) \coloneqq\int_0^t \nabla \mathcal{T}(t-s) \ast \left[ u^{(n)}(s)\, u^{(n)} (s) \right] \,\d s + u^{(0)}(t),
\]
using the notation from \eqref{notation_oseen_ker_conv}. From properties of the heat kernel (see (iii) in Appendix \ref{heat_section_appendix}) we have 
\[
u^{(0)} \in C([0,\infty ); L^2)\cap C((0,\infty ); L^\infty ),
\]
$\|u^{(0)}(t)\|_{\infty}\leq \|u_0\|_{\infty}$ and $\|u^{(0)}(t)\|\leq \|u_0\|$ for $t\geq 0$. Moreover, using induction we deduce from Lemma \ref{additional_prop_up_lemma}, (i) and (iii) (applied with $Y\coloneqq u^{(n)}$) that for all $n\geq 0$, $t\geq 0$,
\[
u^{(n+1)} \in C([0,\infty );L^2)\cap C((0,\infty ) ; L^\infty ),
\] 
\begin{equation}\label{eqSmoothUIterationEst1}
\|u^{(n+1)}(t)\|_{\infty}\leq {C'} \int_0^t\frac{\|u^{(n)}(s)\|_{\infty}^2}{\sqrt{t-s}}\,\d s+\|u_0\|_{\infty},
\end{equation}
and
\begin{equation}\label{eqSmoothUIterationEst2}
\|u^{(n+1)}(t)\|\leq C' \int_0^t\frac{\|u^{(n)}(s)\|_{\infty} \|u^{(n)}(s)\|  }{\sqrt{t-s}}\,\d s+\|u_0\|,
\end{equation}
Theorem \ref{sol_distributions_anyX_theorem} guarantees that if
\[
p^{(n+1)} (t) \coloneqq  \p_k \p_i  (-\Delta )^{-1} \left( u^{(n)}_i (t)\, u_k^{(n)}(t) \right) 
\]
(recall \eqref{def_of_inverse_laplace} for the notation) then for each $n\geq 0$ the pair $u^{(n+1)},p^{(n+1)}$ is a distributional solution of the problem
\[
\p_t  u^{(n+1)} -\Lap u^{(n+1)} +\nabla p^{(n+1)} = (u^{(n)}\cdot \nabla ) u^{(n)},\qquad \mathrm{div}\, u^{(n+1)} =0 
\]
with initial condition $u^{(n+1)} (0) = u_0$,
that is $u^{(n+1)}$ is weakly divergence free and
\begin{multline}\label{temp_existence_pf}
\int u_0 \cdot f(0) \,\d x+\int_0^\infty  \int \left( u^{(n+1)} \cdot (f_t + \Delta f )+ p^{(n+1)}\, \mathrm{div} \, f \right)\\  =  \int_0^\infty \int u^{(n)} \cdot (u^{(n)}\cdot \nabla ) f 
\end{multline}
for all $f\in C_0^\infty (\RR^3 \times [0,\infty )$, $n\geq 0$. In order to take the limit in $n$ we will find a uniform bound on $\| u^{(n+1)} \|_\infty $ on the finite time interval $[0,T ]$, where
\[
T \coloneqq \frac{1}{32  (1+C')^4 \| u_0 \|_\infty^2 }.
\]
For such choice of $T >0$ the constant function $\phi(t)\coloneqq (1+{C'})\|u_0\|_{\infty}$ satisfies the integral inequality
\begin{equation}\label{eqSmoothSolnExistTime}
\phi(t)\geq C'\int_0^t\frac{\phi^2(s)}{\sqrt{t-s}}\,\d s+\|u_0\|_{\infty}
\end{equation}
for all $t\in [0,8T )$. A use of induction and \eqref{eqSmoothUIterationEst1} thus gives 
\begin{equation}\label{temp3_existence_pf}
\|u^{(n)}(t)\|_{\infty}\leq \phi(t).
\end{equation}
for all such $t$'s and all $n\geq 0$. Now noting that for all $j,k=1,2,3$, $n\geq 1$
\[
u_k^{(n)}u_j^{(n)} - u_k^{(n-1)}u_j^{(n-1)} =  u_k^{(n)}\left( u_j^{(n)}- u_j^{(n-1)}\right)+ u_j^{(n-1)} \left(u_k^{(n)} -  u_k^{(n-1)}\right)
\]
we use Corollary \ref{additional_prop_up_corollary} twice (first with $Y\coloneqq  u^{(n)} $, $Z\coloneqq  u^{(n)}-u^{(n-1)} $ and then with $Y\coloneqq  u^{(n-1)} $, $Z\coloneqq  u^{(n)}-u^{(n-1)} $) to obtain
\begin{subequations}\label{the_two_inequalities}
\begin{multline}
\|u^{(n+1)}(t)-u^{(n)}(t)\|_{\infty} \\\leq C' \int_0^t \frac{\left( \| u^{(n)} (s) \|_\infty + \| u^{(n-1)} (s) \|_\infty \right) \|u^{(n)} (s) - u^{(n-1)} (s) \|_\infty }{\sqrt{t-s}} \,\d s,
\end{multline}
\begin{multline}
\|u^{(n+1)}(t)-u^{(n)}(t)\| \\\leq C' \int_0^t \frac{\left( \| u^{(n)} (s) \|_\infty + \| u^{(n-1)} (s) \|_\infty \right) \|u^{(n)} (s) - u^{(n-1)} (s) \| }{\sqrt{t-s}} \,\d s .
\end{multline}
\end{subequations}
Applying \eqref{temp3_existence_pf} to the second of the above inequalities gives for $t\in [0,T]$
\begin{align*}
\MoveEqLeft
\|u^{(n+1)}(t)-u^{(n)}(t)\|\\ &\leq 2C'(1+C') \| u_0 \|_\infty \| u^{(n)}-u^{(n-1)} \|_{C([0,T],L^2 )}\int_0^t \frac{1}{\sqrt{t-s}} \,\d s \\
&= 4C'(1+C') \| u_0 \|_\infty \sqrt{t}  \| u^{(n)}-u^{(n-1)} \|_{C([0,T ],L^2 )} \\
&&\mathllap{\leq \lambda  \| u^{(n)}-u^{(n-1)} \|_{C([0,T ],L^2 )},}
\end{align*}
where $\lambda \coloneqq 4C'(1+C') \| u_0 \|_\infty \sqrt{T }$. Hence
\[
\|u^{(n+1)}-u^{(n)}\|_{C([0,T ],L^2 )} \leq \lambda  \| u^{(n)}-u^{(n-1)} \|_{C([0,T ],L^2 )}
\]
Because the definition of $T $ implies $\lambda \in (0,1)$ we see that $\{ u^{(n)} \}$ is a Cauchy sequence in $C([0,T ]; L^2 )$ and so 
\[
u^{(n)} \to u     \quad \text{ in } C([0,T ];L^2 )
\] 
for some $u\in C([0,T ];L^2 )$ such that $u(0)=u_0$ and $u(t)$ is weakly divergence free for each $t\in [0,T]$ (since $u^{(n)}(0)=u_0$ and $u^{(n)}(t)$ is weakly divergence free for each $n$). Similarly, applying \eqref{temp3_existence_pf} to the first inequality in \eqref{the_two_inequalities} gives
\[
\|u^{(n+1)}-u^{(n)}\|_{C([0,T ];L^\infty )} \leq \lambda  \| u^{(n)}-u^{(n-1)} \|_{C([0,T ];L^\infty )} .
\]
Although this does not imply that $\{ u^{(n)} \}$ is Cauchy in $C([0,T ]; L^\infty  )$ (recall each $u^{(n)}$ need not belong to this space; it is continuous into $L^\infty$ only on the open time interval $(0,\infty)$), it does follow that $\{ u^{(n)} \}$ is a Cauchy sequence in $C([\delta ,T ]; L^\infty  )$ for any $\delta \in (0,T)$, and therefore 
\begin{equation}\label{conv_linfty_delta_to_T}
u^{(n)} \to u     \quad \text{ in } C([\delta ,T ];L^\infty )
\end{equation}
for each $\delta$. Note that the limit function is $u$ since $L^2$ convergence implies convergence almost everywhere on a subsequence.
Therefore 
\[
u \in C([0,T ];L^2 ) \cap C((0,T ];L^\infty )
\]
and \eqref{temp3_existence_pf} gives $\| u(t) \|_\infty \leq (1+C') \| u_0 \|_\infty$ for all $t\in [0,T]$. Letting
\[
p (t) \coloneqq \p_i \p_k (-\Delta )^{-1} \left( u_i (t) \, u_k (t) \right) 
\]
we see that the Plancherel Lemma (Lemma \ref{CZ_lemma}) gives that
\[
p^{(n)} \to p \quad \text{ in } C([0,T ];L^2 ) .
\]
Therefore, taking the limit $n\to \infty $ in \eqref{temp_existence_pf} we obtain
\[
\int u_0 \cdot f \, \d x +\int_0^T \int \left( u \cdot (f_t + \Delta f )+ p\, \mathrm{div} \, f \right)  =  \int_0^T \int u \cdot (u\cdot \nabla ) f 
\]
for all $f\in C_0^\infty (\RR^3 \times [0,T ))$. Thus $u$ is a strong solution of the Navier--Stokes equations on $[0,T )$.
\end{proof}
Note that if $u_0$ is more regular then the continuity as $t\to 0^+$ of the corresponding strong solution $u$ on $[0,T)$ may be stronger. This is, in essence, the issue of continuity as $t\to 0^+$ of the solution of the heat equation (cf. Lemma \ref{Linfty_conv_heat_eq}). If $u_0$ is uniformly continuous then the representation formula \eqref{repr_half_closed} gives that $u(t)\to u_0 $ in $L^\infty$ as $t\to 0^+$ (cf.\ the proof of Lemma \ref{additional_prop_up_lemma} (i)). Furthermore, for such $u_0$ the proof above simplifies since each $u^{(n)}$ belong to $C([0,\infty); L^\infty )$ (cf. the same lemma) and so they converge in $C([0,T];L^\infty )$ rather than in $C([\delta ,T];L^\infty )$ for all $\delta$'s as in \eqref{conv_linfty_delta_to_T}. 
\subsection{Characterisation of singularities}\label{char_of_sing_section}
Here we investigate the maximal time of existence of strong solutions and derive the rates of blow-up of $u$ in various norms at an approach to a (putative) blow-up time.

Let $u_0 \in  V \cap L^\infty$ (that is $u_0\in H^1 \cap L^\infty$ is divergence free, see \eqref{def_of_HV}), let $u$ be the strong solution of the Navier--Stokes equations starting from $u_0$ and let $T_0$ be its maximal time of existence, that is $u$ cannot be extended to a solution on $[0,T')$ for any $T'>T_0$.
Note that Theorem \ref{local_exist_strong_thm} gives that $T_0 \geq C/\| u_0 \|_\infty^2$ and, if $T_0$ is finite,
\[\| u(t) \|_\infty \quad \text{ blows up as }\quad t\to T_0^- , \]
as otherwise we could extend $u$ beyond $T_0$ and hence obtain a contradiction.

In this section we will apply the theory of integral inequalities (see Lemma \ref{lem_integral_ineq}) to bound the $L^\infty$ norm of a strong solution $u$ on some time interval starting from $0$ and thus obtain lower bounds on $T_0$ as well as lower bounds on $\| u(t) \| _\infty$, $\| \nabla u ( t)\|$ and $\| u(t) \|_p$ with $p>3$ when $t\to T_0^-$ (if $T_0$ is finite).

Since \eqref{repr_half_closed} holds for $t\in (0,T_0)$, Lemma \ref{additional_prop_up_lemma} (i) gives that for all $t\in [0,T_0)$, $p>3$
\begin{equation}\label{char_irreg_fund_rel_infty}
\| u(t) \|_\infty \leq C' \int_0^t \frac{ \| u(s) \|_\infty^2 }{\sqrt{t-s}}\,\d s + \min \left( \| u_0 \|_\infty , C' \frac{\| \nabla u_0 \|}{t^{1/4}}, C'\frac{\| u_0 \|_p }{t^{3/2p}} \right).
\end{equation}
Here, the minimum on the right-hand side is obtained by applying Young's inequality for convolutions \eqref{young_for_convolutions} to $u_1(t) = \Phi(t) \ast u_0$ (with exponents $(1,\infty )$, $(6/5,6)$ and $(p/(p-1),p)$ respectively), the fact that $\| \Phi (t) \|_p \leq C/t^{-3(p-1)/2p}$, and the Gagliardo-Nirenberg-Sobolev\footnote{Note that this inequality were not available in the 1930's. Instead, Leray used Lemma \ref{temp_singular_int_bound} to obtain
\[
\left|\int \Phi(x-y,t)u_0(y)\ \d y\right|\leq 2\left|\int|\Phi(x-y,t)|^2|x-y|^2\right|^{1/2}\left\|\nabla u_0\right\| \leq C \left\|\nabla u_0\right\| t^{-1/4}.
\]} inequality $\| u_0 \|_6 \leq C \| \nabla u_0 \|$. Similarly, the convolution under the time integral in the representation formula \eqref{repr_half_closed} can be bounded in terms of $  \|\nabla \mathcal{T}(t-s) \|_\infty \| u(s) \|^2 $ (rather than by $ \| \nabla \mathcal{T}(t-s) \|_1 \| u(s) \|_\infty^2$ as in Lemma \ref{additional_prop_up_lemma}, (i)), and this gives for $t\in [0,T_0)$,
\begin{equation}\label{char_irreg_fund_rel_infty1}
\| u(t) \|_\infty \leq C'' \int_0^t  \min \left(  \frac{ \| u(s) \|_\infty^2 }{\sqrt{t-s}} ,  \frac{ \| u_0 \|^2 }{(t-s)^2}\right) \,\d s +  \| u_0 \|_\infty ,
\end{equation}
where we also used the facts $\| \nabla \mathcal{T} (t) \|_\infty \leq C t^{-2}$ (see \eqref{eqOseenKerEst2}) and $\| u (t) \| \leq \| u_0 \|$ (see the energy equality \eqref{EE_half_closed}). The two inequalities above\footnote{These comprise {\it(3.5)} in \cite{Leray_1934}.} allow us to obtain bounds on $\| u(t) \|_\infty$ in terms of various norms of the initial data.
\begin{lemma}\label{bounds_on_infty_norm_lemma}\footnote{This lemma and the two following corollaries correspond to Sections 21 and 22 in \cite{Leray_1934}.}If $u$ is the strong solution with initial data $u_0\in V \cap L^\infty$ then
\begin{enumerate}[\rm(i)]
\item$\| u(t) \|_\infty \leq C\| u_0 \|_\infty$ for $t\leq C / \| u_0 \|^2_\infty $,
\item $\| u(t) \|_\infty \leq C \| \nabla u_0 \| t^{-1/4} $ for $t\leq C / \| \nabla u_0 \|^4 $,
\item $\| u(t) \|_\infty \leq C\| u_0 \|_p t^{-3/2p}$ for $t\leq (C (1-3/p) / \| u_0 \|_p )^{2p/(p-3)}$, $p>3$.
\end{enumerate}
Moreover, there exists $\varepsilon >0$ such that $\| u(t) \|_\infty \leq C \| u_0 \|_\infty$ for all $t\geq 0$  if $\| u_0 \|^2 \, \| u_0 \|_\infty < \varepsilon$.
\end{lemma}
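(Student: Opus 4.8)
Here is the proof strategy I would follow.

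The plan is to read off all four estimates directly from the integral inequalities \eqref{char_irreg_fund_rel_infty} and \eqref{char_irreg_fund_rel_infty1} by exhibiting explicit supersolutions and then applying the comparison principle of Lemma~\ref{lem_integral_ineq} (for (i)--(iii)), resp.\ a short continuity/bootstrap argument (for the last claim). Throughout I set $f(t):=\| u(t)\|_\infty$; this is continuous on $(0,T_0)$ and bounded as $t\to0^+$ by the definition of a strong solution, so $f^2$ is integrable near $0$ as Lemma~\ref{lem_integral_ineq} requires. I also use the two facts recorded just before the lemma: $T_0\ge C/\|u_0\|_\infty^2$ (Theorem~\ref{local_exist_strong_thm}) and $f(t)\to\infty$ as $t\to T_0^-$ whenever $T_0<\infty$. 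This last fact removes any apparent circularity between the bound and the existence time: each supersolution below is valid only on some explicit interval $[0,T_\ast)$, which gives the bound on $[0,\min(T_0,T_\ast))$; but if $T_0<T_\ast$ this would contradict blow-up at $T_0$, so in fact $T_0\ge T_\ast$ and the bound holds on all of $[0,T_\ast)$.

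For (i) I would majorise the minimum in \eqref{char_irreg_fund_rel_infty} by its first entry, reducing to $f(t)\le C'\int_0^t f(s)^2(t-s)^{-1/2}\,\d s+\|u_0\|_\infty$, and take the constant supersolution $\phi\equiv2\|u_0\|_\infty$. Since $\int_0^t(t-s)^{-1/2}\,\d s=2\sqrt t$, the required inequality $\phi\ge C'\int_0^t\phi(s)^2(t-s)^{-1/2}\,\d s+\|u_0\|_\infty$ holds precisely for $t\le c/\|u_0\|_\infty^2$, and Lemma~\ref{lem_integral_ineq} gives $f\le\phi$ there.

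For (ii) and (iii) I would instead majorise the minimum by its second, resp.\ third, entry --- both finite since $u_0\in V$ gives $\nabla u_0\in L^2$ and $u_0\in L^2\cap L^\infty\subset L^p$ --- and try the self-similar supersolutions $\phi(t)=K\|\nabla u_0\|t^{-1/4}$, resp.\ $\phi(t)=K\|u_0\|_p t^{-3/2p}$. The only computation needed is the Beta integral $\int_0^t s^{-\alpha}(t-s)^{-1/2}\,\d s=B(1-\alpha,\tfrac12)\,t^{1/2-\alpha}$, valid for $\alpha<1$: with $\alpha=\tfrac12$ it equals the constant $\pi$, so the supersolution inequality for (ii) becomes $Kt^{-1/4}\ge(\text{const})K^2\|\nabla u_0\|+C't^{-1/4}$ and holds for $t\le c/\|\nabla u_0\|^4$; with $\alpha=3/p$ (which is $<1$ exactly because $p>3$, so that $\phi^2$ is also integrable near $0$) the inequality reduces, after cancelling $t^{-3/2p}$, to $t^{1/2-3/(2p)}\le c\,(B(1-\tfrac3p,\tfrac12)\,\|u_0\|_p)^{-1}$, and raising to the power $2p/(p-3)=(1/2-3/(2p))^{-1}$ gives the stated horizon. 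The factor $(1-3/p)$ in (iii) is just $1/B(1-\tfrac3p,\tfrac12)$ up to a bounded factor, since $B(1-\tfrac3p,\tfrac12)=\Gamma(1-\tfrac3p)\Gamma(\tfrac12)/\Gamma(\tfrac32-\tfrac3p)$ and $\Gamma(z)\sim1/z$ as $z\to0^+$ while $\Gamma(\tfrac32-\tfrac3p)$ stays bounded away from $0$.

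The ``moreover'' claim is where Lemma~\ref{lem_integral_ineq} does not apply directly --- the second branch of the minimum in \eqref{char_irreg_fund_rel_infty1} is not of the form $g(t-s)f(s)^2$ --- and I expect this to be the main obstacle; the point of that branch is precisely that it involves the nonincreasing quantity $\|u_0\|$ and yields a time-integral bounded \emph{uniformly in $t$}. Here I would bootstrap: put $T_\ast:=\sup\{\tau\in(0,T_0):f(t)\le2\|u_0\|_\infty\text{ for }t\in(0,\tau)\}$, which is positive because the representation formula \eqref{repr_half_closed} gives $\limsup_{t\to0^+}f(t)\le\|u_0\|_\infty$. For $t<T_\ast$, inserting $\|u(s)\|_\infty\le2\|u_0\|_\infty$ into \eqref{char_irreg_fund_rel_infty1} leaves $f(t)\le C''\int_0^t\min\big(4\|u_0\|_\infty^2(t-s)^{-1/2},\,\|u_0\|^2(t-s)^{-2}\big)\,\d s+\|u_0\|_\infty$, and I would bound this integral by splitting the variable $r=t-s$ at the crossover radius $r_\ast:=(\|u_0\|/(2\|u_0\|_\infty))^{4/3}$ where the two entries coincide: on $(0,r_\ast)$ the first entry is the smaller and integrates to a constant times $\|u_0\|_\infty^{4/3}\|u_0\|^{2/3}$, on $(r_\ast,\infty)$ the second entry is the smaller and integrates to a constant times the same quantity, and if $t<r_\ast$ the contribution is even smaller. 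Hence $f(t)\le\big(C''c_0(\|u_0\|^2\|u_0\|_\infty)^{1/3}+1\big)\|u_0\|_\infty$ for $t<T_\ast$, so choosing $\varepsilon$ with $C''c_0\varepsilon^{1/3}\le\tfrac12$ gives $f(t)\le\tfrac32\|u_0\|_\infty<2\|u_0\|_\infty$ whenever $\|u_0\|^2\|u_0\|_\infty<\varepsilon$. By continuity of $f$ on $(0,T_0)$ this strict bound cannot hold up to but not beyond $T_\ast$ unless $T_\ast=T_0$; thus $f\le\tfrac32\|u_0\|_\infty$ on all of $(0,T_0)$, which being bounded forces $T_0=\infty$ by the blow-up criterion, and the claim follows.
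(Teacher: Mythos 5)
Your proof of (i)--(iii) is correct and follows essentially the same route as the paper: the same constant supersolution for (i) and the same self-similar supersolutions $K\|\nabla u_0\|t^{-1/4}$ and $K\|u_0\|_p t^{-3/2p}$ for (ii) and (iii), verified via the Beta-integral computation and fed into the comparison principle of Lemma~\ref{lem_integral_ineq}; your remark resolving the apparent circularity with $T_0$ via the blow-up criterion is a welcome clarification that the paper leaves implicit. The only genuine divergence is in the ``moreover'' claim. The paper keeps the full solution-dependent minimum $\min\bigl(\phi(s)^2/\sqrt{t-s},\,\|u_0\|^2/(t-s)^2\bigr)$ inside the supersolution inequality \eqref{smallness_condition_temp} and invokes a bespoke comparison result for such min-structured kernels (Corollary~\ref{integral_ineq_min_cor}), reducing the smallness condition to the finiteness statement $\|u_0\|_\infty > C\int_0^\infty \min(\|u_0\|_\infty^2 s^{-1/2},\|u_0\|^2 s^{-2})\,\d s$. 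You instead freeze the first branch at the a priori bound $2\|u_0\|_\infty$ on the bootstrap interval and run an explicit open--closed continuity argument; the crossover integral you compute, $c\,\|u_0\|_\infty^{4/3}\|u_0\|^{2/3}=c\,(\|u_0\|^2\|u_0\|_\infty)^{1/3}\|u_0\|_\infty$, is exactly the quantity behind the paper's equivalence with $\|u_0\|^2\|u_0\|_\infty<\varepsilon$. Your version is more self-contained (it avoids the auxiliary corollary entirely and makes the origin of the exponent $1/3$ visible), at the cost of the slightly more delicate bootstrap bookkeeping; both arguments are sound and land on the same condition.
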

\begin{proof} As in the proof of Theorem \ref{local_exist_strong_thm}, we note that the constant function $\phi(t) = C\| u_0 \|_\infty $ satisfies the integral inequality \eqref{eqSmoothSolnExistTime} for $t\in [0,C / \| u_0 \|^2_\infty ]$ and (i) follows from this, \eqref{char_irreg_fund_rel_infty} and from the theory of integral inequalities (see Lemma \ref{lem_integral_ineq}). Similarly, a direct calculation shows that the function $\phi (t) = C \| \nabla u_0 \| t^{-1/4}$ satisfies the integral inequality
\[
\phi (t) \geq C' \int_0^t \frac{ \phi(s)^2 }{\sqrt{t-s}} \,\d s + C' \| \nabla u_0 \| t^{-1/4}
\]
for $t\in (0, C/ \| \nabla u_0 \|^{4} ]$, and so (ii) follows. One can also check that the function $\phi (t) = C \|  u_0 \|_p t^{-3/2p}$ satisfies the integral inequality
\[
\phi (t) \geq C' \int_0^t \frac{ \phi(s)^2 }{\sqrt{t-s}} \,\d s + C' \|  u_0 \|_p t^{-3/2p}
\]
for $t\in (0,(C (1-3/p) / \| u_0 \|_p )^{2p/(p-3)}]$, and (iii) follows.

The last claim follows from \eqref{char_irreg_fund_rel_infty1} and the fact that the constant function $\phi (t) = C \| u_0 \|_\infty$ satisfies the integral inequality
\begin{equation}\label{smallness_condition_temp}
\phi(t)  > C'' \int_0^t \min \left(  \frac{ \phi (s)^2  }{\sqrt{t-s}} ,  \frac{ \| u_0 \|^2 }{(t-s)^2}\right) \,\d s +  \| u_0 \|_\infty ,
\end{equation}
for all $t>0$ if and only if
\[
\| u_0 \|_\infty > C \int_0^\infty \min \left( \frac{\| u_0 \|_\infty^2 }{\sqrt{s}}, \frac{\| u_0 \|^2 }{s^2} \right) \,\d s .
\]
One can check that this last condition is equivalent to the smallness condition $\| u_0 \|^2 \, \| u_0 \|_\infty < \varepsilon$ for some $\varepsilon >0$. Therefore, the integral inequalities \eqref{smallness_condition_temp} and \eqref{char_irreg_fund_rel_infty1} show that, given the smallness condition, $\| u(t) \|_\infty \leq \phi (t) $ for all $t\geq 0$ (where we apply another fact from the theory of integral inequalities, see Corollary \ref{integral_ineq_min_cor}).\end{proof}
From the lemma we immediately obtain lower bounds on the maximal time of existence $T_0$, the rates of blow-up of norms $\| u(t) \|_\infty$, $\| \nabla u (t) \|$ and $\| u(t) \|_p$, where $p>3$, as $t \to T_0^-$, as well as global existence result for small data, which we formulate in the following three corollaries.
\begin{corollary}[Lower bounds on the existence time $T_0$]\label{existence_time_corollary}If $T_0$ is the maximal time of existence of the strong solution $u$ with initial data $u_0 \in V \cap L^\infty$ then
\begin{enumerate}[\rm(i)]
\item $T_0 > C / \| u_0 \|_\infty^2$,
\item $T_0> C/ \| \nabla u_0 \|^{4}$,
\item $T_0 > \left( C \left( 1-\frac{3}{p} \right)/ \| u_0 \|_p\right)^{2p/(p-3)}$ for all $p>3$.
\end{enumerate}
\end{corollary}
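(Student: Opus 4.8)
The corollary is essentially a repackaging of Lemma~\ref{bounds_on_infty_norm_lemma} together with the blow-up criterion recorded just before it: if $T_0<\infty$ then $\|u(t)\|_\infty\to\infty$ as $t\to T_0^-$. The plan is to argue by contradiction in each of the three cases. For~(i), suppose $T_0\le C/\|u_0\|_\infty^2$ with $C$ the constant from Lemma~\ref{bounds_on_infty_norm_lemma}(i). Then every $t\in[0,T_0)$ satisfies $t\le C/\|u_0\|_\infty^2$, so that lemma gives $\|u(t)\|_\infty\le C'\|u_0\|_\infty$ for all $t\in[0,T_0)$, which is incompatible with $\|u(t)\|_\infty\to\infty$ as $t\to T_0^-$; hence $T_0>C/\|u_0\|_\infty^2$. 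Parts~(ii) and~(iii) are identical, using Lemma~\ref{bounds_on_infty_norm_lemma}(ii), (iii) respectively: if $T_0$ were no larger than the claimed bound, the hypothesis of the relevant part of that lemma would hold for every $t\in[0,T_0)$, yielding $\|u(t)\|_\infty\le C\|\nabla u_0\|\,t^{-1/4}$ (resp.\ $\le C\|u_0\|_p\,t^{-3/2p}$) on $[0,T_0)$. Since the right-hand side here is finite for $t$ away from $0$, and in particular stays bounded as $t\to T_0^-$, this again contradicts the blow-up of $\|u(t)\|_\infty$ at $T_0$.

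Should one wish to avoid invoking the blow-up criterion as a black box, the same conclusion follows by restarting the local existence argument at a base time $\tau$ near $T_0$. Assuming $T_0<\infty$ is below the claimed bound, Lemma~\ref{bounds_on_infty_norm_lemma} supplies a bound on $\|u(\tau)\|_\infty$ that is uniform for $\tau\in(T_0/2,T_0)$, say; moreover $u(\tau)\in H\cap L^\infty$ since $u$ is smooth by Corollary~\ref{smoothness_reg_sol_corollary}. Theorem~\ref{local_exist_strong_thm} applied with initial data $u(\tau)$ then produces a strong solution on $[\tau,\tau+T')$ with $T'>C/\|u(\tau)\|_\infty^2$ bounded below uniformly in $\tau$; by the semigroup property following Lemma~\ref{uniqueness_and_EE_strong_sols_lem} this agrees with $u$ on the overlap and so extends $u$ past $T_0$ once $\tau$ is chosen close enough to $T_0$, contradicting the maximality of $T_0$. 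Carrying the constants through this argument recovers exactly the three scaling-consistent lower bounds.

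The one delicate point — and the natural main obstacle — is the bookkeeping that the estimates of Lemma~\ref{bounds_on_infty_norm_lemma} hold on the whole interval $[0,\min(T_0,t_c))$, where $t_c$ is the relevant cutoff time, so that the hypothesis $T_0\le t_c$ really does produce a bound on all of $[0,T_0)$. This is immediate from the proof of that lemma, which runs the integral-inequality comparison of Lemma~\ref{lem_integral_ineq} on $(0,T_0)$ against~\eqref{char_irreg_fund_rel_infty}, but it is worth stating explicitly so that the contradiction with the blow-up of $\|u(t)\|_\infty$ is airtight.
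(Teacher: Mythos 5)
Your argument is correct and is exactly what the paper intends: its proof of the corollary is the single line ``direct consequence of Lemma \ref{bounds_on_infty_norm_lemma} (i)--(iii)'', the unstated reasoning being precisely your contradiction between the $L^\infty$ bounds on $[0,\min(T_0,t_c))$ and the blow-up of $\|u(t)\|_\infty$ as $t\to T_0^-$ established just before that lemma. Your alternative via restarting the local existence theorem at $\tau$ near $T_0$ is also sound, but the first paragraph already matches the paper's route.
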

\begin{proof} It is a direct consequence of Lemma \ref{bounds_on_infty_norm_lemma} (i)-(iii).
\end{proof}
\begin{corollary}[Blow-up rates]\label{blowup_rates_corollary}
If $u$ is a strong solution of the Navier--Stokes equations on the time interval $(T,T_0)$, where $T_0 <\infty$ is the maximal existence time, then for $t\in (T,T_0)$,
\[
\| u(t) \|_\infty \geq \frac{C}{\sqrt{T_0-t}},\quad \|\nabla u(t) \| \geq \frac{C}{(T_0-t)^{1/4}},
\]
and, for $p>3$,
\[\| u(t) \|_p \geq \frac{C^{(1-3/p)/2}(1-3/p)}{(T_0-t)^{(1-3/p)/2}}.
\]
\end{corollary}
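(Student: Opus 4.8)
The plan is to deduce all three bounds from Corollary \ref{existence_time_corollary} by exploiting the semigroup structure of strong solutions. Fix $t\in(T,T_0)$. By Theorem \ref{thmStrongSolnsSmooth} the solution $u$ is smooth on $(T,T_0)$, so $u(t)\in H^m$ for every $m$; in particular $u(t)\in V\cap L^\infty$, and interpolating between $L^2$ and $L^\infty$ gives $u(t)\in L^p$ for every $p\in(3,\infty]$. Thus $u(t)$ is an admissible initial datum for each part of Corollary \ref{existence_time_corollary}.

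Next I would identify the maximal existence time of the strong solution issuing from $u(t)$. By Theorem \ref{local_exist_strong_thm} such a solution exists, and it is unique by Lemma \ref{uniqueness_and_EE_strong_sols_lem}; call its maximal existence time $\tau_{\max}$. On the one hand, the time-translate $s\mapsto u(t+s)$ is itself a strong solution on $[0,T_0-t)$ with datum $u(t)$ --- the required regularity and the weak formulation \eqref{eqNSE_distr_incl_zero} are inherited from $u$ on $(T,T_0)$ via \eqref{eqNSE_distr_equiv} and the continuity of $u$ at $t$ --- so $\tau_{\max}\geq T_0-t$. On the other hand, if $\tau_{\max}>T_0-t$, the semigroup property would let us glue and extend $u$ to a strong solution past $T_0$, contradicting the maximality of $T_0$. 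Hence $\tau_{\max}=T_0-t$.

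Applying Corollary \ref{existence_time_corollary} with initial datum $u(t)$ and maximal existence time $T_0-t$ now yields $T_0-t>C/\|u(t)\|_\infty^2$, $T_0-t>C/\|\nabla u(t)\|^4$, and $T_0-t>\bigl(C(1-3/p)/\|u(t)\|_p\bigr)^{2p/(p-3)}$, and the three claimed lower bounds follow by solving these inequalities for $\|u(t)\|_\infty$, $\|\nabla u(t)\|$ and $\|u(t)\|_p$ respectively. For the last one, raising both sides to the power $(p-3)/(2p)=\tfrac12(1-3/p)$ produces the exponent $(1-3/p)/2$ in the denominator and, after tracking how the constant from Corollary \ref{existence_time_corollary} combines with this $p$-dependent power, the prefactor $C^{(1-3/p)/2}(1-3/p)$.

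The routine parts here are the algebraic rearrangements in the last paragraph; the only point that needs care is the middle paragraph, i.e. transferring the statement "$T_0$ is maximal for $u$" to "$T_0-t$ is maximal for the solution starting at $u(t)$". This rests on the uniqueness and semigroup property of Lemma \ref{uniqueness_and_EE_strong_sols_lem} together with checking that the time-shift of $u$ genuinely satisfies Definition \ref{def_reg_sol_halfclosed} on $[0,T_0-t)$, in particular that $\|u(t+s)\|_\infty$ is bounded as $s\to 0^+$, which is immediate from $u\in C((T,T_0);L^\infty)$.
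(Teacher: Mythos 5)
Your proposal is correct and follows essentially the same route as the paper: the paper also deduces $T_0-t\geq C/\|u(t)\|_\infty^2$ (and the analogues for $\|\nabla u(t)\|$ and $\|u(t)\|_p$) by taking $u(t)$ as initial datum and invoking Theorem \ref{local_exist_strong_thm} and Corollary \ref{existence_time_corollary}, then rearranging. You merely spell out the details the paper leaves implicit, namely that the time-translate is a strong solution with maximal existence time exactly $T_0-t$ and that $u(t)\in V\cap L^p\cap L^\infty$ so the cited corollary applies.
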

\begin{proof}
Let $t\in (T,T_0)$. Since $u(t)\in H \cap L^\infty$, the local existence and uniqueness theorem (Theorem \ref{local_exist_strong_thm}) gives that $(T_0-t) \geq C/\| u (t) \|^2_\infty$, which gives the first bound. The other two follow in a similar way using Corollary \ref{existence_time_corollary}, (ii) and (iii).
\end{proof}
\begin{corollary}[Global existence for small initial data]
There exists $\varepsilon >0$ such that if either $ \| u_0 \|^2 \, \|  u_0 \|_\infty < \varepsilon$, $\| u_0 \| \, \| \nabla u_0 \| <\varepsilon$ or
\eqnb\label{smallness_lp} \left( C\| u_0 \| \right)^{2(p-3)} \| u_0 \|_p^p < C\left(1- \frac{3}{p} \right)\varepsilon^{p-3}\qquad \text{ for any } p>3
\eqne
then $T_0 = \infty $, that is the strong solution with initial data $u_0$ exists for all times.
\end{corollary}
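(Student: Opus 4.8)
\emph{Proof proposal.} Write $u$ for the strong solution with data $u_0\in V\cap L^\infty$ and $T_0\in(0,\infty]$ for its maximal existence time, and let $\varepsilon_0>0$ be the absolute constant furnished by the ``moreover'' clause of Lemma~\ref{bounds_on_infty_norm_lemma}. The first smallness condition is handled directly by that clause: if $\|u_0\|^2\|u_0\|_\infty<\varepsilon_0$ then $\|u(t)\|_\infty\le C\|u_0\|_\infty$ on all of $[0,T_0)$, so $\|u(t)\|_\infty$ cannot blow up as $t\to T_0^-$; since it must do so whenever $T_0<\infty$ (as recalled just before Lemma~\ref{bounds_on_infty_norm_lemma}), we get $T_0=\infty$. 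For the other two conditions the plan is to \emph{restart} the solution: pick a small $t_1\in(0,T_0)$ (possible because Corollary~\ref{existence_time_corollary} gives explicit lower bounds on $T_0$), use parts (ii) or (iii) of Lemma~\ref{bounds_on_infty_norm_lemma} to estimate $\|u(t_1)\|_\infty$ and the energy equality \eqref{EE_half_closed} to get $\|u(t_1)\|\le\|u_0\|$, note that $u(t_1)\in V\cap L^\infty$ by Corollary~\ref{smoothness_reg_sol_corollary}, verify the first smallness condition for the data $u(t_1)$, and then use the semigroup property (see the remark after Lemma~\ref{uniqueness_and_EE_strong_sols_lem}) together with maximality of $T_0$ to deduce $T_0=\infty$ for $u$ itself.

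\emph{The $\nabla u_0$ condition.} Take $t_1\coloneqq c\,\|\nabla u_0\|^{-4}$ with $c>0$ a small absolute constant, chosen so that $t_1$ lies in the range of validity of Lemma~\ref{bounds_on_infty_norm_lemma}(ii) and, by Corollary~\ref{existence_time_corollary}(ii), below $T_0$. Then $\|u(t_1)\|_\infty\le C\|\nabla u_0\|\,t_1^{-1/4}\le C'\|\nabla u_0\|^2$, so
\[
\|u(t_1)\|^2\,\|u(t_1)\|_\infty\le C'\|u_0\|^2\|\nabla u_0\|^2=C'\bigl(\|u_0\|\,\|\nabla u_0\|\bigr)^2<C'\varepsilon^2;
\]
choosing $\varepsilon$ so small that $C'\varepsilon^2<\varepsilon_0$, the first case applied with data $u(t_1)$ gives a global solution, hence $T_0=\infty$.

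\emph{The $L^p$ condition.} Set $\tau_*\coloneqq\bigl(C(1-3/p)/\|u_0\|_p\bigr)^{2p/(p-3)}$; this bounds the validity range in Lemma~\ref{bounds_on_infty_norm_lemma}(iii) and lies below $T_0$ by Corollary~\ref{existence_time_corollary}(iii). Take $t_1\coloneqq c\,\tau_*$ with $c$ a small absolute constant so that $t_1<T_0$ and Lemma~\ref{bounds_on_infty_norm_lemma}(iii) applies. Substituting into $\|u(t_1)\|_\infty\le C\|u_0\|_p\,t_1^{-3/(2p)}$ and using $1+\tfrac{3}{p-3}=\tfrac{p}{p-3}$ to collect exponents gives
\[
\|u(t_1)\|_\infty\le C'\,\|u_0\|_p^{\,p/(p-3)}\,(1-3/p)^{-3/(p-3)},
\]
so that $\|u(t_1)\|^2\|u(t_1)\|_\infty\le C'\|u_0\|^2\|u_0\|_p^{\,p/(p-3)}(1-3/p)^{-3/(p-3)}$; raising to the power $p-3$ shows this falls below $\varepsilon_0$ under a smallness condition of the form \eqref{smallness_lp}, and the first case applied at $u(t_1)$ again gives $T_0=\infty$. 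Finally, take $\varepsilon$ to be the smallest of the three thresholds obtained above.

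\emph{Main obstacle.} None of the steps is hard; the only genuinely fiddly part is the constant bookkeeping in the $L^p$ case --- confirming that the chain ``restart $\Rightarrow$ first smallness condition'' reproduces the inequality \eqref{smallness_lp} (up to the inevitable $p$-dependent constants). One must also remember the two structural checks that make the restart legitimate: $t_1<T_0$, so that $u(t_1)$ is defined (Corollary~\ref{existence_time_corollary}), and $u(t_1)\in V\cap L^\infty$, so that Lemma~\ref{bounds_on_infty_norm_lemma} applies to it (Corollary~\ref{smoothness_reg_sol_corollary}).
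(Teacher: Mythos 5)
Your proposal is correct and follows essentially the same route as the paper: the first case is the ``moreover'' clause of Lemma \ref{bounds_on_infty_norm_lemma}, and the other two cases restart the solution at the endpoint time of Lemma \ref{bounds_on_infty_norm_lemma} (ii) or (iii), bound $\|u(t_1)\|_\infty$ there, use the energy equality for $\|u(t_1)\|\leq\|u_0\|$, and reduce to the first case. The extra structural checks you record ($t_1<T_0$, $u(t_1)\in V\cap L^\infty$, semigroup property) are left implicit in the paper but are exactly the right ones.
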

\begin{proof}The first claim follows directly from the last claim of Lemma \ref{bounds_on_infty_norm_lemma}. As for the smallness condition on $\| u_0 \| \, \| \nabla u_0 \| $, let $t_0 \coloneqq C / \| \nabla u_0 \|^4$, the endpoint  time in Lemma \ref{bounds_on_infty_norm_lemma} (ii). Then
\[
\| u(t_0 ) \|_\infty \leq C \| \nabla u_0 \| t_0^{-1/4} = C \| \nabla u_0 \|^2,
\]
and so, using the energy equality \eqref{EE_half_closed}, we obtain 
\[\| u(t_0 )\|^2 \, \| u(t_0 ) \|_\infty \leq C \| u_0 \|^2 \| \nabla u_0 \|^2 .\]
 In other words, the condition $\| u(t_0 ) \|^2 \, \| u(t_0 ) \|_\infty  < \varepsilon$ holds if $\| u_0 \|\, \| \nabla u_0 \|$ is sufficiently small, as required.

As for the smallness condition on $(C\| u_0 \|)^{2(p-3)} \| u_0 \|_p^p$, take $t_0 \coloneqq (C (1-3/p) / \| u_0 \|_p )^{2p/(p-3)}$, the endpoint  time in Lemma \ref{bounds_on_infty_norm_lemma} (iii). Then
\[
\| u(t_0 ) \|_\infty \leq C \| u_0 \|_p t_0^{-3/2p} = C \| u_0 \|_p^{p/(p-3)} \left( C \left( 1- \frac{3}{p} \right) \right)^{-3/(p-3)}.
\]
Thus the energy equality \eqref{EE_half_closed} and \eqref{smallness_lp} gives 
\[\| u(t_0 ) \|^2 \| u(t_0 ) \|_\infty \leq \| u_0 \|^2 \| u(t_0 ) \|_\infty < \varepsilon ,\]
as required.
\end{proof}
Finally, we deduce the following result\footnote{This corollary is a consequence of Leray's {\it(3.19)}.}, which we will only use later in analysing the structure of a weak solution (Theorem \ref{thmSupInfo}).
\begin{corollary}\label{2nd_char_cor} 
Let $u$ be a strong solution of the Navier--Stokes equations on the time interval $(T,T_0)$, let $t_1\in (T,T_0)$ and $t_2>t_1$. If $t_2-t_1 \leq C \| \nabla u (t_1) \|^{-4}$ then
\[
\| u (t_2) \|_\infty \leq C \frac{ \| \nabla u(t_1) \| }{(t_2-t_1)^{1/4}},\quad \text{ and }\quad \| \nabla u (t_2) \| \leq C \| \nabla u(t_1) \| .
\]
\end{corollary}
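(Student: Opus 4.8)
The plan is to restart the solution at time $t_1$ and apply the local theory of Section~\ref{sec_strong} to the shifted data. Since $u$ is a strong solution on the open interval $(T,T_0)$, Theorem~\ref{thmStrongSolnsSmooth} and Corollary~\ref{smoothness_reg_sol_corollary} show that $u$ is smooth on $(T,T_0)$; in particular $u(t_1)\in V\cap L^\infty$ (we may assume $\|\nabla u(t_1)\|>0$, else $u\equiv 0$). Moreover, as in the remark following Definition~\ref{def_reg_sol_halfclosed}, the restriction of $u$ to $[t_1,T_0)$ is a strong solution on $[t_1,T_0)$ with initial data $u(t_1)$, and by uniqueness (Lemma~\ref{uniqueness_and_EE_strong_sols_lem}) it coincides with the solution furnished by Theorem~\ref{local_exist_strong_thm}. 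Applying Lemma~\ref{bounds_on_infty_norm_lemma}(ii) to this solution, with $u_0:=u(t_1)$ and the time variable shifted by $t_1$, gives
\[
\|u(t)\|_\infty\le C\,\|\nabla u(t_1)\|\,(t-t_1)^{-1/4}\qquad\text{for }t_1<t\le t_1+C\|\nabla u(t_1)\|^{-4},
\]
the window lying within the interval of existence by Corollary~\ref{existence_time_corollary}(ii). Since implicitly $t_2<T_0$ (so that $u(t_2)$ is defined) and $t_2-t_1\le C\|\nabla u(t_1)\|^{-4}$, taking $t=t_2$ yields the first claimed bound.

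For the gradient bound I would work from the representation formula \eqref{repr_half_closed} shifted to start at $t_1$, namely $u(t)=\Phi(t-t_1)\ast u(t_1)+u_2(t)$ with $u_2(t)=\int_{t_1}^t\nabla\mathcal{T}(t-s)\ast[u(s)u(s)]\,\d s$. Differentiating, $\nabla[\Phi(t-t_1)\ast u(t_1)]=\Phi(t-t_1)\ast\nabla u(t_1)$ has $L^2$-norm at most $\|\nabla u(t_1)\|$, while for $\nabla u_2$ one puts the extra derivative on $\mathcal{T}$ and, since $u(s)$ is weakly divergence free, integrates by parts to move it onto $u(s)$ (exactly the manipulation of \eqref{temp_prop_up}); then $\|\nabla\mathcal{T}(\tau)\|_1\le C\tau^{-1/2}$ from \eqref{integral_est_T} together with Hölder's inequality $\|(u(s)\cdot\nabla)u(s)\|\le\|u(s)\|_\infty\|\nabla u(s)\|$ give, just as in the derivation of \eqref{l2_grad_bound},
\[
\|\nabla u(t)\|\le\|\nabla u(t_1)\|+C\int_{t_1}^t(t-s)^{-1/2}\,\|u(s)\|_\infty\,\|\nabla u(s)\|\,\d s .
\]
Substituting the $L^\infty$ bound of the first step and setting $M:=\|\nabla u(t_1)\|$, $h(\tau):=\|\nabla u(t_1+\tau)\|$ turns this into the singular Volterra inequality $h(\tau)\le M+CM\int_0^\tau(\tau-\sigma)^{-1/2}\sigma^{-1/4}h(\sigma)\,\d\sigma$, valid for $\tau\le CM^{-4}$.

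To close this I would use a continuity (bootstrap) argument: $h$ is continuous on $[0,T_0-t_1)$ (Theorem~\ref{thmStrongSolnsSmooth}) with $h(0)=M$, and as long as $h\le 2M$ on $[0,\tau]$, the Beta-function identity $\int_0^\tau(\tau-\sigma)^{-1/2}\sigma^{-1/4}\,\d\sigma=c\tau^{1/4}$ gives $h(\tau)\le M+2cCM^2\tau^{1/4}<2M$ provided $\tau\le c'M^{-4}$ for a small numerical constant $c'$. After shrinking the constant $C$ in the hypothesis so as to respect both this window and the one in Lemma~\ref{bounds_on_infty_norm_lemma}(ii), the set $\{\tau\in[0,t_2-t_1]:h\le 2M\text{ on }[0,\tau]\}$ is a closed subinterval containing $0$ whose right endpoint cannot be strictly less than $t_2-t_1$ (by the strict inequality above and continuity), hence it is all of $[0,t_2-t_1]$, so $\|\nabla u(t_2)\|\le 2\|\nabla u(t_1)\|$, the second claim.

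I expect the main obstacle to be this last bootstrap for the singular Volterra inequality, together with the bookkeeping needed so that a single small constant $C$ in the hypothesis realises all the relevant time windows (from Lemma~\ref{bounds_on_infty_norm_lemma}(ii) and from the Beta integral). The remaining ingredients are just the estimates of Sections~\ref{stokes_eq_section}--\ref{sec_strong} transcribed to the time-shifted problem; a minor point to dispatch is the legitimacy of differentiating the Duhamel integral and of the integration by parts that moves a derivative onto $u$, both licensed by the smoothness of $u$ (Corollary~\ref{smoothness_reg_sol_corollary}).
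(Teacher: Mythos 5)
Your proposal is correct and takes essentially the same route as the paper: the first bound is obtained by restarting the solution at $t_1$ and invoking Lemma \ref{bounds_on_infty_norm_lemma}(ii), and the second by deriving from the representation formula exactly the linear singular inequality $\|\nabla u(t)\|\le \|\nabla u(t_1)\|+C\|\nabla u(t_1)\|\int_{t_1}^t (t-s)^{-1/2}(s-t_1)^{-1/4}\|\nabla u(s)\|\,\d s$ that appears in the text. The only (harmless) deviation is at the closing step, where the paper compares with the constant function $\phi\equiv C\|\nabla u(t_1)\|$ via Corollary \ref{integral_ineq_power1_cor}, whereas you run an explicit continuity bootstrap at threshold $2\|\nabla u(t_1)\|$ using the Beta integral, which in effect reproves that corollary in the case at hand.
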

\begin{proof}
The first inequality follows directly from Lemma \ref{bounds_on_infty_norm_lemma} (ii). For the second one, note that from the smoothness of strong solutions (see Corollary \ref{smoothness_reg_sol_corollary}) we have $(u\cdot \nabla )u\in C([t_1,T_0) ; L^2)$, so by the representation \eqref{sol_NS_form} and Lemma \ref{prop_of_up_lemma} (ii), we obtain\footnote{This is {\it(3.6)} in \cite{Leray_1934}.}
\[\begin{split}
\| \nabla u (t) \| &\leq C \int_{t_1}^{t} \frac{\| \nabla u (s) \| \| u(s) \|_\infty }{\sqrt{t-s}} \,\d s+ \| \nabla u (t_1) \| \\
&\leq C''' \| \nabla u(t_1) \| \int_{t_1}^{t} \frac{\| \nabla u (s) \|  }{\sqrt{t-s} (s-t_1)^{1/4}} \,\d s + \| \nabla u (t_1) \|
\end{split}
\] 
for all $t\in [t_1, T_0)$. Now, a direct calculation shows that the constant function $\phi (t) \coloneqq C \| \nabla u(t_1 )\|$ satisfies the integral inequality
\[
\phi(t) \geq C'''\| \nabla u(t_1) \| \int_{t_1}^{t} \frac{\phi (s)  }{\sqrt{t-s} (s-t_1)^{1/4}} \,\d s + \| \nabla u (t_1) \|
\]
for $t\in [t_1, t_1+C/\| \nabla u(t_1 ) \|^4 ]$. Therefore, 
\[ \| \nabla u(t) \| \leq \phi (t) \quad \text{ for } t\in [t_1, t_1+C/\| \nabla u(t_1 ) \|^4 ], \]
where we also used a fact from the theory of integral inequalities, see Corollary \ref{integral_ineq_power1_cor}. Thus we obtain the second of the required inequalities.
\end{proof}
\subsection{Semi-strong solutions}\label{sec_semi_reg}
In this section we focus on the regularity required from $u_0$ in order to generate a unique strong solution. In Section \ref{sec_local_exist_uni_strong} we have shown that $u_0 \in H \cap L^\infty$ generates such a solution that is strong on $[0,T)$ (see Definition \ref{def_reg_sol_halfclosed}) for some $T>0$ (see Theorem \ref{local_exist_strong_thm}). We also observed that the high regularity of $u_0$ guarantees some further properties of such solutions; in particular the representation formula \eqref{repr_half_closed}.

It turns out that relaxing the regularity of $u_0$ still gives a unique strong solution for (sufficiently small) positive times. This motivates the following definition.\footnote{Definition \ref{def_semi_reg} and the uniqueness result in Lemma \ref{EE_semi_reg_lemma} are stated in Section 23 in Leray's paper.}
\begin{definition}\label{def_semi_reg} A function $u$ is a \emph{semi-strong solution of the Navier--Stokes equations} \eqref{eqNSE}, \eqref{eqNSE_incomp} on the time interval $[0,T)$ if it is a strong solution on the open time interval $(0,T)$ (see Definition \ref{def_strong_open}) such that $u\in C([0,T);L^2)$ and
\eqnb\label{integral_condition_u_infty}
\int_0^t \| u(s) \|_\infty^2 \,\d s < \infty \quad \text{ for all } t\in (0,T).
\eqne
\end{definition}
Note this definition is less restrictive than the definition of strong solutions on the time interval $[0,T)$ (Definition \ref{def_reg_sol_halfclosed}). Namely, we replace the weak form of the equations \eqref{eqNSE_distr_incl_zero} by the weak form \eqref{eqNSE_distr}, which does not include the initial data $u_0$, and we replace the boundedness of $\| u(t) \|_\infty$ as $t\to 0^+$ by the integral condition \eqref{integral_condition_u_infty}. Note that the initial condition $u(0)=u_0$ is now incorporated in the assumption $u\in C([0,T);L^2)$.
\begin{lemma}\label{EE_semi_reg_lemma}
Semi-strong solutions to the Navier--Stokes equations on $[0,T)$ (that satisfy a given initial condition) are unique and satisfy the energy equality
\[
\| u(t) \|^2 + 2\int_0^t \| \nabla u (s) \|^2 \,\d s = \| u(0 ) \|^2
\]
for all $t\in [0,T)$.
\end{lemma}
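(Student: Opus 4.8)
The plan is to deduce both assertions by letting $t_1 \to 0^+$ in the analogues already established on the open interval $(0,T)$, exploiting precisely the two features that distinguish a semi-strong solution from an arbitrary strong solution on $(0,T)$, namely the $L^2$-continuity up to $t=0$ and the integrability condition \eqref{integral_condition_u_infty}.

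For the energy equality I would fix $t \in (0,T)$ and apply Theorem \ref{EE_open_int_thm} on the interval $(\tau, t)$ for $\tau \in (0,t)$, obtaining
\[
\|u(t)\|^2 + 2\int_{\tau}^{t} \|\nabla u(s)\|^2 \,\d s = \|u(\tau)\|^2 .
\]
As $\tau \downarrow 0$ the right-hand side converges to $\|u(0)\|^2$ since $u \in C([0,T);L^2)$, while the integral on the left increases monotonically in $\tau \downarrow 0$; by the Monotone Convergence Theorem $\int_0^t \|\nabla u(s)\|^2\,\d s$ is therefore finite and equals $\tfrac12\bigl(\|u(0)\|^2-\|u(t)\|^2\bigr)$, which is exactly the claimed identity for $t\in(0,T)$. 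At $t=0$ it is trivial. (This step does not even use \eqref{integral_condition_u_infty}.)

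For uniqueness I would take two semi-strong solutions $u,v$ on $[0,T)$ with $u(0)=v(0)$, set $w:=u-v$, and invoke the comparison estimate of Lemma \ref{comparison_two_sols_lem}, which applies because $u,v$ are strong solutions on the open interval $(0,T)$:
\[
\|w(t)\|^2 \leq \|w(\tau)\|^2 \exp\!\left(\tfrac12 \int_{\tau}^{t} \|u(s)\|_\infty^2\,\d s\right), \qquad 0<\tau<t<T .
\]
Letting $\tau \downarrow 0$, the prefactor $\|w(\tau)\|^2$ tends to $\|w(0)\|^2=0$ by the $L^2$-continuity at $0$ and the equality of the data, while the exponent stays bounded because $\int_0^t \|u(s)\|_\infty^2\,\d s < \infty$ by \eqref{integral_condition_u_infty}. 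Hence $\|w(t)\|=0$ for every $t\in(0,T)$, and $w(0)=0$, so $u\equiv v$.

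I do not anticipate a genuine obstacle: the content of the lemma is essentially that Definition \ref{def_semi_reg} packages exactly the hypotheses needed to push these two limits through. The one point worth flagging is that in the comparison step the boundedness of the exponent comes from the integrability hypothesis \eqref{integral_condition_u_infty} on $u$ itself (available since $u$ is assumed semi-strong), which here plays the role that the boundedness of $\|u(t)\|_\infty$ near $t=0$ played for strong solutions on $[0,T)$ in Lemma \ref{uniqueness_and_EE_strong_sols_lem}.
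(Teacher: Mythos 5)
Your argument is correct and is essentially identical to the paper's proof: both pass to the limit $t_1\to 0^+$ in the energy equality \eqref{EE_strong_open_interval} and in the comparison inequality \eqref{comparison_two_sols_inequality}, using the $L^2$-continuity at $t=0$ together with the Monotone Convergence Theorem, and both note that the integral condition \eqref{integral_condition_u_infty} is what keeps the exponent finite in the uniqueness step. Your additional observation that the energy-equality half does not use \eqref{integral_condition_u_infty} is a correct refinement, but the route is the same.
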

\begin{proof} As in Lemma \ref{uniqueness_and_EE_strong_sols_lem}, the claim follows by taking the limit $t_1 \to 0^+$ in \eqref{EE_strong_open_interval} and \eqref{comparison_two_sols_inequality} and applying Monotone Convergence Theorem (note that the integral condition \eqref{integral_condition_u_infty} guarantees that the exponent in \eqref{comparison_two_sols_inequality} remains finite as $t_1\to 0^+$).
\end{proof}
We now use the notion of semi-strong solutions to obtain a local-in-time well-posedness for initial data $u_0 \in V$ (rather than $u_0\in H\cap L^\infty$ as in Theorem \ref{local_exist_strong_thm}).\footnote{This is Section 24 in \cite{Leray_1934}.}
\begin{theorem}\label{semi_reg_sols_existence_thm}
If $u_0 \in V$ (that is $u_0 \in H^1$ is divergence free) then there exists a unique semi-strong solution $u$ on time interval $[0,T)$, where $T\geq C / \| \nabla u_0 \|^4$, such that $u(0)=u_0$.
\end{theorem}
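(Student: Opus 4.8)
The plan is to run a Picard iteration in the spirit of the proof of Theorem \ref{local_exist_strong_thm}, with the role previously played by $\|u_0\|_\infty$ now played by the smoothing bound for the heat semigroup. Set $u^{(0)}(t)\coloneqq \Phi(t)\ast u_0$ and
\[
u^{(n+1)}(t)\coloneqq u^{(0)}(t) + \int_0^t \nabla\mathcal{T}(t-s)\ast\left[u^{(n)}(s)\,u^{(n)}(s)\right]\d s,\qquad
p^{(n+1)}(t)\coloneqq \p_k\p_i(-\Delta)^{-1}\!\left(u^{(n)}_i(t)u^{(n)}_k(t)\right).
\]
Since $u_0\in V$ has $\nabla u_0\in L^2$, the Gagliardo--Nirenberg--Sobolev inequality $\|u_0\|_6\le C\|\nabla u_0\|$ (or, following Leray, Lemma \ref{temp_singular_int_bound}) together with $\|\Phi(t)\|_{6/5}\le Ct^{-1/4}$ gives the crucial bound $\|u^{(0)}(t)\|_\infty\le C'\|\nabla u_0\|t^{-1/4}$, while $\|u^{(0)}(t)\|\le\|u_0\|$. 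Using $\|\nabla\mathcal{T}(t)\|_1\le Ct^{-1/2}$ (see \eqref{integral_est_T}) and Young's inequality, an induction shows $u^{(n)}\in C([0,\infty);L^2)\cap C((0,\infty);L^\infty)$ together with
\[
\|u^{(n+1)}(t)\|_\infty\le C'\int_0^t\frac{\|u^{(n)}(s)\|_\infty^2}{\sqrt{t-s}}\,\d s + C'\|\nabla u_0\|t^{-1/4},\qquad
\|u^{(n+1)}(t)\|\le C'\int_0^t\frac{\|u^{(n)}(s)\|_\infty\|u^{(n)}(s)\|}{\sqrt{t-s}}\,\d s + \|u_0\|.
\]

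First I would close an a priori bound on $[0,T]$ with $T\coloneqq c\|\nabla u_0\|^{-4}$ for a small absolute constant $c$. Exactly the computation in the proof of Lemma \ref{bounds_on_infty_norm_lemma}(ii) shows that $\phi(t)\coloneqq C\|\nabla u_0\|t^{-1/4}$ satisfies $\phi(t)\ge C'\int_0^t\phi(s)^2(t-s)^{-1/2}\d s + C'\|\nabla u_0\|t^{-1/4}$ on $(0,T]$, so by induction $\|u^{(n)}(t)\|_\infty\le\phi(t)$ for all $t\in(0,T]$ and $n\ge 0$. Substituting this into the $L^2$ estimate and using $\int_0^t s^{-1/4}(t-s)^{-1/2}\,\d s = Ct^{1/4}\le CT^{1/4}$ gives $\|u^{(n)}(t)\|\le 2\|u_0\|$ on $[0,T]$ after possibly shrinking $c$. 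The difference estimates, obtained from two applications of Corollary \ref{additional_prop_up_corollary} as in Theorem \ref{local_exist_strong_thm}, read
\[
\|u^{(n+1)}(t)-u^{(n)}(t)\|\le C'\int_0^t\frac{\left(\|u^{(n)}(s)\|_\infty+\|u^{(n-1)}(s)\|_\infty\right)\|u^{(n)}(s)-u^{(n-1)}(s)\|}{\sqrt{t-s}}\,\d s\le\lambda\,\|u^{(n)}-u^{(n-1)}\|_{C([0,T];L^2)},
\]
with $\lambda = C''\|\nabla u_0\|T^{1/4}\in(0,1)$ by the choice of $T$ (note $\|\nabla u_0\|T^{1/4}=c^{1/4}$). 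Hence $\{u^{(n)}\}$ is Cauchy in $C([0,T];L^2)$; denote its limit $u$. Then $u(t)$ is weakly divergence free, $u(0)=u_0$, and passing to an almost-everywhere convergent subsequence gives $\|u(t)\|_\infty\le\phi(t)$ on $(0,T]$; in particular $\int_0^t\|u(s)\|_\infty^2\,\d s\le C\|\nabla u_0\|^2\sqrt t<\infty$, which is the integral condition \eqref{integral_condition_u_infty}.

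It remains to upgrade $u$ to a strong solution on the open interval $(0,T)$. Passing to the limit in the iteration shows that $u$ satisfies the representation formula \eqref{repr_half_closed} (the products $u^{(n)}_iu^{(n)}_k$ converge to $u_iu_k$ strongly enough after convolution with $\nabla\mathcal{T}$, thanks to the uniform bound $\|u^{(n)}(s)\|_\infty\le\phi(s)$). For any $\tau\in(0,T)$ we have $u(\tau)\in H\cap L^\infty$, so Theorem \ref{local_exist_strong_thm} yields a strong solution $v$ starting from $u(\tau)$, which by Theorem \ref{sol_distributions_anyX_theorem} satisfies the representation formula started at $\tau$ that $u$ also satisfies (using the Oseen-kernel semigroup identity $\Phi(t-\tau)\ast\nabla\mathcal{T}(\tau-s)=\nabla\mathcal{T}(t-s)$, immediate from \eqref{eq_LerProj_Fourier}); a short Gr\"onwall estimate on $\|u(t)-v(t)\|$ then forces $u=v$ on a neighbourhood of $\tau$. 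As these neighbourhoods cover $(0,T)$, we obtain $u\in C((0,T);L^2)\cap C((0,T);L^\infty)$ and that $u$ solves \eqref{eqNSE_distr}; hence $u$ is a semi-strong solution on $[0,T)$ with $T\ge C/\|\nabla u_0\|^4$, and uniqueness follows at once from Lemma \ref{EE_semi_reg_lemma}. The main difficulty is precisely the blow-up of $\|u^{(n)}(t)\|_\infty$ as $t\to 0^+$: since $u_0\notin L^\infty$ in general, neither Lemma \ref{additional_prop_up_lemma} nor Theorem \ref{local_exist_strong_thm} applies verbatim, the weight $t^{1/4}$ must be carried through the whole argument (so the contraction yields convergence only in $C([0,T];L^2)$, not in $C([0,T];L^\infty)$), and $L^\infty$-continuity of the limit at positive times must be recovered by the separate local argument above.
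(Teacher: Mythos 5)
Your argument is correct, but it takes a genuinely different route from the paper. The paper does not iterate on the $H^1$ data at all: it mollifies, setting $u_\varepsilon(0)=J_\varepsilon u_0\in H\cap L^\infty$, invokes Theorem \ref{local_exist_strong_thm} to get strong solutions $u_\varepsilon$, and then uses $\|\nabla(J_\varepsilon u_0)\|\le\|\nabla u_0\|$ together with Corollary \ref{existence_time_corollary} and Lemma \ref{bounds_on_infty_norm_lemma}(ii) to obtain the $\varepsilon$-uniform lifespan $T\geq C/\|\nabla u_0\|^4$ and the $\varepsilon$-uniform bounds $\|u_\varepsilon(t)\|\le\|u_0\|$, $\|u_\varepsilon(t)\|_\infty\le C\|\nabla u_0\|t^{-1/4}$; the limit is then extracted by the compactness result of Lemma \ref{lem9_from_Leray} (Arzel\`a--Ascoli on all derivatives), which immediately yields a strong solution on the open interval $(0,T)$, and the attainment $u(t)\to u_0$ in $L^2$ is recovered separately via weak convergence plus convergence of norms. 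The trade-off is real: the paper's route reuses the established machinery verbatim and never has to confront an unbounded $\|Y(t)\|_\infty$ near $t=0$, at the price of a subsequential, nonconstructive limit and a separate argument for continuity at $t=0$; your direct Picard iteration gives convergence in $C([0,T];L^2)$ (so $u(0)=u_0$ and $u\in C([0,T);L^2)$ are free, with no diagonal extraction), but — as you rightly flag — Lemma \ref{young_spacetime_continuity}, Lemma \ref{additional_prop_up_lemma} and Theorem \ref{sol_distributions_anyX_theorem} are all stated under the hypothesis that $\|Y(t)\|_\infty$ stays bounded as $t\to0^+$, so each continuity and representation statement you use must be re-proved with the integrable weight $t^{-1/4}$ carried through (the estimates close because $\int_0^t s^{-1/2}(t-s)^{-1/2}\,\d s$ is finite), and the $L^\infty$-continuity and the weak form on $(0,T)$ then have to be recovered by your local identification with the strong solution issued from $u(\tau)$ via the semigroup identity for the Oseen kernel. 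Both proofs are sound; yours is more self-contained as a fixed-point argument but requires more bookkeeping, while the paper's is shorter given the results already in place.
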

\begin{proof}
Note that $J_\varepsilon u_0 \in  H  \cap L^\infty $ and $\mathrm{div} (J_\varepsilon u_0 ) =0$ (see Section \ref{sec_prelims}). Hence Theorem \ref{local_exist_strong_thm} implies the existence of a unique strong solution $u_\varepsilon (t)$ to the Navier--Stokes equations on some time interval $[0,T_\varepsilon )$ such that $u_\varepsilon (0)= J_\varepsilon u_0$. The energy equality \eqref{EE_half_closed} and the properties of mollification (see Lemma \ref{prop_molli}) give
\begin{equation}\label{4.1}
\| u_\varepsilon (t) \| \leq \|u_\varepsilon (0) \| = \| J_\varepsilon u_0 \| \leq \| u_0 \|,
\end{equation}
\begin{equation}\label{4.1a}
\| \nabla u_\varepsilon (0) \| = \| \nabla ( J_\varepsilon u_0 ) \| = \| J_\varepsilon (\nabla u_0 ) \| \leq \| \nabla u_0 \|.
\end{equation}
The last bound and Corollary \ref{existence_time_corollary} let us bound the existence time $T_\varepsilon$ from below independently of $\varepsilon$, 
\[T_\varepsilon \geq  C/\| \nabla u_0 \|^4  =: T.\] Moreover, Lemma \ref{bounds_on_infty_norm_lemma} (ii) gives
\begin{equation}\label{4.2}
\| u_\varepsilon (t) \|_{\infty} \leq C \| \nabla u_\varepsilon (0) \| t^{-1/4} \leq C \| \nabla u_0 \| t^{-1/4} 
\end{equation}
for $t\in [0,T)$. By \eqref{4.1} and \eqref{4.2} we may apply the convergence lemma (Lemma \ref{lem9_from_Leray}) to extract a sequence $\{ \varepsilon_k \}$ such that $u_{\varepsilon_k } \to u$ almost everywhere, where $u$ is a strong solution of the Navier--Stokes equations on $\RR^3 \times (0,T)$ with
\begin{equation}\label{star1}
\|u(t) \| \leq \| u_0 \|
\end{equation}
and
\begin{equation}\label{starstar11}
\|u(t) \|_{\infty } \leq C \|\nabla  u_0 \| t^{-1/4}
\end{equation}
for $t\in (0,T)$. It follows from the last inequality that $\int_0^t \| u(s) \|_\infty^2 \,\d s $ is finite for all $t\in (0,T)$. It remains to verify that $u(t) \to u_0 $ in $L^2$ as $t\to 0$. Since $u_{\varepsilon_k} $ is a strong solution to the Navier--Stokes equations on $[0,T )$ \eqref{eqNSE_distr_incl_zero_more_flex} gives
\[
0=\int u_{\varepsilon_k} (t) \cdot \phi - \int (J_{\varepsilon_k} u_0) \cdot \phi - \int_0^t \int u_{\varepsilon_k} \cdot \Delta \phi - \int_0^t \int u_{\varepsilon_k} \cdot ( u_{\varepsilon_k} \cdot  \nabla )\phi 
\]
for $t\in [0,T)$ and $\phi \in C_0^\infty $. By the fact that $J_\varepsilon u_0 \to u_0$ in $L^2$ as $\varepsilon \to 0$, \eqref{4.1} and the Dominated Convergence Theorem (applied to the time integrals) we can pass to the limit in the above equation to obtain
\[
0=\int u (t) \cdot \phi - \int u_0 \cdot \phi - \int_0^t \int u \cdot \Delta \phi - \int_0^t \int u \cdot ( u \cdot  \nabla )\phi ,
\]
which gives that
\[
\int u(t) \cdot \phi \to \int u_0 \cdot \phi\qquad \text{ as } t \to 0^+
\]
for all $\phi \in C^\infty_0 $, which, by $L^2$ boundedness \eqref{star1}, gives that $u(t) \rightharpoonup u_0$ weakly in $L^2  $ as $t\to 0^+$. In order to show that $u(t) \to u_0$ strongly in $L^2$ it is enough to show the convergence of the norms, $\| u(t) \| \to \| u_0 \|$ as $t\to 0^+$. This last claim follows from properties of weak limits and \eqref{star1} by writing
\[
\| u_0 \| \leq \liminf_{t\to 0} \| u(t) \| \leq \limsup_{t\to 0 } \| u(t) \| \leq \| u_0 \|. \qedhere
\]
\end{proof}
Similarly, we obtain that the notion of semi-strong solutions gives local-in-time well-posedness for $u_0\in H\cap L^p$, where $p>3$.\footnote{This is Section 25 of \cite{Leray_1934}. Note that the case $u_0\in H\cap L^\infty$, for which Leray states well-posedness of semi-strong solutions, was covered in this article in the well-posedness result for strong solutions, see Theorem \ref{local_exist_strong_thm}.}
\begin{corollary}\label{exist_semireg_Linfty_or_Lp}
Given $u_0 \in H \cap L^p $ with $p\in (3,\infty )$ there exists a semi-strong solution $u$ of the Navier--Stokes equations on $[0,T)$ with $u(0)=u_0$, where $T\geq  \left( C \left( 1-{3}/{p} \right)/ \| u_0 \|_p\right)^{2p/(p-3)}$.
\end{corollary}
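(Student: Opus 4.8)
The proof will mirror that of Theorem \ref{semi_reg_sols_existence_thm} line by line, with the single change that the $H^1$ bound $\|\nabla u_0\|$ on the mollified data is everywhere replaced by the $L^p$ bound $\|u_0\|_p$. First I would mollify: for each $\varepsilon>0$ set $u_0^{\varepsilon}\coloneqq J_\varepsilon u_0$. By Lemma \ref{prop_molli} (iv) we have $u_0^\varepsilon\in H^m$ for every $m$ (in particular $u_0^\varepsilon\in L^\infty$), and $\mathrm{div}\,u_0^\varepsilon=0$ since $u_0$ is weakly divergence free (see Section \ref{sec_prelims}); thus $u_0^\varepsilon\in V\cap L^\infty$. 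Theorem \ref{local_exist_strong_thm} then provides a unique strong solution $u_\varepsilon$ on a maximal interval $[0,T_\varepsilon)$ with $u_\varepsilon(0)=u_0^\varepsilon$, to which the results of Section \ref{char_of_sing_section} apply.

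Next I would collect the $\varepsilon$-uniform bounds. The energy equality \eqref{EE_half_closed} together with Lemma \ref{prop_molli} (i) give $\|u_\varepsilon(t)\|\le\|u_0^\varepsilon\|\le\|u_0\|$, and Lemma \ref{prop_molli} (i) also gives $\|u_0^\varepsilon\|_p\le\|u_0\|_p$. Hence Corollary \ref{existence_time_corollary} (iii) yields
\[
T_\varepsilon>\big(C(1-3/p)/\|u_0^\varepsilon\|_p\big)^{2p/(p-3)}\ge\big(C(1-3/p)/\|u_0\|_p\big)^{2p/(p-3)}=:T,
\]
a lower bound independent of $\varepsilon$, and Lemma \ref{bounds_on_infty_norm_lemma} (iii) applied to $u_\varepsilon$ gives $\|u_\varepsilon(t)\|_\infty\le C\|u_0^\varepsilon\|_p\,t^{-3/2p}\le C\|u_0\|_p\,t^{-3/2p}$ for all $t\in[0,T)$. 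Since $p>3$ the exponent $3/2p$ lies in $(0,1/2)$, so the right-hand side is square integrable in $t$ near $0$; this is exactly the property that will deliver the integral condition \eqref{integral_condition_u_infty}. With $f(t)\coloneqq\|u_0\|+C\|u_0\|_p\,t^{-3/2p}$, which is continuous on $(0,T)$ and dominates both $\|u_\varepsilon(t)\|$ and $\|u_\varepsilon(t)\|_\infty$, the family $\{u_\varepsilon\}$ satisfies the hypotheses of the convergence lemma (Lemma \ref{lem9_from_Leray}). It therefore produces a sequence $\varepsilon_k\to0^+$ and a strong solution $u$ of the Navier--Stokes equations on $(0,T)$ such that $u_{\varepsilon_k}\to u$, $\nabla u_{\varepsilon_k}\to\nabla u$ uniformly on compact subsets of $\RR^3\times(0,T)$, with $\|u(t)\|\le\|u_0\|$ and $\|u(t)\|_\infty\le C\|u_0\|_p\,t^{-3/2p}$; the latter bound secures $\int_0^t\|u(s)\|_\infty^2\,\d s<\infty$ for every $t\in(0,T)$.

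It remains to verify that $u$ attains the initial data in the sense of Definition \ref{def_semi_reg}, that is $u\in C([0,T);L^2)$ with $u(t)\to u_0$ in $L^2$ as $t\to0^+$; I expect this to be the only step requiring genuine (if mild) care. Following the argument in Theorem \ref{semi_reg_sols_existence_thm}, I would write \eqref{eqNSE_distr_incl_zero_more_flex} for each $u_{\varepsilon_k}$ tested against a fixed $\phi\in C_0^\infty(\RR^3)$, use $J_{\varepsilon_k}u_0\to u_0$ in $L^2$ (Lemma \ref{prop_molli} (vi)), the uniform bound $\|u_{\varepsilon_k}(t)\|\le\|u_0\|$, and dominated convergence on the time integrals, to pass to the limit and obtain $\int u(t)\cdot\phi\to\int u_0\cdot\phi$ as $t\to0^+$ for all $\phi\in C_0^\infty$; by $L^2$-boundedness this gives $u(t)\rightharpoonup u_0$ weakly in $L^2$. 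Lower semicontinuity of the norm together with $\|u(t)\|\le\|u_0\|$ then sandwiches $\|u(t)\|\to\|u_0\|$, upgrading weak to strong convergence. Uniqueness and the energy equality follow at once from Lemma \ref{EE_semi_reg_lemma}. There is no substantive analytic obstacle beyond the $u_0\in V$ case; the only bookkeeping point is to carry the same numerical constant $C$ through Corollary \ref{existence_time_corollary} (iii) and Lemma \ref{bounds_on_infty_norm_lemma} (iii) so that the asserted lower bound on $T$ is exactly $\big(C(1-3/p)/\|u_0\|_p\big)^{2p/(p-3)}$.
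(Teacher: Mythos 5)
Your proposal is correct and is essentially identical to the paper's proof, which simply instructs the reader to copy the proof of Theorem \ref{semi_reg_sols_existence_thm} with the replacements $\| \nabla (J_\varepsilon u_0) \| \leq \| \nabla u_0 \|$ by $\| J_\varepsilon u_0 \|_p \leq \| u_0 \|_p$, the bound $\| u_\varepsilon (t) \|_\infty \leq C \| \nabla u_0 \| t^{-1/4}$ by $\| u_\varepsilon (t) \|_\infty \leq C \| u_0 \|_p t^{-3/2p}$, and $T$ by $(C(1-3/p)/\| u_0 \|_p)^{2p/(p-3)}$. Your additional observation that $3/p<1$ guarantees the square-integrability of $\| u(t) \|_\infty$ near $t=0$, and hence the integral condition \eqref{integral_condition_u_infty}, is exactly the point that makes the substitution work.
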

\begin{proof}
Copy the proof above making the following replacements. Replace \eqref{4.1a} by $\| u_\varepsilon (0) \|_p \leq \| u_0 \|_p$, \eqref{4.2} by $\| u_\varepsilon (t) \|_\infty \leq C\| u_0 \|_p t^{-3/2p}$, \eqref{starstar11} by $\| u (t) \|_\infty \leq C\| u_0 \|_p t^{-3/2p}$, and $T$ by $(C(1-3/p)/\| u_0 \|_p )^{2p/(p-3)}$.
\end{proof}
\subsection*{Notes}
This section corresponds to Chapters III and IV of \cite{Leray_1934}. 

In Section 20 \cite{Leray_1934} considers the issue of the existence of solutions that blow up. He points out that such a solution exists if 
\[
\begin{cases}
\Delta U(x) - \alpha U(x) - \alpha ( x\cdot \nabla ) U(x) - \nabla P(x)  = (U(x) \cdot \nabla )U(x),\\
\mathrm{div}\, U(x) =0, 
\end{cases}
\]
has a nontrivial solution in $\RR^3$ for some $\alpha >0$. In that case there would exist a strong solution of the Navier--Stokes equations on the time interval $(-\infty , T)$ of the following self-similar form
\[
u(x,t) \coloneqq \frac{1}{\sqrt{2\alpha (T-t)}} U \left( \frac{x}{\sqrt{2\alpha (T-t)}} \right),
\] 
which would blow up at time $T$. However, Ne\v{c}as, R\r{u}\v{z}i\v{c}ka \& \v{S}ver\'{a}k (1996)\nocite{necas_ruzicka_sverak} have shown that this system of equations has no nontrivial $L^3$ solutions. 

In fact, it is rather remarkable that the issue of the existence of solutions that blow up is one of the most important open problems in mathematics to this day, one of seven Millennium Problems (see \cite{Fefferman_Clay}).

Leray showed the smoothness of strong solutions via similar bounds as in the analysis of the Stokes equations (pp. 218-219). Since in our presentation the properties of the representation formulae \eqref{repr_formula}, \eqref{repr_formula_p} and \eqref{repr_formula1}, \eqref{repr_formula1_p} are organised in Lemmas \ref{prop_of_up_lemma} and \ref{additional_prop_up_lemma}, we were able to prove the smoothness of strong solutions by induction. 

Furthermore, Section \ref{char_of_sing_section} shows that the analysis of the maximal time of existence and the blow-up rates can be done without the use of Leray's {\it(3.6)},
\[
\| \nabla u(t) \| \leq C \int_0^t \frac{\| \nabla u (s) \| \, \| u (s) \|_\infty }{\sqrt{t-s}} \d s + \| \nabla u_0 \|,
\]
which we use only in the proof of Corollary \ref{2nd_char_cor}.

\section{Weak solutions of the Navier--Stokes equations}\label{sec_weak}
In this section we show the global existence of a weak solution (which Leray termed a \emph{turbulent solution}) of the Navier-Stokes equations. His approach is characterised by considering the following modified system, for $\varepsilon>0$: 
\begin{equation}\label{regularised_NSE}
\begin{split}
&\p_t u - \Delta u + ((J_\varepsilon u) \cdot \nabla ) u + \nabla p = 0, \\
& \mathrm{div}\, u(t) =0,
\end{split}
\end{equation}
which is often called the \emph{Leray regularisation}. We will see that this regularisation of the nonlinear term gives for each $\varepsilon>0$ a unique, global in time, strong solution. We then study the limit $\varepsilon \to 0$ of the solutions of the above equations. 
\subsection{Well-posedness for the regularised equations}
\begin{definition}\label{def_of_sol_regularised_eqs}
A function $u_\varepsilon$ is a \emph{strong solution of the regularised equations \eqref{regularised_NSE} on the interval $[0,T)$} if for some $p_\varepsilon \in L_{\loc}^1 (\RR^3 \times [0,T))$ 
\begin{equation}\label{regularised_eqs_distr_form}
\int u_\varepsilon(0) \cdot f(0)  +\int_0^T  \int (u_\varepsilon \cdot (f_t + \Lap f) + p_\varepsilon \, \mathrm{div}\, f) = \int_0^T \int (J_\varepsilon u_\varepsilon ) \cdot (u_\varepsilon \cdot \nabla) f
\end{equation}
for all $f\in C_0^\infty (\RR^3 \times [0,T);\RR^3)$, $u_\varepsilon (t)$ is weakly divergence free for $t\in (0,T)$, and 
\[ u_\varepsilon \in C([0,T); L^2)\cap C((0,T);L^\infty )\]
with $\| u_\varepsilon (t) \|_\infty$ bounded as $t\to 0^+$.
\end{definition}
Note this definition follows the lines of the definition of a strong solution of the Navier--Stokes equations on time interval $[0,T)$ (Definition \ref{def_reg_sol_halfclosed}), the difference appearing only in the form of the distributional equations \eqref{regularised_eqs_distr_form}. Moreover, we see that a solution $u_\varepsilon$ and the corresponding pressure $p_\varepsilon$ are given by
\begin{equation}\label{sol_regularisedNSE_form}
\begin{split}
u_\varepsilon (t)&=  \Phi (t) \ast u_\varepsilon (t_1 )+\int_{t_1}^{t} \int \nabla T (t-s) \ast \left[ (J_\varepsilon u_\varepsilon ) (s)  \, u_\varepsilon (s) \right]   \,\d s ,\\
p_\varepsilon(t)&=  \p_i \p_k (-\Delta )^{-1} \left( ( J_\varepsilon u_{\varepsilon , i} )(t) \, u_{\varepsilon , k} (t) \right)   
\end{split}
\end{equation}
for all $0\leq t_1 < t <T$, cf. \eqref{sol_NS_form} and \eqref{repr_half_closed}, see also Corollary \ref{cor_can_have_yz}. 

Now let $u_0 \in H^1  \cap L^\infty $ be divergence free; we will show existence and uniqueness of global-in-time strong solution of the regularised equations with initial data $u_0$.
\begin{theorem}[Global well-posedness of the regularised equations\footnote{This is Section 26 of \cite{Leray_1934}.}]\label{thm5.1}
For each $\varepsilon >0$ there exists a unique strong solution $u_\varepsilon $ of the regularised equations \eqref{regularised_NSE} on the time interval $[0,\infty )$ such that $u_\varepsilon (0)=u_0$, $u_\varepsilon $ is smooth on the time interval $(0,\infty )$ (in the sense of Corollary \ref{smoothness_reg_sol_corollary}) and the energy equality 
\begin{equation}\label{EE_regularised_eqs}
\| u_\varepsilon (t) \|^2 + 2 \int_0^t \| \nabla u_\varepsilon (s) \|^2 \,\d s = \| u_0 \|^2
\end{equation}
holds for all $t\geq 0$.
\end{theorem}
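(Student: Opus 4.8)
The plan is to run the same machinery as in Sections~\ref{stokes_eq_section}--\ref{sec_strong}: first build a local-in-time strong solution $u_\varepsilon$ on a maximal interval $[0,T_\varepsilon)$ by a Picard iteration, then use the one feature special to the regularisation --- the smoothing bound $\|J_\varepsilon v\|_\infty \le C\varepsilon^{-3/2}\|v\|$ of Lemma~\ref{prop_molli}(iii) --- to prevent finite-time blow-up of $\|u_\varepsilon(t)\|_\infty$, so that $T_\varepsilon=\infty$.

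For local existence and uniqueness I would essentially repeat the proof of Theorem~\ref{local_exist_strong_thm}. Since $J_\varepsilon v$ is weakly divergence free whenever $v$ is, the nonlinear term has divergence form, $((J_\varepsilon v)\cdot\nabla)v = \partial_i\big((J_\varepsilon v)_i v\big)$, so the iteration
\[
u^{(0)}(t)\coloneqq \Phi(t)\ast u_0, \qquad u^{(n+1)}(t)\coloneqq u^{(0)}(t) + \int_0^t \nabla\mathcal{T}(t-s)\ast\big[(J_\varepsilon u^{(n)})(s)\,u^{(n)}(s)\big]\,\d s
\]
falls under Corollary~\ref{additional_prop_up_corollary} with $Y=J_\varepsilon u^{(n)}$, $Z=u^{(n)}$. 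Because $\|J_\varepsilon u^{(n)}\|_\infty\le\|u^{(n)}\|_\infty$, this yields exactly the bounds \eqref{eqSmoothUIterationEst1}--\eqref{eqSmoothUIterationEst2} and the difference estimates \eqref{the_two_inequalities} --- now splitting $(J_\varepsilon u^{(n)})u^{(n)}-(J_\varepsilon u^{(n-1)})u^{(n-1)}$ as $(J_\varepsilon u^{(n)})(u^{(n)}-u^{(n-1)})+(J_\varepsilon(u^{(n)}-u^{(n-1)}))u^{(n-1)}$ and using $\|J_\varepsilon\cdot\|_\infty\le\|\cdot\|_\infty$ once more --- so the contraction argument produces a unique strong solution $u_\varepsilon$ on some $[0,T)$ with $T\ge C/\|u_0\|_\infty^2$ and with pressure and representation formulae \eqref{sol_regularisedNSE_form}. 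Uniqueness on any interval of existence follows as in Lemma~\ref{comparison_two_sols_lem}: for $w\coloneqq u_\varepsilon-v_\varepsilon$ the term $\int((J_\varepsilon u_\varepsilon)\cdot\nabla)w\cdot w$ vanishes since $J_\varepsilon u_\varepsilon$ is divergence free, while the remaining term is bounded, after an integration by parts, by $\|\nabla w\|\,\|w\|\,\|v_\varepsilon\|_\infty$, and Gronwall's inequality closes it because $\|v_\varepsilon(t)\|_\infty$ stays bounded as $t\to0^+$. Finally, smoothness of $u_\varepsilon$ on $(0,T)$ follows as in Corollary~\ref{smoothness_reg_sol_corollary} ($J_\varepsilon u_\varepsilon$ is as smooth as we wish by Lemma~\ref{prop_molli}(iv)), and testing the equation with $u_\varepsilon$ and using $\int((J_\varepsilon u_\varepsilon)\cdot\nabla)u_\varepsilon\cdot u_\varepsilon=\tfrac12\int(J_\varepsilon u_\varepsilon)\cdot\nabla|u_\varepsilon|^2=0$ gives the energy equality \eqref{EE_regularised_eqs} on $(0,T)$, and on $[0,T)$ by the $L^2$-continuity at $t=0$.

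The decisive step is the global bound on $\|u_\varepsilon(t)\|_\infty$. From \eqref{EE_regularised_eqs} we have $\|u_\varepsilon(t)\|\le\|u_0\|$ on the whole interval of existence, so Lemma~\ref{prop_molli}(iii) gives $\|(J_\varepsilon u_\varepsilon)(s)\|_\infty\le A$ there, where $A\coloneqq C\varepsilon^{-3/2}\|u_0\|$; feeding this into \eqref{sol_regularisedNSE_form} via Corollary~\ref{additional_prop_up_corollary} yields
\[
\|u_\varepsilon(t)\|_\infty \le A\int_0^t\frac{\|u_\varepsilon(s)\|_\infty}{\sqrt{t-s}}\,\d s + \|u_0\|_\infty .
\]
Crucially this is a \emph{linear} Volterra integral inequality, in contrast with the quadratic \eqref{char_irreg_fund_rel_infty} governing Navier--Stokes strong solutions; comparison with the corresponding Volterra equation (see the appendix on the Volterra equation), whose solution is an entire, hence locally bounded, function of Mittag--Leffler type, shows $\|u_\varepsilon(t)\|_\infty\le\Psi_\varepsilon(t)$ with $\Psi_\varepsilon$ finite on every bounded interval. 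Consequently, if $T_\varepsilon<\infty$ then $\|u_\varepsilon(t)\|_\infty\le M\coloneqq\Psi_\varepsilon(T_\varepsilon)<\infty$ for $t\in[0,T_\varepsilon)$; restarting the local existence theorem at any $t_1\in(T_\varepsilon-C/M^2,T_\varepsilon)$ with data $u_\varepsilon(t_1)\in H\cap L^\infty$ extends the solution to time $t_1+C/\|u_\varepsilon(t_1)\|_\infty^2\ge t_1+C/M^2>T_\varepsilon$ --- and by the semigroup property this is a genuine extension of $u_\varepsilon$ --- contradicting maximality. Hence $T_\varepsilon=\infty$, and smoothness on $(0,\infty)$ together with \eqref{EE_regularised_eqs} for all $t\ge0$ follow from what was shown on each finite subinterval.

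I expect the only real difficulty to be conceptual rather than computational: one must notice that the Leray regularisation, once the energy bound $\|u_\varepsilon\|\le\|u_0\|$ is pushed through the smoothing estimate, turns the self-interaction of the nonlinearity into a term that is \emph{linear} in the unknown, so that the integral inequality for $\|u_\varepsilon(t)\|_\infty$ no longer blows up in finite time; everything else is a transcription of the local theory of Sections~\ref{stokes_eq_section}--\ref{sec_strong} with $u\,u$ replaced by $(J_\varepsilon u)\,u$. A minor but important caveat is that the constant $A=C\varepsilon^{-3/2}\|u_0\|$ degenerates as $\varepsilon\to0^+$, so this argument gives no $\varepsilon$-uniform estimate --- which is precisely why the later passage $\varepsilon\to0^+$ will instead rely on the energy-tail estimate of Lemma~\ref{tail_estimate}.
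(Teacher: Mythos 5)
Your proposal is correct and follows essentially the same route as the paper: transcribe the local theory of Sections \ref{stokes_eq_section}--\ref{sec_strong} with $u\,u$ replaced by $(J_\varepsilon u)\,u$, and then use the energy bound together with $\|J_\varepsilon v\|_\infty\le C\varepsilon^{-3/2}\|v\|$ to turn the $L^\infty$ estimate into a linear Volterra inequality, whose solution is globally bounded on finite intervals, precluding blow-up. The only differences are cosmetic (you spell out the restart/extension step and the Mittag--Leffler growth of the comparison function, where the paper simply invokes Corollary \ref{integral_ineq_power1_cor} and Appendix \ref{volterra_eq_section}).
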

\begin{proof}
We note that the analysis from Section \ref{sec_strong} can be applied to the regularised equations \eqref{regularised_NSE}. In particular, by noting that $\| J_\varepsilon v \| \leq \| v \|$ and $\| J_\varepsilon v \|_\infty \leq \| v \|_\infty$ for any $v\in  L^2\cap L^\infty $ (see Lemma \ref{prop_molli}, (i)), we can prove a local existence and uniqueness theorem following Theorem \ref{local_exist_strong_thm}, to obtain a unique strong solution $u_\varepsilon$ of the system \eqref{regularised_NSE} on the time interval $[0,T)$ for some $T \geq C / \| u_0 \|^2_\infty$. Now following the arguments in Section \ref{smoothness_section} we note that (using the representation formulae \eqref{sol_regularisedNSE_form} instead of \eqref{sol_NS_form}) $u_\varepsilon$ is smooth on the time interval $(0,T)$, and so following Theorem \ref{EE_open_int_thm} and Lemma \ref{uniqueness_and_EE_strong_sols_lem} we obtain the energy equality 
\[
\| u_\varepsilon (t) \|^2 + 2 \int_0^t \| \nabla u_\varepsilon (s) \|^2 \,\d s = \| u_0 \|^2
\]
for $t\in [0,T)$. 
 It remains to show that $T$, the maximal time of existence, is infinite. 

As in Section \ref{char_of_sing_section} we see that $\| u_\varepsilon (t) \|_\infty $ must blow-up as $t\to T^-$ if $T<\infty$. We also obtain
\[
\| u(t) \|_\infty \leq C \int_0^t \frac{ \| (J_\varepsilon u_\varepsilon ) (s) \|_\infty  \| u(s) \|_\infty }{\sqrt{t-s}}\,\d s + \| u_0 \|_\infty 
\]
for $t\in [0,T)$, in a similar way to the derivation of \eqref{char_irreg_fund_rel_infty}. This inequality, however, is fundamentally different from \eqref{char_irreg_fund_rel_infty} in the sense that we can now apply the bound 
\[
\| (J_\varepsilon u_\varepsilon ) (s) \|_\infty \leq C \varepsilon^{3/2} \| u_\varepsilon (s) \|  \leq C \varepsilon^{3/2} \| u_0 \|, 
\]
(see Lemma \ref{prop_molli} (iii) and the energy equality \eqref{EE_regularised_eqs}). Moving this bound outside of the integral, we obtain a linear integral inequality,
\[
\|u_\varepsilon (t) \|_{\infty} \leq  C' \varepsilon^{-3/2} \|u_0\| \int_0^t \frac{ \|u_\varepsilon (s)\|_{\infty }}{\sqrt{t-s}} \,\d s + \| u_0 \|_\infty.
\]
Letting $\phi_\varepsilon \in C([0,\infty ))$ be the unique solution to the corresponding linear integral equation\footnote{\re{eqVolterra} is an example of the Volterra equation; see Appendix \ref{volterra_eq_section} for the proof of existence and uniqueness of solutions.},
\begin{equation}\label{eqVolterra}
\phi_\varepsilon (t) = C' \varepsilon^{-3/2} \|u_0 \| \int_0^t \frac{\phi_\varepsilon (s)}{\sqrt{t-s}} \,\d s + \| u_0 \|_{\infty },\quad t\geq 0,
\end{equation}
we see that $\| u_\varepsilon (t) \|_\infty \leq \phi_\varepsilon (t)$ for all $t\geq 0$ (see Corollary \ref{integral_ineq_power1_cor}). Therefore $\|  u_\varepsilon (t) \|_\infty $ remains bounded on every finite interval and hence $T=\infty$, as required.
\end{proof}
In order to study the limit as $\varepsilon \to 0$ of the solutions $u_\varepsilon$ to the system \eqref{regularised_NSE}, we first show that the kinetic energy of $u_\varepsilon (t) $ outside of a ball can be bounded independently of $\varepsilon$.
\begin{lemma}[Separation of energy\footnote{This is Section 27 of \cite{Leray_1934}.}]\label{tail_estimate}
Let $\varepsilon >0$, $0<R_1 < R_2$ and let $u_\varepsilon$ be the solution of \eqref{regularised_NSE} with initial condition $u_0$. Then for $t\geq 0$
\[
 \int_{|x|>R_2} |u_\varepsilon (t)|^2 \,\d x \leq  \int_{|x|>R_1} |u_0 |^2 \,\d x + \frac{C(u_0,t)}{R_2-R_1},
\]
where $C(u_0,t):=C \|u_0\|^2 \sqrt{t} + C \|u_0 \|^3 t^{1/4}$. 
\end{lemma}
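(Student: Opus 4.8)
The plan is to localise the energy estimate. Fix a smooth radial cut-off $\chi$ with $\chi \equiv 0$ on $B(0,R_1)$, $\chi \equiv 1$ on $\RR^3 \setminus B(0,R_2)$, $0 \le \chi \le 1$ and $\|\nabla\chi\|_\infty \le C/(R_2-R_1)$, with $\nabla\chi$ supported in the shell $R_1 \le |x| \le R_2$. By Theorem \ref{thm5.1} (and Corollary \ref{smoothness_reg_sol_corollary}) the solution $u_\varepsilon$ is smooth on $(0,\infty)$ and $u_\varepsilon(s)$, $\nabla^m u_\varepsilon(s)$, $p_\varepsilon(s)$ lie in $L^2\cap L^\infty$ and decay at infinity, so I would multiply \eqref{regularised_NSE} by $\chi u_\varepsilon$, integrate over $\RR^3$, and integrate the diffusion term by parts just once (doing it twice would produce a $\Delta\chi$ of size $(R_2-R_1)^{-2}$, which is too large). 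Using $\mathrm{div}\, u_\varepsilon=0$ and $\mathrm{div}(J_\varepsilon u_\varepsilon)=0$ in the nonlinear and pressure terms, this gives for $0<s<t$
\[
\frac{1}{2}\frac{\d}{\d s}\int \chi |u_\varepsilon(s)|^2 + \int \chi |\nabla u_\varepsilon(s)|^2 = -\int (\p_j\chi)\, u_{\varepsilon,i}\, \p_j u_{\varepsilon,i} + \frac{1}{2}\int (\nabla\chi \cdot J_\varepsilon u_\varepsilon)\,|u_\varepsilon|^2 + \int (\nabla\chi \cdot u_\varepsilon)\, p_\varepsilon .
\]
Discarding the nonnegative dissipation term, integrating in $s$ over $(\delta,t)$ and letting $\delta\to0^+$ (using $u_\varepsilon\in C([0,\infty);L^2)$), the lemma follows once the three terms on the right are integrated in time and bounded by $C(u_0,t)/(R_2-R_1)$, since $\int_{|x|>R_2}|u_\varepsilon(t)|^2 \le \int\chi|u_\varepsilon(t)|^2$ and $\int\chi|u_0|^2 \le \int_{|x|>R_1}|u_0|^2$.

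For the first (diffusion) term, Cauchy--Schwarz in space and then in time together with the energy equality \eqref{EE_regularised_eqs} give
\[
\int_0^t \|\nabla\chi\|_\infty\, \|u_\varepsilon(s)\|\, \|\nabla u_\varepsilon(s)\|\,\d s \le \frac{C}{R_2-R_1}\,\|u_0\|\,\sqrt t\,\Big(\int_0^t \|\nabla u_\varepsilon\|^2\Big)^{1/2} \le \frac{C\|u_0\|^2\sqrt t}{R_2-R_1},
\]
which is the first part of $C(u_0,t)$. For the remaining two terms I would bound both, pointwise in time, by $\frac{C}{R_2-R_1}\|u_\varepsilon(s)\|_3^3$: for the nonlinear term, Hölder's inequality and Lemma \ref{prop_molli}(i) give $\int |\nabla\chi|\,|J_\varepsilon u_\varepsilon|\,|u_\varepsilon|^2 \le \|\nabla\chi\|_\infty\, \|J_\varepsilon u_\varepsilon\|_3\, \|u_\varepsilon\|_3^2 \le \|\nabla\chi\|_\infty\,\|u_\varepsilon\|_3^3$; for the pressure term, $\int |\nabla\chi|\,|u_\varepsilon|\,|p_\varepsilon| \le \|\nabla\chi\|_\infty\, \|u_\varepsilon\|_3\, \|p_\varepsilon\|_{3/2}$ and $\|p_\varepsilon\|_{3/2} \le C\|J_\varepsilon u_\varepsilon \otimes u_\varepsilon\|_{3/2} \le C\|u_\varepsilon\|_3^2$, the first inequality being the boundedness of $\p_i\p_k(-\Delta)^{-1}$ on $L^{3/2}$ (the $L^p$ analogue of Lemma \ref{CZ_lemma}, valid since this operator is of Calderón--Zygmund type). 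Now interpolation and the Sobolev inequality $\|f\|_6 \le C\|\nabla f\|$ give $\|u_\varepsilon(s)\|_3 \le C\|u_\varepsilon(s)\|^{1/2}\|\nabla u_\varepsilon(s)\|^{1/2}$, so $\|u_\varepsilon(s)\|_3^3 \le C\|u_0\|^{3/2}\|\nabla u_\varepsilon(s)\|^{3/2}$ using $\|u_\varepsilon(s)\|\le\|u_0\|$; and Hölder in time with \eqref{EE_regularised_eqs} gives $\int_0^t \|\nabla u_\varepsilon\|^{3/2} \le t^{1/4}\big(\int_0^t \|\nabla u_\varepsilon\|^2\big)^{3/4} \le C\|u_0\|^{3/2}t^{1/4}$. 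Hence the nonlinear and pressure terms together contribute at most $\frac{C\|u_0\|^3 t^{1/4}}{R_2-R_1}$, which completes the estimate.

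The main obstacle is the pressure term, and specifically making its bound uniform in $\varepsilon$: the naive estimate $\|p_\varepsilon(s)\|_2 \le C\|u_\varepsilon(s)\|_\infty\|u_\varepsilon(s)\|$ is useless here because $\|u_\varepsilon(s)\|_\infty$ is controlled only through the $\varepsilon$-dependent Volterra bound in Theorem \ref{thm5.1}. The way around it is to exploit the divergence-free structure so that $\chi$ enters only through $\nabla\chi$, to estimate $p_\varepsilon$ in $L^{3/2}$ rather than $L^2$ by Calderón--Zygmund theory, and then to interpolate the resulting $L^3$ norms of $u_\varepsilon$ only against the two genuinely $\varepsilon$-uniform quantities available, $\|u_\varepsilon(s)\|\le\|u_0\|$ and $\int_0^t \|\nabla u_\varepsilon\|^2 \le \tfrac{1}{2}\|u_0\|^2$; the powers $\sqrt t$ and $t^{1/4}$ in $C(u_0,t)$ are exactly what Hölder's inequality in time extracts from the latter. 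A minor remaining point is to justify the differentiation under the integral sign and the integration by parts, which is routine given the smoothness and spatial decay of $u_\varepsilon$, $p_\varepsilon$ on $(0,\infty)$ and the cut-off at $\delta>0$.
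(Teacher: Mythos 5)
Your proof is correct and follows the same overall strategy as the paper: localise the energy identity with a cut-off whose gradient is of size $(R_2-R_1)^{-1}$ and is supported in the shell, integrate by parts once in the diffusion term, use the two divergence-free conditions so that only $\nabla\chi$ appears, discard the nonnegative dissipation, and control the three flux terms using only the $\varepsilon$-uniform bounds $\|u_\varepsilon(s)\|\le\|u_0\|$ and $\int_0^t\|\nabla u_\varepsilon\|^2\le\tfrac12\|u_0\|^2$ from \eqref{EE_regularised_eqs}; the exponents $\sqrt t$ and $t^{1/4}$ arise in both arguments from exactly the same Hölder-in-time steps. The one genuine difference is in how the nonlinear and pressure terms are measured. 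You use the exponent triple $(3,3,3)$ and $(3,3/2)$, which forces you to bound $\|p_\varepsilon\|_{3/2}\le C\|u_\varepsilon\|_3^2$ via the $L^{3/2}$-boundedness of $\p_i\p_k(-\Delta)^{-1}$ — a true statement, but it is the Calder\'on--Zygmund theorem, which the paper never establishes (Lemma \ref{CZ_lemma} only gives the $L^2$ case via Plancherel, and the paper is deliberately economical here, even noting that Leray avoided Fourier analysis altogether). The paper instead uses the exponents $(2,2)$ and $(2,4,4)$, estimating $\|p_\varepsilon\|\le C\|u_\varepsilon\|_4^2$ by the Plancherel Lemma and proving $\|u_\varepsilon\|_4^4\le C\|\nabla u_\varepsilon\|^3\|u_\varepsilon\|$ by hand from the Newtonian-potential representation of $|u_\varepsilon|^2$ together with the Hardy-type inequality of Lemma \ref{temp_singular_int_bound}; both routes land on the same bound $C\|\nabla u_\varepsilon\|^{3/2}\|u_\varepsilon\|^{1/2}$ for $\|p_\varepsilon\|$ and $\|u_\varepsilon\|_4^2$ (respectively $\|u_\varepsilon\|_3^3\le C\|u_0\|^{3/2}\|\nabla u_\varepsilon\|^{3/2}$ in your version) and hence on the same $\|u_0\|^3t^{1/4}$ contribution. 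So your argument buys a slightly slicker pressure estimate at the cost of a deeper harmonic-analysis input, while the paper's stays entirely self-contained; if you want your version to fit the paper's framework you should either prove the $L^{3/2}$ bound or switch to the $L^2$/$L^4$ bookkeeping.
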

\begin{proof}
For brevity we will write $u$ in place of $u_\varepsilon$. Let 
\[
f(x) := \begin{cases}
0\qquad & |x| <R_1,\\
\frac{|x| - R_1 }{R_2-R_1} & R_1 \leq |x| \leq R_2,\\
1 & |x|> R_2.
\end{cases}
\]
Taking the scalar product of the regularised equations \eqref{regularised_NSE} against $-2f(x)u_i(x,t) $, integrating in space and time and using $\mathrm{div}\, u=0$ yields
\begin{multline}
2\int_0^t \int f |\nabla u |^2 + \int f |u|^2 \\=  \int f |u_0|^2 - \int_0^t \int \left( 2\p_k f\, u_i\, \p_k u_i +2 \p_i f \, p\, u_i +  \p_k f  (J_\varepsilon u_k) |u|^2 \right)
\end{multline}
Bounding below the second term on the left-hand side by $ \int_{|x|>R_2} |u(t)|^2$ and using the nonnegativity of the first term yields
\begin{align*}
\MoveEqLeft[0]
 \int_{|x|>R_2} |u(t)|^2 \\
 &\leq   \int f |u_0|^2 - \int_0^t \int \left( 2\p_k f u_i \p_k u_i  +  2\p_i f \, p\, u_i +  \p_k f  (J_\varepsilon u_k) |u|^2 \right) \\
 &\leq   \int_{|x|>R_1} |u_0|^2 + \frac{1}{R_2-R_1} \int_0^t \left(2 \|u \| \| \nabla u\| +2 \| u  \| \| p \| +  \| J_\varepsilon u \| \|u\|_{4}^{2} \right)&\\
&&\mathllap{\leq \int_{|x|>R_1} |u_0|^2 +\frac{\|u_0\|}{R_2-R_1}\left(  2\int_0^t  \| \nabla u\| +2\int_0^t \| p \| +  \int_0^t \|u\|_{4}^{2} \right).}
\end{align*}
Let us denote the last three integrals on the right hand side by $I_1$, $I_2$ and $I_3$ respectively. We have
\begin{equation}\label{i1}
I_1 \leq \sqrt{t} \left( \int_0^t \| \nabla u (s) \|^2 \,\d s \right)^{1/2} \leq \frac{\sqrt{t}}{2} \|u_0 \|
\end{equation} 
by the energy equality \eqref{EE_regularised_eqs}. As for $I_2$, $I_3$ note that since $|u|^2$ solves (trivially) the Poisson equation $\Delta |u|^2 = \Delta |u|^2$ we can integrate by parts to obtain
\[
|u|^2 = \frac{-1}{4 \pi} \int \frac{1}{|x-y|} \Delta |u (y)|^2 \,\d y = \frac{1}{4\pi } \int \frac{x - y}{|x-y|^3}\cdot  \nabla |u (y) |^2 \,\d y .
\]
Thus
\begin{align*}
\MoveEqLeft
\|u \|_{4}^4 = \frac{1}{4 \pi } \int \int | u(x)|^2  \frac{x - y}{|x-y|^3} \cdot  \nabla |u(y)|^2  \,\d x\, \d y\\
&\leq \frac{1}{2 \pi } \int \int   \frac{| u(x)|^2}{|x-y|^2}\, |u(y) | \,|\nabla u (y)|  \,\d x\, \d y\\
&\leq C \| \nabla u \|^2 \int |u(y)| \, |\nabla u (y) | \,\d y \leq C \| \nabla u \|^3 \| u \|,
\end{align*}
where we used Lemma \ref{temp_singular_int_bound} and the Cauchy-Schwarz inequality. Note that this estimate is independent of $t$ (which we omitted in the notation). Moreover, the representation formula \eqref{sol_regularisedNSE_form} for $p$ together with the Plancherel Lemma (Lemma \ref{CZ_lemma}) and the bound $\| J_\varepsilon u \|_4 \leq \| u \|_4 $ (see Lemma \ref{prop_molli} (i)) give
\begin{equation}\label{bound_on_p_magic}
\| p \| \leq C \|  | J_\varepsilon u | \, | u | \|\leq C  \| u \|_4^2 \leq C \| \nabla u \|^{3/2} \| u \|^{1/2} .
\end{equation}
Therefore $\| p \|$ and $\| u \|_4^2 $ enjoy the same bound $C \| \nabla u \|^{3/2} \| u \|^{1/2}$. Thus, by the energy equality \eqref{EE_regularised_eqs} and H\"older's inequality, we obtain
\begin{align*}
\MoveEqLeft
I_2,I_3 &\leq C \int_0^t  \| \nabla u (s) \|^{3/2} \|  u (s)  \|^{1/2} \,\d s \leq C \|  u_0  \|^{1/2} \int_0^t \| \nabla u (s) \|^{3/2} \,\d s \\
&&\mathllap{\leq C \|  u_0  \|^{1/2} t^{1/4} \left( \int_0^t \| \nabla u (s) \|^2 \,\d s \right)^{3/4}  \leq C \| u_0 \|^2 t^{1/4}.}
\end{align*}
Hence, finally
\[
\begin{split}
 \int_{|x|>R_2} |u(t)|^2 &\leq   \int_{|x|>R_1} |u_0|^2 +\frac{\|u_0\|}{R_2-R_1}\left( 2I_1 + 2I_2 + I_3 \right) \\
&\leq   \int_{|x|>R_1} |u_0|^2 +\frac{C}{R_2-R_1}\left(  t^{1/2} \|u_0\|^2 + t^{1/4} \|u_0\|^3 \right). \qedhere
\end{split} 
\]
\end{proof}
\begin{remark}
It is interesting to note that \cite{Leray_1934} presented a way of deriving the bound \eqref{bound_on_p_magic} that does not use the Plancherel Lemma (in fact Leray does not mention the use of Fourier transform). See Appendix \ref{another_proof_of_magic_bound_sec} for details.
\end{remark}
\subsection{Global existence of a weak solution}
Here we study the limit as $\varepsilon \to 0$ of the solutions $u_\varepsilon$ of the regularised equations \eqref{regularised_NSE} to obtain a global-in-time weak solution of the Navier--Stokes equations, as defined below.
\begin{definition}\label{def_weak_sol}\footnote{This is the definition in Section 31 of \cite{Leray_1934}.}
A function $u$ is a \emph{weak solution of the Navier--Stokes equations} if there exists a set $S\subset (0,\infty)$ of measure zero such that $u(t)$ is weakly divergence free for all $t\in (0,\infty ) \setminus S$, 
\begin{equation}\label{distr_form_weak_sol}
\int u (t) \cdot  f(t) = \int u_0 \cdot f(0) + \int_0^t \int u \cdot \left( \p_t f +  \Delta f   \right)+\int_0^t \int  u\cdot (u\cdot \nabla ) f
\end{equation}
for all $t>0$ and all divergence-free test functions $f$ such that $\p_t^m \nabla^k f \in C([0,\infty); L^2) \cap C([0,\infty ); L^\infty )$ for all $k,m\geq 0$, and
\begin{equation}\label{EI_weak_sol}
\| u(t) \|^2 + 2 \int_s^t \| \nabla u (r) \|^2 \,\d r \leq \| u (s)\|^2 
\end{equation}
for every $s\in [0,\infty ) \setminus S$ and every $t\geq s$. 
\end{definition}
Such a solution is often called a \emph{Leray-Hopf weak solution} (Eberhard \cite{Hopf_1951} considered weak solutions in a similar sense on a bounded domain), while \emph{weak solutions} are functions (belonging to $L^\infty(0,T;H)\cap L^2(0,T;V)$) satisfying the first part but not necessarily the energy inequality, see, for example, Definitions 4.9 and 3.3 in \cite{JCR_R_S_NSE_book} (see also Lemma 6.6 therein for the equivalence of the spaces of test functions). The set $S$ is often called the \emph{set of singular times}.
\begin{corollary}\label{continuity_in_t_of_weak_sol_cor}
A weak solution $u$ satisfies
\[
u\in L^\infty ((0,\infty );L^2) \cap L^2 ((0,\infty ); H^1).
\]
Moreover, it is $L^2$ weakly continuous in time and for $s\in [0,\infty ) \setminus S$, $u(t)\to u(s)$ in $L^2$ as $t\to s^+$.
\end{corollary}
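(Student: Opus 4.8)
The plan is to read all three assertions directly off the two defining properties of a weak solution, the weak formulation \eqref{distr_form_weak_sol} and the energy inequality \eqref{EI_weak_sol}, together with the uniform bound they force. The first step is the easiest. Since by definition $S\subset(0,\infty)$, the time $s=0$ lies in $[0,\infty)\setminus S$, so \eqref{EI_weak_sol} applies with $s=0$, where $u(0)=u_0$ (the value $u_0$ in \eqref{distr_form_weak_sol} being, by definition, the value of the solution at $t=0$). Thus
\[
\|u(t)\|^2+2\int_0^t\|\nabla u(r)\|^2\,\d r\le\|u_0\|^2\qquad\text{for all }t\ge0,
\]
which gives at once $\sup_{t\ge0}\|u(t)\|\le\|u_0\|$ and $\int_0^\infty\|\nabla u(r)\|^2\,\d r\le\tfrac12\|u_0\|^2$, i.e.\ the asserted $u\in L^\infty((0,\infty);L^2)\cap L^2((0,\infty);H^1)$ (with ``$L^2((0,\infty);H^1)$'' understood, as usual in this context, as $u\in L^\infty((0,\infty);L^2)$ together with $\nabla u\in L^2((0,\infty);L^2)$).

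Next I would establish weak $L^2$ continuity. Take the admissible test field $f$ in \eqref{distr_form_weak_sol} to be independent of $t$ (any smooth divergence-free field all of whose derivatives lie in $L^2\cap L^\infty$), so that \eqref{distr_form_weak_sol} reads
\[
\int u(t)\cdot f=\int u_0\cdot f+\int_0^t\int u\cdot\Delta f+\int_0^t\int u\cdot(u\cdot\nabla)f .
\]
By the bound from the previous step, the integrands $\int u(s)\cdot\Delta f$ and $\int u(s)\cdot(u(s)\cdot\nabla)f$ are bounded in $s$ by $\|u_0\|\,\|\Delta f\|$ and $\|u_0\|^2\|\nabla f\|_\infty$ respectively, so $t\mapsto\int u(t)\cdot f$ is Lipschitz on $[0,\infty)$. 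Since such $f$'s are dense in $H$, an $\varepsilon/3$ argument exploiting $\|u(t)\|\le\|u_0\|$ upgrades this to continuity of $t\mapsto\int u(t)\cdot g$ on $[0,\infty)$ for every $g\in H$; that is, $u$ is weakly continuous as an $H$-valued function (and, after the harmless modification of $u$ on the null set $S$ by its Leray projection, weakly continuous into all of $L^2$, which affects neither \eqref{distr_form_weak_sol} nor \eqref{EI_weak_sol}).

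For the final claim I would fix $s\in[0,\infty)\setminus S$ and combine these two facts. On one hand, \eqref{EI_weak_sol} at this $s$ gives $\|u(t)\|\le\|u(s)\|$ for all $t\ge s$, so $\limsup_{t\to s^+}\|u(t)\|\le\|u(s)\|$. On the other hand, since $s\notin S$ we have $u(s)\in H$, so the weak continuity just proved gives $\int u(t)\cdot u(s)\to\|u(s)\|^2$ as $t\to s^+$; Cauchy--Schwarz then forces $\|u(s)\|\le\liminf_{t\to s^+}\|u(t)\|$, whence $\|u(t)\|\to\|u(s)\|$. Expanding
\[
\|u(t)-u(s)\|^2=\|u(t)\|^2-2\int u(t)\cdot u(s)+\|u(s)\|^2
\]
and letting $t\to s^+$, the right-hand side tends to $0$, giving the desired strong right-continuity at $s$.

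I do not expect a genuine obstacle: the only point requiring care is the passage, in the weak-continuity step, from the rich but still ``small'' class of time-independent admissible test fields in \eqref{distr_form_weak_sol} to all of $H$. For this it is essential that \eqref{distr_form_weak_sol} is imposed for \emph{every} $t>0$ (not merely for a.e.\ $t$), so that the continuous function produced by the representation formula really coincides with $t\mapsto\int u(t)\cdot f$ pointwise, and that the uniform bound $\|u(t)\|\le\|u_0\|$ is available to make the density argument run; everything else is routine Hilbert-space manipulation.
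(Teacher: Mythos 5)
Your proposal is correct and follows essentially the same route as the paper's (very terse) proof: the energy inequality with $s=0$ gives the function-space membership, time-independent divergence-free test fields in \eqref{distr_form_weak_sol} plus the uniform $L^2$ bound and a density argument give weak continuity, and weak continuity combined with $\limsup_{t\to s^+}\|u(t)\|\le\|u(s)\|$ from \eqref{EI_weak_sol} and weak lower semicontinuity of the norm gives strong right-continuity at $s\notin S$. Your parenthetical care about upgrading weak continuity from pairings against $H$ to pairings against all of $L^2$ (via the null-set modification on $S$) is a legitimate detail the paper glosses over, but it does not change the argument.
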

\begin{proof}
The first property is a consequence of the energy inequality \eqref{EI_weak_sol}, the $L^2$ weakly continuity is a consequence of \eqref{distr_form_weak_sol}, and the last property is a consequence of the weak continuity and the convergence of the norms $\| u(t) \| \to \| u(s) \|$ for $s\in [0,\infty ) \setminus S$ (which follows from \eqref{EI_weak_sol}).
\end{proof}
\begin{theorem}[Global existence of a weak solution\footnote{This theorem corresponds to Sections 28-31 of \cite{Leray_1934}.}]\label{existence_weak}
If $u_0\in H$ then there exists a weak solution $u$ of the Navier--Stokes equations such that $\| u(t) - u_0 \|\to 0$ as $t\to 0^+$.
\end{theorem}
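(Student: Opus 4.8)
The plan is to construct the weak solution $u$ as a limit (along a subsequence) of the regularised solutions $u_\varepsilon$ from Theorem \ref{thm5.1}, after first smoothing the initial data. The key tools are: the uniform energy equality \eqref{EE_regularised_eqs}, the separation-of-energy estimate (Lemma \ref{tail_estimate}) which provides the spatial tightness needed to upgrade weak $L^2$ convergence to strong $L^2_{\loc}$ convergence in space-time, and the convergence lemma (Lemma \ref{lem9_from_Leray}) style reasoning to pass to the limit in the nonlinear term on time intervals where the solution is regular. First I would fix a sequence $u_0^{(k)} \coloneqq J_{1/k} u_0 \in H \cap L^\infty$ with $u_0^{(k)} \to u_0$ in $L^2$ and $\|u_0^{(k)}\| \le \|u_0\|$, $\int_{|x|>R}|u_0^{(k)}|^2 \le \int_{|x|>R/2}|u_0|^2 + o(1)$; by Theorem \ref{thm5.1} each pair $(k,\varepsilon)$ gives a global strong solution, but it suffices to take $\varepsilon = 1/k$ and work with $v_k \coloneqq u_{1/k}$ solving \eqref{regularised_NSE} with datum $u_0^{(k)}$. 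The energy equality gives $\|v_k(t)\|^2 + 2\int_0^t \|\nabla v_k\|^2 = \|u_0^{(k)}\|^2 \le \|u_0\|^2$ uniformly, so $\{v_k\}$ is bounded in $L^\infty((0,\infty);L^2) \cap L^2((0,\infty);H^1)$.

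Next I would extract the limit. By Banach--Alaoglu there is a subsequence (not relabelled) with $v_k \rightharpoonup u$ weak-$\ast$ in $L^\infty((0,\infty);L^2)$ and $\nabla v_k \rightharpoonup \nabla u$ weakly in $L^2((0,\infty);H^1)$; lower semicontinuity then yields the bounds in Corollary \ref{continuity_in_t_of_weak_sol_cor}. The essential point is \emph{strong} convergence of $v_k$ in $L^2_{\loc}(\RR^3 \times (0,\infty))$, needed to pass to the limit in the nonlinear term $\int (J_{1/k}v_k)\cdot(v_k\cdot\nabla)f \to \int u\cdot(u\cdot\nabla)f$. For this I would combine (i) equiboundedness of $\int_0^T\|\nabla v_k\|^2$ with an Aubin--Lions type compactness argument on bounded spatial domains (using the equation to bound $\partial_t v_k$ in a negative Sobolev norm), giving strong convergence on $\Omega \times (0,T)$ for each ball $\Omega$, and then (ii) the tail estimate Lemma \ref{tail_estimate}, which, applied with the datum $u_0^{(k)}$ and $R_1 = R$, $R_2 = 2R$, shows $\sup_k \int_0^T \int_{|x|>2R} |v_k(t)|^2\,\d x\,\d t$ is small uniformly in $k$ for $R$ large; diagonalising over $\Omega = B(0,R)$ upgrades this to strong convergence in $L^2(\RR^3\times(0,T))$. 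The mollification is harmless since $\|J_{1/k}v_k - v_k\|_{L^2} \to 0$ follows from the $H^1$ bound and Lemma \ref{prop_molli}. With strong convergence in hand, passing to the limit in \eqref{regularised_eqs_distr_form} (rewritten in the integrated form \eqref{distr_form_weak_sol}, after noting $p_k = \partial_i\partial_k(-\Delta)^{-1}((J_{1/k}v_{k,i})v_{k,k})$ is bounded in $L^{4/3}$ by the Plancherel Lemma and converges distributionally) yields \eqref{distr_form_weak_sol} for a.e.\ $t$, and the $L^2$-weak continuity of $u$ lets one fix a representative for which it holds for all $t>0$.

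Finally I would establish the energy inequality \eqref{EI_weak_sol} and identify the singular set $S$. From $\|v_k(t)\|^2 + 2\int_s^t\|\nabla v_k\|^2 = \|v_k(s)\|^2$, integrating in $s$ over a small interval, using weak lower semicontinuity of the norm at time $t$ and strong $L^2$ convergence at almost every time $s$ (which holds off a null set $S$ by Fubini, since $v_k(s) \to u(s)$ strongly in $L^2$ for a.e.\ $s$ along the strongly-convergent subsequence), one obtains \eqref{EI_weak_sol} for every $s \notin S$ and every $t \ge s$; the weak divergence-free property of $u(t)$ for $t\notin S$ passes to the limit the same way. For the initial condition $\|u(t)-u_0\|\to 0$: weak convergence $v_k(t)\rightharpoonup u(t)$ plus the energy inequality give $\limsup_{t\to0^+}\|u(t)\| \le \limsup_k\|u_0^{(k)}\| = \|u_0\|$, while weak lower semicontinuity and weak continuity at $0$ (from \eqref{distr_form_weak_sol} with $t\to0^+$, using $u_0^{(k)}\to u_0$) give $\|u_0\|\le\liminf_{t\to0^+}\|u(t)\|$; combining, the norms converge and $u(t)\to u_0$ strongly. \textbf{The main obstacle} is the strong $L^2$ space-time compactness of $\{v_k\}$ on the unbounded domain $\RR^3$: on a bounded domain this is standard Aubin--Lions, but controlling the energy escaping to spatial infinity requires the non-trivial tail estimate of Lemma \ref{tail_estimate}, and one must be careful that the mollification radius $1/k$ in the regularised equation does not spoil either the compactness or the passage to the limit in $J_{1/k}v_k$.
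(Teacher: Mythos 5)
Your proposal is correct, but it reaches the conclusion by a genuinely different route from the paper. The common ground is the starting point (regularised solutions with mollified data $J_\varepsilon u_0$, the uniform energy equality \eqref{EE_regularised_eqs}) and the crucial role of the tail estimate of Lemma \ref{tail_estimate} in preventing energy from escaping to spatial infinity, which is what makes the energy inequality \eqref{EI_weak_sol} survive the limit. Where you diverge is in how compactness is produced: you bound $\p_t v_k$ in $L^{4/3}((0,T);H^{-1}(\Omega))$ via the equation and the pressure estimate \eqref{bound_on_p_magic}, invoke Aubin--Lions on balls, and glue with the tail estimate to get strong convergence in $L^2(\RR^3\times(0,T))$, hence strong convergence at a.e.\ time; all statements required ``for every $t$'' are then recovered by passing to a weakly continuous representative. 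The paper instead follows Leray and never estimates a time derivative: it applies Helly's selection theorem to the nonincreasing functions $t\mapsto\|u_{\varepsilon_n}(t)\|$ and a diagonal argument over rational times and rational cubes to extract one subsequence along which $u_{\varepsilon_n}(t)\rightharpoonup u(t)$ in $L^2$ for \emph{every} $t\geq 0$ simultaneously, and then obtains strong convergence at each fixed $t\notin S$ timewise, from the Rellich embedding $H^1(B_R)\subset\subset L^2(B_R)$ combined with the tail estimate. Your version buys brevity and standard machinery (it is essentially the modern textbook construction of Leray--Hopf solutions adapted to $\RR^3$); the paper's version buys pointwise-in-time weak convergence everywhere, so that \eqref{distr_form_weak_sol}, the divergence-free property and the identification $u(0)=u_0$ hold for every $t$ with no choice of representative, and it avoids negative Sobolev spaces and Aubin--Lions altogether, which is both historically faithful and arguably more elementary. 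Do note one small labelling slip: the $L^{4/3}((0,T);L^2)$ bound on $p_k$ comes from the $L^2\to L^2$ boundedness of $\p_i\p_k(-\Delta)^{-1}$ together with the interpolation $\|u\|_4^2\leq C\|u\|^{1/2}\|\nabla u\|^{3/2}$, exactly as in \eqref{bound_on_p_magic}; the Plancherel Lemma alone gives only the $L^2$-in-space part, and the time integrability is what the energy bound supplies.
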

\begin{proof} For each $\varepsilon >0$ let $u_\varepsilon$ be the unique strong solution of \eqref{regularised_NSE} with $u_\varepsilon (0) = J_\varepsilon u_0$. The existence of such $u_\varepsilon$ is guaranteed by Theorem \ref{thm5.1}, which also gives
\begin{equation}\label{2)}
 \| u_\varepsilon (t) \|^2 + 2\int_0^t \| \nabla u_\varepsilon (s) \|^2 \,\d s =  \|J_\varepsilon u_0 \|^2 \leq  \| u_0 \|^2 
 \end{equation}
 for $t\geq 0$, $\varepsilon >0$ (see \eqref{EE_regularised_eqs}; the last inequality is a property of the mollification operator, see Lemma \ref{prop_molli} (i)). The weak solution is constructed in the following four steps.
 
 \vspace{10pt}
\noindent\emph{Step 1. Construct the sequence $\{ \varepsilon_n \}$.}

Multiplying the equation in \eqref{regularised_NSE} by $f$ and integrating by parts we obtain, for $t\geq 0$,
 \begin{equation}\label{1)}
 \int u_\varepsilon (t) \cdot  f(t) = \int J_{\varepsilon }u_0 \cdot  f(0) + \int_0^t \int u_\varepsilon \cdot \left( \p_t f + \Delta f  \right)+\int_0^t \int u_\varepsilon \cdot ( J_\varepsilon u_\varepsilon \cdot \nabla ) f  .
 \end{equation}
Inequality \eqref{2)} implies that for each $t$ the numbers $\|u_{\varepsilon} (t) \| $ are bounded independently of $\varepsilon$, and so using a diagonal argument we extract a subsequence $\{ \varepsilon_n \} $ such that for all $t\in \QQ^+$, $\|u_{\varepsilon_n} (t)\| \to W(t)$ as $k\to \infty$, for some function $W \colon \QQ^+ \to [0,\infty )$. Extend $W$ to the whole of $\RR_+$ by letting $W(t) \coloneqq \liminf_{s\to t^-} W(s)$ for $t\in \RR_+ \setminus \QQ^+$. Using the following fact, we see that $\|u_{\varepsilon_n} (t)\| \to W(t)$ as $n\to \infty$ for times $t$ at which $W$ is continuous.
 \begin{fact}[Helly's theorem\footnote{This result is due to \cite{helly}, see also Lemma 13.15 in \cite{carothers}.}]
 If $g_n \in C([0,1])$ is nonincreasing for each $n$ and $g_n(t) \to g(t)$ for all $t\in \QQ\cap [0,1]$ then $g_n (t) \to g(t)$ at each continuity point $t$ of $g$. 
 \end{fact}
Since $\|u_{\varepsilon_n} (t)\|$ is a nonincreasing function of $t$ for each $n$ (see the energy equality \eqref{EE_regularised_eqs}), the same is true of the limit function $W(t)$. Therefore, since any non-increasing non-negative function has at most countably many points of discontinuity, we can apply the diagonal argument to account for such points and obtain
\begin{equation}\label{norms_converge_temp}
\|u_{\varepsilon_n} (t)\| \to W(t) \qquad \text{ for } t\geq 0,
\end{equation}
where we have also relabelled the sequence $\{\varepsilon_n \}$ and redefined $W$ at its points of discontinuity. Note that since 
\begin{equation}\label{weak_thm_temp1}
u_{\varepsilon_n} (0) =J_{\varepsilon_n } u_0 \to u_0\quad \text{ in } L^2
\end{equation}
(see Lemma \ref{prop_molli} (vi)) we obtain
\[
W(0) = \lim_{n\to \infty }  \|u_{\varepsilon_n} (0)\| = \lim_{ n \to \infty }  \|J_{\varepsilon_n } u_0 \| = \| u_0 \| .
\]
We now want to take the limit of the functions themselves (rather than the norms). Since 
\begin{equation}\label{weak_sol_temp2}
\| J_{\varepsilon_n } u_{\varepsilon_n } (t) \|\leq \| u_{\varepsilon_n } (t) \| \leq \| u_0 \|, \qquad \varepsilon_n>0,t \geq 0
\end{equation}
(see Lemma \ref{prop_molli} (i) and \eqref{2)}) we can apply the diagonal argument once more (and relabel the sequence $\{ \varepsilon_n \}$) to deduce that the numbers
\eqnb\label{numbers_converge}
\int_{t_1}^{t_2} \int_\Omega \left( u_{\varepsilon_n} \right)_k  \quad \text{and}\quad\int_{t_1}^{t_2} \int_\Omega \left( u_{\varepsilon_n } \right)_k \left( J_{\varepsilon_n} u_{\varepsilon_n } \right)_l  \quad \text{ converge as }n\to \infty 
\eqne
for all $t_1, t_2 \in \QQ^+$, $k,l=1,2,3$ and all cubes $\Omega\subset\RR^3$ with rational vertices, that is vertices whose coordinates are rational numbers. Moreover, from the timewise uniform continuity of the above integrals (that is from the bound $C(T)|t_2-t_1|$ of the integrals whenever $t_1,t_2 \in (0,T)$ for some $T$) we obtain that in fact they converge as $n \to \infty$ for every pair $t_1,t_2 \geq 0$, every $k,l=1,2,3$ and every cube $\Omega$ with rational vertices. 

From the convergence in \eqref{numbers_converge} we see that
\eqnb\label{number_converge_better}
\int_{t_1}^{t_2} \int u_{\varepsilon_n} \cdot ( \p_t f + \Delta f ) \quad \text{ and} \quad \int_{t_1}^{t_2} \int  u_{\varepsilon_n } \cdot \left(  J_{\varepsilon_n} u_{\varepsilon_n } \cdot \nabla \right) f 
\eqne
converge as $n\to \infty$ for all $t_1,t_2 \geq 0$ and all test functions $f$ (see Definition \ref{def_weak_sol}). Indeed, fix $t_1,t_2 \geq 0$, a test function $f$, and $\epsilon >0$. Without loss of generality we can assume $t_1<t_2$. There exists $N\in \NN$, $\{g_k\}_{k=1}^N \subset \RR^3$, $\{\Omega_k\}_{k=1}^N$ (cubes with rational coordinates), and a family of intervals
\[\{(p_k,q_k)\colon 0\leq p_k<q_k\leq\infty,\ k=1,\ldots, N\}
\] such that the (vector-valued) function
\[
G_N (x,t) \coloneqq \sum_{k=1}^N g_k I_{\Omega_k} (x) I_{(p_k,q_k)}(t),
\]
satisfies
\eqnb\label{choice_of_GN}
\| (\p_t f + \Delta f ) - G_N \|_{L^\infty((t_1,t_2);L^2)} \leq \frac{\epsilon }{4 \| u_0 \|(t_2-t_1)}.
\eqne
Moreover, the first part of \eqref{numbers_converge} gives that for sufficiently large $n,m$
\[
\left| \int_{t_1}^{t_2} \int (u_{\varepsilon_n}-u_{\varepsilon_m}) \cdot G_N \right| \leq \frac{\epsilon }{2}
\]
Thus 
\begin{align*}
\MoveEqLeft[0]
\left| \int_{t_1}^{t_2} \int (u_{\varepsilon_n}-u_{\varepsilon_m}) \cdot ( \p_t f + \Delta f )\right|\\& \leq \left| \int_{t_1}^{t_2} \int (u_{\varepsilon_n}-u_{\varepsilon_m}) \cdot G_N \right|
+\int_{t_1}^{t_2} \int (|u_{\varepsilon_n}|+|u_{\varepsilon_m}|) \left| ( \p_t f + \Delta f )- G_N  \right|\\
&\leq \frac{\epsilon }{2} +  \frac{\epsilon }{4 \| u_0 \|(t_2-t_1)} \int_{t_1}^{t_2} \left( \|u_{\varepsilon_n} (s) \| + \|u_{\varepsilon_m} (s) \|  \right) \d s \\
&&\mathllap{\leq \frac{\epsilon }{2} +  \frac{\epsilon }{2 \| u_0 \|(t_2-t_1)} \int_{t_1}^{t_2} \| u_0 \| \, \d s = \epsilon,\hspace{10pt}}
\end{align*}
where we also used the energy inequality \eqref{2)}. Thus we obtain the first part of \eqref{number_converge_better}. The second part follows in a similar way, by choosing a simple (matrix) function $G_N$ such that 
\[
\| \nabla f - G_N \|_{L^\infty ((t_1,t_2);L^\infty )} \leq \frac{\epsilon }{2 \| u_0 \|^2(t_2-t_1)},
\]
rather than \eqref{choice_of_GN}. Thus we obtain \eqref{number_converge_better}.

On the other hand, the convergence $J_{\varepsilon_n } u_0 \to u_0$ in $L^2$ (see \eqref{weak_thm_temp1}) implies, in particular, the convergence of numbers
\[
\int J_{\varepsilon_n }u_0 \cdot  f(0) \to \int u_0 \cdot  f(0)\qquad \text{ for all test functions }f.
\]
Combining this with \eqref{number_converge_better} we can use the weak form of the regularised equations \eqref{1)} to obtain that the numbers
\[
\int u_{\varepsilon_n} (t)\cdot  f(t) \quad \text{ converge for all test functions }f \text{ and all } t\geq 0.
\] 
Thus letting $f(x,t)\coloneqq \phi (x)$ for some $\phi \in L^2$ such that $\phi \in H^m$ for all $m\geq 1$ and $\mathrm{div}\, \phi =0$, we obtain that the numbers
\eqnb\label{numbers_converge_phi}
\int u_{\varepsilon_n} (t)\cdot  \phi  \quad \text{ converge for all } t\geq 0, \phi .
\eqne
This together with the fact that $\| u_{\varepsilon_n} (t) \| \leq \|u_0\|$ (see \eqref{2)}) implies that\footnote{This functional analytical fact is one of Leray's remarkable contributions, which he discusses on page 209. To see it, note that if \eqref{weak_conv_turb} does not hold then (using the boundedness in $L^2$) one could extract subsequences $\{ n_k \}$, $\{ m_k \}$ such that $u_{\varepsilon_{n_k}} (t) \rightharpoonup v(t) $, $u_{\varepsilon_{m_k}} (t) \rightharpoonup w(t) $ for some $v(t), w(t)\in L^2$, $v(t)\ne w(t)$. In that case \eqref{numbers_converge_phi} gives $\int (u_{\varepsilon_{n_k}} (t) -u_{\varepsilon_{m_k}} (t))\phi \to 0$ for all $\phi$, and taking $\phi\coloneqq J_\delta (v(t)-w(t))$ gives $\int(v(t)-w(t)) J_\delta(v(t)-w(t))=0$, which in the limit $\delta \to 0^+$ gives $\| v(t)-w(t) \|=0$, a contradiction.}
\begin{equation}\label{weak_conv_turb}
u_{\varepsilon_n} (t) \rightharpoonup u(t) \qquad \text{ as }n\to \infty \text{ in }L^2, \quad t\geq 0,
\end{equation}
for some $u(t) \in L^2$ (note $u(0)=u_0$ by \eqref{weak_thm_temp1} and $u(t)$ is weakly divergence free for each $t$). We have thus constructed the sequence $\{ \varepsilon_n \}$ and we obtained $u$ as the weak limit of $u_{\varepsilon_n}$'s. In order to show that $u$ is the required weak solution (Step 4), we first show that in fact $u_{\varepsilon_n} (t) \to u(t)$ strongly in $L^2$ for almost all $t$ (Step 3).

 \vspace{10pt}
\noindent\emph{Step 2. Define the set of singular times $S$.}

Fatou's lemma and \eqref{2)} give
\[
\int_0^\infty \liminf_{\varepsilon_n \to 0} \|\nabla u_{\varepsilon_n} (t) \|^2 \,\d t \leq \liminf_{\varepsilon_n \to 0} \int_0^\infty  \|\nabla u_{\varepsilon_n}(t) \|^2\,\d t  \leq  \frac{1}{2} \|u_0\|^2.
\]
Hence $\liminf_{\varepsilon_n \to 0} \|\nabla u_{\varepsilon_n} (t) \|^2 < \infty$ for almost every $t>0$, that is $|S|=0$, where  
\begin{equation}\label{def_of_S}
S\coloneqq \{  t> 0 \, : \, \| \nabla u_{\varepsilon_n } (t) \| \to \infty  \text{ as } n\to \infty \} 
\end{equation}
is the \emph{set of singular times}.

 \vspace{10pt}
\noindent\emph{Step 3. Show that $u_{\varepsilon_n} (t) \to u(t)$ strongly in $L^2$ for $t\in (0,\infty ) \setminus S$.}

Recalling that $W(t) = \lim_{\varepsilon_n\to 0 } \| u_{\varepsilon_n }(t) \|$ (see \eqref{norms_converge_temp}) we see that it is enough to show that 
\eqnb\label{conv_of_norms_wt}
\| u(t ) \|=W(t ), \quad t\in (0,\infty ) \setminus S,
\eqne
since weak convergence \eqref{weak_conv_turb} together with the convergence of the norms is equivalent to strong convergence. The estimate $\| u(t ) \|\leq W(t )$ holds for all $t\geq 0$ by the property of weak limits,
\eqnb\label{square}
\|u(t)\| \leq \liminf_{\varepsilon_n \to 0} \|u_{\varepsilon_n} (t) \| = W(t).
\eqne
In order to prove the converse inequality, fix $t\in (0,\infty )\setminus S$. For such $t $ let $\{ \varepsilon_{n_k}\} $ be a subsequence along which $\liminf_{\varepsilon_n \to 0} \|\nabla u_{\varepsilon_n} (t ) \|^2$ is attained, that is
\eqnb\label{liminf_is_attained}
\lim_{k\to \infty} \| \nabla u_{\varepsilon_{n_k}} (t  ) \| = \liminf_{\varepsilon_n \to 0} \|\nabla u_{\varepsilon_n} (t ) \| .
\eqne
It follows that the sequence $\{ \nabla u_{\varepsilon_{n_k}} (t ) \}$ is bounded in $L^2$ and therefore 
\begin{equation}\label{weak_conv_gradients_outside_S}
\nabla  u_{\varepsilon_{n_k}} (t ) \rightharpoonup \nabla u (t )  \text{ in }L^2
\end{equation}
on a subsequence (which we relabel back to $\varepsilon_{n_k}$; note on such a subsequence \eqref{liminf_is_attained} still holds). The limit function is $\nabla u(t )$ by the definition of weak derivatives, and 
\begin{equation}\label{ae_fatou_for_gradients}
\| \nabla u (t ) \| \leq \liminf_{\varepsilon_n \to 0} \|\nabla u_{\varepsilon_n} (t ) \| \qquad \mbox{ for }t  \in (0,\infty )\setminus S.
\end{equation}
At this point we need to apply the separation of energy result (Lemma \ref{tail_estimate}). We fix $\eta >0$, and we let $R_1 (\eta )>0 $ be large enough that 
\[ \int_{|x|>R_1(\eta )} |u_0|^2 \leq \frac{\eta }{2}, \]
and set
\[ R_2 (\eta , t) \coloneqq  R_1 (\eta ) + \frac{4}{\eta } C(u_0,t),\]
where $C(u_0,t)$ is the constant from the lemma. The lemma now implies that
 \begin{equation}\label{5.11}
 \int_{|x| > R_2 (\eta , t) } |u_\varepsilon (t) |^2 \leq \eta,  \qquad  \varepsilon >0.
 \end{equation} 
Since $u_{\varepsilon_{n_k}}(t ) \rightharpoonup u(t )$ in $H^1$ (see \eqref{weak_conv_turb} and \eqref{weak_conv_gradients_outside_S}) we have in particular 
\[ u_{\varepsilon_{n_k}}(t ) \rightharpoonup u(t)\quad \text{ in }H^1 (B(R_2(\eta , t ))),
\]
and so the compact embedding\footnote{Leray's Lemma 2 provides a similar compactness result.} $H^1 (B(R_2(\eta , t ))) \subset \subset L^2 (B(R_2(\eta , t )))$ implies
\[
u_{\varepsilon_{n_k}} (t )\to u (t ) \quad \text{ in } L^2 (B(R_2(\eta , t ))).
\]
Therefore, from \eqref{5.11}, we obtain
\begin{align*}
\MoveEqLeft
 \limsup_{{\varepsilon_{n_k}} \to 0} \| u_{\varepsilon_{n_k}} (t ) \|^2 \\
 &\leq  \limsup_{{\varepsilon_{n_k}} \to 0} \int_{|x| \leq  R_2 (\eta , t ) } | u_{\varepsilon_{n_k}} (t ) |^2 +\limsup_{{\varepsilon_{n_k}} \to 0} \int_{|x| > R_2 (\eta , t ) } | u_{\varepsilon_{n_k}} (t ) |^2 \\
&& \mathllap{\leq  \int_{|x| \leq  R_2 (\eta , t ) } | u (t ) |^2  +\eta 
\leq \| u(t  ) \|^2 + \eta \qquad \eta >0,}
\end{align*}
And hence, taking $\eta \to 0$,
\[
W(t ) =\limsup_{{\varepsilon_{n_k}} \to 0} \| u_{\varepsilon_{n_k}} (t ) \| \leq \| u(t  ) \|.
\]
Thus we obtained the $\geq $ inequality in \eqref{conv_of_norms_wt}, as required.

 \vspace{10pt}
\noindent\emph{Step 4. Verify that $u$ is a weak solution.}

We already established (after \eqref{weak_conv_turb}) that $u(t)$ is weakly divergence free. Step 3 gives
\[\begin{split}
\int u_{\varepsilon_n } (t) \cdot \left( \p_t f(t) + \Delta f (t) \right) &\to \int u (t) \cdot \left( \p_t f(t) + \Delta f (t) \right),\\
 \int u_{\varepsilon_n }(t) \cdot ( J_{\varepsilon_n } u_{\varepsilon_n }(t) \cdot \nabla ) f (t) &\to \int u(t) \cdot (  u(t) \cdot \nabla ) f (t) 
 \end{split}
\]
for almost every $t> 0$. Thus, using \eqref{weak_sol_temp2}, we obtain \eqref{distr_form_weak_sol} by taking the limit $\varepsilon_n\to 0^+$ in \eqref{1)} and applying the Dominated Convergence Theorem to the time integrals. As for the energy inequality \eqref{EI_weak_sol}, let $s \in (0,\infty )\setminus S$ and $t>s$. Since $ \| \nabla u (\tau  ) \| \leq \liminf_{\varepsilon_n \to 0} \|\nabla u_{\varepsilon_n} (\tau ) \|$ for almost every $\tau \geq 0$ (see \eqref{ae_fatou_for_gradients}), we can use \eqref{square}, Fatou's lemma, the identity 
\[
 \| u_\varepsilon (t) \|^2 + 2\int_s^t \| \nabla u_\varepsilon (s) \|^2 \,\d s =  \| u_\varepsilon (s) \|^2, \qquad \varepsilon >0
\]
(see \eqref{2)}) and the fact that $\lim_{\varepsilon_n \to 0} \| u_{\varepsilon_n} (s) \|=\| u(s) \|$ (see \eqref{conv_of_norms_wt}) to obtain
\begin{align*}
\MoveEqLeft
 \| u(t)\|^2 +2\int_s^t \| \nabla u (\tau ) \|^2\,\d \tau\\
 &\leq   \liminf_{\varepsilon_n \to 0 } \| u_{\varepsilon_n} (t)\|^2+2 \int_s^t \liminf_{\varepsilon_n \to 0} \| \nabla u_{\varepsilon_n} (\tau ) \|^2 \,\d \tau\\
 &\leq   \liminf_{\varepsilon_n \to 0} \left(     \| u_{\varepsilon_n} (t)\|^2 +2\int_0^t \| \nabla u_{\varepsilon_n} (\tau ) \|^2 \,\d \tau  \right) \\
&&\mathllap{=  \liminf_{\varepsilon_n \to 0}  \| u_{\varepsilon_n } (s) \|^2= \| u (s) \|^2 . }
\end{align*}
The case $s=0$ follows similarly by using \eqref{weak_thm_temp1} (rather than \eqref{conv_of_norms_wt}) in the last equality (recall also $u(0)=u_0$).
Finally $\| u(t) - u_0 \| \to 0$ as $t\to 0$ follows from the weak convergence $u(t) \rightharpoonup u_0$ in $L^2$ (a consequence of \eqref{distr_form_weak_sol}) and the convergence of norms $\| u(t) \| \to \| u_0 \|$ (a consequence of Fatou's lemma and energy inequality with $s=0$).
\end{proof}
\subsection{Structure of the weak solution}
Let $u_0 \in H$ and consider the weak solution $u$ given by Theorem \ref{existence_weak}. We first show that such solution enjoys the following \emph{weak-strong uniqueness} property\footnote{Leray calls this \emph{Comparison of a regular solution and a turbulent solution}, see pp. 242-244.}.
\begin{lemma}[Weak-strong uniqueness]\label{weak_strong_uniqueness_lem}
If $\|\nabla u(t_0)\| <\infty$ for some $t_0 \geq 0$ then $u=v$ on the time interval $[t_0,T)$ for some $T>t_0$, where $v$ is the semi-strong solution corresponding to the initial data $u(t_0)$ (recall Definition \ref{def_semi_reg}).
\end{lemma}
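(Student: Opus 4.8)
The plan is to run the classical Leray weak--strong comparison between the weak solution $u$ and the semi-strong solution $v$. Since $u(t_0)\in L^2$ is weakly divergence free and $\|\nabla u(t_0)\|<\infty$, we have $u(t_0)\in V$, so Theorem \ref{semi_reg_sols_existence_thm} yields a unique semi-strong solution $v$ on $[t_0,T)$ with $v(t_0)=u(t_0)$ and $T-t_0\geq C\|\nabla u(t_0)\|^{-4}>0$; by Corollary \ref{smoothness_reg_sol_corollary} this $v$ is smooth on the open interval $(t_0,T)$, with its pressure $q$. Set $w\coloneqq u-v$; the goal is $\|w(t)\|=0$ for all $t\in[t_0,T)$.

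The main work is the "cross-term" identity. Fix $t\in(t_0,T)$ and $t_0<a<b<t$, and pick $\chi\in C_0^\infty((t_0,T))$ with $\chi\equiv1$ on $[a,b]$. Then $f\coloneqq\chi v$ is an admissible test function in Definition \ref{def_weak_sol}: it is divergence free, supported in time inside $(0,\infty)$, and by Corollary \ref{smoothness_reg_sol_corollary} every $\p_t^m\nabla^k f$ lies in $C([0,\infty);L^2)\cap C([0,\infty);L^\infty)$. Subtracting the instances of \eqref{distr_form_weak_sol} at times $b$ and $a$ and using $\chi\equiv 1$ on $[a,b]$ gives
\[
\int u(b)\cdot v(b)-\int u(a)\cdot v(a)=\int_a^b\!\!\int\big(u\cdot(\p_s v+\Delta v)+u\cdot(u\cdot\nabla)v\big).
\]
Now substitute the pointwise equation $\p_s v=\Delta v-(v\cdot\nabla)v-\nabla q$: the pressure term drops since $u(s)$ is weakly divergence free, an integration by parts turns $\int u\cdot\Delta v$ into $-\int\nabla u:\nabla v$, and the cancellation $\int u\cdot(w\cdot\nabla)v=\int w\cdot(w\cdot\nabla)v$ (valid because $w$ is divergence free, cf.\ \eqref{temp_cancellation_EE}) rewrites the nonlinear part. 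Letting $a\to t_0^+$ and $b\to t^-$ --- using $v(a)\to v(t_0)=u(t_0)$ and $v(b)\to v(t)$ in $L^2$, the $L^2$ weak continuity of $u$ (Corollary \ref{continuity_in_t_of_weak_sol_cor}), and $\nabla u,\nabla v\in L^2_{\loc}([t_0,T);L^2)$, $\|v\|_\infty\in L^2_{\loc}$, $\|u(s)\|\leq\|u_0\|$ to control the integrands near $t_0$ --- we obtain
\[
\int u(t)\cdot v(t)-\|u(t_0)\|^2=-2\int_{t_0}^t\!\!\int\nabla u:\nabla v+\int_{t_0}^t\!\!\int w\cdot(w\cdot\nabla)v.
\]

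Finally, expand $\|w(t)\|^2=\|u(t)\|^2-2\int u(t)\cdot v(t)+\|v(t)\|^2$, insert the cross-term identity, the energy inequality \eqref{EI_weak_sol} from $s=t_0$ for $u$ (so $\|u(t)\|^2+2\int_{t_0}^t\|\nabla u\|^2\leq\|u(t_0)\|^2$), and the energy equality for $v$ from Lemma \ref{EE_semi_reg_lemma} (so $\|v(t)\|^2+2\int_{t_0}^t\|\nabla v\|^2=\|u(t_0)\|^2$). All $\|u(t_0)\|^2$ terms cancel, the quadratic gradient terms assemble into $-2\int_{t_0}^t\|\nabla w\|^2$, and we are left with
\[
\|w(t)\|^2+2\int_{t_0}^t\|\nabla w\|^2\leq-2\int_{t_0}^t\!\!\int w\cdot(w\cdot\nabla)v\leq 2\int_{t_0}^t\|\nabla w\|\,\|w\|\,\|v\|_\infty\,\d s,
\]
using $\int w\cdot(w\cdot\nabla)v=-\int(w\cdot\nabla)w\cdot v$ and Cauchy--Schwarz. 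Young's inequality absorbs $2\int\|\nabla w\|^2$ on the left, giving $\|w(t)\|^2\leq\tfrac12\int_{t_0}^t\|w(s)\|^2\|v(s)\|_\infty^2\,\d s$; since $v$ is semi-strong, $\int_{t_0}^t\|v(s)\|_\infty^2\,\d s<\infty$ by \eqref{integral_condition_u_infty}, so Gronwall's inequality forces $\|w\|\equiv0$ on $[t_0,T)$, i.e.\ $u\equiv v$ there.

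I expect the main obstacle to be the passage $a\to t_0^+$ together with the admissibility of $v$ as a test function: one must reconcile the mere $L^2$-continuity of $u$ and $v$ at $t_0$ (and the merely $L^2_{\loc}$-in-time control of $\nabla u,\nabla v$ and $\|v\|_\infty$ near $t_0$) with the smoothness required of test functions, which is exactly what the cutoff $\chi$ and the integrations by parts are designed to handle. A minor caveat is that \eqref{EI_weak_sol} is only asserted for $s\notin S$, so the argument as written applies when $t_0\notin S$ --- in particular when $t_0=0$, since then \eqref{EI_weak_sol} with $s=0$ was already established in Theorem \ref{existence_weak} --- and the case $t_0\in S$ (see \eqref{def_of_S}) requires a brief additional remark.
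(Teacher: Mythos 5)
Your proof is correct and follows essentially the same route as the paper: test the weak formulation of $u$ against the semi-strong solution $v$, combine the resulting cross-term identity with the energy inequality for $u$ and the energy equality for $v$, and close with Cauchy--Schwarz, Young and Gronwall. The only differences are cosmetic: the paper runs the whole estimate from an interior time $t_1>t_0$ and sends $t_1\to t_0^+$ only at the very end of the Gronwall bound (using the $L^2$-continuity of $u$ at $t_0\notin S$ from Corollary \ref{continuity_in_t_of_weak_sol_cor}), whereas you take that limit inside the cross-term identity and invoke \eqref{EI_weak_sol} at $s=t_0$ directly; your temporal cutoff $\chi$ makes the admissibility of $v$ as a test function explicit (a point the paper leaves implicit), and your closing caveat about $t_0\notin S$ is exactly the hypothesis under which the lemma is applied in Theorem \ref{thmEpochs}.
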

\begin{proof}
The assumption gives that $u(t_0)\in V$ (that is $u(t_0) \in H^1$ and $u(t_0)$ is divergence free, recall \eqref{def_of_HV}) and so Theorem \ref{semi_reg_sols_existence_thm} gives the existence of a unique semi-strong solution $v(t)$ on $[t_0,T)$, for some $T>t_0$. We will show that\footnote{This is essentially the so-called \emph{weak-strong uniqueness} property (see, for instance, Section 6.3 in \cite{JCR_R_S_NSE_book}).} $u=v$ on time interval $[t_0,T)$. 

From the energy equality for $v$ (see Lemma  \ref{EE_semi_reg_lemma}) and the energy inequality for $u$ (see \eqref{EI_weak_sol}), we obtain for a.e. $t_1\in (t_0,T)$ and every $t\in (t_1,T)$
\[\begin{split}
& \|v(t)\|^2 + 2 \int_{t_1}^t \| \nabla v (s) \|^2 \,\d s =  \|v(t_1)\|^2, \\
&  \|u(t)\|^2 + 2 \int_{t_1}^t \| \nabla u (s) \|^2 \,\d s  \leq  \|u(t_1)\|^2.
\end{split}\]
Adding these together and letting $w:=u-v$ gives
\begin{multline*}
 \|u(t_1)\|^2 +  \| v(t_1) \|^2 \geq  \| w(t) \|^2 + 2\int_{t_1}^t \| \nabla w (s) \|^2\,\d s + 2\int u (t) \cdot v (t)  \\+   4 \int_{t_1}^t \int  \nabla u :\nabla v  ,
\end{multline*}
where $A:B \coloneqq A_{ij}B_{ij}$ denotes the inner product of matrices. Since $v$ is strong on $(t_0,T)$ (see Definition \ref{def_semi_reg}), it satisfies $\p_t v - \Delta v + (v\cdot \nabla ) v + \nabla p =0$ in $(t_0,T)\times \RR^3$ and it can be used as a test function for $u$ on time interval $(t_1,t)$ (see \eqref{distr_form_weak_sol}) to write
\[\begin{split}
\int u (t_1)\cdot  v (t_1) &= \int u (t) \cdot v(t) - \int_{t_1}^t \int u \cdot \left( \p_t v + \Delta v +(u \cdot \nabla ) v   \right)  \\
&= \int u (t) \cdot v(t) - \int_{t_1}^t \int u \cdot \left( 2 \Delta v + (w\cdot \nabla ) v -\nabla p \right)\\
&=  \int u (t) \cdot v(t) + \int_{t_1}^t \int v \cdot \left( w \cdot \nabla \right)w  +2 \int_{t_1}^t \int \nabla u : \nabla v , \end{split}
\]
where in the last step we integrated by parts all three terms under the last integral and used the facts that $u(t)$ and $v(t)$ are weakly divergence free and that $\int v \cdot (w \cdot \nabla ) v=0$ (cf. \eqref{temp_cancellation_EE}). Hence, using the inequality above and the Cauchy-Schwarz inequality we obtain
\begin{align*}
\MoveEqLeft
 \|w(t_1)\|^2 =  \|u(t_1)\|^2 +  \| v(t_1) \|^2 - 2 \int u(t_1) \cdot v(t_1) \\
  &\geq  \| w(t) \|^2 + 2\int_{t_0}^t \| \nabla w (s) \|^2 \,\d s -2  \int_{t_1}^t \int v \cdot \left( w\cdot \nabla \right)w  \\
&\geq  \| w(t) \|^2 + 2 \int_{t_1}^t \| \nabla w(s) \|^2\,\d s  - 2 \int_{t_1}^t \| v(s) \|_{\infty} \| w(s) \|\, \| \nabla w (s) \|\,\d s \\
&&\mathllap{\geq   \| w(t) \|^2 -\frac{1}{2} \int_{t_1}^t \| v(s) \|_{\infty}^2 \| w (s) \|^2 ,\hspace{6pt}}
\end{align*}
where in the last step we used Young's inequality $ab\leq a^2 +b^2/4$. Hence Gronwall's inequality implies
\[
\| w(t) \|^2 \leq \| w(t_1 ) \|^2 \mathrm{exp} \left( \frac{1}{2} \int_{t_1}^t \| v (s) \|_{\infty}^2 \,\d s \right) .
\]
Since $\int_{t_0}^t \| v(s) \|_\infty < \infty $ (see Definition \ref{def_semi_reg}) and since $t_0 \not \in S$ both $u$ and $v$ are continuous in $L^2$ as $t_1 \to  t_0^-$ (see Corollary \ref{continuity_in_t_of_weak_sol_cor}) and we can take the limit $t_1 \to t_0^-$ to obtain
\[
\| w(t) \|^2 \leq \| w(t_0 ) \|^2 \exp \left( \frac{1}{2} \int_{t_1}^t \| v (s) \|_{\infty}^2 \,\d s \right) =0 \qquad t\in (t_0,T).
\]
Thus $u(t) = v(t)$ for $t\in [t_0,T)$, as required.
\end{proof}
 We can use the weak-strong uniqueness to obtain that $u$, the weak solution given by Theorem \ref{existence_weak}, is regular in certain time intervals.
\begin{definition}\label{def_max_interval}
We call an open interval $(a,b)\subset (0,\infty )$ \emph{a maximal interval of regularity} if $u$ is a strong solution on $(a,b)$ (see Definition \ref{def_strong_open}) and $u$ is not a strong solution on any open interval $I$ strictly containing $(a,b)$.
\end{definition}
\begin{theorem}[The structure of the weak solution\footnote{This theorem corresponds to Sections 32 and 33 in \cite{Leray_1934}.}]\label{thmEpochs}
If $u$ is a weak solution given by Theorem \ref{existence_weak} then there exists a family of pairwise disjoint maximal intervals of regularity $(a_i,b_i)\subset (0,\infty )$ of $u$ such that the set
\[
\Sigma \coloneqq (0,\infty ) \setminus \bigcup_{i} (a_i , b_i) 
\]
has measure zero.
\end{theorem}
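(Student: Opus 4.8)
The plan is to read off the structure of $u$ entirely from weak--strong uniqueness: almost every time is a point at which $u$ is regular enough to coincide locally with a (semi-)strong solution to its right, and a routine gluing argument then packages all such times into at most countably many maximal intervals of regularity, whose complement must be null.

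First I would isolate the \emph{good set}
\[
G \coloneqq \{\, t_0 \in (0,\infty)\setminus S \;:\; \|\nabla u(t_0)\| < \infty \,\},
\]
where $S$ is the null set of singular times from Theorem~\ref{existence_weak}. Since $u\in L^2((0,\infty);H^1)$ by Corollary~\ref{continuity_in_t_of_weak_sol_cor}, the set of times at which $\|\nabla u(t)\|=\infty$ is null, and together with $|S|=0$ this gives $|(0,\infty)\setminus G|=0$. For $t_0\in G$ we have $u(t_0)\in V$ (it lies in $H^1$ and is weakly divergence free because $t_0\notin S$), so Lemma~\ref{weak_strong_uniqueness_lem} applies: on some interval $[t_0,T_{t_0})$ the weak solution $u$ coincides with the semi-strong solution issued from $u(t_0)$, and hence, by Definition~\ref{def_semi_reg}, $u$ is a strong solution on the open interval $(t_0,T_{t_0})$.

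Next I would organise these intervals. Let $\mathcal{I}$ be the collection of open subintervals of $(0,\infty)$ on which $u$ is a strong solution. If $I_1,I_2\in\mathcal{I}$ overlap then $I_1\cup I_2\in\mathcal{I}$: splitting a test function via a partition of unity in time subordinate to $\{I_1,I_2\}$ verifies \eqref{eqNSE_distr} on the union, the continuity $u\in C(\cdot;L^2)\cap C(\cdot;L^\infty)$ and the pointwise weak divergence-free property pass to the union, and the two pressures agree on the overlap since on each piece the pressure is forced by the representation formula \eqref{sol_NS_form} to equal $\p_i\p_k(-\Delta)^{-1}(u_iu_k)$. Consequently, for a fixed $I\in\mathcal{I}$ the union of all members of $\mathcal{I}$ that intersect $I$ is again a member of $\mathcal{I}$ and is maximal, i.e.\ a maximal interval of regularity in the sense of Definition~\ref{def_max_interval}. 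Distinct maximal intervals of regularity are therefore pairwise disjoint (two overlapping ones would glue into a strictly larger element of $\mathcal{I}$), so they form an at most countable family $\{(a_i,b_i)\}_i$, and the set of left endpoints $\{a_i\}_i$ is countable.

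Finally I would assemble the estimate. For $t_0\in G$, let $J\in\mathcal{I}$ be the maximal interval obtained above from $I=(t_0,T_{t_0})\in\mathcal{I}$, so $(t_0,T_{t_0})\subset J$; then either $t_0\in J\subset\bigcup_i(a_i,b_i)$ or $t_0$ is the left endpoint of $J$, i.e.\ $t_0\in\{a_i\}_i$. Hence
\[
\Sigma = (0,\infty)\setminus\bigcup_i(a_i,b_i) \;\subset\; \bigl((0,\infty)\setminus G\bigr)\cup\{a_i\}_i ,
\]
a union of two null sets, so $|\Sigma|=0$. The main obstacle I anticipate is the bookkeeping in the gluing step — in particular checking that the pressures match so that ``maximal interval of regularity'' is a well-defined notion and the maximal intervals are automatically disjoint; everything else is routine measure-theoretic packaging of Lemma~\ref{weak_strong_uniqueness_lem}.
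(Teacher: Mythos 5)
Your proposal is correct and follows essentially the same route as the paper: reduce to the weak--strong uniqueness lemma at almost every time and then use the countability of the pairwise disjoint maximal intervals, so that $\Sigma$ is contained in a null set union the countable set of left endpoints $\{a_i\}$. The only (harmless) differences are that you justify $\|\nabla u(t)\|<\infty$ for a.e.\ $t$ via $u\in L^2((0,\infty);H^1)$ rather than via the construction of $S$ in \eqref{def_of_S} (where $t\notin S$ already gives $\|\nabla u(t)\|<\infty$ by \eqref{ae_fatou_for_gradients}), and that you spell out the gluing and maximality bookkeeping which the paper leaves implicit.
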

Clearly $S\subset (0,\infty ) \setminus \bigcup_{i} (a_i , b_i)$ (where $S$ is the set of singular times of $u$, see Definition \ref{def_weak_sol}), since a strong solution is divergence free for all times and satisfies the energy equality \eqref{EE_strong_open_interval}. Therefore this theorems asserts that any $t_0 \in (0,\infty ) \setminus S$ is an initial point of an interval in the interior of which $u$ coincides with a strong solution, and the coincidence continues as long as the strong solution exists. Moreover, the energy inequality \eqref{EI_weak_sol} shows that $\| u(t) \|$ is strictly decreasing on every interval of regularity $(a_i,b_i)$. Note also that the family $\{(a_i,b_i)\}_i$ is at most countable (as a family of pairwise disjoint open intervals on $\RR$).
\begin{proof}
Since for the weak solution $u$ constructed in the last theorem the set $S$ is given by \eqref{def_of_S}, we see that if $t_0\notin S$ then $\| \nabla u (t_0 ) \| <\infty$ and so Lemma \ref{weak_strong_uniqueness_lem} gives that $t_0$ either belongs to a maximal interval of regularity (see Definition \ref{def_max_interval}) or is a left endpoint of one such interval. That is $t_0 \in \bigcup_i [a_i,b_i)$, and so 
\[\Sigma \subset S\cup \bigcup_i \{a_i\}.\] 
 Since there are at most countably many maximal intervals of regularity we obtain $\left| \Sigma \right| =0$, as required. 
\end{proof}
Furthermore, it turns out that one of the intervals of regularity $(a_i,b_i)$ of the weak solution $u$ contains $(C\| u_0 \|^4 , \infty )$ and on this interval $u$ enjoys a certain decay, which we make precise in the following theorem.
\begin{theorem}[Supplementary information\footnote{This is Section 34 in \cite{Leray_1934}.}]\label{thmSupInfo}
Under the assumptions of the last theorem the set of singular times $\Sigma $ is bounded above by $C\|u_0\|^4$, and for $t > C \| u_0 \|^4$
\begin{eqnarray}
\| \nabla u(t) \| &<& C \| u_0 \| t^{-1/2}, \label{additional1} \\
\| u(t) \|_{\infty } &<& C\| u_0 \| t^{-3/4}. \label{additional2}
\end{eqnarray}
Moreover
\begin{equation}\label{thmSupInfo:est3}
\sum_{i:\, b_i<\infty} \sqrt{b_i - a_i} \leq C \| u_0 \|^2 .
\end{equation}
\end{theorem}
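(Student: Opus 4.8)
The plan is to extract, for suitable times $\tau$, a nearby time $t_1$ at which $\|\nabla u(t_1)\|$ is small, and then to run the strong-solution theory of Section~\ref{char_of_sing_section} forward from $t_1$. The one ingredient needed throughout is the global dissipation bound: taking $s=0$ and letting $t\to\infty$ in the energy inequality \eqref{EI_weak_sol} (recall $0\notin S$ and $u(0)=u_0$) gives
\[
\int_0^\infty \|\nabla u(r)\|^2\,\d r \leq \tfrac12\|u_0\|^2 .
\]
Hence, by Chebyshev's inequality, on any interval $(\tau/2,\tau)$ the set of times $s$ with $\|\nabla u(s)\|^2 > 2\|u_0\|^2/\tau$ has measure at most $\tau/4<\tau/2$; since $S$ has measure zero we may therefore choose $t_1\in(\tau/2,\tau)\setminus S$ with $\|\nabla u(t_1)\|^2\leq 2\|u_0\|^2/\tau$, and likewise with $(\tau/2,\tau)$ replaced by $(\tau/3,2\tau/3)$.

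First I would prove that $\Sigma$ is bounded. Fix $t_1\in(\tau/2,\tau)\setminus S$ as above; since $\|\nabla u(t_1)\|<\infty$, Lemma~\ref{weak_strong_uniqueness_lem} and Theorem~\ref{semi_reg_sols_existence_thm} show that $u$ coincides on $[t_1,t_1+C\|\nabla u(t_1)\|^{-4})$ with a semi-strong solution, so $u$ is a strong solution on the open interval $(t_1,t_1+C\|\nabla u(t_1)\|^{-4})\supseteq (t_1,t_1+C\tau^2/\|u_0\|^4)$. If $\tau$ exceeds a fixed multiple of $\|u_0\|^4$ this interval contains $\tau$, so $\tau$ lies in the interior of a maximal interval of regularity (Definition~\ref{def_max_interval}) and $\tau\notin\Sigma$; thus $\Sigma\subset(0,C\|u_0\|^4]$, and since $(C\|u_0\|^4,\infty)$ is connected it lies inside a single interval of regularity. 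For the decay bounds \eqref{additional1}, \eqref{additional2}, fix $t>C\|u_0\|^4$ and take $t_1\in(t/3,2t/3)\setminus S$ with $\|\nabla u(t_1)\|^2\leq 2\|u_0\|^2/t$; then $t_1$ is in the interior of an interval of regularity, and $t-t_1\leq 2t/3\leq C\|\nabla u(t_1)\|^{-4}$ once $t$ exceeds a multiple of $\|u_0\|^4$. Applying Corollary~\ref{2nd_char_cor} with this $t_1$ and $t_2=t$ gives
\[
\|\nabla u(t)\|\leq C\|\nabla u(t_1)\|\leq C\|u_0\|\,t^{-1/2},\qquad \|u(t)\|_\infty\leq C\,\frac{\|\nabla u(t_1)\|}{(t-t_1)^{1/4}}\leq C\|u_0\|\,t^{-3/4},
\]
where we used $t-t_1\geq t/3$; enlarging the constant turns these into the strict inequalities \eqref{additional1}, \eqref{additional2}.

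Next I would establish \eqref{thmSupInfo:est3}. Let $(a_i,b_i)$ be a maximal interval of regularity with $b_i<\infty$. By Definition~\ref{def_max_interval} the strong solution on $(a_i,b_i)$ cannot be extended past $b_i$, so $b_i$ is its maximal time of existence, and Corollary~\ref{blowup_rates_corollary} yields $\|\nabla u(t)\|^2\geq C(b_i-t)^{-1/2}$ for $t\in(a_i,b_i)$. Integrating,
\[
\int_{a_i}^{b_i}\|\nabla u(t)\|^2\,\d t \geq C\int_{a_i}^{b_i}(b_i-t)^{-1/2}\,\d t = 2C\sqrt{b_i-a_i}.
\]
Summing over the pairwise disjoint intervals $(a_i,b_i)$ (at most countably many, cf.\ Theorem~\ref{thmEpochs}) and using the dissipation bound,
\[
\sum_{i:\,b_i<\infty}\sqrt{b_i-a_i}\ \leq\ \frac{1}{2C}\sum_i\int_{a_i}^{b_i}\|\nabla u(t)\|^2\,\d t\ \leq\ \frac{1}{2C}\int_0^\infty\|\nabla u(t)\|^2\,\d t\ \leq\ C\|u_0\|^2 .
\]

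The main obstacle, as I see it, is the bookkeeping at the interface between the weak solution and the strong solutions it locally agrees with: one must ensure that the time $t_1$ delivered by the averaging argument can be taken outside $S$, so that the weak--strong uniqueness lemma applies, and that the maximal interval of regularity issuing from $t_1$ is long enough --- of length $\gtrsim\|\nabla u(t_1)\|^{-4}\gtrsim\tau^2/\|u_0\|^4$ --- to reach the time at which one wishes to evaluate $u$. (For Corollary~\ref{2nd_char_cor} one also uses that this forces $t_1+C\|\nabla u(t_1)\|^{-4}\leq b_i$, since $\|\nabla u(t)\|$ cannot remain bounded up to a finite blow-up time $b_i$.) Once the constants are arranged consistently, the rest follows directly from the dissipation identity and the blow-up rates of Section~\ref{char_of_sing_section}.
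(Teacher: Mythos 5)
Your proof is correct, and for \eqref{thmSupInfo:est3} it coincides with the paper's argument (integrate the blow-up rate $\|\nabla u(t)\|^2\geq C(b_i-t)^{-1/2}$ over each finite maximal interval of regularity and sum against the dissipation bound). For the other two assertions you take a genuinely different route. The paper never selects a ``good'' time: to bound $\Sigma$ it integrates the backward blow-up bound $\|\nabla u(s)\|\geq C(b_i-s)^{-1/4}$ over all of $(0,b_i)$ to get $\|u_0\|^2\geq C\,b_i^{1/2}$; for the decay it reads Corollary \ref{2nd_char_cor} contrapositively to obtain the pointwise lower bound $\|\nabla u(s)\|\geq C\min\bigl((t-s)^{-1/4},\|\nabla u(t)\|\bigr)$ for every $s$ in an interval of regularity, integrates this over $(0,t)$ in the energy inequality to reach $\|u_0\|^2\geq C\min\bigl(t^{1/2},\,t\|\nabla u(t)\|^2\bigr)$, deduces \eqref{additional1}, and only then bootstraps to \eqref{additional2} by applying the $L^\infty$ estimate of Corollary \ref{2nd_char_cor} with $s=t/2$. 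You instead use Chebyshev on $\int_0^\infty\|\nabla u\|^2\leq\tfrac12\|u_0\|^2$ to locate $t_1$ near $t$ with $\|\nabla u(t_1)\|^2\lesssim\|u_0\|^2/t$ and push forward by the guaranteed existence time $\gtrsim\|\nabla u(t_1)\|^{-4}\gtrsim t^2/\|u_0\|^4$. Your version is shorter and is the now-standard ``eventual regularity'' selection argument; the paper's version avoids choosing times, yields the quantitative inequality $\|u_0\|^2\geq Ct\|\nabla u(t)\|^2$ for every large $t$ directly, but pays with an extra appeal to the integral-inequality machinery. One bookkeeping point: to invoke Corollary \ref{2nd_char_cor} as stated you need $t_1$ in the \emph{interior} of a maximal interval of regularity, whereas $t_1\notin S$ only places $t_1$ in $\bigcup_i[a_i,b_i)$; since $\Sigma$ also has measure zero (Theorem \ref{thmEpochs}) you should select $t_1\notin\Sigma$ rather than merely $t_1\notin S$ (or else argue on the open interval $\bigl(t_1,\,t_1+C\|\nabla u(t_1)\|^{-4}\bigr)$ supplied by weak--strong uniqueness). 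This is exactly the interface issue you flag yourself, and it is immediately repairable.
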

\begin{proof}
If the union $\bigcup_i (a_i , b_i) $ has only one element $(0,\infty )$ then the first claim follows trivially. If not then let $i$ be such that $b_i<\infty$ and let $t\in (a_j,b_j)$, where $j$ is such that $b_j \leq  b_i$ (that is $(a_i,b_i)$ does not preceed $(a_j,b_j)$ on the line). Since $u$ becomes singular at $b_j$ Corollary \ref{blowup_rates_corollary} gives 
\begin{equation}\label{suppl_info_temp}
\| \nabla u (t) \| \geq C (b_j-t)^{-1/4} \geq C(b_i-t)^{-1/4}.
\end{equation}
Applying this lower bound in the energy inequality \eqref{EI_weak_sol} gives
\[
\| u_0 \|^2 \geq 2 \int_0^{b_i} \| \nabla u(t) \|^2 \,\d t \geq C\int_0^{b_i} (b_i - t )^{-1/2} \,\d s = C (b_i)^{1/2} .
\]
Thus all finite $b_i$'s are bounded by $C\| u_0 \|^4$ and the first claim follows. As for the last claim of the lemma apply the first inequality of \eqref{suppl_info_temp} to the energy inequality \eqref{EI_weak_sol} to obtain
\begin{multline*}
\| u_0 \|^2 \geq 2 \sum_{j:\, b_j<\infty} \int_{a_j}^{b_j} \| \nabla u(t) \|^2 \,\d t \\ \geq C \sum_{j:\, b_j<\infty} \int_{a_j}^{b_j} (b_j -t)^{-1/2} \,\d t  = C \sum_{j:\, b_j<\infty}  \sqrt{b_j-a_j},
\end{multline*}
as required. It remains to show the decay estimates \eqref{additional1}, \eqref{additional2}. Let $s\in \bigcup_i (a_i,b_i)$ and $t>s$. Since $u$ is strong on an interval containing $s$, we can use Corollary \ref{2nd_char_cor} to write that if $t-s \leq C \| \nabla u (s) \|^{-4}$ then
\begin{eqnarray}
&& \| \nabla u (t) \| \leq C \| \nabla u (s) \| , \label{circle} \\
&& \| u (t) \|_{\infty} \leq C \| \nabla u(s) \| (t-s)^{-1/4}. \label{triangle}
\end{eqnarray}
From the first of these two inequalities we see that either $t-s\geq C \| \nabla u (s) \|^{-4}$ or $\| \nabla u(t) \| \leq C \| \nabla u (s) \|$, and so 
\[
\| \nabla u(s) \| \geq C \min \left( (t-s)^{-1/4} , \| \nabla u (t) \|  \right) .
\]
Using this lower bound in the energy inequality \eqref{EI_weak_sol} we obtain
\[
\begin{split}
 \| u_0 \|^2 &\geq  2 C \int_0^t  \min \left( (t-s)^{-1/2} , \| \nabla u (t) \|^2  \right) \,\d s \\
& \geq  C \int_0^t  \min \left( t^{-1/2} , \| \nabla u (t) \|^2  \right) \,\d s\\
&= C   \min \left( t^{1/2} , t \| \nabla u (t) \|^2  \right) .
\end{split}
\]
Therefore, since the first argument of the minimum tends to $\infty$ as $t\to \infty$, we see that for $t^{1/2} > \| u_0 \|^2/C'$ we must have $\min \left( t^{1/2} , t \| \nabla u (t) \|^2  \right)=  t \| \nabla u (t) \|^2 $ and hence $\| u_0 \|^2 \geq C t \| \nabla u (t) \|^2$, from which \eqref{additional1} follows. 

To obtain \eqref{additional2}, let $s \geq  \| u_0 \|^4/2(C')^2$ and $t\coloneqq 2s$. Then $t^{1/2}\geq \| u_0 \|^2/C'$ and so using the above result gives
\[
t-s=t/2=\frac{1}{2} (t^{-1/4} t^{1/2})^4 \leq C(\| u_0 \|^{-1} t^{1/2})^4 \leq C \| \nabla u (t) \|^{-4} .
\]
Therefore  we may apply \eqref{triangle} to obtain $\| u(t) \|_\infty \leq C \| \nabla u (s) \| t^{-1/4}$. Now \eqref{additional2} follows by an application of \eqref{additional1},
\[
\| u(t) \|_{\infty} \leq C \| \nabla u (s ) \| t^{-1/4} \leq C \| u_0 \| t^{-3/4} . \qedhere
\]
\end{proof}
Finally, we remark on a consequence of \re{thmSupInfo:est3}. 
\begin{corollary}\label{corBoxDim}
The set of singular times $\Sigma $ satisfies
\[d_H(\Sigma ) \leq d_B( \Sigma ) \leq 1/2.\]
\end{corollary}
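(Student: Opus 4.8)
The plan is to reduce the statement to a single combinatorial estimate on the number of intervals of length $\delta$ needed to cover $\Sigma$. Since $d_H(E)\le d_B(E)$ for every set $E$ (a standard fact: a cover of $E$ by $N(\delta)$ sets of diameter $\delta$ shows that for $s>d_B(E)$ the $s$-dimensional Hausdorff premeasure is at most $N(\delta)\delta^s\to0$), it suffices to prove $d_B(\Sigma)\le1/2$; and for the upper box-counting dimension it is enough to partition a bounded window containing $\Sigma$ into a grid of mesh $\delta$ and bound the number $N(\delta)$ of grid cells that meet $\Sigma$, showing $N(\delta)\le C\bigl(1+\|u_0\|^2\,\delta^{-1/2}\bigr)$ as $\delta\to0^+$. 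The two inputs I would feed in are exactly the conclusions of Theorem~\ref{thmSupInfo}: first, $\Sigma$ is bounded, $\Sigma\subset(0,L]$ with $L\le C\|u_0\|^4$; second, the lengths $\ell_i\coloneqq b_i-a_i$ of the maximal intervals of regularity satisfy $\sum_{i:\,b_i<\infty}\sqrt{\ell_i}\le C\|u_0\|^2$, which I abbreviate to $M\coloneqq C\|u_0\|^2$.

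For the count, I would fix $\delta>0$, partition $[0,L]$ into $m=\lceil L/\delta\rceil$ consecutive closed intervals of length $\delta$, and observe that a grid cell is disjoint from $\Sigma$ if and only if it is contained in a single gap $(a_i,b_i)$: indeed, since the $(a_i,b_i)$ are pairwise disjoint open intervals, each is a connected component of $\bigcup_j(a_j,b_j)$, so a connected cell contained in that union lies in one of them, while a cell meeting $\Sigma$ is not so contained. Hence $N(\delta)$ is $m$ minus the number of grid cells lying strictly inside the gaps. Writing $\ell_i'\coloneqq|(a_i,b_i)\cap[0,L]|$, a gap contains at least $\ell_i'/\delta-2$ full grid cells when $\ell_i'\ge2\delta$, and $\sum_i\ell_i'=L$ because $|\Sigma|=0$. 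Splitting the gaps into the ``large'' ones ($\ell_i'\ge2\delta$) and the ``small'' ones, the relation $\sum\sqrt{\ell_i}\le M$ bounds the number of large gaps by $O(M/\sqrt{\delta})$ and the total length of the small gaps by $O(M\sqrt{\delta})$; feeding these into
\[
N(\delta)\ \le\ m-\tfrac{1}{\delta}\sum_{\text{large }i}\ell_i'+2\,\#\{\text{large gaps}\}\ \le\ 1+\tfrac{1}{\delta}\sum_{\text{small }i}\ell_i'+2\,\#\{\text{large gaps}\}
\]
gives $N(\delta)\le C\bigl(1+M\delta^{-1/2}\bigr)$. Then I would conclude
\[
d_B(\Sigma)=\limsup_{\delta\to0^+}\frac{\log N(\delta)}{\log(1/\delta)}\le\limsup_{\delta\to0^+}\frac{\log\bigl(C(1+M\delta^{-1/2})\bigr)}{\log(1/\delta)}=\frac12,
\]
and hence $d_H(\Sigma)\le d_B(\Sigma)\le1/2$.

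The one point that needs care — and essentially the only obstacle beyond routine counting and bookkeeping of boundary cells — is the single interval of regularity extending to $+\infty$: its length is infinite, so it cannot be absorbed into the bound $\sum\sqrt{\ell_i}\le M$. However, since $\Sigma$ is bounded by $L$, this interval meets $[0,L]$ in a sub-interval of finite length, so it contributes only the harmless $+1$ to the count of large gaps and at most $2\delta$ to the small-gap length; thus it is peeled off at the very start and causes no difficulty, and the remaining (finitely many relevant) gaps are controlled by the summability $\sum\sqrt{\ell_i}\le M$ exactly as above.
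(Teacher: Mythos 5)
Your proof is correct and follows essentially the same route as the paper's: both rest on the two inputs from Theorem~\ref{thmSupInfo} (boundedness of $\Sigma$ and $\sum_i\sqrt{b_i-a_i}\leq C\|u_0\|^2$), split the gaps at the threshold $2\delta$, bound the number of large gaps by $C\delta^{-1/2}$ and the total contribution of the small ones by $C\sqrt{\delta}$, and peel off the single unbounded interval of regularity. The only cosmetic difference is that you count grid cells directly, whereas the paper estimates the Lebesgue measure of the $\delta$-neighbourhood $\Sigma_\delta$ and invokes the equivalent Minkowski--Bouligand formulation of the upper box-counting dimension.
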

Here $d_H$ denotes the Hausdorff dimension and $d_B$ denotes the upper box-counting dimension (see Sections 2.1 and 3.2 in \cite{Falconer_Intro} for the definitions). This corollary does not appear in Leray's paper. Here we show it following the proof of a more general result by \cite{Lapidus_Zeta_book} (see Theorem 1.10 therein); see also \cite{Besicovitch_Taylor_1954}. An alternative approach can be found in the proof of Theorem 8.13 of \cite{JCR_R_S_NSE_book}.
\begin{proof}
Note that due to the general inequality $d_H(K) \leq d_B(K)$ for any compact $K\subset \RR$ (see Lemma 3.7 in \cite{Falconer_Intro}) it is enough to show the bound $d_B (\Sigma ) \leq 1/2$. In this context the upper box-counting dimension is equivalent to the Minkowski-Bouligand dimension, which (for subsets of $\RR$) is given by
\begin{equation}\label{eqMinkowskiDim}
d_B (\Sigma ) = 1- \liminf_{\delta \to 0^+} \frac{\log |\Sigma_\delta |}{\log \delta },
\end{equation}
where $\Sigma_\delta $ denotes the $\delta$-neighbourhood of $\Sigma$ (see, for instance, Proposition 2.4 in \cite{Falconer_Intro}). Since $\Sigma $ is bounded there exists a unique index $i_0$ such that $b_{i_0}=\infty$, and so the set $\bigcup_{i\ne i_0} (a_i, b_i)$ is bounded by $a_{i_0}$. Thus
\[
\Sigma = (0,a_{i_0}] \setminus \bigcup_i (a_i ,b_i), 
\]
where now each $b_i$ is finite. Thus we can renumber the intervals $(a_i,b_i)$ such that the length of $(a_i,b_i)$ does not increase with $i$, that is $b_i-a_i \geq b_{i+1}-a_{i+1}$ for all $i$.

For $\delta >0$ let $N_\delta$ be such that
\[
\begin{cases}
b_i-a_i < 2 \delta \qquad & i>N_\delta ,\\
b_i-a_i \geq 2 \delta & i \leq N_\delta .
\end{cases}
\]
Observe that
\[
\Sigma_\delta = \Sigma \cup \left( \bigcup_{i=1}^{N_\delta } (a_i , a_i + \delta ) \cup (b_i - \delta , b_i ) \right) \cup \bigcup_{i>N_\delta } (a_i, b_i ).
\]
Thus, since $|\Sigma |=0$,
\eqnb\label{measure_of_sigma_delta}
|\Sigma_\delta | \leq 2\delta N_\delta + \sum_{i>N_\delta } (b_i-a_i) \leq 2\delta N_\delta + \sqrt{2\delta } \sum_{i>N_\delta }  \sqrt{b_i-a_i} \leq 2\delta N_\delta + C \sqrt{\delta } ,
\eqne
where we used the assumption $\sum_i \sqrt{b_i-a_i} \leq C$ (see the last inequality in Theorem \ref{thmSupInfo}). Now since for each $k$
\[
k \sqrt{b_k-a_k } = \sum_{j=1}^k \sqrt{b_k-a_k} \leq \sum_{j=1}^k \sqrt{b_j-a_j} \leq C,
\]
we see that $b_k - a_k< 2\delta $ for $k > C/ \sqrt{2\delta }$. In other words $N_\delta \leq C / \sqrt{\delta }$. Hence \eqref{measure_of_sigma_delta} gives $|\Sigma_\delta | \leq C\sqrt{\delta }$ and so (since $\log \delta <0$ for small $\delta >0$)
\[
\frac{\log |\Sigma_\delta |}{\log \delta } \geq \frac{\log C + \frac{1}{2}\log \delta }{\log \delta } \to \frac{1}{2}
\]
as $\delta \to 0^+$. Therefore $d_B (\Sigma ) \leq 1/2$, as required.
\end{proof}
It is interesting to note that apart from the bound $d_H(\Sigma ) \leq 1/2$ a stronger property $\mathcal{H}^{1/2} (\Sigma )=0$ holds. This can be shown by an (earlier) argument due to \cite{scheffer_1976}, which is similar to the above and is also presented by \cite{robinson_sema}, see Proposition 8 therein.
\subsection*{Notes}
This section corresponds to Chapters V and VI of \cite{Leray_1934}, except that Leray did not discuss the dimension of the set of singular times. In fact the notion of the box-counting dimension was not properly unified in the 1930's, although some variants were already being studied at the time (see \cite{Bouligand_1928} and \cite{Ponttrjagin_Schnirelman_1932} for instance). On the other hand, the notion of the Hausdorff dimension was already  quite well developed (see \cite{Hausdorff_1918}, \cite{Besicovitch_1935}), but it is not apparent whether or not Leray was aware of these developments. In any case, it was \cite{scheffer_1976} who was the first to study the dimension of the set of singular times for the Navier--Stokes equations.
\section*{Acknowledgements}
\comment{This publication arose from series of lectures presented by the authors for a fluid mechanics reading group organised at the University of Warwick by James Robinson and Jose Rodrigo.
We are particularly grateful to James Robinson for his encouragement and interest in this work.}
We would like to thank Robert Terrell for his English translation of Leray's original work, and Simon Baker, Tobias Barker, Antoine Choffrut and Kenneth Falconer for their helpful comments.
We are particularly grateful to James Robinson for his enthusiasm, encouragement and interest in this work.

WSO is supported by EPSRC as part of the MASDOC DTC at the University of Warwick, Grant No. EP/HO23364/1.

BCP was partially supported by an EPSRC Doctoral Training Award and partially by postdoctoral funding from ERC 616797.

\appendix
\renewcommand{\thesection}{A}
\numberwithin{equation}{section}
\section{Appendix}
\subsection{The heat equation and the heat kernel}\label{heat_section_appendix}

Let $v_0 \in L^2$ and $v(t)\coloneqq  \Phi (t)  \ast v_0 $, where $\Phi =(4\pi t)^{-3/2} \mathrm{e}^{-|x|^2/4t}$ is the heat kernel.

If $v_0$ is a vector-valued function which is weakly divergence free then $\mathrm{div}\, v(t) =0$ for each $t>0$ (which can be shown directly by approximating $\nabla \Phi (x-y,t)$ in $L^2$ by the gradient of a smooth and compactly supported $g$ at each $t$). Furthermore for $m\geq 1$, $v$ satisfies
\begin{enumerate}[\rm(i)]
\item $v\in C([0,T);L^2)$ with $v(0) = v_0$, and $\| v(t) \| \leq \| v_0 \|$,
\item $\nabla^m v\in C((0,T);L^2)$ and 
\[\| \nabla^m v(t) \| \leq C_m  \| v_0 \| t^{-m/2}. \] Moreover if $\nabla^m v_0 \in L^2$ then $\nabla^m v\in C([0,T);L^2)$ with $  \| \nabla^m v(t) \| \leq C_m \min_{k\leq m} \lewy  \| \nabla^k v_0 \| t^{-(m-k)/2}  \prawy$.
\item $v\in C((0,T);L^\infty )$ and $ \| v(t) \|_{\infty } \leq C \min \{ \| v_0 \|t^{-3/4}  ,\| v_0 \|_\infty \}$,
\item $\nabla^m v\in C((0,T);L^\infty )$ and 
\[\| \nabla^m v \|_\infty \leq C_m \min \{ \| v_0 \| t^{-m/2-3/4}  ,\| v_0 \|_\infty  t^{-m/2} \},\]
\item $ v, \nabla^m v \in \mathcal{H}^{1/2} ((0,T))$ with the corresponding constants $C_0 (t) \leq c_0 \| v_0 \| /t$, $C_m (t) \leq c_m \| v_0 \| t^{-1-m/2}$.
\end{enumerate}
The inequalities in (i)-(iv) follow directly from Young's inequality \eqref{young_for_convolutions} and the bounds $\| \nabla^m \Phi (t) \|_1 \leq C_m t^{-m/2}$, $\| \nabla^m \Phi (t) \| \leq C_m t^{-m/2-3/4}$, $m\geq 0$ (which can be verified by a direct calculation). The claims regarding continuity in $(0,T)$ follow from the fact that $\Phi$ and all its spacial derivatives $\nabla^m \Phi$, $m\geq 1 $, belong to $C((0,T);L^1)\cap C((0,T);L^2)$ (a consequence of the Dominated Convergence Theorem). The claims regarding continuity at $t=0$ in (i) and (ii) are known as the \emph{approximation of identity}, see e.g. Theorem 1.18 in Chapter 1 of \cite{stein_weiss_introduction} for a proof. Finally, property (v) follows from Morrey's inequality (see, for example, Section 4.5.3 in \cite{evans_gariepy}) and Young's inequality for convolutions \eqref{young_for_convolutions} by writing
\eqnb\label{tempekek}
\| \Phi (t) \ast v_0 \|_{C^{0,1/2}} \leq C \| \nabla \Phi (t) \ast v_0 \|_{6} \leq C \| \nabla \Phi (t) \|_{3/2} \| v_0 \| = C \|v_0 \|/ t. 
\eqne
The claim for the derivatives follows similarly by noting that 
\[\| \nabla^{m+1} \Phi (t) \|_{3/2} = C_m t^{-1-m/2}.\]

As for the pointwise convergence as $t\to 0^+$ we have
\begin{lemma}[pointwise convergence as $t\to 0^+$ of the heat flow]\label{Linfty_conv_heat_eq}
If $x_0\in \RR^3 $ is a continuity point of $v_0$ then
\[
v(x_0,t) \to v_0 (x_0) \qquad \text{ as } t\to 0^+.
\]
If $v_0$ is bounded and uniformly continuous then
\[
\| v(t) - v_0 \|_\infty \to 0 \qquad \text{ as } t\to 0^+.
\]
\end{lemma}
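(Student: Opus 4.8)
The plan is to prove both statements by the classical \emph{approximation of the identity} argument. The starting point is that $\int \Phi(y,t)\,\d y = 1$ for every $t>0$, so that
\[
v(x,t)-v_0(x) = \int \Phi(y,t)\bigl(v_0(x-y)-v_0(x)\bigr)\,\d y ,
\]
together with the fact that both the total mass of $\Phi(\cdot,t)$ over $\{|y|\ge\delta\}$ and its $L^2$ norm over $\{|y|\ge\delta\}$ vanish as $t\to0^+$: for any fixed $\delta>0$ the substitution $y=\sqrt t\,z$ gives
\[
\int_{|y|\ge\delta}\Phi(y,t)\,\d y = C\!\int_{|z|\ge\delta/\sqrt t}\!\e^{-|z|^2/4}\,\d z
\quad\text{and}\quad
\Bigl(\int_{|y|\ge\delta}\Phi(y,t)^2\,\d y\Bigr)^{1/2} = C\,t^{-3/4}\Bigl(\int_{|z|\ge\delta/\sqrt t}\!\e^{-|z|^2/2}\,\d z\Bigr)^{1/2},
\]
both of which tend to $0$ as $t\to0^+$ (for the second, because the Gaussian tail decays faster than any power of $\delta/\sqrt t$). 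Note also that $v(x_0,t)$ is well defined for each $t>0$ since $\Phi(\cdot,t)\in L^2$ and $v_0\in L^2$.

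For the first claim, fix $\epsilon>0$. Since $v_0$ is continuous at $x_0$ there is $\delta>0$ with $|v_0(x_0-y)-v_0(x_0)|<\epsilon$ for $|y|<\delta$ (so, in particular, $v_0$ is bounded on $B(x_0,\delta)$). I would split the integral above, evaluated at $x=x_0$, into the regions $\{|y|<\delta\}$ and $\{|y|\ge\delta\}$. On the first region the integrand is at most $\epsilon\,\Phi(y,t)$, so this part contributes at most $\epsilon$. On the second region I would bound $|v_0(x_0-y)-v_0(x_0)|\le|v_0(x_0-y)|+|v_0(x_0)|$ and estimate $\int_{|y|\ge\delta}\Phi(y,t)|v_0(x_0-y)|\,\d y\le\|v_0\|\,\|\Phi(\cdot,t)\|_{L^2(|y|\ge\delta)}$ by Cauchy--Schwarz, plus the term $|v_0(x_0)|\int_{|y|\ge\delta}\Phi(y,t)\,\d y$ directly; both tend to $0$ as $t\to0^+$ by the displayed limits. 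Hence $\limsup_{t\to0^+}|v(x_0,t)-v_0(x_0)|\le\epsilon$, and letting $\epsilon\to0$ gives the claim.

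For the second claim, when $v_0\in L^\infty$ is uniformly continuous the very same $\delta$ works for all $x_0$ simultaneously, and on $\{|y|\ge\delta\}$ one no longer needs Cauchy--Schwarz: $|v_0(x_0-y)-v_0(x_0)|\le2\|v_0\|_\infty$, so the far contribution is at most $2\|v_0\|_\infty\int_{|y|\ge\delta}\Phi(y,t)\,\d y$, uniformly in $x_0$. Thus $\|v(t)-v_0\|_\infty\le\epsilon+2\|v_0\|_\infty\int_{|y|\ge\delta}\Phi(y,t)\,\d y$, whose $\limsup$ as $t\to0^+$ is $\epsilon$; letting $\epsilon\to0$ finishes the proof.

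There is no genuine obstacle here; the only point needing care is that in the first statement $v_0$ is merely in $L^2$, so the ``tail'' must be controlled through the $L^2$ decay of $\Phi(\cdot,t)$ away from the origin rather than by an $L^\infty$ bound — which is exactly why the Gaussian computation above is recorded. A density argument in $L^2$ would be of no help, since pointwise convergence is not preserved under $L^2$ approximation.
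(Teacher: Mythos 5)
Your proof is correct, and it is the standard approximation-of-the-identity argument; the paper does not actually prove this lemma but simply refers the reader to Section 4.2 of Giga--Saal, so your write-up fills in exactly the expected details. The one point that genuinely needed care for the first claim --- controlling the tail of a merely $L^2$ initial datum via the Cauchy--Schwarz bound $\|v_0\|\,\|\Phi(\cdot,t)\|_{L^2(|y|\ge\delta)}$ together with the superpolynomial decay of the Gaussian tail --- is handled correctly.
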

See Section 4.2 in \cite{giga_saal} for a proof.

One can verify that $v(t)$ is a solution of the heat equation $v_t = \Delta v$ in $\RR^3 \times (0,\infty )$  by a direct calculation. It is a unique such solution in the class of functions $C([0,\infty ); L^2)$ satisfying $v(0)=v_0$ for some $v_0\in L^2$, which we make precise in the following lemma.
\begin{lemma}\label{uniqueness_heat_eq_lemma}
Let $w\in C([0,T);L^2)$ be a weak solution of the heat equation with $w(0)=0$, that is
\begin{equation}\label{A1:weakheat}
\int_0^T \int (\phi_t+\Delta \phi ) w  \, \d x \,\d t =0
\end{equation}
for all $\phi\in C_0^\infty (\RR^3 \times [0,T))$. Then $w\equiv 0$.
\end{lemma}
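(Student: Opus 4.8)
The plan is to reduce the claim to the uniqueness of the solution to the heat equation in a class where we can work pointwise, which amounts to showing that the distributional solution $w$ of \eqref{A1:weakheat} is in fact smooth for positive times and then that it must coincide with the (trivial) classical solution emanating from $w(0)=0$. First I would mollify in space: setting $w_\varepsilon(t) \coloneqq J_\varepsilon w(t)$ (so that $w_\varepsilon \in C([0,T);H^m)$ for every $m$ by Lemma \ref{prop_molli} (iv) and by commuting $J_\varepsilon$ with the test function, using the fact that convolving a test function $\phi$ with $\eta_\varepsilon$ yields another admissible test function), one checks that $w_\varepsilon$ still satisfies \eqref{A1:weakheat}, now against all $\phi\in C_0^\infty(\RR^3\times[0,T))$. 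Because $w_\varepsilon(t)$ is smooth in $x$ and \eqref{A1:weakheat} then says $\partial_t w_\varepsilon = \Delta w_\varepsilon$ in the sense of distributions in $t$ with values in a nice space, one promotes $w_\varepsilon$ to a genuine classical solution of the heat equation on $\RR^3\times(0,T)$ with the right spatial decay.

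The core step is then an energy estimate for $w_\varepsilon$. Multiplying $\partial_t w_\varepsilon = \Delta w_\varepsilon$ by $w_\varepsilon$ and integrating over $\RR^3$ (justified by the $H^m$ regularity and decay of $w_\varepsilon$), integration by parts gives
\[
\frac{1}{2}\frac{\d}{\d t}\|w_\varepsilon(t)\|^2 = -\|\nabla w_\varepsilon(t)\|^2 \le 0,
\]
so $\|w_\varepsilon(t)\|^2$ is non-increasing on $(0,T)$. Since $w\in C([0,T);L^2)$ with $w(0)=0$, property (i) of mollification gives $\|w_\varepsilon(t)\| \le \|w(t)\| \to 0$ as $t\to0^+$, while the continuity of $w_\varepsilon$ up to $t=0$ (inherited from $w$) gives $\|w_\varepsilon(0)\|=0$. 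Monotonicity then forces $\|w_\varepsilon(t)\|=0$ for all $t\in[0,T)$, i.e.\ $w_\varepsilon\equiv 0$ for every $\varepsilon>0$. Finally, letting $\varepsilon\to0^+$ and using Lemma \ref{prop_molli} (vi) (or (v)), $J_\varepsilon w(t)\to w(t)$ in $L^2$, hence $w(t)\equiv 0$ for every $t$, as claimed.

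The main obstacle is the rigorous justification that the mollified, spatially smooth object $w_\varepsilon$ actually satisfies the heat equation \emph{pointwise in time} — that is, passing from the space-time distributional identity \eqref{A1:weakheat} (with test functions compactly supported in space) to the statement that $t\mapsto w_\varepsilon(t)\in H^m$ is $C^1$ with $\partial_t w_\varepsilon=\Delta w_\varepsilon$, so that the energy identity above is legitimate. One way to sidestep delicate regularity-in-time arguments, mirroring the sketch of Theorem \ref{uniqueness_stokes}, is to instead integrate in time and work with $v(x,t)\coloneqq\int_0^t w_\varepsilon(x,s)\,\d s$: this $v$ is $C^1$ in time by the fundamental theorem of calculus (since $w_\varepsilon\in C([0,T);H^m)$), it satisfies $\partial_t v - \Delta v = 0$ classically with $v(0)=0$, and then the energy estimate applied to $v$ — together with $\|v(t)\|\le\int_0^t\|w(s)\|\d s\to0$ as $t\to0^+$ — gives $v\equiv0$, whence $w_\varepsilon\equiv0$ by differentiating in $t$. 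Either route works; the integrated version is the cleaner one to write out in full in the appendix.
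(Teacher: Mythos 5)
Your argument is correct, but it is genuinely different from the one in the paper. The paper proves this lemma by duality: it first enlarges the class of admissible test functions (via a spatial cut-off $\theta_j$) to smooth $\phi$ with uniformly bounded $L^2$ derivatives and support in $\RR^3\times[0,T')$, and then, for an arbitrary $\Psi\in C_0^\infty(\RR^3\times(0,T))$, constructs a solution of the backward problem $\phi_t+\Delta\phi=\Psi$, $\phi(T)=0$ by Duhamel's formula, verifies that it is admissible, and inserts it into \eqref{A1:weakheat} to conclude $\int_0^T\int\Psi w=0$. You instead mollify in space and run an energy estimate on the time-integral $v(t)=\int_0^t J_\varepsilon w(s)\,\d s$. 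This works: testing \eqref{A1:weakheat} against products $\psi(x)\chi(t)$ and using that $J_\varepsilon$ is self-adjoint and maps admissible test functions to admissible test functions, one gets $\int\psi\, J_\varepsilon w(t)=\int_0^t\int\psi\,\Delta(J_\varepsilon w)(s)\,\d s$ for every $\psi\in C_0^\infty$, i.e.\ $\partial_t v=J_\varepsilon w=\Delta v$ with $v\in C^1([0,T);H^m)$; then $\tfrac12\tfrac{\d}{\d t}\|v\|^2=\int v\,\Delta v=-\|\nabla v\|^2\le 0$ is legitimate for $H^2$-valued $v$ (no boundary terms at infinity by density of $C_0^\infty$ in $H^2$), and $v(0)=0$ forces $v\equiv0$, hence $J_\varepsilon w\equiv0$ and, by Lemma \ref{prop_molli}, $w\equiv0$. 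You are right to prefer the integrated version: the non-integrated one requires knowing $\partial_t w_\varepsilon$ exists pointwise in time, which is exactly the regularity you would otherwise have to bootstrap. The trade-off between the two routes is that the paper's duality argument must construct and control the adjoint solution (the conditions $(\ast)$ and the cut-off lemma are the technical cost), whereas yours must justify the passage from the space-time distributional identity to the $H^m$-valued ODE $\partial_t v=\Delta v$ — an elementary du Bois-Reymond argument in $t$ that you should spell out in a full write-up. It is also worth noting that your construction $v=\int_0^t J_\varepsilon(\cdot)\,\d s$ is precisely the device the paper uses one level up, in the proof of Theorem \ref{uniqueness_stokes}, where this lemma is invoked as a black box; your proof effectively pushes that device down into the base case and replaces duality by an energy estimate.
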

\begin{proof}
We modify the argument from Section 4.4.2 of \cite{giga_saal}. We focus on the case $T<\infty$ (the case $T=\infty$ follows trivially by applying the result for all $T>0$). 

We first show that assumption $w\in C([0,T);L^2)$ implies that \eqref{A1:weakheat} holds also for all $\phi \in C^\infty (\RR^3 \times [0,T) )$ such that
\[
\begin{cases}
\sup_{t\in [0,T) } \left(  \| \p_t \phi (t) \| + \| D^\alpha \phi (t) \| \right) \leq C &\text{ for some } C>0, \\
\mathrm{supp }\, \phi \subset \RR^3 \times [0,T' ) &\text{ for some } T' <T .
\end{cases} \quad ( \ast )
\]
Indeed, let $\theta \in C^\infty (\RR ; [0,1])$ be such that $\theta (\tau ) = 1$ for $\tau \leq 1$ and $\theta (\tau ) = 0 $ for $\tau \geq 2$ (take for instance $\theta (\tau ) \coloneqq q(2-\tau ) / (q(2-\tau )+ q(1-\tau ))$, where $q(s) \coloneqq \e^{-1/s}$ for $s>0$ and $q(s)\coloneqq 0$ otherwise). For $x\in \RR^3$ Let $\theta_j (x) \coloneqq \theta (|x|/j)$. Then $\theta_j \in C_0^\infty $, for some $M>0$ $\| \nabla \theta_j \|_\infty \leq M/j$ and $\| D^2 \theta_j \|_\infty \leq M/{j^2}$, and
\[
\theta_j (x) = \begin{cases}
1 \qquad &|x|<j,\\
0& |x| > 2j .
\end{cases}
\]
Now for $\phi \in C^\infty (\RR^3 \times [0,T) )$ satisfying $(\ast )$ let $T'<T$ be such that $\phi (t) \equiv 0$ for $t\geq T'$ and let
\[\phi_j (x,t)  \coloneqq \theta_j (x) \phi(x,t).\]
Then $\phi_j \in C_0^\infty (\RR^3 \times [0,T))$ and so \eqref{A1:weakheat} gives
\[
\int_0^T \int (\p_t \phi_j+\Delta \phi_j ) w  \, \d x \, \d t =0 ,
\]
or equivalently
\begin{equation}\label{A1:weakheat1_phi_j}
\int_0^T \int \theta_j (\p_t \phi +\Delta \phi ) w  \, \d x \,\d t +\int_0^T \int w \left( 2 \nabla \theta_j \cdot \nabla \phi +  \phi \Delta \theta_j \right)  \, \d x \, \d t  =0 ,
\end{equation}
Since $\theta_j$ converges pointwise to $1$ and
\begin{multline*}
\int_0^T \int |(\p_t \phi+\Delta \phi ) w|  \, \d x \, \d t \leq \int_0^T \| \p_t \phi (t)+ \Delta \phi (t) \| \| \| w (t) \| \, \d t \\\leq T C \sup_{t\in [0,T']}  \| w(t) \| < \infty
\end{multline*}
the Dominated Convergence Theorem gives that the first integral on the left-hand side of \eqref{A1:weakheat1_phi_j} converges to $\int_0^T \int (\p_t \phi+\Delta \phi ) w  \, \d x\, \d t$. The second integral is bounded by
\begin{multline*}
\int_0^T \int |w| \left| 2 \nabla \theta_j \cdot \nabla \phi +  \phi \Delta \theta_j \right|  \, \d x \, \d t \\
\leq T \sup_{t\in [0,T']}  \| w(t) \| (2CM/j + CM/{j^2} ) \stackrel{j\to \infty }{\longrightarrow} 0.
\end{multline*}
Therefore taking the limit $j\to \infty$ in \eqref{A1:weakheat1_phi_j} gives
\[
\int_0^T \int (\p_t \phi+\Delta \phi ) w  \, \d x \, \d t =0
\]
as required.

We will show that
\[
\int_0^T \int \Psi w \, \d x \, \d t = 0
\]
for all $\Psi \in C_0^\infty (\RR^3 \times (0,T) )$. This finishes the proof as $C_0^\infty$ is dense in $L^2$ and so $\int_0^T \int v w\, \d x \,\d t = 0$ for all $v\in L^2 (\RR^3 \times (0,T))$. Hence $w\equiv 0$.

Given $\Psi \in C_0^\infty (\RR^3 \times (0,T) )$ let $T'<T$ be such that $\Psi(t) \equiv 0 $ for $t\geq T'$. Extend $\Psi$ by zero for $t\leq 0 $ and $t\geq T$. There exists a classical solution $\phi$ to the problem 
\begin{equation}\label{phi_sol_of_backwards_heat}
\begin{cases}
\phi_t + \Delta \phi = \Psi \qquad & \text{ in } \RR^3 \times (-\infty,T),\\
\phi(x,T) =0 &\text{ for } x\in \RR^3 .
\end{cases}
\end{equation}
Indeed, denoting $\widetilde{\phi } (x,t) = \phi (x,T-t)$ and $\widetilde{\Psi} (x,t) = \Psi (x,T-t)$ the above problem becomes
\[
\begin{cases}
\widetilde{\phi}_t - \Delta \widetilde{\phi} =- \widetilde{\Psi}\qquad & \text{ in } \RR^3 \times (0,\infty),\\
\widetilde{\phi}(x,0) =0 &\text{ for } x\in \RR^3 .
\end{cases}
\]
A classical solution $\widetilde{\phi }$ is obtained by an application of the Duhamel principle,
\[
\widetilde{\phi }(t) \coloneqq -\int_0^t \Phi (t-s) \ast \widetilde{\Psi } (s)\, \d s,
\]
and such $\widetilde{\phi }$ is smooth (see for example Section 4.3.2 of \cite{giga_saal}). Observe that, since $\Psi (s) \equiv 0$ for $s\geq T'$, we have that $\widetilde{\phi }(t) \equiv 0$ for $t\in [0,T-T']$. Moreover, since $\Psi$ is smooth and compactly supported in $\RR^3 \times \RR$ we have for $|\alpha |\leq 2$
\[
D^\alpha \widetilde{\phi } (t) = -\int_0^t \Phi (t-s) \ast D^\alpha \widetilde{\Psi } (s)\, \d s,
\]
and it follows from Lemma \ref{young_spacetime_continuity} that $D^\alpha \widetilde{\phi } \in C ([0,T],L^2 )$ (note that $\Phi \in L^1_{\loc} ([0,\infty ); L^1)$ due to the bound $\| \Phi (t) \|_1 \leq C_0$). Therefore, we also obtain $\widetilde{\phi }_t = \Delta \widetilde{\phi } - \widetilde{\Psi } \in C([0,T];L^2)$. Hence $\phi (x,t)=\widetilde{\phi}(x,T-t)$ is smooth, $\phi(t)\equiv 0$ for $t \in [T',T]$, and
\[
\sup_{t\in [0,T) } \left(  \| \p_t \phi (t) \| + \| D^\alpha \phi (t) \| \right) = \sup_{t\in [0,T]} \left(  \| \p_t \widetilde{\phi }(t) \| + \| D^\alpha \widetilde{ \phi } (t) \| \right),
\]
that is $\phi$ satisties $(\ast )$. We can therefore use the first part and \eqref{phi_sol_of_backwards_heat} to write
\[
0 = \int_0^T \int (\phi_t + \Delta \phi ) w \, \d x \, \d t = \int_0^T \int \Psi w \, \d x \, \d t . \qedhere 
\]
\end{proof}
\subsection{The extension of Young's inequality for convolutions}\label{young_spacetime_continuity_section}
We will prove the following lemma (Lemma \ref{young_spacetime_continuity}).
\begin{lemma}\label{young_spacetime_continuity_appendix}
If $p,q,r \geq 1$ are such that $1/q=1/p+1/r-1$, $A\in L^1_{\loc}([0,T);L^p)$ and $B\in C((0,T);L^r)$ with $\| B(t) \|_r$ bounded as $t\to 0^+$ then $u$ defined by
\[
u(t)\coloneqq \int_0^t  A(t-s) \ast B(s) \, \d s
\]
belongs to $C([0,T);L^q)$ and
\begin{equation}\label{integral_convolution_bound}
\| u (t) \|_q \leq \int_0^t \| A(t-s) \|_p \| B(s) \|_r \,\d s.
\end{equation}
\end{lemma}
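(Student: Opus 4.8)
The plan is to split the statement into the norm bound \eqref{integral_convolution_bound} and the continuity $u\in C([0,T);L^q)$, handling the latter by a change of variables that removes the apparent $t$-dependence inside $A$.

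For the bound, I would fix $t\in(0,T)$ (the value $t=0$ being trivial) and first check that the defining integral makes sense: since $B\in C((0,T);L^r)$ has $\|B(\cdot)\|_r$ bounded near $0$, the map $s\mapsto\|B(s)\|_r$ is bounded on $(0,t]$, say by $M_t$; since $A\in L^1_{\loc}([0,T);L^p)$, the substitution $\sigma=t-s$ gives $\int_0^t\|A(t-s)\|_p\,\d s=\int_0^t\|A(\sigma)\|_p\,\d \sigma<\infty$; and because convolution is a bounded bilinear map $L^p\times L^r\to L^q$ by \eqref{young_for_convolutions}, the function $s\mapsto A(t-s)\ast B(s)$ is strongly measurable into $L^q$ with $\int_0^t\|A(t-s)\ast B(s)\|_q\,\d s\le M_t\int_0^t\|A(\sigma)\|_p\,\d \sigma<\infty$. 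Hence $u(t)$ is a well-defined Bochner integral and $\|u(t)\|_q\le\int_0^t\|A(t-s)\ast B(s)\|_q\,\d s\le\int_0^t\|A(t-s)\|_p\|B(s)\|_r\,\d s$, which is \eqref{integral_convolution_bound}. (The same conclusion follows from the integral Minkowski inequality \eqref{minkowski_integral_version_more_general} if one prefers to avoid vector-valued integrals.)

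For the continuity, the key step is to substitute $\sigma=t-s$ to write
\[
u(t)=\int_0^t A(\sigma)\ast B(t-\sigma)\,\d \sigma=\int_0^\infty \mathbf{1}_{(0,t)}(\sigma)\,A(\sigma)\ast B(t-\sigma)\,\d \sigma,
\]
so that $A$ is evaluated at an argument independent of $t$. Given $t_0\in[0,T)$ and $t_n\to t_0$ with $t_n\le K<T$, I would apply the vector-valued dominated convergence theorem in $L^q$. For a.e.\ $\sigma$ one has $\mathbf{1}_{(0,t_n)}(\sigma)\to\mathbf{1}_{(0,t_0)}(\sigma)$; when this limit is $1$ then $\sigma<t_0$, so $\sigma<t_n$ eventually and $t_n-\sigma\to t_0-\sigma\in(0,T)$, whence continuity of $B$ and \eqref{young_for_convolutions} give $A(\sigma)\ast B(t_n-\sigma)\to A(\sigma)\ast B(t_0-\sigma)$ in $L^q$, while when the limit is $0$ the $n$-th integrand vanishes for large $n$; so the integrands converge pointwise a.e.\ in $L^q$. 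For domination, $\|B(\cdot)\|_r$ is bounded on $(0,K]$ (continuity on $(0,T)$ plus boundedness near $0$), say by $M_K$, so $\|\mathbf{1}_{(0,t_n)}(\sigma)\,A(\sigma)\ast B(t_n-\sigma)\|_q\le M_K\,\mathbf{1}_{(0,K)}(\sigma)\,\|A(\sigma)\|_p\in L^1$. Dominated convergence then gives $u(t_n)\to u(t_0)$ in $L^q$, i.e.\ $u\in C([0,T);L^q)$, with $t_0=0$ handled the same way.

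The only subtle points I anticipate are the bookkeeping for strong measurability of $\sigma\mapsto A(\sigma)\ast B(t-\sigma)$ into $L^q$ and the applicability of the Bochner dominated convergence theorem; both follow from continuity of the bilinear convolution map and the usual separability of these $L^q$ spaces, so I expect no real obstacle beyond stating these technicalities cleanly — the substitution $\sigma=t-s$ is precisely what reduces the continuity claim to a routine application of dominated convergence.
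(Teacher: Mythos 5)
Your proof is correct, but the continuity part takes a genuinely different route from the paper's. The paper proves $u\in C((0,T);L^q)$ by a direct $\varepsilon$--$\delta$ estimate: for $t_1<t_2$ it shifts the integration variable in the $u(t_1)$ integral so that
\[
u(t_2)-u(t_1)=\int_0^{t_2-t_1}A(t_2-s)\ast B(s)\,\d s+\int_{t_2-t_1}^{t_2}A(t_2-s)\ast\bigl(B(s)-B(s-(t_2-t_1))\bigr)\,\d s,
\]
then controls the first term by the smallness of $\int_0^\eta\|A(s)\|_p\,\d s$ for small $\eta$ and the second by the uniform continuity of $B$ into $L^r$ on $[\eta/2,T']$, splitting off the portions of the integral where $s$ or $s-(t_2-t_1)$ may be close to $0$. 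You instead substitute $\sigma=t-s$ so that $A$ is evaluated at a $t$-independent argument and invoke the vector-valued dominated convergence theorem with dominating function $M_K\,\mathbf{1}_{(0,K)}(\sigma)\|A(\sigma)\|_p$. Both arguments are complete. Yours is shorter and avoids the endpoint bookkeeping, at the price of invoking Bochner integration and the strong measurability of $\sigma\mapsto A(\sigma)\ast B(t-\sigma)$; as you note this follows from continuity of the bilinear convolution map together with the essentially separably-valued ranges of $A$ and $B(t-\cdot)$, a point worth making explicit when $q=\infty$ since $L^\infty$ itself is not separable. The paper's argument uses only the scalar Minkowski and Young inequalities and so stays entirely elementary. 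The norm bound \eqref{integral_convolution_bound} is obtained identically in both.
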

\begin{proof}
The bound \eqref{integral_convolution_bound} is clear from the integral version of Minkowski inequality \eqref{minkowski_integral_version} and from Young's inequality \eqref{young_for_convolutions}. This bound also implies the continuity $u(t) \to 0$ in $L^q$ as $t\to 0^+$. It remains to show that $u \in C((0,T);L^q)$. For this reason let $T'<T$ and $\varepsilon >0$. Let $M>0$ be such that 
\[
\max_{s\in [0,T']} \| B(s) \|_r \leq M, \quad \int_0^{T'} \| A(s) \|_p \,\d s  \leq M.
\] 
From the assumption on $A$ we see that there exist $\eta >0$ such that
\[
\int_0^{\eta} \| A(s) \|_p \, \d s \leq \varepsilon .
\]
Since $B$ is uniformly continuous into $L^r$ on $[\eta/2 , T']$ there exists $\delta \in (0, \eta/2)$ such that
\[
\| B(s)-B(t) \|_{r} < \varepsilon 
\]
whenever $s,t \in [\eta/2 , T']$, $t>s$ and $t-s < \delta $. Letting $t_1,t_2 \in (0,T']$ be such that $t_1<t_2$, $t_2-t_1 <\delta $ we obtain
\begin{equation}\label{temp_decomp}
\begin{split}
\MoveEqLeft
u(t_2)-u(t_1)=\int_0^{t_2}  A(t_2-s) \ast B(s) \, \d s - \int_0^{t_1} A(t_1-s)\ast B(s)  \, \d s \\
&= \int_0^{t_2}  A(t_2-s) \ast B(s) \, \d s - \int_{t_2-t_1}^{t_2} A(t_2-s)\ast B(s-(t_2-t_1))  \, \d s\\
&=  \int_0^{t_2-t_1}  A(t_2-s) \ast B(s) \, \d s\\
&\quad+ \int_{t_2-t_1}^{t_2} A(t_2-s)\ast \left( B(s) -B(s-(t_2-t_1)) \right)  \, \d s.
\end{split}
\end{equation}
Thus, using the integral Minkowski inequality \eqref{minkowski_integral_version} and Young's inequality for convolutions \eqref{young_for_convolutions} we obtain
\[
\begin{split}
\| u(t_2)-u(t_1) \|_{q} &\leq \int_0^{t_2-t_1} \| A(t_2-s)\|_p \| B(s) \|_{r} \,\d s\\
& + \int_{t_2-t_1}^{t_2} \| A(t_2-s) \|_p \| B(s) -B(s-(t_2-t_1))  \|_{r} \,\d s .
\end{split}
\]
Since $t_2-t_1<\delta < \eta $ the first term on the right-hand side is bounded by $M \varepsilon $. As for the second term, if $t_2 < \eta$ it can be similarly bounded by $2M \varepsilon $. If not, then we decompose it into $\int_{t_2-t_1}^{\eta } + \int_\eta^{t_2}$, we bound the first of the resulting integrals by $2M\varepsilon$ and, as for the second one, observe that $s,s-(t_2-t_2) \in [\eta/2, T']$ whenever $s\in [\eta, T']$ to write
\begin{multline*}
\int_{\eta}^{t_2 } \| A(t_2-s) \|_p \| B(s) -B(s-(t_2-t_1))  \|_{r}\, \d s \\\leq \varepsilon \int_\eta^{t_2} \| A(t_2-s ) \|_p \, \d s  \leq \varepsilon M.
\end{multline*}
Therefore $\| u(t_2)-u(t_1 ) \|_q \leq 4M \varepsilon$. 
\end{proof}

\subsection{Decay estimates of $P(x,t)$}\label{decay_p_appendix}
Let $P(x,t):=\frac{1}{|x|} \int_0^{|x|} \frac{\mathrm{e} ^{-\xi^2/4t}}{t^{1/2}} \,\d \xi$ (see \eqref{def_of_p}). We have the following decay estimates.
\begin{theorem}\label{decay_p_theorem}
\begin{equation}\label{decay_p_inequality}
| \nabla^m P(x,t) | \leq \frac{C_m}{(|x|^2+t)^{(m+1)/2}}
\end{equation}
for all $x\in \RR^3$, $t>0$.
\end{theorem}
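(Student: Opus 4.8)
The plan is to use the parabolic scaling symmetry of $P$ to collapse the whole family \eqref{decay_p_inequality} into a single uniform bound on a compact set. Substituting $\xi=\lambda\eta$ in \eqref{def_of_p} gives the homogeneity relation
\[
P(\lambda x,\lambda^2 t)=\lambda^{-1}P(x,t),\qquad \lambda>0,\ x\in\RR^3,\ t>0,
\]
and differentiating $m$ times in the $x$-variable yields
\[
(\p^\alpha P)(\lambda x,\lambda^2 t)=\lambda^{-1-|\alpha|}(\p^\alpha P)(x,t)\qquad\text{for }|\alpha|=m.
\]
Given $(x,t)$ with $t>0$, the choice $\lambda=(|x|^2+t)^{-1/2}$ sends $(x,t)$ into the compact set $\Gamma:=\{(y,s):|y|^2+s=1,\ s\ge 0\}$, so it is enough to prove that for each fixed $m$ the spatial derivatives $\nabla^m P$ are bounded on $\Gamma$: this immediately gives $|\nabla^m P(x,t)|=(|x|^2+t)^{-(m+1)/2}|(\nabla^m P)(\lambda x,\lambda^2 t)|\le C_m(|x|^2+t)^{-(m+1)/2}$.

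First I would note that $\Gamma$ stays away from the origin $(0,0)$, so the boundedness of $\nabla^m P$ on $\Gamma$ follows once one knows that $\nabla^m P$ extends to a continuous function on $(\RR^3\times[0,\infty))\setminus\{(0,0)\}$. On $\RR^3\times(0,\infty)$ this is contained in the smoothness of the (radial) profile of $P$ already recorded in Section~\ref{oseen_kernel_section}. Near a point $(x_0,0)$ with $x_0\ne 0$ I would substitute $\xi=\sqrt{t}\,\eta$ to write
\[
P(x,t)=\frac{1}{|x|}\Bigl(\sqrt{\pi}-R(x,t)\Bigr),\qquad R(x,t):=\int_{|x|/\sqrt{t}}^{\infty}\e^{-\eta^2/4}\,\d\eta,
\]
and observe that on any slab $\{|x|\ge\varepsilon,\ 0<t\le 1\}$ the tail $R$, together with all of its $(x,t)$-derivatives, is dominated by a constant times $\e^{-\varepsilon^2/8t}$ — each such derivative is a polynomial in $|x|/\sqrt{t}$ times a Laurent monomial in $(|x|,t)$ times $\e^{-|x|^2/4t}$. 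Hence $R\to 0$ in $C^\infty$ locally uniformly on $\{x\ne 0\}$ as $t\to 0^+$, so $P$ extends to a $C^\infty$ function there with limit $\sqrt{\pi}/|x|$; in particular every $\nabla^m P$ is continuous on $(\RR^3\times[0,\infty))\setminus\{(0,0)\}$, and $C_m:=\sup_\Gamma|\nabla^m P|<\infty$, which closes the argument.

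The one subtle point is the behaviour as $t\to 0^+$ for $x\ne 0$ and, symmetrically, as $x\to 0$ for $t>0$: expanding $\nabla^m\bigl(P(x,t)\bigr)$ through the radial chain rule produces terms $\tilde P^{(j)}(|x|)/|x|^{\,m-j}$ whose prefactors look singular at $|x|=0$ although the singularities cancel because the radial profile is even, and whose decay at $|x|=\infty$ must also be tracked. The scaling reduction above avoids both issues, since the rescaled point always lies on $\Gamma$, hence uniformly away from the origin. If one instead prefers a direct computation, the estimate can be obtained by splitting into $|x|^2\le t$ — where rescaling to $t=1$ reduces it to the boundedness of the smooth radial profile on $\overline{B(0,1)}$ — and $|x|^2\ge t$, where $|x|\ge t^{1/2}$ lets the term-by-term bounds $|\tilde P^{(j)}(r)|\le C_j(r^2+t)^{-(j+1)/2}$, which follow from $|h(r)|\le\min(r\,t^{-1/2},\sqrt{\pi})$ with $h(r):=t^{-1/2}\int_0^r\e^{-\xi^2/4t}\,\d\xi$ and from the rapid decay of $\p_r^{k}\e^{-r^2/4t}$, combine to give $(r^2+t)^{-(m+1)/2}$.
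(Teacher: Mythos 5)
Your proof is correct, but it follows a genuinely different route from the one in the paper. The paper also starts from the scaling symmetry, but only in the one-parameter form $P(x,t)=t^{-1/2}P(x/\sqrt{t},1)$: it fixes $t=1$ and then proves by induction that every $D^\alpha P(x,1)$ with $|\alpha|=m$ has the structure $\frac{Q_\alpha(x)}{|x|^{2m}}P(x,1)+E_\alpha(x)$ on $|x|>1$, with $\deg Q_\alpha\le m$ and $E_\alpha$ exponentially decaying, from which the spatial decay $|x|^{-(m+1)}$ at infinity follows; boundedness on $B(0,2)$ comes from smoothness of the radial profile, and rescaling finishes. You instead use the full two-parameter homogeneity $(\p^\alpha P)(\lambda x,\lambda^2 t)=\lambda^{-1-|\alpha|}(\p^\alpha P)(x,t)$ to push every point onto the compact parabolic sphere $\{|y|^2+s=1,\ s\ge0\}$, so that no spatial decay needs to be tracked at all; the price is that you must show $\nabla^m P$ extends continuously to $\{t=0,\ x\ne0\}$, which you do correctly via the complementary-error-function tail $R(x,t)=\int_{|x|/\sqrt t}^{\infty}\e^{-\eta^2/4}\,\d\eta$ and the observation that it, with all derivatives, vanishes like $\e^{-\varepsilon^2/8t}$ on slabs $\{|x|\ge\varepsilon\}$. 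In short, the paper trades the $t\to0^+$ boundary analysis for an algebraic induction capturing decay as $|x|\to\infty$ at fixed $t=1$, while you trade the decay bookkeeping for the (clean) singular-limit analysis; both hinge on the same symmetry and both are complete. Two cosmetic remarks: your formula for $P$ in terms of $R$ drops the prefactor $\tfrac{1}{4\pi^{3/2}}$ (harmless for the bound), and for the supremum over the part of $\Gamma$ with $s>0$ you implicitly use joint continuity of $\nabla^m P$ in $(x,t)$ there, which does follow from the smoothness of the profile recorded in Section 1.2 but is worth a sentence.
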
 
\begin{proof}
Let us first assume that $t=1$ (the general case will follow from the rescaling $P(x,t)=\frac{1}{\sqrt{t}}P(x/\sqrt{t},1)$). We claim that any partial derivative $D^\alpha P(x,1)$ of order $|\alpha |=m$ is of the form
\begin{equation}\label{repr_der_of_p}
D^\alpha P(x,1)= \frac{Q_\alpha(x)}{|x|^{2m}} P(x,1)+E_\alpha (x)
\end{equation}
for $m\geq 0$ and $|x|>1$, where $Q_\alpha(x)$ denotes a polynomial of degree less than or equal to $|\alpha |=m$ and $E_\alpha(x)$ denotes a function that is smooth in $|x|>1$ and whose derivatives of all orders decay exponentially when $|x|\to \infty$. The case $m=0$ follows trivially with $E_0\equiv 0$. For the inductive step, assume \eqref{repr_der_of_p} holds for all partial derivatives $D^\alpha P(x,t)$ with $|\alpha |=m\geq 0$ and write
\[
\begin{split}
\partial_{x_i} D^\alpha P(x,1) &= \partial_{x_i} \left( \frac{Q_\alpha(x)}{|x|^{2m}} P(x,1)+E_\alpha (x) \right) \\
&= \frac{\tilde Q_{m-1} (x)}{|x|^{2m}}P(x,1) - \frac{2mQ_\alpha (x)\, x_i }{|x|^{2m+2}}P(x,1)\\
&+\frac{Q_\alpha(x)}{|x|^{2m}} \left( \frac{x_i}{|x|^2} P(x,1)+ \frac{x_i}{|x|^2} \e^{-|x|^2/4} \right)+ \partial_{x_i} E_\alpha (x)\\
&= \frac{P(x,1)}{|x|^{2m+2}} \left( |x|^2 \tilde Q_{m-1} (x) -(2m-1)Q_\alpha (x) x_i  \right)\\
&+ \frac{Q_\alpha(x)x_i}{|x|^{2m+2}}  \e^{-|x|^2/4} +  \partial_{x_i} E_\alpha (x) 
\end{split}
\]
for $i=1,2,3$, where $\tilde Q_{m-1} (x)$ denotes some polynomial of degree less than or equal to $m-1$. Clearly, the last bracket is some polynomial of degree less than or equal to $m+1$ and the remaining two terms are smooth in $|x|>1$ and decay exponentially as $|x|\to \infty$. Hence the induction follows. Because $P(x,1)$ decays like $|x|^{-1}$ as $|x|\to \infty$ we see from \eqref{repr_der_of_p} that 
\[
|\nabla^m P(x,1) | \leq \frac{C_m}{|x|^{m+1}} \leq \frac{C_m}{(|x|^2+1)^{(m+1)/2}}
\]
holds for $|x|\geq 2$. As $P(\cdot ,1)$ is a smooth function (see Section \ref{oseen_kernel_section}) we have $|\nabla^m P(x,1)| \leq C_m \leq {C_m}{(|x|^2+1)^{-(m+1)/2}}$ for all $x\in B(0,2)$. Hence \eqref{decay_p_inequality} follows in the case $t=1$.  Finally, the rescaling $P(x,t)=\frac{1}{\sqrt{t}}P(x/\sqrt{t},1)$ yields 
\[\begin{split}
|\nabla^m P(x,t) | &= \left| t^{-(m+1)/2} \left[ \nabla^m P(y,1) \right]_{y=x/\sqrt{t}} \right| \\
&\leq  t^{-(m+1)/2} \frac{C_m}{ \left( \left|x/\sqrt{t} \right|^2+1 \right)^{(m+1)/2}} = \frac{C_m}{\left( |x|^2+t\right)^{(m+1)/2}}. \qedhere
\end{split}
\]
\end{proof}
\subsection{Properties of the Stokes equations}\label{appendix_stokes_eq_X}
Here we present proofs of some results from Section \ref{stokes_eq_section}. Namely we complete the proof of Theorem \ref{classical_sol_if_X_smooth_theorem}, show property (iii) of the representation \eqref{repr_formula} and the continuity $\nabla u \in C((0,T);L^\infty )$ given representation \eqref{repr_formula1} with $Y\in \mathcal{H}^{1/2} ((0,T))$, and we show uniqueness of distributional solutions of the Stokes equations (Theorem \ref{uniqueness_stokes}).
\subsubsection{Proof of Theorem \ref{classical_sol_if_X_smooth_theorem}}\label{app_stokes_pf_of_thm}
Recall that it remains to verify that if for some $R>0$
\[
F\in C^\infty (\RR^3 \times [0,T); \RR^3 )\quad \text{ and } \quad \supp \, F(t) \subset B(0,R)\text{ for } t\in [0,T) 
\]
then the pair of functions $u_2$, $p$ given by \eqref{repr_formula}, \eqref{repr_formula_p} is a classical solution of the problem
\[
\left\{
\begin{array}{rl}
\p_t u_2 -\Delta u_2 +\nabla p  &=F,\\
\mathrm{div}\, u_2 &=0,\\
 u_2 (0)&=0,
\end{array}  
\right.
\]
and $u_2 \in C([0,T); L^2 )$. Indeed, as remarked after the statement of Theorem \ref{classical_sol_if_X_smooth_theorem}, then $u=u_1+u_2$ is a classical solution of the Stokes equations \eqref{eqLinSys1}, \eqref{eqLinSys1_incomp} with $u(0)=u_0$ and $u\in C([0,T); L^2 )$.

Note that, since $F$ is smooth and compactly supported in space and since $\mathcal{T}\in L^1_{\loc} ([0,T);L^2)$ (see \eqref{integral_est_T}) we can use Lemma \ref{young_spacetime_continuity} to obtain $u_2 \in C([0,T);L^2 )$ and $\| u_2 (t) \| \to 0$ as $t \to 0^+$ (which means that $u_2$ satisfies the initial condition $u_2(0)=0$). Moreover, since $\mathcal{T}\in C((0,\infty );L^2)$ (see \eqref{fcn_spaces_for_T}), we deduce that the functions $\nabla u_2$, $\Lap u_2$, $\p_tu_2$ are continuous (an application of the Dominated Convergence Theorem) and belong to $C((0,T);L^2)$ (a consequence of Lemma \ref{young_spacetime_continuity}). Similarly $\nabla p$ is continuous and $\nabla p \in C([0,T); L^2)$ (by an application of the Plancherel Lemma (Lemma \ref{CZ_lemma})). Therefore, since the Fourier transform is an isometry from $L^2$ into $L^2$, we see that $u_2$, $p$ satisfy the claim above if and only if
\begin{equation}\label{eqU2Fourier}
\p_t \widehat{ u_2}(\xi,t) + 4\pi^2|\xi|^2 \widehat{u_2}(\xi,t)+2\pi\irm \, \xi \, \widehat{ p}(\xi,t)=\widehat{F}(\xi,t)
\end{equation}
and
\begin{equation}\label{eqU2Fourier_b}
\xi\cdot \widehat{ u_2}(\xi,t)=0
\end{equation}
hold for $t>0$ and almost every $\xi \in \RR^3$. Here $\widehat{u_2}$, $\widehat{p}$, $\widehat{F}$ denote the Fourier transform $\mathcal{F}$ of $u_2$, $p$, $F$, respectively. Since $p$ satisfies $\Delta p = \mathrm{div} \, F$ we obtain
\[
2\pi\irm \,\xi \, \widehat{ p}(\xi,t) = \frac{\xi\otimes\xi}{|\xi|^2}\widehat F(\xi,t),
\]
where $\xi \otimes \xi $ denotes the $3\times 3$ matrix with components $\xi_i \xi_j$. Therefore \eqref{eqU2Fourier} is equivalent to 
\begin{equation}\label{eqU2Fourier_equiv}
\left( \p_t+4\pi^2|\xi|^2 \right) \widehat{ u_2}(\xi,t)=\left(I-\frac{\xi\otimes\xi}{|\xi|^2}\right) \widehat{F}(\xi,t).
\end{equation}
Now since
\begin{equation}\label{ft_heat_kernel}
\mathcal{F} \left[ \Phi (\cdot ,t) \right] = \mathrm{e}^{-4\pi^2 t |\xi |^2}
\end{equation}
(see e.g. Theorem 1.13 in Chapter 1 of \cite{stein_weiss_introduction} for a proof of this fact) and $-\Delta P = \Phi$ (see \eqref{laplace_relation}), we obtain 
\[
\mathcal{F} \left[ P(\cdot ,t) \right] = \frac{1 }{4\pi^2 |\xi |^2} \mathrm{e}^{4\pi^2 t |\xi |^2},
\]
and so
\[
\mathcal{F} \left[ \p_i \p_j P(\cdot ,t) \right] = \frac{-\xi_i \xi_j }{|\xi |^2} \mathrm{e}^{-4\pi^2 t |\xi |^2}.
\]
Hence for all $t>0$
\[
\mathcal{F} [\mathcal{T}(t)]=\left(I- \frac{\xi\otimes\xi}{|\xi|^2} \right) \mathrm{e}^{-4\pi^2 t|\xi|^2}.
\]
Since $u_2(t) = \int_0^t \mathcal{T}(t-s)\ast F(s)\,\d s$, 
\begin{equation}\label{FT_of_u2}
\widehat{u_2} (\xi, t) = \int_0^t \left(I- \frac{\xi\otimes\xi}{|\xi|^2} \right) \widehat{F} (\xi, s) \mathrm{e}^{-4\pi^2 (t-s)|\xi|^2}\,   \d s,\qquad  \xi \ne 0,
\end{equation}
and so \eqref{eqU2Fourier_b} holds for $\xi \ne 0$ and
\begin{multline*}
\left( \p_t+4\pi^2|\xi|^2 \right) \widehat{ u_2}(\xi , t)\\
=\left( \p_t+4\pi^2|\xi|^2\right) \int_0^t\left(I-\frac{\xi\otimes\xi}{|\xi|^2}\right) \mathrm{e}^{-4\pi^2 (t-s)|\xi|^2}\widehat{F}(\xi,s)\,\d s\\
=\left(I -\frac{\xi\otimes\xi}{|\xi|^2} \right)\widehat{F}(\xi,t)
\end{multline*}
for $\xi \ne 0$, that is \eqref{eqU2Fourier}, as claimed. 

\subsubsection{Property (iii) of the representation \eqref{repr_formula}}\label{app_prop_iii}
We need to show that if $F\in C([0,T);L^2)$, $u_0 \in L^2$ and $u$ is given by \eqref{repr_formula},
\[u(t)= u_1 (t)+u_2 (t) = \Phi (t)\ast u_0  +\int_0^t  \mathcal{T}(t-s) \ast F(s)\, \d s ,\]
then $u\in C([0,T); L^2 )$ with
\begin{equation}\label{l2_bound_appendix}
\| u (t) \| \leq \int_0^t \| F(s) \|\,  \d s + \| u_0 \| \qquad \text{ for }t\in (0,T).
\end{equation}
Moreover $u$ satisfies the energy dissipation equality
\begin{equation}\label{en_dissipation_appendix}
\| u(t) \|^2 - \| u_0 \|^2 + 2 \int_0^t \| \nabla u(s) \|^2\,  \d s = 2 \int_0^t \int u \cdot F\,\, \d x \, \d s 
\end{equation}
for $t\in (0,T)$. 

We prove these claim by considering two cases.
 
Case 1. $F$ is regular, that is satisfies \eqref{regularity_of_X_requirement} for some $R>0$. For such $F$, due to Theorem \ref{classical_sol_if_X_smooth_theorem} (or rather to the proof above), $u=u_1+u_2$ and $p$ satisfy $u(0)=u_0$,
\begin{equation}\label{eq_for_u2_pointwise}
\p_t u_1 - \Delta u_1=0,\quad  \p_t u_2 -  \Delta u_2 +  \nabla p =  F \qquad \text{ in } \RR^3 \times (0,T)
\end{equation}
and $u_1,u_2 \in C([0,T);L^2)$, where $p$ is given by \eqref{repr_formula_p}.

Note that for $y\in B(0,R)$ and $|x|>2R$
\[ |x|/2<|x|-R<|x-y|<|x|+R \]
and using \eqref{eqOseenKerEst2} we can write for such $x$ and for all $t$
\[
\begin{split}
|u_2 (x,t)|&\leq \int_0^t \int_{B(0,R)} \frac{C_0}{\left( |x-y|^2 + t-s \right)^{3/2}} |F(y,s)|\,  \d y\,  \d s \\
&\leq \frac{C |B(0,R)|^{1/2}t}{|x|^{3}} \max_{s\in[0,t]} \| F(s) \|,
\end{split}
\]
which gives a decay $\sim | x|^{-3}$ of $|u_2(x,t)|$ as $|x| \to \infty$ that is uniform on any compact time interval $[0,T']$, where $T'<T$. Similarly one can derive a decay $\sim | x|^{-5}$ of $|\Delta u_2(x,t)|$ and decay $\sim | x |^{-3}$ of $|\nabla p(x,t)|$. Thus \eqref{eq_for_u2_pointwise} gives the decay $\sim |x|^{-3}$ of $\p_t u_2 (x,t)$. Letting 
$\delta,t \in (0,T)$, $t>\delta$ and using this decay as well as the fact that $u_1$ and all its derivatives belong to $C((0,T);L^2)$ (see (ii) in Appendix \ref{heat_section_appendix}) we can integrate the equality
\[
u \cdot \p_t u - u \cdot \Delta u + u \cdot \nabla p = u \cdot F
\]
on $ \RR^3 \times (\delta , t)$ to obtain
\begin{equation}\label{en_dissipation_Xcompact}
 \|u (t) \|^2 -  \| u(\delta ) \|^2 + 2\int_\delta^t \| \nabla u (s) \|^2 \,\d s = 2\int_\delta^t \int u \cdot F \, \d x \,\d s .
\end{equation}
Since $u\in C([0,T);L^2)$ we can take the limit $\delta \to 0^+$ (and apply the Monotone Convergence Theorem) to obtain \eqref{en_dissipation_appendix}. As for the bound \eqref{l2_bound_appendix} note that the above equality and the Cauchy-Schwarz inequality give
\[
\frac{\d }{\d t} \| u(t) \|^2 \leq  2 \int u (t) \cdot F(t) \, dx \leq 2 \| u(t) \| \, \| F(t) \|,  \qquad t\in (0,T) ,
\]
that is
\[
\frac{\d }{\d t} \| u(t) \| \leq   \| F(t) \|,  \qquad t\in (0,T).
\]
Integrating this inequality in $t$ gives \eqref{l2_bound_appendix}.\\

Case 2. $F\in C([0,T);L^2)$. Let $u$ be the velocity field corresponding to $F$ and the initial velocity field $u_0$, and consider a compact time interval $[0,T']\subset [0,T)$. For such $F$ consider a sequence $\{F_R \} \subset C([0,T'];L^2)$ such that $F_R \in C^\infty (\RR^3 \times \RR$), $\supp \, F_R (t) \subset B(0,R)$ for all $t$ and 
\[
F_R \to F \qquad \text{ in } C([0,T'];L^2) \text{ as } R\to \infty .
\] 
(See Lemma \ref{approx_of_X_lemma} for a proof of the existence of such a sequence.) Let $u_R$ denote the velocity field corresponding to $F_R$ and to the initial velocity $u_0$. Since the representation formula \eqref{repr_formula} is linear we can apply \eqref{l2_bound_appendix} to the difference $u_{R_1}-u_{R_2}$ for $R_1,R_2 >0$ to see that $\{ u_R \}$ is Cauchy in $C([0 ,T'];L^2)$ as $R\to \infty$. Thus $u_R \to u'$ in $C([0 ,T'];L^2 )$ as $R\to \infty$ for some $u' \in C([0,T'];L^2)$. However, Lemma \ref{prop_of_up_lemma} (i) applied to the difference $u_R-u$ gives $\| u(t) - u_R (t)\|_\infty \to 0$ for every $t\in (0,T']$, and thus $u=u'$ (since convergence in $L^2$ gives convergence almost everywhere on a subsequence). Hence $u\in C([0,T'];L^2)$ and \eqref{l2_bound_appendix} follows for $t\in [0,T']$ by taking $R\to \infty$ in the corresponding inequality for $u_R$. As for the energy dissipation equality \eqref{en_dissipation_appendix}, let $\delta \in (0,T')$ and note that property (ii) of the representation (see Section \ref{section_forcing_X}) applied to the difference $u_R-u$ gives
\[
\nabla u_R \to \nabla u \qquad \text{ in } C([\delta , T'];L^2) \text{ as } R\to \infty .
\]
(Note that the convergence is not in $C([0,T'];L^2)$ since each $\nabla u_R$ need not belong to this space; see (ii).) Since Case 1 gives for each $R>0$, $t\in [0,T']$ 
\[
\frac{1}{2} \|u_R (t) \|^2 - \frac{1}{2} \| u_R(\delta ) \|^2 + \int_\delta^t \| \nabla u_R (s) \|^2 \,\d s = \int_\delta^t \int u_R \cdot F_R \, \d x \,\d s, 
\]
(see \eqref{en_dissipation_Xcompact}) we can take the limit $R\to \infty $ and then take the limit $\delta  \to 0^+$ (and apply the Monotone Convergence Theorem again) to obtain \eqref{en_dissipation_appendix} for all $t\in [0,T']$.
\subsubsection{The continuity $\nabla u \in C((0,T);L^\infty )$ for $u$ given by formula \eqref{repr_formula1}}\label{app_stokes_continuity}
We need to show that if $Y \in C((0,T),L^\infty )$ is weakly divergence free, $ \| Y(t) \|_\infty $ remains bounded as $t \to 0^+$ and $Y\in \mathcal{H}^{1/2} ((0,T))$ and if $u_2$ is given by
\[
u_{2} (t) \coloneqq    \int_0^t  \nabla  \mathcal{T}(t-s) \ast \left[ Y(s) Y (s) \right] \,  \d s 
\]
(recall the notation \eqref{notation_oseen_ker_conv}) then $\nabla u \in C((0,T);L^\infty )$.

In order to prove it, fix $T'<T$ and let $M>0$ be such that
\[
\| Y(t) \|_\infty , C_0(t) \leq M \qquad \text{ for } t\in [0,T'].
\]
Fix $\varepsilon >0$. Let $\eta >0$ be such that
\[
\int_0^t \frac{1}{(t-s)^{3/4}} \,\d s \leq \frac{\varepsilon}{M^2} \quad \text{ for } t\leq 2\eta .
\]
Since for each pair $i,k$ we have $Y_i Y_k \in C((0,T);L^\infty )$, $Y_iY_k$ is uniformly continuous into $L^\infty $ on time interval $[\eta/2,T']$. Thus there exists $\delta \in (0,\eta/2)$ such that for all $i,k$
\[
\| Y_i (t) Y_k(t) - Y_i (s) Y_k (s) \|_\infty \leq \frac{\varepsilon \,\eta}{T'} \]
whenever $s,t\in [\eta/2 , T']$ and $|t-s|<\delta$. Now let $t_1,t_2\in [0,T']$ be such that $t_1<t_2$ and $t_2-t_1<\delta $, and calculate
\[
\begin{split}
\MoveEqLeft
\p_l u_{2,j} (t_1) - \p_l u_{2,j} (t_2) = \int_0^{t_2-t_1} \p_{li} \mathcal{T}_{jk} (t_2-s) \ast \left[ Y_i (s) Y_k (s) \right] \,\d s\\
&+ \int_{t_2-t_1}^{t_2} \p_{li} \mathcal{T}_{jk} (t_2-s) \ast  \left[ Y_i (s) Y_k (s)\right. \\
& \hspace{3cm}- \left. Y_i (s-(t_2-t_1))Y_k (s-(t_2-t_1)) \right] \,\d s
\end{split}
\]
(see calculation \eqref{temp_decomp}) and denote the two integrals on the right-hand side by $I_1$, $I_2$ respectively. As for $I_1$, using the same trick of employing the H\"older continuity of $Y_k(s)$ as indicated in \eqref{temp_prop_up}, we obtain
\begin{align*}
\left| I_1 \right| &\leq \int_0^{t_2-t_1} \int \left| \p_{li} \mathcal{T}_{jk} (x-y,t_2-s) Y_i (y,s) \left[Y_k (y,s)-Y_k (x,s) \right]\right| \,\d y \, \d s \\
&\leq M^2 \int_0^{t_2-t_1} \int \frac{C |x-y |^{1/2}}{\left( |x-y|^2 + (t_2-s) \right)^{5/2}} \,\d y \, \d s
\\&&\mathllap{=M^2 \int_0^{t_2-t_1} \frac{C}{(t_2-s)^{3/4}} \,\d s\leq C \varepsilon .\hspace{5pt}}
\end{align*}
If $t_2 \leq 2\eta $ then one can bound $|I_2 |$ in a similar way   to obtain 
\[
|I_2 |\leq 2C \varepsilon .
\]
Otherwise, we write $\int_{t_2-t_1}^{t_2} = \int_{t_2-t_1}^{\eta} + \int_{\eta}^{t_2-\eta} + \int_{t_2-\eta}^{t_2} $ and denote the resulting three integrals by $I_{2,1}$, $I_{2,2}$, $I_{2,3}$, respectively. Since the length of the intervals of integration in $I_{2,1}$, $I_{2,3}$ is less than $2\eta$ we obtain, as above,
\[
\left| I_{2,1} \right|, \left| I_{2,3} \right| \leq 2C\varepsilon .
\]
For $I_{2,2}$ note that $s,s-(t_2-t_1)\in [\eta/2,T']$ for each $s$ from the interval of integration to write

\begin{align*}
\left| I_{2,2} \right| &\leq \int_\eta^{t_2-\eta } \| \p_{li} \mathcal{T}_{jk} (t_2-s) \|_1 \| Y_i(s)Y_k(s) \\
&\hspace{3cm}- Y_i (s-(t_2-t_1))Y_k(s-(t_2-t_1)) \|_\infty \,\d s\\
&&\mathllap{\leq \frac{\varepsilon \, \eta }{T'} \int_\eta^{t_2-\eta} \frac{C}{t_2-s } \,\d s \leq C \varepsilon ,}
\end{align*}
where we used the Minkowski inequality \eqref{minkowski_integral_version}, Young's inequality \eqref{young_for_convolutions} and the bound $\| \nabla^2 \mathcal{T}(t) \|_1 \leq C/t$ (see \eqref{eqOseenKerEst2}). Thus altogether
\[
\left| \p_l u_{2,j} (t_1) - \p_l u_{2,j} (t_2) \right| \leq 5 C \varepsilon \qquad l,j=1,2,3
\]
and the continuity $\nabla u_2 \in C((0,T);L^\infty )$ follows.
\subsubsection{Uniqueness of distributional solutions of the Stokes equations}\label{app_stokes_uniqueness}
We will show that if $u,p$ are such that $u\in C([0,T); L^2)$ is weakly divergence free, $p\in L^1_{\loc} (\RR^3 \times [0,T))$, and 
\begin{equation}\label{weak_homogeneous_solution_app}
\int_0^T \int \left( (\phi_t + \Delta \phi ) \cdot u + p \, \mathrm{div}\, \phi \right) \,\d x \, \d t =0
\end{equation}
for all $\phi \in C_0^\infty (\RR^3 \times [0,T) ; \RR^3)$, then $u\equiv 0$.

\begin{proof} Fix $\psi \in C_0^\infty (\RR^n \times [0,T))$ let $\phi \coloneqq \nabla \psi$. Then $\mathrm{div}\,  \phi = \Delta \psi$ and so \eqref{weak_homogeneous_solution_app} gives
\begin{equation}\label{p_weakly_harmonic}
\int_0^T \int  p \, \Delta \psi \, \d x \, \d t = \int_0^T \int  \nabla (\psi_t + \Delta \psi ) \cdot u  \, \d x \, \d t =0
\end{equation}
since $u$ is weakly divergence free.

For $\varepsilon >0$ let
\[
v (x,t) \coloneqq \int_0^t (J_\varepsilon u)(x,s)\, \d s, \qquad {q} (x,t) \coloneqq \int_0^t (J_\varepsilon p)(x,s)\, \d s.
\]
We first show that $v$, $q$ also satisfy \eqref{weak_homogeneous_solution_app}, 
\begin{equation}\label{weak_homogeneous_solution_vq}
\int_0^T \int \left( (\phi_t + \Delta \phi ) \cdot v + q \, \mathrm{div}\, \phi \right) \,\d x \,\d t =0
\end{equation}
for all $\phi \in C_0^\infty (\RR^3 \times [0,T) ; \RR^3)$, or equivalently
\[
\begin{split}
0 &= \int_0^T \int \int_0^t \int \left( \eta_\varepsilon (y) u(x-y,s) \cdot (\phi_t (x,t)+ \Delta \phi (x,t) ) \right.\\
&\hspace{70pt}+ \left.\eta_\varepsilon (y)  p(x-y,s) \, \mathrm{div}\, \phi (x,t) \right) \,\d y \,\d s \,\d x \,\d t \\
&=\int  \eta_\varepsilon (y)  \int \int_0^T  \left( \int_0^t u(x,s) \,\d s \cdot (\phi_t(x+y,t) + \Delta \phi(x+y,t) )  \right. \\
&\hspace{90pt}+ \left. \int_0^t p (x,s) \,\d s \,\, \mathrm{div} \, \phi (x+y,t) \right)\,\d t \,\d x \,\d y .
\end{split}
\]
We will show that the expression under the $y$ integral vanishes. In fact, for fixed $y\in \RR^3$ let 
\[
\Psi (x,t) \coloneqq  -\int_{t}^{T'} \phi(x+y,s) \,\d s,
\]
where $T'<T$ is such that $\phi (t) \equiv 0$ for $t\geq T'$. Clearly $\Psi \in C_0^\infty (\RR^3 \times [0,T);\RR^3)$ and so \eqref{weak_homogeneous_solution_app} gives
\begin{align*}
0&=  \int \int_0^T u(x,t) \cdot (\Psi_t (x,t) + \Delta \Psi(x,t) ) \, \d t\,\d x \\
&\quad+  \int \int_0^T  q  (x,t) \, \mathrm{div} \, \Psi (x,t)  \, \d t\,\d x.
\end{align*}
Integration by parts in $t$ and the identity
\[\p_t ( \Psi_{t} (x,t)+ \,\Delta \Psi (x,t) )=\phi_t (x+y,t)+\Delta \phi (x+y,t)
\]
give
\begin{align*}
\int \int_0^T  &\left( \int_0^t u(x,s) \,\d s \cdot (\phi_t(x+y,t) + \Delta \phi(x+y,t) ) \right.\\
&\quad\left.+ \int_0^t p (x,s) \,\d s \,\, \mathrm{div} \, \phi (x+y,t) \right)\,\d t \,\d x =0
\end{align*}
as required. Therefore $v$, $q$ indeed satisfy \eqref{weak_homogeneous_solution_vq}. Moreover, letting 
\[\Psi (x,t) \coloneqq  -\int_{t}^{T'} \psi(x+y,s) \,\d s,\]
this time for a scalar test function $\psi$ we obtain from \eqref{p_weakly_harmonic} that for each $y\in \RR^3$
\begin{align*} 0&=\int_0^T  \int p \Delta \Psi  \,\d x \,\d t= - \int_0^T  \int p(x,t) \int_t^{T'} \Delta \psi (x+y,s) \,\d s \, \d x\, \d t \\
&&\mathllap{= - \int_0^T  \int \left( \int_0^t p(x,s)\, \d s \right) \Delta \psi (x+y,t) \, \d x \,\d t.}
\end{align*}
Hence Fubini's theorem gives

\begin{multline*}
\int_0^T  \int q \Delta \psi \, \d x \,\d t = \int_0^T \int \int_0^t \int \eta_\varepsilon (y) p(x-y,s ) \Delta \psi (x,t) \, \d y \,\d s \,\d x \,\d t \\
= \int \eta_\varepsilon (y) \int_0^T \int \left( \int_0^t p(x,s)\, \d s \right) \Delta \psi (x+y, t) \,\d x \,\d t \,\d y =0,
\end{multline*}

that is, like $p$, $q$ satisfies \eqref{p_weakly_harmonic}. Therefore \eqref{weak_homogeneous_solution_vq} applied with $\Delta \phi$ in place of $\phi$ gives 
\[
0 = \int_0^T \int \Delta (\phi_t + \Delta \phi ) \cdot v \, \d x \,\d t = \int_0^T \int  (\phi_t + \Delta \phi ) \cdot \Delta v \, \d x\, \d t .
\]
The uniqueness of weak solutions to the heat equation (see Lemma \ref{uniqueness_heat_eq_lemma}) now implies $\Delta v\equiv 0$ almost everywhere in $\RR^3\times [0,T)$, hence $\Delta v \equiv 0$ everywhere by continuity of $\Delta v$. Hence Liouville's theorem implies that $v(t)$ is constant for each $t$. Therefore $v(t)\equiv 0$ at each $t$ due to the fact $v\in C([0,T);L^2)$. Hence, differentiating the definition of $v$ (in $t$), we see that $J_\varepsilon u (t)\equiv 0$ for each $t$ and $\varepsilon$. The almost everywhere convergence of the mollification (see Lemma \ref{prop_molli}, (v)) gives $u(t) \equiv 0$ for each $t$, as required. 
\end{proof}

\subsection{Integral inequalities}\label{integral_ineq}
\begin{lemma}\label{lem_integral_ineq_app}
Suppose $g>0$ is a continuous function on $(0,T)$ that is locally integrable $[0,T )$, that functions $f,\phi: (0,T)\to \RR^+$ satisfy
\begin{eqnarray}
f(t) &\leq & \int_0^t g(t-s) f(s)^2 \,\d s + a(t) , \label{f_ineq} \\
\phi (t) &\geq & \int_0^t g(t-s) \phi (s)^2 \,\d s + b(t) \label{phi_ineq}
\end{eqnarray}
for all $t\in (0,T)$, where $a$, $b$ are continuous functions satisfying $a\leq b$, $\phi$ is continuous, and that $f^2$ and $\phi^2$ are integrable near $0$.
Then $f\leq \phi $ on $ (0,T)$.
\end{lemma}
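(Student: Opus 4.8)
First I would linearise the problem. Set $w\coloneqq f-\phi$ and $w^+\coloneqq\max\{w,0\}$, and note that on the set $\{w^+>0\}$ one has $f>\phi\ge0$, so $w^+\le f$ there. Subtracting the two hypotheses, using the factorisation $f^2-\phi^2=(f-\phi)(f+\phi)$, the inequality $a\le b$, and $f+\phi\ge0$ to replace $w$ by $w^+$ inside the integral, I obtain for every $t\in(0,T)$
\[
w^+(t)\ \le\ \int_0^t g(t-s)\,\bigl(f(s)+\phi(s)\bigr)\,w^+(s)\,\d s .
\]
(On $\{w^+(s)>0\}$ the integrand equals $g(t-s)(f^2-\phi^2)(s)$; on $\{w^+(s)=0\}$ it vanishes, in agreement with $f^2\le\phi^2$ there.) The hypotheses also force all the convolutions that appear to be finite, and hence $f^2\in L^1_{\loc}(0,T)$: for $0<\varepsilon<t-\delta<t$ the function $g$ is bounded below by a positive constant $c$ on the compact set $t-[\varepsilon,t-\delta]$, so $\int_\varepsilon^{t-\delta}f^2\le c^{-1}\int_0^t g(t-s)f(s)^2\,\d s<\infty$. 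In particular $f+\phi\in L^2_{\loc}(0,T)$, and $(f+\phi)^2\le2f^2+2\phi^2$ is integrable near $0$.

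The plan is then to deduce $w^+\equiv0$ on $(0,T)$ from a single auxiliary Gronwall-type claim, call it (Aux): \emph{if $0\le\tau<\tau+\eta<T$ and $p\colon(\tau,\tau+\eta)\to\RR^+$ satisfies $p^2\in L^1(\tau,\tau+\eta)$, $\mu\in L^2(\tau,\tau+\eta)$ and $p(t)\le\int_\tau^t g(t-s)\mu(s)p(s)\,\d s$ for a.e.\ $t$, then $p\equiv0$.} Granting (Aux), applying it with $\tau=0$ and small $\eta$, $p=w^+$, $\mu=f+\phi$ gives $w^+=0$ on $(0,\eta)$. Set $\tau^\ast\coloneqq\sup\{\tau\le T:\ w^+=0\text{ a.e.\ on }(0,\tau)\}\ge\eta>0$ and suppose $\tau^\ast<T$. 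Then the displayed inequality reduces on $(\tau^\ast,T)$ to $w^+(t)\le\int_{\tau^\ast}^t g(t-s)(f(s)+\phi(s))w^+(s)\,\d s$; since $f^2\in L^1_{\loc}(0,T)$ and $\phi$ is continuous on $(0,T)$, we have $f+\phi\in L^2(\tau^\ast,\tau^\ast+\eta')$ and $(w^+)^2\le f^2\in L^1(\tau^\ast,\tau^\ast+\eta')$ for small $\eta'>0$, so (Aux) with $\tau=\tau^\ast$ gives $w^+=0$ on $(\tau^\ast,\tau^\ast+\eta')$, contradicting maximality. Hence $\tau^\ast=T$ and $f\le\phi$. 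Everything here is carried out with essential suprema and a.e.\ statements, since $f$ (and hence $w^+$) is not assumed continuous.

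Proving (Aux) is the technical heart, and the step I expect to be the main obstacle. Writing the hypothesis as $p\le\mathcal Kp$ with $\mathcal Kh(t)\coloneqq\int_\tau^t g(t-s)\mu(s)h(s)\,\d s$ gives $p\le\mathcal K^np$ for all $n$. I would first upgrade integrability: by the Cauchy--Schwarz inequality, $p(t)^2\le P(t)\Psi(t)$, where $P(t)\coloneqq\int_\tau^t g(t-s)p(s)^2\,\d s$ and $\Psi(t)\coloneqq\int_\tau^t g(t-s)\mu(s)^2\,\d s$ both lie in $L^1(\tau,\tau+\eta)$ by Tonelli; substituting this into the definition of $P$ yields the linear Volterra inequality $P(t)\le\int_\tau^t g(t-s)\Psi(s)P(s)\,\d s$ for $P\in L^1$, with an $L^1$ weight $\Psi$ and a kernel $g$ that is merely integrable at the origin. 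When $g$ is singular at $0$ (as in the applications, where $g(\tau)\sim C\tau^{-1/2}$) and $\Psi$ is not bounded, closing the iteration for this linear inequality is the delicate point: one shows that the $L^1(\tau,\tau+\eta)$-norms of the iterated kernels tend to $0$ as $n\to\infty$, the key being that the $n$-fold self-convolutions $g^{\ast n}$ become progressively less singular and their $L^1$-norms decay factorially, which absorbs the growth contributed by the weight; it follows that $P\equiv0$, and then $p^2\le P\Psi\equiv0$. On any interval bounded away from $0$ the kernel is bounded and this reduces to the classical Gronwall iteration; it is only the organisation of the singular-kernel case that requires genuine care.
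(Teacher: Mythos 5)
Your linearisation is correct: subtracting the two hypotheses, factorising $f^2-\phi^2=(f-\phi)(f+\phi)$ and discarding the set where $f\le\phi$ does give
\[
w^+(t)\ \le\ \int_0^t g(t-s)\bigl(f(s)+\phi(s)\bigr)w^+(s)\,\d s ,
\]
and this is a genuinely different route from the paper's. The paper never linearises: it runs a continuation argument directly on the quadratic inequality (the set of $t'$ with $\int_0^t g(t-s)f(s)^2\,\d s+a(t)<\phi(t)$ on $(0,t']$ is shown to be nonempty and to exhaust $(0,T)$), first in the case $a\le b-\delta$ and then in general by replacing $f$ with $f-\varepsilon$, which is checked to satisfy the same type of inequality with $a-\varepsilon/4$; continuity of $\phi$ does all the work and no Volterra iteration is needed.

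The gap is exactly where you placed your bets: claim (Aux) is false with the integrability you have extracted ($g\in L^1_{\loc}$, $\mu\in L^2$, $p^2\in L^1$). Take $\tau=0$, $g(\sigma)=\sigma^{-3/4}$ (positive, continuous, locally integrable on $[0,T)$), $\mu(s)=s^{-1/3}\in L^2$ and $p(t)=t^{-1/4}$, so $p^2\in L^1$; then
\[
\int_0^t g(t-s)\mu(s)p(s)\,\d s=\int_0^t (t-s)^{-3/4}s^{-7/12}\,\d s=\Bigl(\int_0^1(1-u)^{-3/4}u^{-7/12}\,\d u\Bigr)\,t^{-1/3}\ \ge\ t^{-1/4}=p(t)
\]
for all $t\le 1$, since the Beta constant exceeds $1$, yet $p\not\equiv 0$. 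The heuristic that the iterated kernels' $L^1$-norms decay factorially fails because the weight is unbounded: the singularity of $\mu$ at the origin compounds with that of $g$ at every step instead of being absorbed. (In fairness, essentially the same example, $f(t)=t^{-1/4}$, $a=b=0$, $\phi$ a small constant, defeats the lemma as literally stated, and the paper's own Step~1 silently assumes that $\int_0^t g(t-s)f(s)^2\,\d s$ can be made small by taking $t$ small, which also does not follow from $g\in L^1_{\loc}$ and $f^2\in L^1$ alone. In every application $g(\sigma)=C\sigma^{-1/2}$ and $f,\phi\lesssim t^{-1/4}$, so $g\ast f^2$ and $\Psi=g\ast\mu^2$ are locally bounded; under that implicit strengthening the paper's argument closes, and your (Aux) can be rescued too, since $\Psi\in L^\infty$ reduces $P\le\int_0^t g(t-s)\Psi(s)P(s)\,\d s$ to the standard weakly singular Volterra iteration of Appendix A.6. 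But with only $\mu\in L^2$, (Aux) is not a technical detail to be filled in; it is false.)
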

\begin{proof}
The proof proceeds in three steps.

\vspace{10pt}
\noindent\emph{Step 1. The case $ a(t) \leq b(t)- \delta $ for $t \in (0,\tau )$ for some $\delta, \tau >0$}.

Let 
\[I\coloneqq \lewy t' \, : \, \int_0^{t} g(t-s)f(s)^2 \,\d s + a(t) < \phi(t) \text{ for all } t\in (0,t']  \prawy.\]
Note that \eqref{f_ineq} gives $f(t)< \phi (t)$ for $t\in I$. Let $t_0\in (0,\tau )$ be such that
\[
\left| \int_0^t g (t-s) f(s)^2 \,\d s \right| < \frac{ \delta }{2} , \quad \left| \int_0^t g (t-s) \phi (s)^2 \,\d s \right| <\frac{ \delta }{2}  
\]
for $t\in (0,t_0]$. Then for $t\in (0,t_0]$ \eqref{phi_ineq} gives
\begin{multline*}
\int_0^t g(t-s) f(s)^2 \,\d s + a(t) < \delta/2 +b(t )-\delta  \\\leq \phi(t) - \int_0^t g(t-s) \phi (s)^2 \,\d s - \delta/2 < \phi (t),
\end{multline*}
that is $t_0 \in I$. Now let $T':=\sup I$. We need to show that $T'=T$. Suppose otherwise that $T'<T$. Then
\[
\int_0^{T'} g(T'-s)f(s)^2 \,\d s + a(T')  < \int_0^{T'} g(T'-s)\phi (s)^2 \,\d s + b(T') \leq \phi(T') 
\]
by \eqref{phi_ineq}. By continuity we obtain $\int_0^{t} g(t-s)f(s)^2 \,\d s + a(t) <  \phi(t)$ for $t\in [T',T'']$ for some $T''>T'$. Hence $T''\in I$, which contradicts the definition of $T'$. Therefore indeed $T'=T$ and the lemma follows in this case.

\vspace{10pt}
\noindent\emph{Step 2. The case $\liminf_{t\to 0^+} (b(t)-a(t))=0$: there exists $t_0>0$ such that $f(t)\leq \phi(t)$ for $t\in (0,t_0)$}.

Let $t_0>0$ be small enough such that 
\[
\left| \int_0^t g(t-s) \,\d s \right| <\frac{1}{4}, \quad \left| \int_0^t g(t-s) f(s) \,\d s \right| <\frac{1}{4}\qquad \text{ for } t\in [0,t_0].
\]
Let $\varepsilon \in (0,1/3)$ and consider $f_\varepsilon (t):= f(t) -\varepsilon$. Then $ \varepsilon/2+3\varepsilon^2/4 - \varepsilon \leq - \varepsilon/4$ and so $f_\varepsilon$ satisfies the inequality
\begin{align*}
\MoveEqLeft[0]
f_\varepsilon (t) \leq  \int_0^t g(t-s) (f_\varepsilon (s)+\varepsilon )^2 \,\d s + a(t) -\varepsilon \\
&\leq \int_0^t g(t-s) f_\varepsilon (s)^2 \,\d s +2\varepsilon \int_0^t g(t-s) (f(s)-\varepsilon ) \,\d s\\
&\quad + \varepsilon^2 \int_0^t g(t-s) \,\d s + a(t) - \varepsilon \\
&\leq \int_0^t g(t-s) f_\varepsilon (s)^2 \,\d s +\varepsilon/2 + \varepsilon^2/2 + \varepsilon^2/4  + a(t) - \varepsilon \\
&&\mathllap{\leq \int_0^t g(t-s) f_\varepsilon (s)^2 \,\d s + a(t) - \varepsilon/4}
\end{align*}

Because $a(t)- \varepsilon/4 \leq b(t) - \varepsilon/4$ on $(0,t_0)$, similarly as in Step 1 we obtain $f_\varepsilon \leq \phi $ on $(0,t_0)$. The claim follows by taking the limit $\varepsilon \to 0^+$.

\vspace{10pt}
\noindent\emph{Step 3. The case $\liminf_{t \to 0^+} (b(t)-a(t))=0$: $f(t) \leq \phi (t) $ for all $t\in (0,T)$}.

Let
\[
I_1 := \{ t\in (0,T) \, : \, f(s) \leq \phi (s) \text{ for } s\in (0,t) \}.
\]
Let $t_1 \coloneqq \sup I_1$. Note that Step 2 gives $t_1\geq t_0>0$. Suppose that $t_1 <T$. Let $F(t)\coloneqq f(t_1+t)$, $\Phi (t)\coloneqq \phi (t_1+t)$. Then $F$, $\Phi$ satisfy
\[ \begin{split}
F(t) &\leq  \int_0^t g(t-s) F(s)^2 \,\d s+A(t) ,\\
\Phi (t) &\geq  \int_0^t g(t-s) \Phi (s)^2 \,\d s +B(t),
\end{split} \]
where 
\[A(t)\coloneqq a(t_1+t)+ \int_0^{t_1} g(t_1+t-s) f(s)^2\,\d s,\]
\begin{multline*}
B(t)\coloneqq b(t_1+t)+\int_0^{t_1} g(t_1+t-s) \phi (s)^2\,\d s \\
\geq A(t) + b(t_1+t)-a(t_1+t).
\end{multline*}
Noting that $A$, $B$ are continuous (by the Dominated Convergence Theorem) and that $A(t)\leq B(t)$ for all $t\in (0,T-t_1)$, we can apply Step 1 and Step 2 to the functions $F$, $\Phi$, to conclude that $F(t)\leq \Phi (t)$ for all $t\in (0,t_2]$ for some $t_2>0$. Thus $f(t)\leq \phi (t)$ for all $t\in [0, t_1+t_2)$, which contradicts the definition of $t_1$.\qedhere

\end{proof}
The above lemma can be modified to fit several other settings.
\begin{corollary}\label{integral_ineq_power1_cor}
Let $g,a,b$ be as in Lemma \ref{lem_integral_ineq} and let $h$ satisfy the same conditions as $g$. Let functions $f,\phi \colon (0,T)\to \RR$ be such that $f$ and $\phi$ are bounded near as $t \to 0^+$ and $\phi$ is continuous and 
\[\begin{cases}
f(t) &\leq \int_0^t g(t-s)h(s)f(s) \,\d s + a(t),\\
\phi (t) &\geq \int_0^t g(t-s)h(s) \phi (s) \,\d s + b(t)
\end{cases}\]
for all $t\in (0,T)$. Then $f\leq \phi$ on $(0,T)$.
\end{corollary}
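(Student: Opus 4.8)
The plan is to mirror the proof of Lemma~\ref{lem_integral_ineq_app} almost verbatim, replacing the quadratic terms $f(s)^2$ and $\phi(s)^2$ by the linear terms $h(s)f(s)$ and $h(s)\phi(s)$. What keeps the argument intact is that the Volterra kernel $K(t,s):=g(t-s)h(s)$ is nonnegative (so $\psi\mapsto\int_0^tK(t,s)\psi(s)\,\d s$ is order-preserving) and locally integrable on $[0,T)$ (so $t\mapsto\int_0^tK(t,s)\,\d s$ is continuous and vanishes as $t\to0^+$). First I would record a preliminary fact: since $f$ is bounded near $0$ and $g,h$ are locally integrable, a routine Volterra iteration (cf.\ Appendix~\ref{volterra_eq_section}) shows that $f$ is bounded on every compact subinterval of $(0,T)$; hence all the integrals below are finite and, by the Dominated Convergence Theorem, $t\mapsto\int_0^tK(t,s)f(s)\,\d s$ is continuous on $(0,T)$, exactly as for $\int_0^tg(t-s)f(s)^2\,\d s$ in Lemma~\ref{lem_integral_ineq_app}.

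\emph{Step 1 (strict case).} Assume first $a(t)\le b(t)-\delta$ on $(0,\tau)$ for some $\delta,\tau>0$, and set
\[
I:=\bigl\{\,t'\in(0,T):\ \int_0^tK(t,s)f(s)\,\d s+a(t)<\phi(t)\ \text{for all }t\in(0,t']\,\bigr\}.
\]
By the hypothesis on $f$, $t\in I$ forces $f(t)<\phi(t)$. Choosing $t_0\in(0,\tau)$ so small that $\bigl|\int_0^tK(t,s)f(s)\,\d s\bigr|<\delta/2$ and $\bigl|\int_0^tK(t,s)\phi(s)\,\d s\bigr|<\delta/2$ on $(0,t_0]$, the hypothesis on $\phi$ gives $\int_0^tK(t,s)f(s)\,\d s+a(t)<\tfrac{\delta}{2}+b(t)-\delta\le\phi(t)-\int_0^tK(t,s)\phi(s)\,\d s-\tfrac{\delta}{2}<\phi(t)$ there, so $t_0\in I$. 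Let $T':=\sup I$; if $T'<T$, then using $f<\phi$ on $(0,T')$, $K\ge0$, and the hypothesis on $\phi$ we get $\int_0^{T'}K(T',s)f(s)\,\d s+a(T')<\int_0^{T'}K(T',s)\phi(s)\,\d s+b(T')\le\phi(T')$, and by the continuity recorded above this strict inequality persists slightly beyond $T'$, contradicting $T'=\sup I$. Hence $f<\phi$ on $(0,T)$ in this case.

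\emph{Steps 2 and 3 (general case).} For small $t$: given $\varepsilon\in(0,1)$ put $f_\varepsilon:=f-\varepsilon$, so that $f_\varepsilon(t)\le\int_0^tK(t,s)f_\varepsilon(s)\,\d s+\varepsilon\int_0^tK(t,s)\,\d s+a(t)-\varepsilon$; choosing $t_0$ with $\int_0^tK(t,s)\,\d s<\tfrac12$ on $(0,t_0)$ turns this into $f_\varepsilon(t)\le\int_0^tK(t,s)f_\varepsilon(s)\,\d s+\bigl(a(t)-\tfrac{\varepsilon}{2}\bigr)$ with $a-\tfrac{\varepsilon}{2}\le b-\tfrac{\varepsilon}{2}$ on $(0,t_0)$, so Step~1 gives $f_\varepsilon<\phi$ on $(0,t_0)$, and $\varepsilon\to0^+$ yields $f\le\phi$ on $(0,t_0)$. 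For the globalization I would run the maximal-interval argument of Step~3 of Lemma~\ref{lem_integral_ineq_app}: set $t_1:=\sup\{t:\,f\le\phi\text{ on }(0,t)\}\ge t_0>0$; if $t_1<T$, translate by letting $F(t):=f(t_1+t)$, $\Phi(t):=\phi(t_1+t)$, $A(t):=a(t_1+t)+\int_0^{t_1}K(t_1+t,s)f(s)\,\d s$, $B(t):=b(t_1+t)+\int_0^{t_1}K(t_1+t,s)\phi(s)\,\d s$; then $A,B$ are continuous and $A\le B$ (using $f\le\phi$ on $(0,t_1)$), so Steps~1--2 applied to $F,\Phi,A,B$ give $f\le\phi$ on $[t_1,t_1+t_2)$ for some $t_2>0$, contradicting the definition of $t_1$. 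Therefore $f\le\phi$ on $(0,T)$.

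\emph{Main obstacle.} Just as in the quadratic case, the monotonicity of the linear operator is trivial; the delicate issues are the behaviour as $t\to0^+$ when $b-a$ is not bounded below by a positive constant, and the fact that $f$ is not assumed continuous --- which is precisely why one works with the continuous quantity $\int_0^tK(t,s)f(s)\,\d s+a(t)$ rather than with $f$ itself, and why the proof splits into a localized step near $0$ (the perturbation $f-\varepsilon$) and a separate globalization step. The only genuinely new point, local boundedness of $f$, is immediate from the linear Volterra structure. Alternatively one can collapse Steps~2--3 into a single Step~1 by perturbing with $f-\varepsilon\psi$, where $\psi$ is the positive continuous solution of $\psi(t)=\int_0^tK(t,s)\psi(s)\,\d s+1$ furnished by Appendix~\ref{volterra_eq_section}; this makes the inequality strict on all of $(0,T)$ at once.
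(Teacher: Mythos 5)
Your proposal is correct and follows essentially the same route as the paper, whose entire proof of this corollary is the remark ``Similar to the proof above'': you transplant the three-step argument of Lemma~\ref{lem_integral_ineq_app} to the kernel $K(t,s)=g(t-s)h(s)$, where the linear structure in fact simplifies the perturbation step ($f-\varepsilon$ produces only the harmless term $\varepsilon\int_0^t K(t,s)\,\d s$ instead of the quadratic cross terms). Your added observations on local boundedness of $f$ and the alternative single-step perturbation via the Volterra solution $\psi$ are sound but not needed beyond what the paper implicitly assumes.
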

\begin{proof}
Similar to the proof above.
\end{proof}
\begin{corollary}\label{integral_ineq_min_cor}
Let $a,b>0 $ be such that $a\leq b$ and let $h,g \colon (0,\infty ) \to \RR^+$ be continuous functions such that $g$ is locally integrable on $[0,\infty)$ and $h$ is integrable on $(1,\infty )$. Suppose also that there exist $\tau>0$, $C'>0$ such that
\[\begin{split}
h(s) &\geq C'^2 g (s) \quad \text{ for } s\in (0,\tau ), \\
h(s) &\leq C'^2 g(s) \quad \text{ for } s\in [\tau,\infty ),
\end{split}
\]
and for all $t>0$
\[
C'> \int_0^t \min \left( C'^2 g(t-s) , h(t-s) \right) \,\d s + b.
\]
If $T>0$ and $f$ is a positive function on $(0,T)$ that is bounded near $0$ and
\[
f(t) \leq \int_0^t \min \left( g(t-s) f(s)^2 , h(t-s) \right) \,\d s + a
\]
for $t\in (0,T)$, then $f \leq C'$ on $(0,T)$.
\end{corollary}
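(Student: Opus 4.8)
The plan is to compare $f$ directly with the constant function $\phi\equiv C'$, adapting Step~1 of the proof of Lemma~\ref{lem_integral_ineq_app}. Two elementary facts drive the argument. First, $C'$ is a \emph{strict supersolution}: the third hypothesis states precisely that $C'>\int_0^t\min(C'^2g(t-s),h(t-s))\,\d s+b$ for every $t>0$, and the assumption comparing $h$ with $C'^2g$ shows that the integrand equals $C'^2g(t-s)$ when $t-s<\tau$ and $h(t-s)$ when $t-s\geq\tau$; this makes the integral finite for every $t$ (local integrability of $g$ near $0$ together with continuity of $h$ on $[\tau,t]$ and its integrability near infinity) and, since $g>0$ forces $\int_0^t C'^2g(r)\,\d r>0$ for $t>0$, it also yields $C'>b\geq a$. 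Second, for each fixed $r'$ the map $r\mapsto\min(g(r')\,r^2,h(r'))$ is nondecreasing on $[0,\infty)$; hence whenever $0\leq f(s)\leq C'$ for all $s\in(0,t)$ one has $\min(g(t-s)f(s)^2,h(t-s))\leq\min(C'^2g(t-s),h(t-s))$, and combining this with the first fact the hypothesis on $f$ gives $f(t)\leq\int_0^t\min(C'^2g(t-s),h(t-s))\,\d s+b<C'$ as well.

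Given these, I would first use that $f$ is bounded near $0$: if $f\leq M$ on $(0,t_*)$ then $f(t)\leq M^2\int_0^t g(r)\,\d r+a$ for $t\in(0,t_*)$, and since $\int_0^t g\to0$ as $t\to0^+$ while $a<C'$, there is $t_0\in(0,t_*)$ with $f<C'$ on $(0,t_0)$. Set $T'\coloneqq\sup\{t'\in(0,T):f<C'\text{ on }(0,t')\}$, so $T'\geq t_0>0$, and suppose for contradiction that $T'<T$. For each $t<T'$ we have $f<C'$ on $(0,t)$, hence $f(t)<C'$ by the second fact; letting $t\uparrow T'$ gives $f<C'$ on $(0,T')$, and one more application of the second fact at $t=T'$ (the integral $\int_0^{T'}\min(g(T'-s)f(s)^2,h(T'-s))\,\d s$ being finite because $f$ is bounded on $[T'/2,T']$ and $f^2$ is integrable near $0$) gives $f(T')<C'$, so $f<C'$ on $(0,T']$.

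It then remains to propagate the strict inequality slightly beyond $T'$, which contradicts $T'=\sup$ and forces $T'=T$, whence $f<C'$ — in particular $f\leq C'$ — on $(0,T)$, as required. This propagation is carried out exactly as the ``by continuity'' step in the proof of Lemma~\ref{lem_integral_ineq_app}: the strict margin in $\int_0^{T'}\min(C'^2g(T'-s),h(T'-s))\,\d s+b<C'$ survives for $t$ slightly larger than $T'$ (continuity in $t$ of that integral, by dominated convergence), and a short bootstrap on $(T',T'+\delta)$ — controlling $\int_{T'}^{t}\min(g(t-s)f(s)^2,h(t-s))\,\d s\leq\sup_{(T',t)}f^2\cdot\int_0^\delta g$ and using $\int_0^\delta g\to0$ — keeps $f$ below $C'$ there. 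I expect this last step to be the main obstacle, precisely because $f$ is only assumed bounded near $0$ rather than continuous, so one cannot simply invoke continuity of $f$ itself; as in Lemma~\ref{lem_integral_ineq_app} one works instead with the Volterra functional $t\mapsto\int_0^t\min(g(t-s)f(s)^2,h(t-s))\,\d s+a$, whose behaviour across $t=T'$ is controlled once $f$ has been shown to be locally bounded up to $T'$. Everything else — the splitting of the $\min$ at the threshold $\tau$, the finiteness of the various integrals, and the elementary monotonicity estimate — is routine.
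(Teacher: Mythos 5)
Your overall strategy is the same as the paper's: treat the constant $C'$ as a strict supersolution, use the monotonicity of $r\mapsto\min(g(t-s)r^2,h(t-s))$ to propagate the bound, establish $f<C'$ on an initial interval, and run a supremum-and-contradiction argument. The only structural difference is at the start: the paper notes that for $t<\tau$ one has $t-s<\tau$, so the minimum in the hypothesis on $C'$ is identically $C'^2g(t-s)$, and it invokes Lemma \ref{lem_integral_ineq_app} directly (with $\phi\equiv C'$, kernel $g$, constants $a\leq b$) to get $f\leq C'$ on all of $(0,\min(\tau,T))$; your direct estimate $f(t)\leq M^2\int_0^tg+a$ is also fine and yields a possibly shorter initial interval, which suffices. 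Your preliminary observations ($C'>b\geq a$, finiteness of the comparison integral, monotonicity of the min) are all correct.

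The one genuine gap is in the propagation past $T'$, exactly where you predicted. The bound $\int_{T'}^{t}\min(g(t-s)f(s)^2,h(t-s))\,\d s\leq \sup_{(T',t)}f^2\cdot\int_0^{\delta}g$ is circular: $f$ is only assumed positive and bounded near $0$, so $\sup_{(T',t)}f$ is not known to be finite --- its finiteness on $(T',t)$ is essentially what you are trying to prove there. Nor can you fall back on the $h$-branch of the minimum, since $h$ is only integrable on $(1,\infty)$ and in the intended application ($h(s)\sim s^{-2}$) it is not integrable at $0$. (The paper's own proof dispatches this step with a bare ``by continuity'', so the lacuna is shared, but it does need an argument.) The clean repair is the shifting device from Step 3 of the proof of Lemma \ref{lem_integral_ineq_app}: set $F(r):=f(T'+r)$ and absorb the history into the inhomogeneity, $A(r):=a+\int_0^{T'}\min\bigl(g(T'+r-s)f(s)^2,h(T'+r-s)\bigr)\,\d s\leq a+M(T'+r)-M(r)$ with $M(t):=\int_0^t\min(C'^2g,h)$, while $C'\geq \int_0^rC'^2g(r-\rho)\,\d \rho+B(r)$ with $B(r):=C'-C'^2\int_0^rg\geq b+M(T'+r)-C'^2\int_0^rg$. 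For $r<\tau$ one has $M(r)=C'^2\int_0^rg$, hence $A\leq B$, and Lemma \ref{lem_integral_ineq_app} applied to $F$ and $\phi\equiv C'$ on $(0,\min(\tau,T-T'))$ gives $f\leq C'$ beyond $T'$, which is the desired contradiction. With that substitution for your bootstrap, the argument is complete and coincides with the paper's.
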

\begin{proof} If $t< \tau $ then the minimum under the second last integral is $(C')^2 g(t-s)$ and so Lemma \ref{lem_integral_ineq_app} gives $f(t) \leq C'$ for such $t$'s. Thus letting
\[ t_0 \coloneqq \sup \{ t'>0 \colon f(t) \leq C' \text{ for } t<t'\} \]
we see that $t_0 \geq \tau >0$. If $t_0 <T $ we obtain
\begin{multline*}
\int_0^{t_0 }  \min \left( g(t_0-s) f(s)^2 , h(t_0-s) \right) \,\d s +  a \\
\leq \int_0^{t_0 }  \min \left( g(t_0-s) (C')^2 , h(t_0-s) \right) \,\d s +  b < C' .
\end{multline*}
Thus, by continuity
\[
\int_0^{t }  \min \left( g(t-s) f(s)^2 , h(t-s) \right) \,\d s +  a  < C'
\]
for $t\in [t_0, t_0+\delta )$ for some $\delta >0$, which contradicts the definition of $t_0$. Thus $t_0=T$, as required.
\end{proof}

\subsection{The Volterra equation}\label{volterra_eq_section}
In this section we show that the equation
\begin{equation}\label{volterra}
\phi (x) = C \int_0^x \frac{\phi (y)}{\sqrt{x-y}} \,\d y +D,
\end{equation}
where $C,D>0$, has a unique solution $\phi \in C[0,\infty )$. This is equivalent to showing that \eqref{volterra} has a unique solution $\phi \in C[0,T]$ for every $T>0$. We can rewrite \eqref{volterra} in the form
\begin{equation}\label{volterra1}
\phi - A\phi = D,
\end{equation}
where
\eqnb\label{def_of_A_Volterra}
A\phi(x) := \int_0^T K(x,y) \phi(y) \, \d y
\eqne
with $K(x,y) := C\,\chi_{\{ y<x\}} (x-y)^{-1/2}$. This is an example of the Volterra integral equation of the 2nd kind with with a weakly singular kernel $K$, that is any $K$ such that for all $x,y\in [0,T]$ with $x\ne y$ $K$ is continuous at $(x,y)$ and $|K(x,y)|\leq M|x-y|^{\alpha-1 }$ for some $\alpha \in (0,1]$, $M>0$. In what follows we apply the theory of such equations to \eqref{volterra1}. We consider only the case $\alpha =1/2$; other cases follow similarly. We follow the arguments from \cite{kress}.

First note that the set of compact operators is closed in the operator norm.
\begin{lemma}\label{conv_operators_lemma}
Let $X$ be a Banach space and $A_n \in L(X) $ be a sequence of compact operators such that $\| A_n - A \| \to 0$ as $n\to \infty$ for some $A\in L(X)$. Then $A$ is compact. 
\end{lemma}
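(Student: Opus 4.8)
The plan is to show directly that $A$ maps the closed unit ball $B = \{x \in X : \|x\|\le 1\}$ to a relatively compact subset of $X$; since $X$ is a Banach space, this is equivalent to showing that $A(B)$ is totally bounded.

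First I would fix $\varepsilon >0$ and use the convergence $\|A_n - A\| \to 0$ to choose $N$ with $\|A_N - A\| < \varepsilon/2$. Since $A_N$ is compact, $A_N(B)$ is relatively compact and hence totally bounded, so there are finitely many points $y_1,\dots,y_m \in X$ with $A_N(B) \subset \bigcup_{j=1}^m B(y_j,\varepsilon/2)$. Then for an arbitrary $x\in B$, pick $j$ with $\|A_N x - y_j\| < \varepsilon/2$; the triangle inequality gives
\[
\|Ax - y_j\| \le \|Ax - A_N x\| + \|A_N x - y_j\| \le \|A_N - A\| + \varepsilon/2 < \varepsilon ,
\]
so that $A(B) \subset \bigcup_{j=1}^m B(y_j,\varepsilon)$. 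Since $\varepsilon>0$ was arbitrary, $A(B)$ is totally bounded, and as $X$ is complete its closure $\overline{A(B)}$ is complete and totally bounded, hence compact. Therefore $A$ is compact, as required.

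There is no substantial obstacle here: the only point needing care is the standard fact that in a complete metric space relative compactness is equivalent to total boundedness, which is precisely what allows us to upgrade a finite $\varepsilon/2$-net for $A_N(B)$ to a finite $\varepsilon$-net for $A(B)$. (An alternative route is a diagonal extraction — given a bounded sequence $(x_k)$, successively extract subsequences along which $A_n x_k$ converges for each fixed $n$, pass to the diagonal subsequence, and use $\|A_n - A\| \to 0$ to verify that $(A x_k)$ is Cauchy — but the total-boundedness argument above is shorter and avoids the diagonalisation.)
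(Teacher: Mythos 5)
Your proof is correct and is exactly the standard total-boundedness argument that the paper itself omits, deferring instead to the reference (Kress, p.~26): choose $N$ with $\|A_N-A\|<\varepsilon/2$, transfer a finite $\varepsilon/2$-net for $A_N(B)$ to a finite $\varepsilon$-net for $A(B)$, and conclude via completeness of $X$. Nothing is missing.
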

This is elementary (see e.g. \cite{kress}, p. 26, for the proof). 
We now show that $A$ (defined by \eqref{def_of_A_Volterra}) is a compact operator on $X:= C[0,T]$ by cutting off the singularity and using the above lemma.
\begin{lemma}
The operator $A:X\to X$ is compact.
\end{lemma}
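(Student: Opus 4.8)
The plan is to approximate the kernel $K(x,y)=C\,\chi_{\{y<x\}}(x-y)^{-1/2}$ by a sequence of continuous kernels $K_n$ on $[0,T]\times[0,T]$, note that each associated integral operator $A_n$ is compact (since it has a continuous kernel, so it is a standard Hilbert--Schmidt-type / Arzel\`a--Ascoli argument), and then show $\|A_n-A\|\to 0$ in the operator norm on $X=C[0,T]$, so that Lemma \ref{conv_operators_lemma} yields compactness of $A$.

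First I would define, for $n\in\NN$,
\[
K_n(x,y) := \begin{cases} K(x,y) & \text{if } x-y \geq 1/n,\\ C\,n^{1/2} & \text{if } 0\le x-y < 1/n,\\ 0 & \text{if } y>x,\end{cases}
\]
which is continuous on $[0,T]\times[0,T]$ (the two non-trivial pieces agree at $x-y=1/n$), and let $A_n\phi(x):=\int_0^T K_n(x,y)\phi(y)\,\d y$. Each $A_n$ maps $X$ into $X$ and is compact: continuity of $A_n\phi$ follows from uniform continuity of $K_n$, and for $\|\phi\|_\infty\le 1$ the family $\{A_n\phi\}$ is uniformly bounded (by $T\max|K_n|$) and equicontinuous (again by uniform continuity of $K_n$), so the Arzel\`a--Ascoli theorem gives relative compactness of the image of the unit ball. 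Next, for the operator-norm convergence, I would estimate for $\|\phi\|_\infty\le 1$
\[
|(A-A_n)\phi(x)| \le \int_{\max(0,x-1/n)}^{x} |K(x,y)-K_n(x,y)|\,\d y \le \int_0^{1/n} \left( C\,r^{-1/2} + C\,n^{1/2}\right) \d r = 2C\,n^{-1/2},
\]
uniformly in $x\in[0,T]$, hence $\|A-A_n\|\le 2C\,n^{-1/2}\to 0$. An application of Lemma \ref{conv_operators_lemma} then shows $A$ is compact on $X$, completing the proof.

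The main obstacle here is essentially bookkeeping rather than a genuine difficulty: one must be careful that the cut-off kernel $K_n$ is genuinely continuous up to the diagonal and on the boundary $y=x$ (the value there being $C\,n^{1/2}$, matching the formula $K(x,y)$ would blow up but $K_n$ is bounded), and that the estimate for $\|A-A_n\|$ is truly uniform in $x$ — which it is, because the integrand is supported in the short interval $(\max(0,x-1/n),x)$ of length at most $1/n$ and the singular part $C r^{-1/2}$ is integrable near $0$ with $\int_0^{1/n} C r^{-1/2}\,\d r = 2C n^{-1/2}$. Once compactness of $A$ is established, the Riesz theory for compact operators (injectivity of $I-A$, which follows from a Gr\"onwall-type argument applied to \eqref{volterra1}, implies invertibility) gives existence and uniqueness of $\phi\in C[0,T]$ solving \eqref{volterra}, and since $T>0$ was arbitrary this yields the unique solution in $C[0,\infty)$.
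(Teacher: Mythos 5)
Your strategy is exactly the paper's: truncate the singular kernel, observe that the truncated operators are compact via Arzel\`a--Ascoli, prove operator-norm convergence, and invoke Lemma \ref{conv_operators_lemma}. There is, however, a concrete error in your construction. Your $K_n$ is \emph{not} continuous on $[0,T]\times[0,T]$: you checked the matching at $x-y=1/n$ but not at the diagonal, where $K_n$ jumps from $C\,n^{1/2}$ (for $y$ just below $x$) to $0$ (for $y>x$). Consequently the justification you give for compactness of $A_n$ --- ``continuity/equicontinuity follow from uniform continuity of $K_n$'' --- does not apply as stated. The paper avoids this by using a multiplicative cutoff $K_n(t,s)=h(n|t-s|)K(t,s)$ with $h\equiv 0$ on $[0,1/2]$, so that $K_n$ vanishes identically in a neighbourhood of the diagonal and is genuinely continuous on the square, making the Arzel\`a--Ascoli step immediate.

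Your version is repairable: what equicontinuity of $\{A_n\phi:\|\phi\|_\infty\le 1\}$ actually requires is that $x\mapsto K_n(x,\cdot)$ be continuous into $L^1(0,T)$, and your capped kernel does satisfy this (the regions where $K_n(x,\cdot)$ and $K_n(x',\cdot)$ disagree have measure $O(|x-x'|)$ and the kernel is bounded by $C\,n^{1/2}$); alternatively, just adopt the paper's cutoff or interpolate your cap linearly to $0$ across the diagonal. Separately, a small arithmetic slip: with your bound $|K-K_n|\le C r^{-1/2}+C n^{1/2}$ the integral evaluates to $3C\,n^{-1/2}$, not $2C\,n^{-1/2}$ (you get $2C\,n^{-1/2}$ if you instead use $|K-K_n|\le K$ on the truncation region, which holds since $K_n\le K$ there). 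Neither issue affects the conclusion that $\|A-A_n\|\to 0$, and the final appeal to Lemma \ref{conv_operators_lemma} is correct.
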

\begin{proof}
We see that $A$ is continuous by writing
\begin{multline*}
|A\phi (t)| = C \left| \int_0^t  \frac{\phi (s) }{\sqrt{t-s}} \,\d s \right| \\\leq C \| \phi \|_{\sup} \int_0^t (t-s)^{-1/2} \,\d s = 2C \| \phi \|_{\sup} \, t^{1/2}  
\leq 2 C T^{1/2} \| \phi \|_{\sup} .
\end{multline*}
For $n\in \NN$ we define a cut-off $K_n$ of the kernel $K$ by 
\[
K_n(t,s) := \begin{cases}
h (n|t-s|) K(t,s) \quad &{t\ne s} \\
0 & t=s,
\end{cases}
\]
where $h: [0,\infty ) \to [0,1]$ is a continuous function such that $h(t) =0$ for $t \in [0,1/2]$ and $h(t)=1$ for $t\geq 1$. Because $K_n \in C([0,T]^2)$ for every $n$, the corresponding integral operators $A_n$ are compact by the Arzel\`{a}-Ascoli theorem. Moreover
\[
\begin{split}
|A\phi (t) - A_n \phi (t) | &= \left| \int_{t-1/n}^t (1-h(n|t-s|)) K(t,s) \phi(s) \,\d s \right| \\
&\leq \| \phi \|_{\sup} \int_{t-1/n}^t  \frac{C}{\sqrt{t-s}}  \,\d s = C  \| \phi \|_{\sup} \,\,n^{-1/2} \to 0
\end{split}
\]
as $n\to \infty$ uniformly in $t$. Hence $\| A_n-A\|\to 0$ and Lemma \ref{conv_operators_lemma} gives compactness of $A$.
\end{proof}
We now show the claim by applying Fredholm Alternative.
\begin{theorem}\label{thm_volterra}
The equation \eqref{volterra1} has a unique solution $\phi \in C[0,T]$.
\end{theorem}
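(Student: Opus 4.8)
The plan is to apply the Fredholm Alternative for compact operators on the Banach space $X = C[0,T]$. Since the previous lemma establishes that $A$ is a compact operator on $X$, the Riesz--Schauder theory tells us that $I - A$ is invertible (equivalently, \eqref{volterra1} has a unique solution for every right-hand side) if and only if the homogeneous equation $\phi - A\phi = 0$ has only the trivial solution. So the first and main step is to show that the only $\phi \in C[0,T]$ satisfying $\phi = A\phi$ is $\phi \equiv 0$.

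For this, I would iterate the operator $A$ and estimate its powers. Suppose $\phi = A\phi$ with $\|\phi\|_{\sup} = M$ on $[0,T]$. Then $\phi = A^n \phi$ for all $n$, so it suffices to show $\|A^n\| \to 0$ as $n \to \infty$. The key computation is that the kernel of $A^n$ has a pointwise bound of the form $K^{(n)}(t,s) \leq C^n \Gamma(1/2)^n (t-s)^{n/2 - 1}/\Gamma(n/2)$ for $s < t$; this follows by induction using the Beta-integral identity
\[
\int_s^t (t-r)^{a-1}(r-s)^{b-1}\,\d r = (t-s)^{a+b-1} \frac{\Gamma(a)\Gamma(b)}{\Gamma(a+b)}.
\]
Integrating this bound against $\|\phi\|_{\sup}$ over $s \in [0,t]$ gives
\[
|A^n \phi (t)| \leq M \frac{(C\Gamma(1/2))^n}{\Gamma(n/2)} \cdot \frac{t^{n/2}}{(n/2)} \leq M \frac{(C\Gamma(1/2))^n T^{n/2}}{\Gamma(n/2 + 1)},
\]
and since $\Gamma(n/2+1) \to \infty$ faster than any exponential, the right-hand side tends to $0$ as $n \to \infty$. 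Hence $M = 0$, so $\phi \equiv 0$, and the Fredholm Alternative yields existence and uniqueness of $\phi \in C[0,T]$ solving \eqref{volterra1}. Since $T > 0$ was arbitrary and solutions on overlapping intervals must agree (by uniqueness), we obtain a unique $\phi \in C[0,\infty)$ solving \eqref{volterra}.

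I expect the main obstacle to be bookkeeping the induction for the iterated-kernel bound cleanly — in particular getting the Gamma-function constants to telescope correctly through the Beta integral — rather than anything conceptually deep. An alternative that sidesteps the Fredholm machinery would be a direct contraction/Neumann-series argument: the same iterated-kernel estimate shows $\sum_n \|A^n\| < \infty$, so $I - A$ is invertible with inverse $\sum_{n\geq 0} A^n$, giving the solution $\phi = \sum_{n \geq 0} A^n D$ explicitly; positivity of $\phi$ is then immediate since each term is nonnegative. Either route works; I would likely present the Fredholm version for brevity since the compactness of $A$ has already been established, but mention that the Neumann series gives an explicit formula.
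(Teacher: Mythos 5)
Your proposal is correct and follows essentially the same route as the paper: compactness of $A$ plus the Fredholm Alternative, with the homogeneous equation killed by estimating iterates of $A$ on a putative fixed point via the Beta-integral identity (the paper iterates $A^2$ to keep integer powers and factorials, you keep general $A^n$ and Gamma functions of half-integers, which is only a bookkeeping difference). The iterated-kernel bound and the final estimate $|A^n\phi(t)|\leq M\,(C\Gamma(1/2))^n T^{n/2}/\Gamma(n/2+1)\to 0$ both check out.
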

\begin{proof}
Because $X$ is a Banach space and $A:X\to X$ is compact we can apply Fredholm Alternative to conclude that \eqref{volterra1} has a unique solution if the equation 
\begin{equation}\label{volterra2}
\phi-A\phi =0
\end{equation}
has no non-zero solution. We will use induction to show that a solution $\phi$ to this homogeneous problem satisfies 
\begin{equation}\label{volterra_induction}
|\phi (t) | \leq  \| \phi \|_{\sup } \frac{M^k t^k }{k!}
\end{equation}
for some $M>0$ and all $t\in [0,T]$, $k=0,1,\ldots$. The base case $k=0$ is trivial. For the inductive step we first note that that for any $t,s\in [0,T]$ with $0\leq s <t\leq T$ we have
\[
\int_s^t \frac{\d \tau }{\sqrt{t-\tau}\sqrt{\tau -s}} = \int_0^1 \frac{\d z}{\sqrt{z(1-z)}} =:I
\]
by the change of variable $z:= \frac{\tau -s}{t-s}$. Now assume that a solution $\phi$ to \eqref{volterra2} satisfies $|\phi (t) | \leq M \| \phi \|_{\sup } \frac{C^k t^k }{k!}$ for some $k$. Then, because $\phi = A\phi = A^2 \phi$, we have for all $t\in (0,T]$
\begin{align*}
\MoveEqLeft[0]
|\phi(t) | = |A^2 \phi (t)| = C^2 \left| \int_0^t \int_0^\tau \frac{\phi (s) }{\sqrt{\tau - s}\sqrt{t-\tau }} \,\d s \,\d \tau \right| \\
&\leq C^2  \int_0^t \int_0^\tau \frac{|\phi (s)| }{\sqrt{\tau - s}\sqrt{t-\tau }} \,\d s \,\d \tau =C^2  \int_0^t \int_s^t \frac{|\phi (s)| }{\sqrt{\tau - s}\sqrt{t-\tau }}  \,\d \tau \,\d s  \\
&= I C^2  \int_0^t |\phi(s) | \,\d s \leq  \| \phi \|_{\sup } \frac{I C^2 M^k} {k!} \int_0^t s^k \,\d s=\| \phi \|_{\sup }  \frac{I C^{2} M^k  t^{k+1}}{(k+1)!} ,
\end{align*}
where we used Fubini's theorem. The bound \eqref{volterra_induction} now follows with $M:= I C^2$. Taking the limit $k\to \infty $ in \eqref{volterra_induction} gives $\phi \equiv 0$. 
\end{proof}
\subsection{A proof of \eqref{bound_on_p_magic} without the use of the Plancherel Lemma}\label{another_proof_of_magic_bound_sec}
Here we give an elementary proof of \eqref{bound_on_p_magic},
\[
\| p \|^2 \leq C \| \nabla u \|^3 \| u \|.
\]
First note that the representation formula for $p$ (see \eqref{sol_regularisedNSE_form}) can be rewritten in the form
\[
p= \frac{1}{4\pi } \int \nabla \left( \frac{1}{|x-y|}\right) \cdot g(y) \, \d y,
\]
where $g \coloneqq (J_\varepsilon u \cdot \nabla )u$. Therefore
\[
\begin{split}
\| p \|^2 &= \frac{1}{(4\pi )^2}\iiint  \left[ \nabla \frac{1}{|x-y|} \cdot g(y) \right]  \left[ \nabla \frac{1}{|x-z|} \cdot g(z) \right] \,\d x\, \d y\, \d z \\
&=  \frac{-1}{(4\pi )^2}\iiint  \left[  \frac{1}{|x-y|} \mathrm{div}\, g(y) \right]  \left[ \nabla \frac{1}{|x-z|} \cdot g(z) \right] \,\d x \,\d y\, \d z 
\end{split}
\]
Since $\nabla_x |x-y|^{-1} =\nabla_y |x-y|^{-1} $, integration by parts in $x$ and then in $y$ gives
\[
\| p \|^2 = \frac{-1}{(4\pi )^2}\iiint  \left[  \frac{1}{|x-y|} \nabla ( \mathrm{div}\, g(y)) \right] \cdot  \frac{g(z)}{|x-z|}   \,\d x\, \d y \, \d z .
\]
Now the calculus identity
\[
\nabla (\mathrm{div} \, g ) = \Delta g + \mathrm{curl} (\mathrm{curl} \, g )
\]
gives
\begin{equation}\label{p_magic_temp}
\begin{split}
\| p \|^2 &= \frac{1}{4\pi}\iint   \frac{g(x) \cdot g(z)}{|x-z|}    \,\d x \, \d z \\
 &-\frac{1}{(4\pi )^2}\iiint  \left[  \frac{1}{|x-y|} \mathrm{curl} ( \mathrm{curl}\, g(y)) \right] \cdot  \frac{g(z)}{|x-z|}   \,\d x\, \d y \, \d z .
\end{split}
\end{equation}
Since the $i$th component of $\mathrm{curl}\, g $ can be expressed in the form $(\mathrm{curl}\, g )_i = \epsilon_{ijk} \p_j g_k$, where the coefficients 
\[
\epsilon_{ijk} \coloneqq \begin{cases} 1 \qquad &\text{ if } ijk \text{ is an even permutation of } 123,\\
-1 &\text{ if } ijk \text{ is an odd permutation of }123,\\
0 &\text{ otherwise }
\end{cases}
\]
satisfy $\epsilon_{ijk}=-\epsilon_{kji}$, we can write the last triple integral as
\[
\iiint  \frac{ \epsilon_{ijk}\p_j  \left(\mathrm{curl}\, g(y)\right)_k  g_i (z)}{|x-y|\, |x-z|} \,\d x \, \d y \, \d z,
 \]
which integrated by parts three times (first in $y$ then in $x$ and $z$) gives
\begin{align*}
\MoveEqLeft
-\iiint  \frac{ \epsilon_{ijk} \left( \mathrm{curl}\, g(y)\right)_k \p_j g_i (z)}{|x-y|\, |x-z|} \,\d x \, \d y \, \d z\\
&=\iiint  \frac{  \left( \mathrm{curl}\, g(y)\right)_k \epsilon_{kji} \p_j g_i (z)}{|x-y|\, |x-z|} \,\d x \, \d y \, \d z\\
&= \iiint  \frac{  \left( \mathrm{curl}\, g(y)\right) \cdot \left( \mathrm{curl}\, g(z)\right)}{|x-y|\, |x-z|} \,\d x \, \d y \, \d z= \int |F(x)|^2 \,\d x \geq 0,
\end{align*}
where $F(x) \coloneqq \int \mathrm{curl}\, g(y) /|x-y| \,\d y $. Therefore \eqref{p_magic_temp} gives
\begin{align*}
\| p \|^2 &\leq  \frac{1}{4\pi}\iint   \frac{g(x) \cdot g(z)}{|x-z|}    \,\d x \, \d z  \\
&=  \frac{1}{4\pi}\iint  \frac{(J_\varepsilon u_k (x))  \p_k u_i (x)  (J_\varepsilon u_j(z))  \p_j u_i(z)}{|x-z|}    \,\d x \, \d z 
\end{align*}
(this inequality appears in \cite{Leray_1934} on p. 233 with ``$\leq$'' wrongly replaced by ``$=$''),
from where the Cauchy-Schwarz inequality and Lemma \ref{temp_singular_int_bound} give
\[\begin{split}
\| p \|^2 &\leq  \frac{1}{4\pi}\int \| \nabla u \| \left(   \int \frac{|J_\varepsilon u_k (x)|^2  }{|x-z|^2}    \,\d x \right)^{1/2} \, |J_\varepsilon u_j(z)| \, |  \nabla u (z) | \,\d z \\
& \leq C \| \nabla u \|^2 \int |J_\varepsilon u_j(z)| \, |  \nabla u (z) | \,\d z \leq C \| \nabla u \|^3 \| u \| ,
\end{split}
\]
where we also used $\| J_\varepsilon (\nabla u ) \|\leq \|\nabla u \|$, $\| J_\varepsilon u \|\leq \|u \|$ (see Lemma \ref{prop_molli} (i)).
\subsection{Smooth approximation of the forcing}\label{appendix_approx_of_X}
\begin{lemma}[Dini's lemma]\label{dini_lemma_fact}
Let $I$ be a compact interval and let $f_n,f \in C(I;\RR)$ be continuous functions such that $f_n (t) \to f (t)$ as $n\to \infty$ and $f_{n+1}(t) \leq f_n (t)$ for each $t\in I$. Then $\| f_n - f \|_{C(I)} \to 0$. 
\end{lemma}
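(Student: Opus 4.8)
The plan is to reduce to the nonnegative case and then exploit compactness of $I$ through a nested open cover. First I would set $g_n \coloneqq f_n - f$. By hypothesis each $g_n$ is continuous on $I$, the sequence $\{g_n\}$ is pointwise nonincreasing, and $g_n(t) \to 0$ for each $t \in I$; moreover $g_n \geq 0$, since for every $t$ we have $f_n(t) \geq \lim_{m\to\infty} f_m(t) = f(t)$ by monotonicity of the sequence $\{f_m(t)\}$. It therefore suffices to prove that $\sup_{t\in I} g_n(t) \to 0$ as $n\to\infty$.

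Fix $\varepsilon > 0$. For each $n$ define $U_n \coloneqq \{ t \in I : g_n(t) < \varepsilon \}$, which is relatively open in $I$ by continuity of $g_n$. Since $g_{n+1} \leq g_n$ pointwise we have $U_n \subset U_{n+1}$, and since $g_n(t) \to 0$ for each $t$, every point of $I$ lies in some $U_n$; thus $\{U_n\}_n$ is an open cover of the compact set $I$. Extracting a finite subcover and using that the $U_n$ are nested, we obtain $N$ with $U_N = I$. Hence $g_n(t) < \varepsilon$ for all $t\in I$ and all $n \geq N$, that is $\| f_n - f \|_{C(I)} \leq \varepsilon$ for $n \geq N$. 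Since $\varepsilon > 0$ was arbitrary, $\| f_n - f \|_{C(I)} \to 0$.

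No step here is a genuine obstacle; the only thing to note carefully is the sign $g_n \geq 0$, which is immediate from the monotone pointwise convergence. An alternative I could present is a contradiction argument: if $\| f_n - f \|_{C(I)} \not\to 0$, there are $\varepsilon > 0$, a subsequence $\{n_k\}$ and points $t_k \in I$ with $g_{n_k}(t_k) \geq \varepsilon$; passing to a subsequence with $t_k \to t^\ast \in I$ by compactness and using $g_{n_k}(t_k) \leq g_m(t_k)$ whenever $n_k \geq m$ together with continuity of $g_m$ yields $\varepsilon \leq g_m(t^\ast)$ for every $m$, contradicting $g_m(t^\ast) \to 0$. I would use the open-cover version as the main argument and mention the contradiction argument only as a remark if desired.
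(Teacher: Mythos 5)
Your proof is correct: the reduction to $g_n \coloneqq f_n - f \geq 0$ and the nested open cover $U_n = \{t : g_n(t) < \varepsilon\}$ with a finite subcover of the compact interval is the standard argument for Dini's theorem, and the contradiction variant you sketch is equally valid. The paper itself dismisses this lemma with ``This is elementary'' and supplies no proof, so there is nothing to compare against; your write-up simply fills in that omission correctly.
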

\begin{proof}
This is elementary.
\end{proof}
\begin{lemma}
\label{approx_of_X_lemma}
Let $p\in[1,\infty)$ and  $F\in C([0,T), L^p)$. For any $T'\in(0,T)$ and any $\varepsilon >0$ there exists $\widetilde{F} \in C^\infty ( \RR^3  \times \RR )$ and $R>0$ such that $\mathrm{supp} \, \widetilde{F} (t) \subset B(0,R)$ for all $t\in \RR$ and
\begin{equation}\label{approx_of_X_lemma_ineq}
\| F - \widetilde{F} \|_{C([0,T'],L^2)} < \varepsilon.
\end{equation}
Moreover $\max_{t\in[0,T']} \| \widetilde{F} (t)\|_\infty \leq \max_{t\in[0,T']} \| {F} (t)\|_\infty $.
\end{lemma}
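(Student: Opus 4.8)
The plan is to build $\widetilde F$ from $F$ by three successive operations — extension in time, truncation in space, and mollification in space–time — keeping the $L^p$ error uniform in $t\in[0,T']$ and never increasing the relevant $L^\infty$ bound. First I would extend $F$ to $\bar F\in C(\RR;L^p)$ by freezing the endpoints: $\bar F(t):=F(0)$ for $t<0$, $\bar F(t):=F(t)$ for $t\in[0,T']$, and $\bar F(t):=F(T')$ for $t>T'$. Then $\bar F$ agrees with $F$ on $[0,T']$, its range is the compact set $\{F(t):t\in[0,T']\}$, one has $\sup_{t\in\RR}\|\bar F(t)\|_\infty=M:=\max_{t\in[0,T']}\|F(t)\|_\infty$, and, being continuous on a metric space over a compact domain, $\bar F$ is uniformly continuous into $L^p$ on each compact time interval.

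Next I would truncate in space. Fix $\chi\in C_0^\infty(\RR^3;[0,1])$ with $\chi\equiv1$ on $B(0,1)$ and $\chi\equiv0$ off $B(0,2)$, put $\chi_R(x):=\chi(x/R)$ and $F^R(t):=\chi_R\bar F(t)$. Then $\supp F^R(t)\subset B(0,2R)$, $\|F^R(t)\|_\infty\le M$, and $\|F^R(t)-\bar F(t)\|_{L^p}^p\le\phi_R(t):=\int_{|x|>R}|\bar F(x,t)|^p\,\d x$. The functions $\phi_R$ are continuous in $t$ (since $g\mapsto\|g\|_{L^p(|x|>R)}$ is $1$-Lipschitz on $L^p$), nonincreasing in $R$, and $\phi_R(t)\to0$ pointwise as $R\to\infty$; hence Dini's lemma (Lemma \ref{dini_lemma_fact}), applied on a fixed compact interval containing $[0,T']$, gives $\sup_t\phi_R(t)\to0$, so for $R$ large enough $\|F^R-\bar F\|_{C([0,T'],L^p)}<\varepsilon/2$. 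Finally I would mollify: with $\rho\in C_0^\infty(\RR^3\times\RR)$ a standard mollifier supported in the unit ball and $\rho_\delta(x,s):=\delta^{-4}\rho(x/\delta,s/\delta)$, set $\widetilde F:=\rho_\delta\ast F^R$ (space–time convolution). Then $\widetilde F\in C^\infty(\RR^3\times\RR)$, $\supp\widetilde F(t)\subset B(0,2R+\delta)$ for every $t$, and $\|\widetilde F(t)\|_\infty\le\sup_{|\sigma|\le\delta}\|F^R(t-\sigma)\|_\infty\le M$, since averaging does not increase the sup-norm.

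The one point needing genuine care is the uniform-in-$t$ estimate $\|\widetilde F-F^R\|_{C([0,T'],L^p)}<\varepsilon/2$. By Minkowski's integral inequality,
\[ \|\widetilde F(t)-F^R(t)\|_{L^p}\le\int\rho_\delta(y,\sigma)\,\bigl\|\tau_yF^R(t-\sigma)-F^R(t)\bigr\|_{L^p}\,\d y\,\d \sigma, \]
where $\tau_y$ denotes translation by $y$, so it suffices to make $\|\tau_yF^R(t-\sigma)-F^R(t-\sigma)\|_{L^p}$ and $\|F^R(t-\sigma)-F^R(t)\|_{L^p}$ uniformly small for $|y|,|\sigma|\le\delta$. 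The second is handled by the uniform continuity of $\bar F$ (hence of $F^R$) in time on a compact interval. The first — which I expect to be the main obstacle — uses that the range of $F^R$ is a compact subset of $L^p$ (the continuous image of a compact set under multiplication by the fixed function $\chi_R$) and that a compact subset of $L^p$ is uniformly equicontinuous under translations; I would prove this by covering the range by finitely many $\eta$-balls and invoking translation-continuity of each centre (alternatively one may cite the Kolmogorov–Riesz criterion). Choosing $\delta$ small then gives the bound, and combining the two steps with $\bar F=F$ on $[0,T']$ yields $\|\widetilde F-F\|_{C([0,T'],L^p)}<\varepsilon$, while $\max_{t\in[0,T']}\|\widetilde F(t)\|_\infty\le M=\max_{t\in[0,T']}\|F(t)\|_\infty$ is already established; renaming $2R+\delta$ as $R$ completes the proof. (The same argument applies verbatim with $L^2$ in place of $L^p$, which is how the lemma is used in Theorem \ref{sol_distributions_anyX_theorem}.)
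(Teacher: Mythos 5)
Your proposal is correct and follows essentially the same route as the paper: extend in time by freezing the endpoints, truncate in space, apply Dini's lemma to get the truncation error uniform in $t$, mollify in space--time, and split the mollification error via Minkowski's inequality into a time-translation part (uniform continuity in time) and a space-translation part handled by a finite-cover compactness argument. The only cosmetic differences are your smooth cutoff in place of the paper's characteristic-function cutoff and your phrasing of the compactness step in terms of the range of $F^R$ in $L^p$ rather than the compact time interval; these are equivalent here.
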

\begin{proof}
It suffices to consider $T<\infty$. First extend $F$ in time from $[0,T']$ to the whole line by taking $F(\cdot ,t) \coloneqq F(\cdot , 0)$ for $t<0$ and $F(\cdot , t) \coloneqq F(\cdot ,T')$ for $t>T'$.
For $R>0$ let 
\[
F_R (x,t) \coloneqq \chi_{B(0,R)} (x) F(x,t).
\]
Clearly $F_R\in C(\RR ; L^p)$ (as a product of two such functions) and so $\| F_R (t) - F(t) \|_{p} $ is continuous in $t$ for each $R$. Hence, noting that $\| F_R (t) - F(t) \|_{p} $ is a nonincreasing function of $R$ converging to zero as $R\to \infty$ for each $t\in [0,T']$, we can use Dini's Lemma (Lemma \ref{dini_lemma_fact}) to fix $R>0$ such that
\begin{equation}\label{dini_gives_this}
\| F_R - F \|_{C([0,T'];L^p)} < \varepsilon/3.
\end{equation}
We will now mollify $F_R$ to obtain $\widetilde{F}$. Let $\eta_\delta$, $\xi_\delta$ be mollifiers in $\RR^3$ and $\RR$ respectively, that is let $\xi (t) \coloneqq C \exp (1/(|t|^2-1))$ for $t\in (0,1)$ and $\xi (t) \coloneqq 0$ if $t\not \in (0,1)$, where $C$ is such that $\int_\RR \xi =1$, and let $\xi_\delta (t) \coloneqq \xi(t/\delta )/\delta$, $\eta (x) \coloneqq \xi (|x|)$, $\eta_\delta (x) \coloneqq \eta (x/\delta )/ \delta^3$. Define the mollification $F_R^\delta $ of $F_R$ by
\begin{multline*}
F_R^\delta (x,t) \coloneqq \int_\RR \xi_\delta (s) \int \eta_\delta (y) F_R (x-y, t-s) \,\d y \,\d s\\
=\int_\RR \xi (s) \int \eta (y) F_R (x-\delta y,t-\delta s) \,\d y \,\d s.
\end{multline*}
Clearly $\widetilde{F} \coloneqq F_R^\delta (x,t)$ has the required regularity for each $\delta >0$ (in particular $\max_{t\in[0,T']} \| \widetilde{F} (t)\|_\infty \leq \max_{t\in[0,T']} \| {F} (t)\|_\infty $ holds by the property of mollifiers (see (i) in Lemma \ref{prop_molli})). Therefore the proof will be complete if we show the approximation property \eqref{approx_of_X_lemma_ineq} for some $\delta >0$. 
Because $F_R (x,t)- F_R^\delta (x,t) = \int_\RR \xi (s) \int \eta (y) \left( F_R (x,t) -F_R (x-\delta y,t-\delta s)\right) \,\d y \,\d s$, we can use the Minkowski inequality (see \ref{minkowski_integral_version_more_general}) and the triangle inequality 
\begin{multline*}
\| F_R(x-\delta y ,t-\delta s) - F_R (x,t)\|_{p} \leq \| F_R(x-\delta y ,t-\delta s) - F_R (x-\delta y,t)\|_{p}\\
 + \| F_R(x-\delta y ,t) - F_R (x,t)\|_{p}  
\end{multline*}
to write
\begin{equation}\label{approx_of_X_temp1}
\begin{split}
\| F_R (t) - F_R^\delta (t) \|_{p} &\leq \int_\RR \xi (s) \int \eta (y) \| F_R (\cdot -\delta y,t-\delta s) - F_R (\cdot ,t) \|_{p} \,\d y \,\d s\\
&\leq \int_\RR \xi (s) \| F_R(t-\delta s) - F_R (t)\|_{p} \,\d s \\
&\quad+ \int \eta(y) \| F_R(\cdot -\delta y ,t) - F_R (\cdot ,t)\|_{p}   \,\d y.
\end{split}
\end{equation}
Since $F_R$ is uniformly continuous on $[-1,T'+1]$ into $L^p$ (recall we extended $F$ in time to the whole line) there exists sufficiently small $\delta_1>0$ such that for $\delta \in (0,\delta_1)$
\[
\| F_R(t-\delta s) - F_R (t)\|_{p} < \varepsilon /3
\]
for all $t\in [0,T']$, and so the first term on the right-hand side of \eqref{approx_of_X_temp1} is less than $\varepsilon /3$ for all $t\in [0,T']$. As for the second term there exists $\delta>0$ such that $\delta < \delta_1$ and
\[
\| F_R (\cdot - z ,t)-F_R (\cdot , t) \|_{p} < \varepsilon /3
\]
whenever $|z| < \delta $ and $t\in [0,T']$. Indeed, by the continuity of translation in space of $L^p$ functions for each $t_0\in [0,T']$ there exists a $\delta_{t_0}$ such that $\| F_R (\cdot - z ,t_0)-F_R (\cdot , t_0) \|_{p} < \varepsilon /3$ whenever $|z|<\delta_{t_0}$. Moreover, the continuity of $F_R$ in time into $L^p$ and triangle inequality gives that $\| F_R (\cdot - z ,t)-X_R (\cdot , t) \|_p < \varepsilon /3$ whenever $|z|<\delta_{t_0}$ and $t$ belongs to some open set $J_{t_0}$ containing $t_0$. By compactness of $[0,T']$ we obtain a finite cover $\{ J_{t_i} \}_{i=1,\ldots , m}$ of $[0,T']$ consisting of such open sets and $\delta$ is obtained by taking the minimum of the corresponding $\delta_{t_i}$, $i=1,\ldots ,m$. This means in particular that
\[
\| F_R (\cdot - \delta y, t) - F_R (\cdot , t) \|_{p} < \varepsilon /3
\]
whenever $|y|<1$, that is for all $y\in \supp \, \eta $. Therefore, for such $\delta$ the second term on the right-hand side of \eqref{approx_of_X_temp1} is bounded by $\varepsilon /3$ uniformly in $t\in[0,T']$. The approximation property \eqref{approx_of_X_lemma_ineq} therefore follows directly from \eqref{dini_gives_this} and \eqref{approx_of_X_temp1}.
\end{proof}

\bibliography{Bib_LerayNotes}{}
\end{document}